\newtheorem{theorem}{Theorem}[section]
\newtheorem{lemma}[theorem]{Lemma}
\newtheorem{proposition}[theorem]{Proposition}
\newtheorem{corollary}[theorem]{Corollary}
\theoremstyle{definition}
\newtheorem{example}[theorem]{Example}
\theoremstyle{remark}
\newtheorem{remark}[theorem]{Remark}
\numberwithin{equation}{section}
\newcommand{\EE}{\mathbb{E}}
\newcommand{\NN}{\mathbb{N}}
\newcommand{\QQ}{\mathbb{Q}}
\newcommand{\RR}{\mathbb{R}}
\newcommand{\f}{\infty}
\newcommand{\eps}{\varepsilon}
\newcommand{\graph}{\mathrm{Graph}}
\newcommand{\osc}{\mathrm{osc}}
\newcommand{\var}{\mathrm{Var}}
\newcommand{\sgn}{\mathrm{sgn}}
\title[The partial derivatives of Okamoto's function]{The higher order partial derivatives of Okamoto's function with respect to the parameter}
\author[P. Allaart]{Pieter Allaart}
\address[P. Allaart]{Mathematics Department, University of North Texas, 1155 Union Cir \#311430, Denton, TX 76203-5017, U.S.A.}
\email{allaart@unt.edu}
\author[N. Dalaklis]{Nathan Dalaklis}
\address[N. Dalaklis]{Mathematics Department, University of North Texas, 1155 Union Cir \#311430, Denton, TX 76203-5017, U.S.A.}
\email{NathanDalaklis@my.unt.edu}
\author[K. Kawamura]{Kiko Kawamura}
\address[K. Kawamura]{Mathematics Department, University of North Texas, 1155 Union Cir \#311430, Denton, TX 76203-5017, U.S.A.}
\email{kiko@unt.edu}
\author[M. Ortiz]{Matthew Ortiz}
\address[M. Ortiz]{Mathematics Department, University of North Texas, 1155 Union Cir \#311430, Denton, TX 76203-5017, U.S.A.}
\email{MatthewOrtiz2@my.unt.edu}
\author[J. Zheng]{Jiajie Zheng}
\address[J. Zheng]{Mathematics Department, University of North Texas, 1155 Union Cir \#311430, Denton, TX 76203-5017, U.S.A.}
\email{Jiajie.Zheng@unt.edu}
\begin{document}

\begin{abstract}
Let $\{F_a: a\in(0,1)\}$ be Okamoto's family of continuous self-affine functions, introduced in [{\em Proc. Japan Acad. Ser. A Math. Sci.} {\bf 81} (2005), no. 3, 47--50]. This family includes well-known ``pathological" examples such as Cantor's devil's staircase and Perkins' continuous but nowhere differentiable function. It is well known that $F_a(x)$ is real analytic in $a$ for every $x\in[0,1]$. We introduce the functions
\[
M_{k,a}(x):=\frac{\partial^k}{\partial a^k}F_a(x), \qquad k\in\NN, \quad x\in[0,1].
\]
We compute the box-counting dimension of the graph of $M_{k,a}$, characterize its differentiability, and investigate in detail the set of points where $M_{k,a}$ has an infinite derivative. While some of our results are similar to the known facts about Okamoto's function, there are also some notable differences and surprising new phenomena that arise when considering the higher order partial derivatives of $F_a$.
\end{abstract}

\subjclass[2020]{Primary: 26A27, 26A30; Secondary: 28A78, 11A63}

\keywords{Continuous nowhere differentiable function, infinite derivative, ternary expansion, unique beta-expansion, Hausdorff dimension}

\maketitle

\tableofcontents

\section{Introduction}

In 2005, Okamoto \cite{Okamoto} introduced a one-parameter family of self-affine functions $\{F_a: 0<a<1\}$ defined on the interval $[0,1]$ as the unique continuous solution of the functional equation
\begin{equation} \label{eq:Okamoto-FE}
F_a(x)=\begin{cases}
aF_a(3x) & \mbox{if $0\leq x\leq 1/3$},\\
(1-2a)F_a(3x-1)+a & \mbox{if $1/3<x\leq 2/3$},\\
aF_a(3x-2)+1-a & \mbox{if $2/3<x\leq 1$}.
\end{cases}
\end{equation}
Special cases of this family had been considered before: Most famously, $F_{1/2}$ is the ternary Cantor function. Furthermore, $F_{5/6}$ is Perkins' continuous but nowhere differentiable function \cite{Perkins}, and $F_{2/3}$ was studied independently by Bourbaki \cite{Bourbaki} and Katsuura \cite{Katsuura}. However, Okamoto \cite{Okamoto} was the first to systematically study the differentiability of $F_a$ for all $a\in(0,1)$. He proved the following surprising result:

\begin{theorem}[Okamoto] \label{thm:Okamoto}
Let $a_0\approx .5592$ be the unique real root of $54a^3-27a^2=1$. 
\begin{enumerate}[(a)]
\item $F_a$ is nowhere differentiable if $2/3\leq a<1$;
\item $F_a$ is nondifferentiable almost everywhere but differentiable at uncountably many points if $a_0<a<2/3$;
\item $F_a$ is differentiable almost everywhere but nondifferentiable at uncountably many points if $0<a<a_0$ and $a\neq1/3$.
\end{enumerate}
\end{theorem}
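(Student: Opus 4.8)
The plan is to encode everything through the ternary expansion of $x$ and the self-affine structure of $F_a$. Write $x=0.\omega_1\omega_2\cdots$ with $\omega_i\in\{0,1,2\}$, let $x_n=0.\omega_1\cdots\omega_n$ and $x_n'=x_n+3^{-n}$ be the endpoints of the level-$n$ ternary cylinder containing $x$, and set $t_n:=0.\omega_{n+1}\omega_{n+2}\cdots=3^n(x-x_n)$. Iterating \eqref{eq:Okamoto-FE} shows that on each cylinder $F_a$ is an affine copy of itself, $F_a(x)=F_a(x_n)+3^{-n}S_n\,F_a(t_n)$, where
\[
S_n=S_n(x):=\prod_{i=1}^n d_{\omega_i},\qquad d_0=d_2=3a,\quad d_1=3-6a,
\]
is the slope of $F_a$ across the cylinder. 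The two one-sided difference quotients from the endpoints then take the clean form
\[
\frac{F_a(x)-F_a(x_n)}{x-x_n}=S_n\,\frac{F_a(t_n)}{t_n},\qquad \frac{F_a(x_n')-F_a(x)}{x_n'-x}=S_n\,\frac{1-F_a(t_n)}{1-t_n}.
\]
Since a non-terminating $x$ lies strictly inside every cylinder, $S_n$ is the convex combination $t_n\,Q_1+(1-t_n)\,Q_2$ of these two quotients $Q_1,Q_2$; hence if $F_a'(x)$ exists and is finite then necessarily $S_n(x)\to F_a'(x)$. This reduces non-differentiability to showing $|S_n|\to\infty$ or that $S_n$ oscillates, and differentiability (with derivative $0$) to showing $|S_n|\to0$ together with a uniform oscillation bound.

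For the almost-everywhere statements I would invoke the strong law of large numbers: for Lebesgue-a.e. $x$ the ternary digits are equidistributed, so $\tfrac1n\log|S_n(x)|\to\tfrac13\log\big(27a^2|1-2a|\big)$. The critical constant is where this limit vanishes, i.e. $27a^2|1-2a|=1$; on $(1/2,1)$ this is exactly $54a^3-27a^2=1$, which defines $a_0$, while on $(0,1/2)$ the function $27a^2(1-2a)$ attains its maximum value $1$ only at the degenerate point $a=1/3$ (where $d_0=d_1=d_2=1$ and $F_{1/3}(x)=x$). A short computation then gives $27a^2|1-2a|<1$ precisely on $(0,a_0)\setminus\{1/3\}$ and $>1$ on $(a_0,1)$. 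Thus for $a\in(a_0,1)$ we get $|S_n|\to\infty$ a.e., hence a.e. non-differentiability [parts (a),(b)], and for $a\in(0,a_0)\setminus\{1/3\}$ we get $S_n\to0$ a.e., hence differentiability with derivative $0$ a.e. [part (c)].

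For part (a), when $a\in(2/3,1)$ every factor satisfies $|d_i|\ge 6a-3>1$, so $|S_n|\ge(6a-3)^n\to\infty$ for \emph{every} $x$; by the convex-combination identity this already rules out a finite derivative at each non-terminating point, and the countably many triadic points are covered by applying the argument from the side on which the expansion is non-terminating. The borderline $a=2/3$ is delicate: now $|d_1|=1$, so $|S_n|$ stays bounded only when $\omega_i=1$ for all large $i$; but there $d_1=-1$, so $S_n$ alternates in sign while remaining bounded, and the one-sided quotients $S_nF_a(t_n)/t_n$ and $S_n(1-F_a(t_n))/(1-t_n)$ are forced to oscillate, again precluding differentiability.

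For part (c) with $a\in(1/3,a_0)$ the digits $0,2$ both give $|d|=3a>1$, so every $x$ with all digits in $\{0,2\}$ has $|S_n|\to\infty$, yielding an uncountable family of non-differentiability points; for $a\in(0,1/3)$ the digit $1$ gives $|d_1|=3-6a>1$, so points that are mostly $1$'s with a sparse, freely chosen set of $0$'s/$2$'s give uncountably many $x$ with $|S_n|\to\infty$. For part (b) with $a\in(a_0,2/3)$ we have $|d_1|=6a-3<1$ while $|d_0|=|d_2|=3a<2$; points that are mostly $1$'s give $|S_n|\to0$, and a sparse freely chosen set of other digits again produces uncountably many candidates. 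The \emph{main obstacle}, and the technical heart of the argument, is upgrading control of the cylinder slopes to genuine two-sided differentiability in case (b): one must bound $|F_a(x+h)-F_a(x)|/|h|$ for all small $h$, not merely at cylinder endpoints. I would do this via the self-affine oscillation estimate $\osc(F_a;\text{level-}n\text{ cylinder})=|S_n|\,3^{-n}\,\osc(F_a;[0,1])$, which, because the maximal factor $3a$ is $<2$, controls the increment across the $O(1)$ neighbouring cylinders meeting $[x,x+h]$ and yields $|F_a(x+h)-F_a(x)|/|h|\lesssim|S_n|\to0$. Verifying that the sparse $0$/$2$ digits do not spoil this bound, and checking the analogous estimates at the exceptional points, is where the careful work lies.
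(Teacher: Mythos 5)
The first thing to note is that the paper does not prove this statement at all: Theorem \ref{thm:Okamoto} is Okamoto's own theorem, quoted from \cite{Okamoto} (with the boundary case $a=a_0$ later settled by Kobayashi \cite{Kobayashi}), so there is no in-paper proof to compare against. That said, your argument is the standard one, and it is precisely the $k=0$ specialization of the machinery this paper builds for $M_{k,a}$: your cylinder slope $S_n(x)$ is the paper's secant slope $3^n\Delta_{0,a}(I_n(x))=(3a)^{n-l_n}(3(1-2a))^{l_n}$ (Lemma \ref{lem:ternary-increments} with $P_0\equiv 1$); your convex-combination/straddle reduction is the ``well-known fact'' quoted at the start of Section \ref{sec:differentiability}; your oscillation identity is the $k=0$ (and exact, by self-affinity) version of Lemma \ref{lem:oscillation-bounds}; and your normal-number criterion $27a^2|1-2a|\lessgtr 1$ is exactly the digit-frequency-$\tfrac13$ evaluation of the paper's criterion expressed through $\phi(a)$ (Corollary \ref{cor:derivativ-zero-rough}). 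The skeleton --- straddle lemma for necessity of $S_n\to F_a'(x)$, the strong law for the almost-everywhere statements, the comparison of the factors $3a$ and $|3-6a|$ to produce uncountable exceptional sets, and the sign-oscillation of $S_n$ to dispose of the borderline case $a=2/3$ --- is correct and complete in outline.

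Two concrete weak spots, both fixable. (i) Your stated reason for the key sufficiency step (bounding $|F_a(x+h)-F_a(x)|$ by the oscillations over the $O(1)$ level-$n$ cylinders meeting $[x,x+h]$), namely ``because the maximal factor $3a$ is $<2$'', is a non sequitur. What the step actually requires is that consecutive cylinders $I_{n,j}$ and $I_{n,j\pm 1}$ have numbers of $1$-digits differing by exactly one, so that their slopes (hence oscillations) differ by a factor of at most $K_a:=\max\{a/|1-2a|,\,|1-2a|/a\}$; this is finite precisely because $a\neq 1/2$, and the bound $3a<2$ plays no role (for $a=0.6$ the adjacent-cylinder ratio is $a/(2a-1)=3>2$, yet the argument goes through). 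This is exactly the device the paper uses in the proof of Proposition \ref{prop:large-a-derivative}: the constant $B_k=C_k a/|1-2a|$ in \eqref{ineq:DiffQuo}, justified by $l(j\pm 1)=l(j)\pm 1$. (ii) Part (c) includes $a=1/2$ (the Cantor function), where $K_a=\infty$ and your SLLN limit is $-\infty$; there the oscillation argument degenerates and a separate (trivial) argument is needed: almost every $x$ has a ternary digit $1$, so $F_{1/2}$ is constant on a cylinder whose interior contains $x$, giving $F_{1/2}'(x)=0$ a.e., while points with all digits in $\{0,2\}$ still satisfy $|S_n|=(3/2)^n\to\infty$ and remain non-differentiability points. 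With (i) justified correctly and (ii) added, your proof is complete.
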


Note the exclusion of $a=1/3$ in part (c). The reason is, that $F_{1/3}(x)=x$. Thus, $F_{1/3}$ is the only member of the family $\{F_a\}$ that is differentiable everywhere.
Kobayashi \cite{Kobayashi} later showed that the conslusion of case ({\em b}) also applies for $a=a_0$. Graphs of $F_a$ for several values of $a$ are shown in Figure \ref{fig:Okamoto-graphs}. 

\begin{figure}
\begin{center}
\epsfig{file=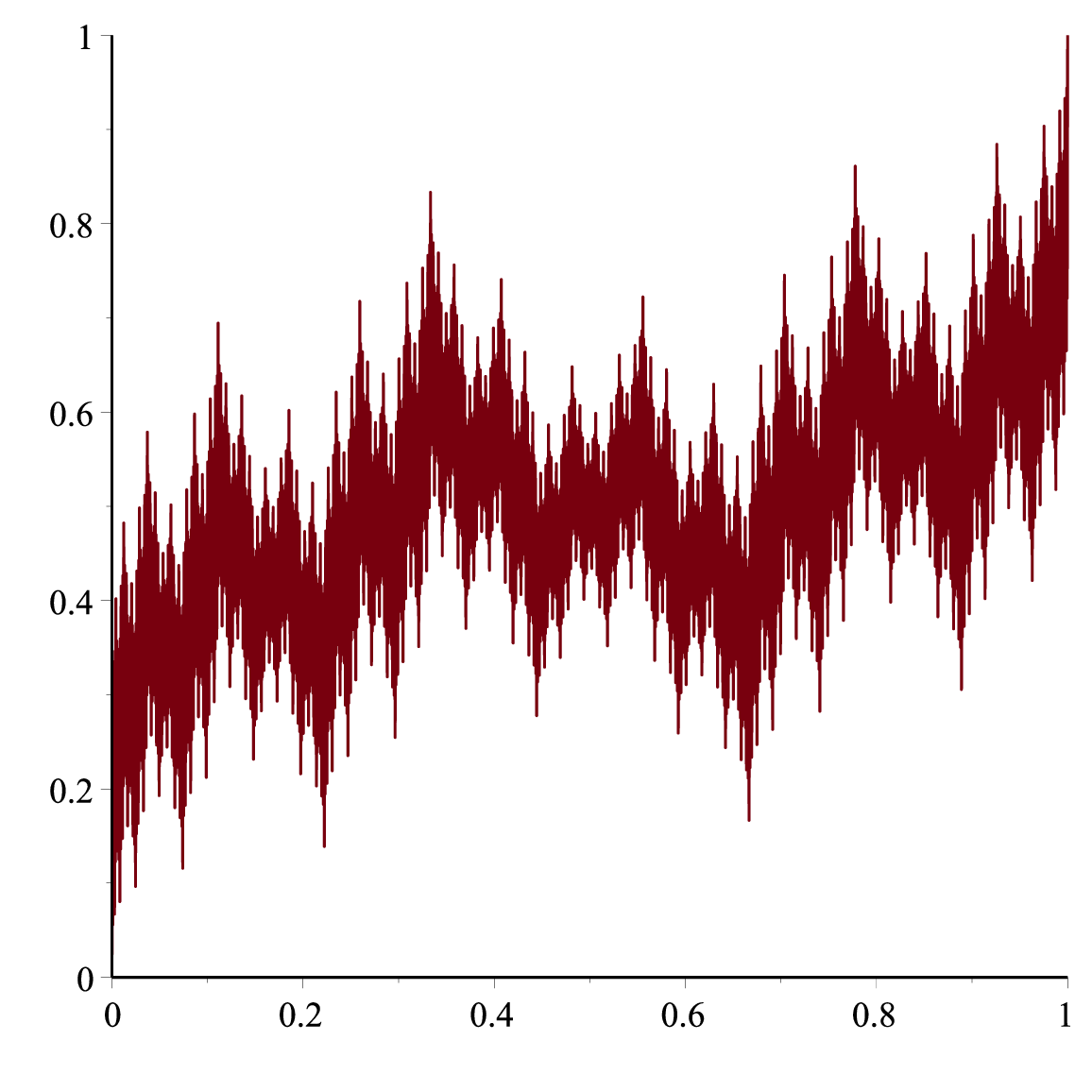, height=.3\textwidth, width=.3\textwidth} \qquad\quad
\epsfig{file=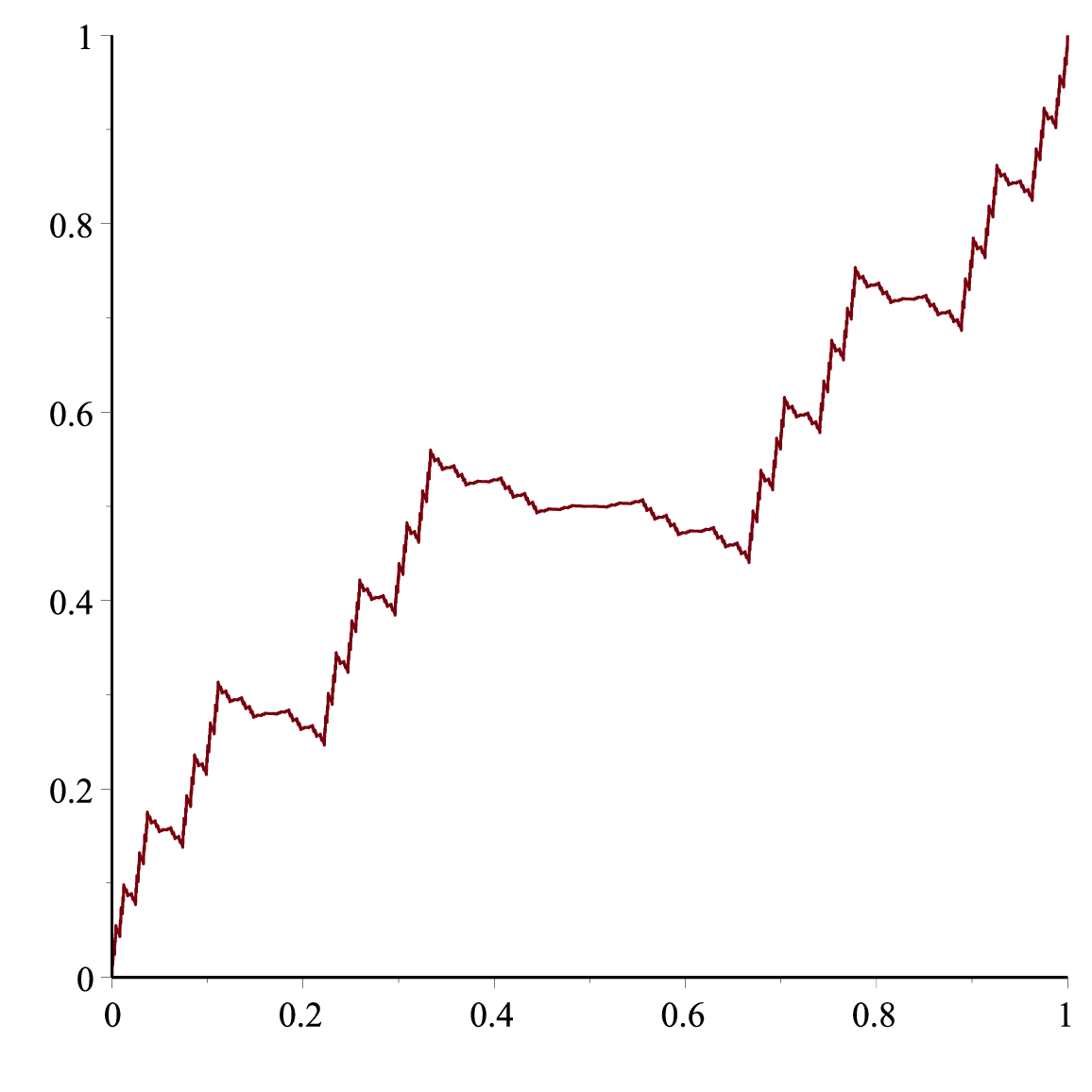, height=.3\textwidth, width=.3\textwidth}\\
\epsfig{file=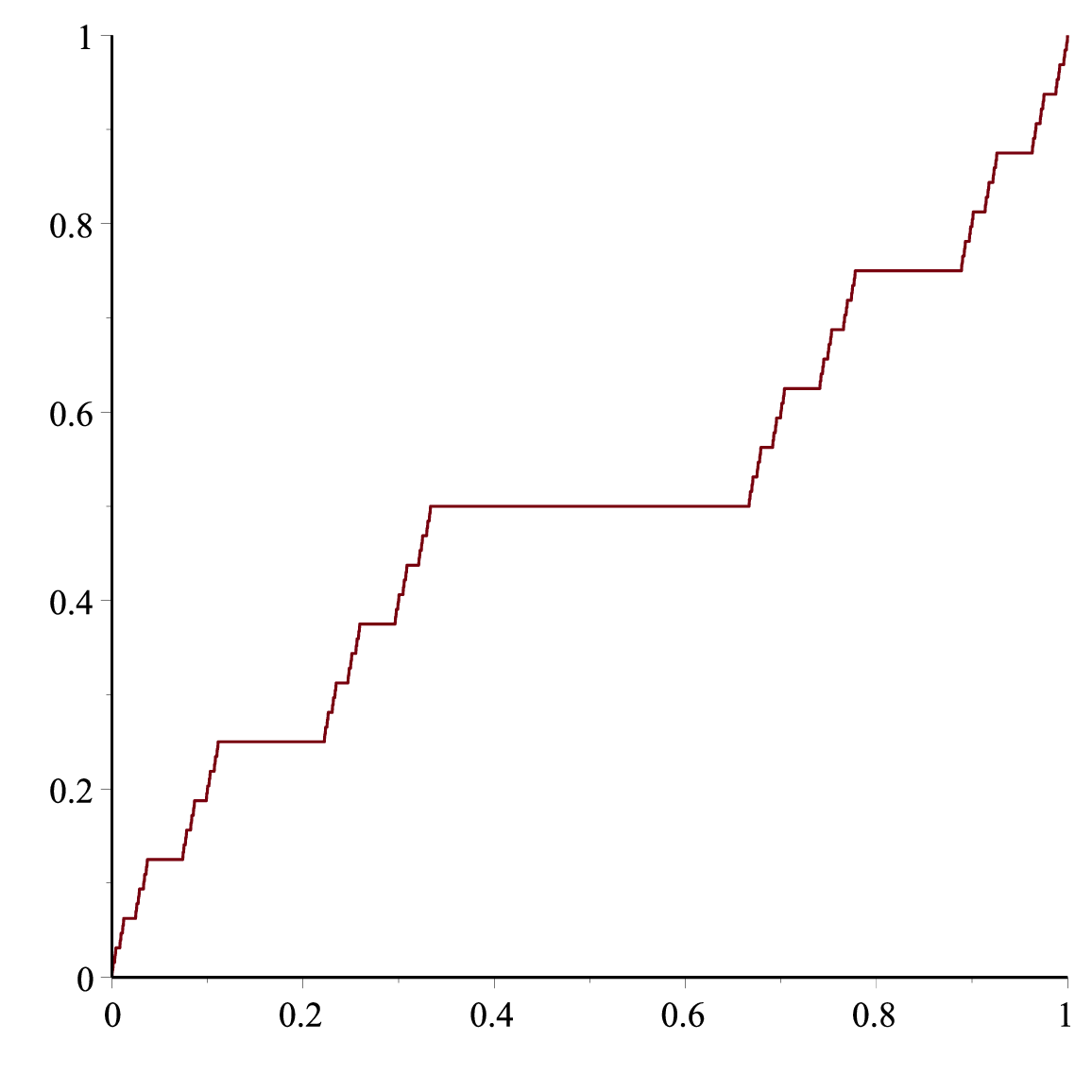, height=.3\textwidth, width=.3\textwidth} \qquad\quad
\epsfig{file=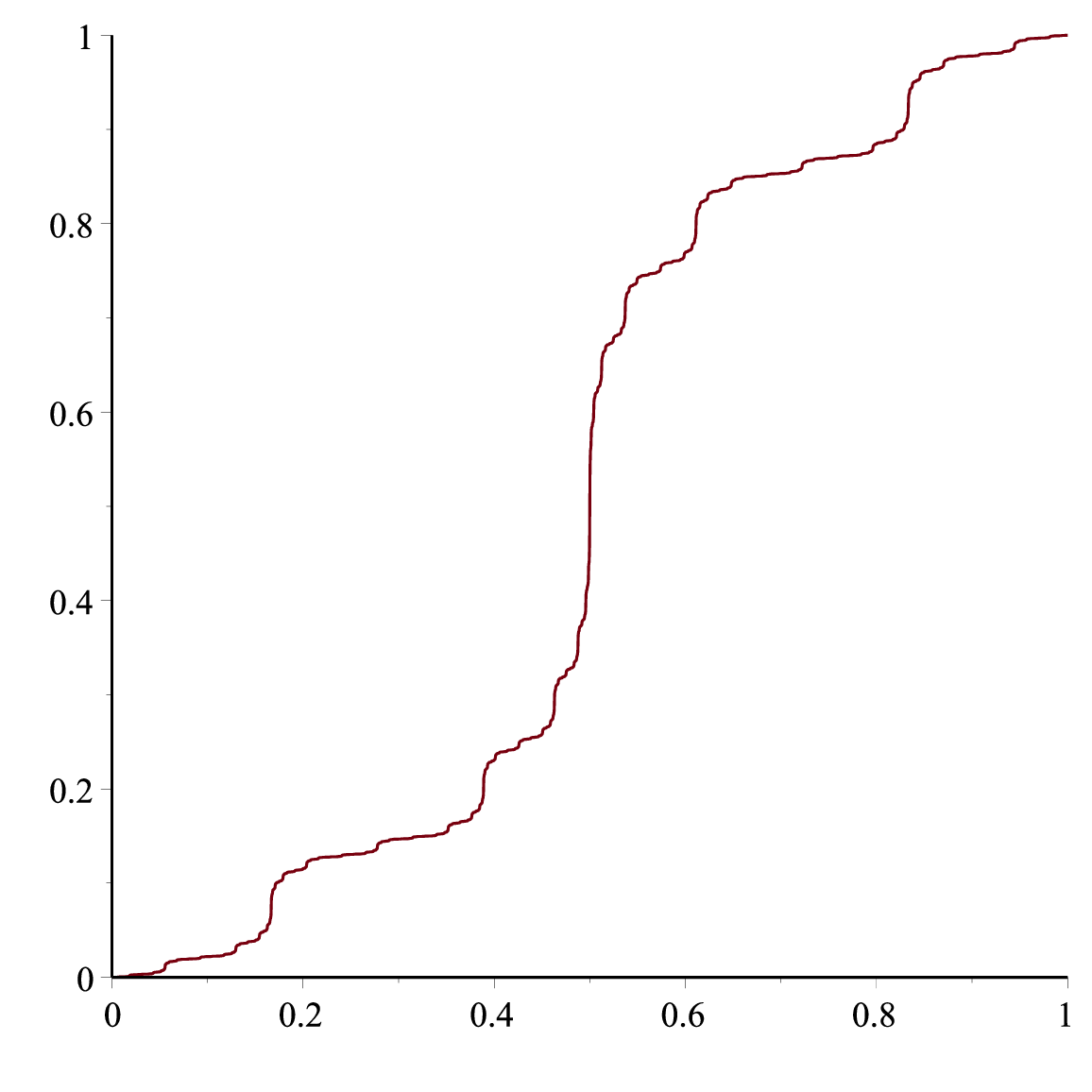, height=.3\textwidth, width=.3\textwidth}
\caption{Graph of $F_a$ for several values of $a$. Top left: $a=5/6$ (Perkins' function); top right: $a=a_0\approx .5592$; bottom left: $a=1/2$ (the Cantor function); bottom right: $a=1/5$ (a ``slippery devil's staircase").}
\label{fig:Okamoto-graphs}
\end{center}
\end{figure}

Various authors have subsequently studied Okamoto's function. For instance, Allaart \cite{Allaart} computed the Hausdorff dimension of the sets of exceptional points of measure zero in ({\em b}) and ({\em c}) of the above theorem, and described the set of points where $F_a$ has an infinite derivative. McCollum \cite{McCollum} used the self-affine structure of $F_a$ to show that its graph has box-counting dimension $1+\log_3(4a-1)$ for $a\geq 1/2$, and claimed (with an incorrect proof) that this is also the Hausdorff dimension. Only very recently have B\'ar\'any and Prokaj \cite{Barany-Prokaj} shown that for Lebesgue-almost all $a\in(1/2,1)$ the Hausdorff dimension of the graph of $F_a$ is indeed equal to $1+\log_3(4a-1)$. (For $a\leq 1/2$ the function $F_a$ is increasing, and so the dimension of its graph is trivially $1$.) Seuret \cite{Seuret} was the first to determine the multifractal structure of $F_a$.

In this article, we are interested in the partial derivatives of $F_a$ with respect to the parameter $a$. That is, we define for all $k\in\NN_0:=\NN\cup\{0\}$ and $0<a<1$ the function
\[
M_{k,a}(x):=\frac{\partial^k}{\partial a^k}F_a(x), \qquad k\in\NN, \quad x\in[0,1].
\]
(In particular, $M_{0,a}=F_a$.) These functions are well defined, because $F_a(x)$ is real analytic in $a$ for every fixed $x$, as shown in \cite{Kobayashi}. Figure \ref{fig:M-graphs} shows some of their graphs.

We observe that the function $M_{1,1/3}$ (first graph in second row of Figure \ref{fig:M-graphs}) was investigated by Dalaklis et al.~\cite{Dalaklis}, who showed that it is continuous but nowhere differentiable and characterized the set of points where it has an infinite derivative. (By ``nowhere differentiable" in this article, we shall mean ``not possessing a finite derivative at any point".) The nowhere-differentiability of $M_{1,1/3}$ is perhaps surprising because $F_{1/3}$ is simply the identity function on $[0,1]$, and is therefore differentiable everywhere. 

Motivated by the article \cite{Dalaklis} and the interesting structures of the graphs in Figure \ref{fig:M-graphs}, we study here the partial derivative $\partial{F_a}(x)/{\partial a}$ for other values of $a$, as well as the higher order partial derivatives. These functions are no longer strictly self-affine, but are still approximately self-affine in some sense. We will compute the box-counting dimension of the graph of $M_{k,a}$, characterize its differentiability, and investigate in detail the set of points where $M_{k,a}$ has an infinite derivative. While some of our results are similar to the known facts about Okamoto's function, there are also some notable differences and surprising new phenomena that arise when considering the higher order partial derivatives of $F_a$. We hope that this article, while highlighting a specific family of functions, will serve as a blueprint for studying functions that are only approximately self-affine.

While many papers (e.g. \cite{Darst, Falconer-2004, JKPS, Troscheit}) have studied points of non-differentiability or infinite derivatives of monotone functions, there is considerably less literature on analogous questions for non-monotone functions. Nonetheless, as was shown in \cite{Allaart,Allaart-2017a,Allaart-Kawamura}, especially the study of infinite derivatives for non-monotone self-affine functions can lead to surprising results and reveal intricate structures that are not encountered in the monotone case. The functions considered in this paper exhibit even finer and more complex behavior than those in \cite{Allaart,Allaart-2017a}.

\begin{figure}
\begin{center}
\epsfig{file=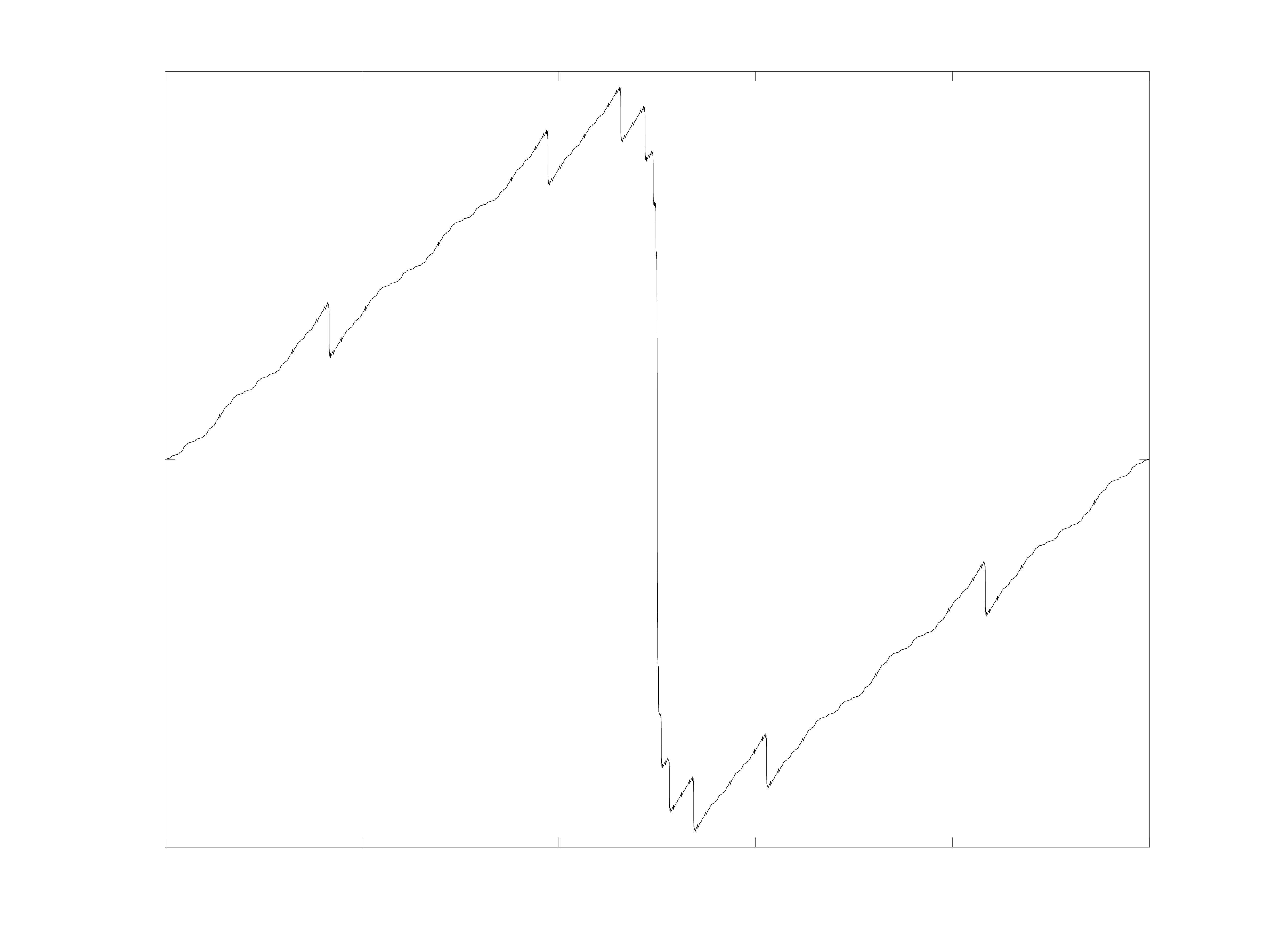, height=.3\textwidth, width=.3\textwidth} \quad
\epsfig{file=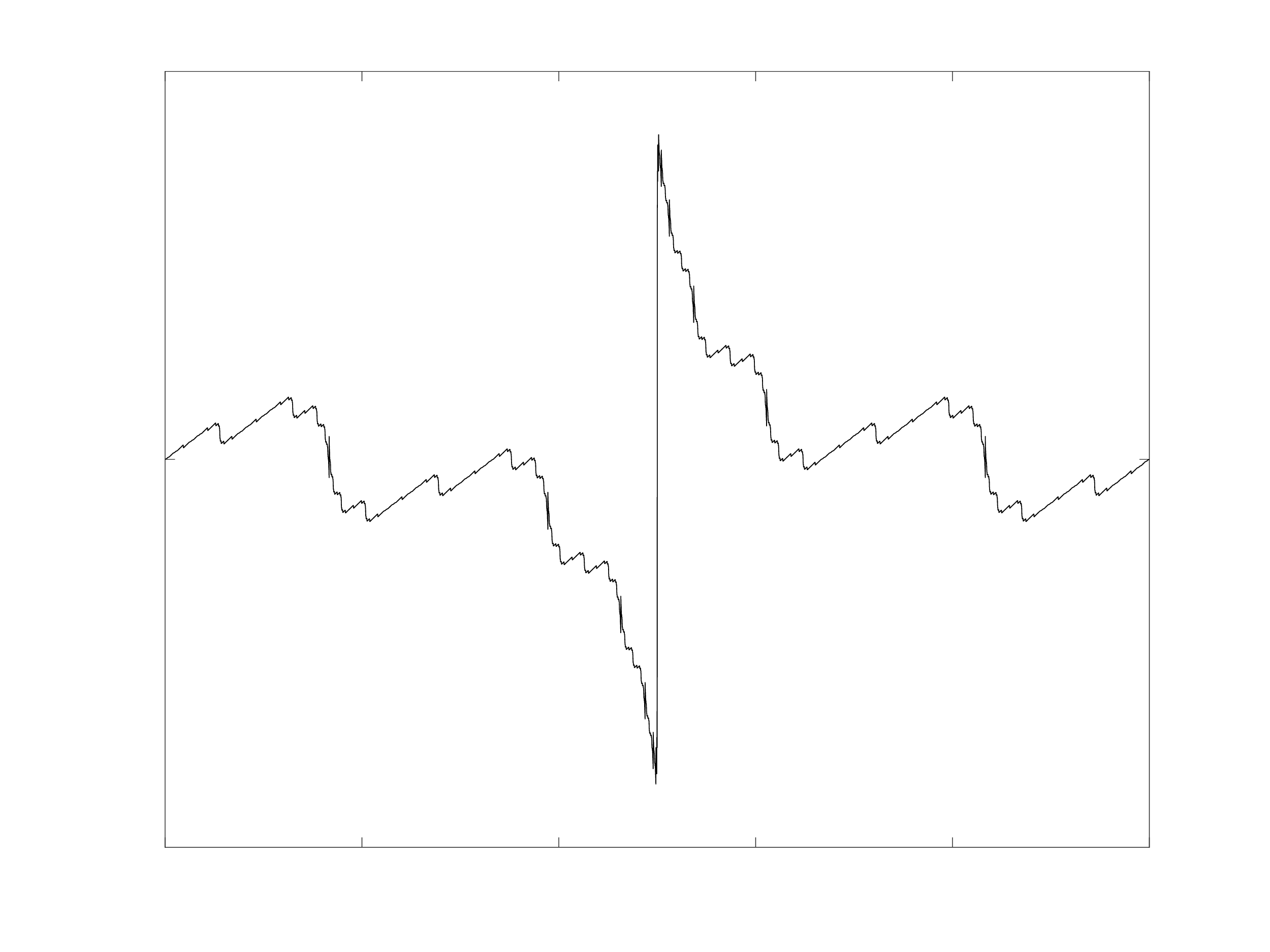, height=.3\textwidth, width=.3\textwidth} \quad
\epsfig{file=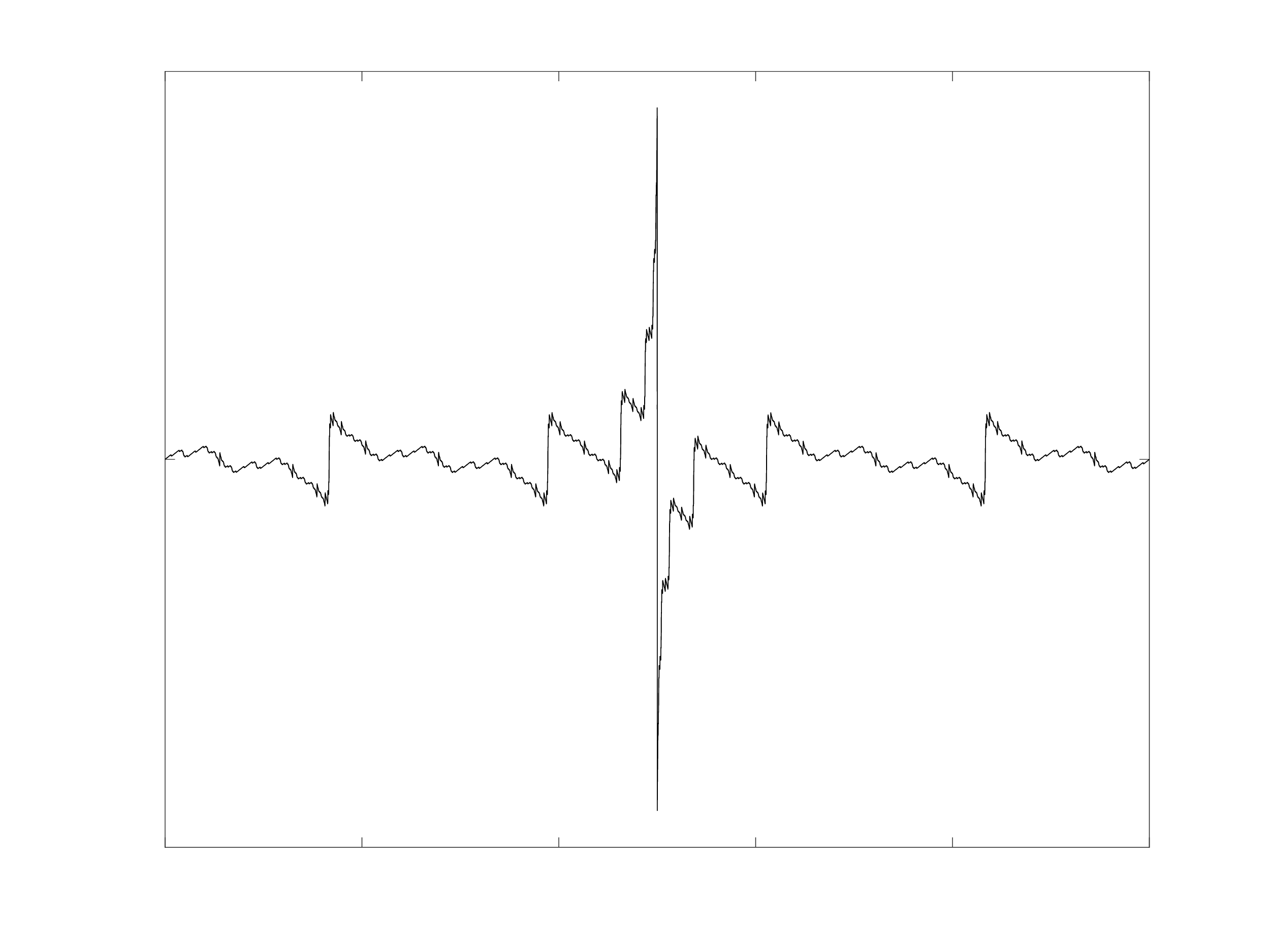, height=.3\textwidth, width=.3\textwidth}
\\
\epsfig{file=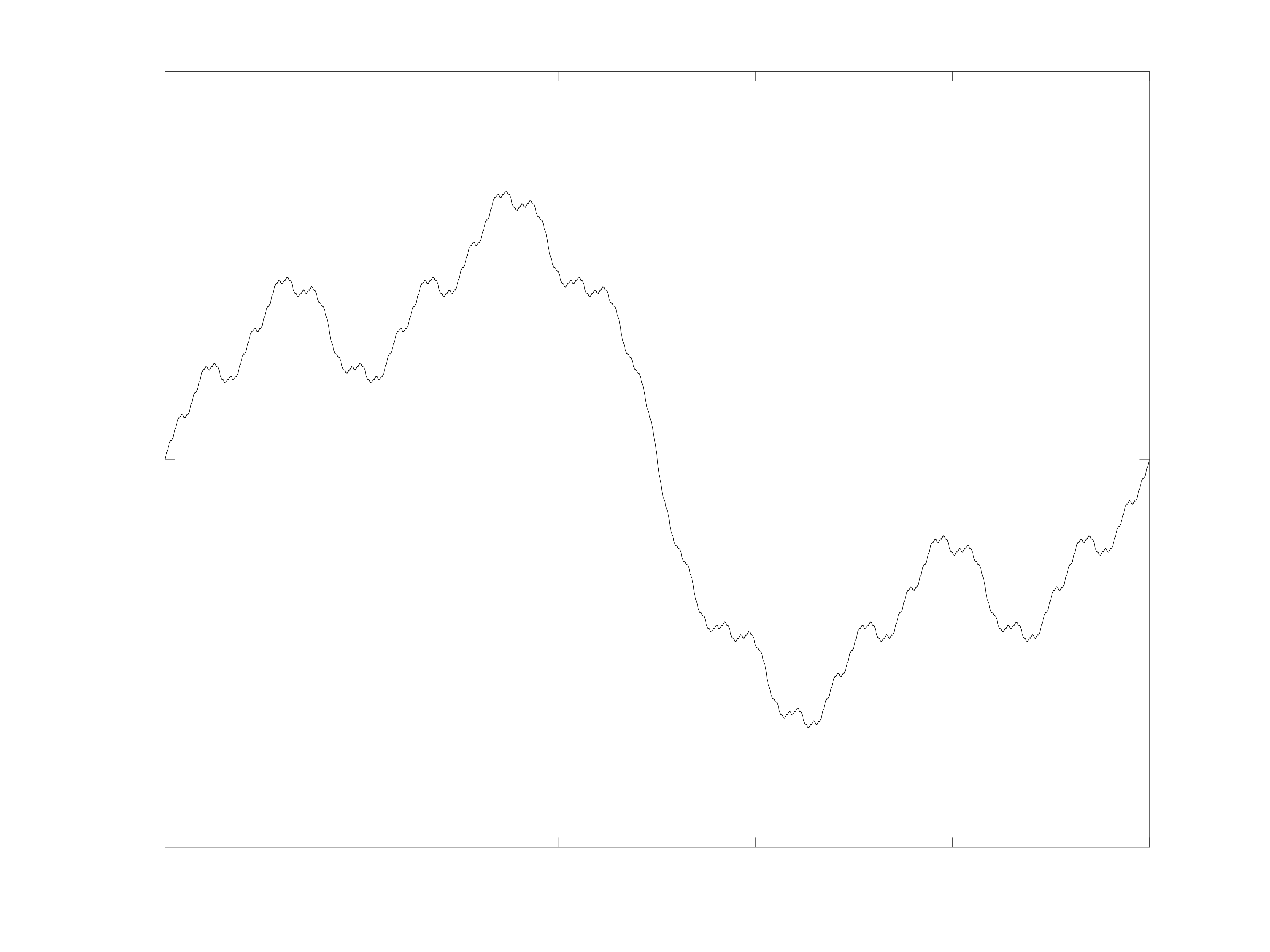, height=.3\textwidth, width=.3\textwidth} \quad
\epsfig{file=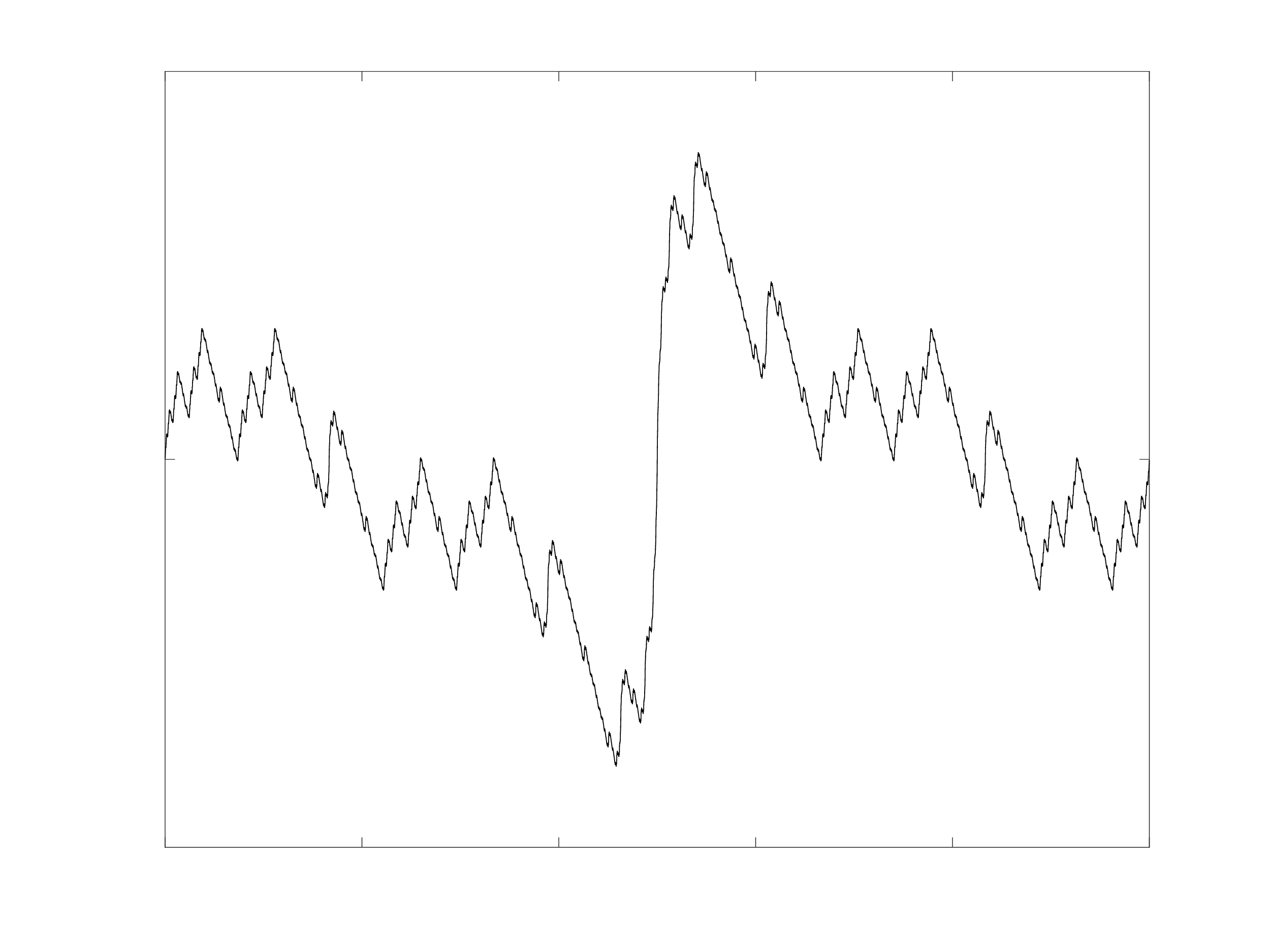, height=.3\textwidth, width=.3\textwidth} \quad
\epsfig{file=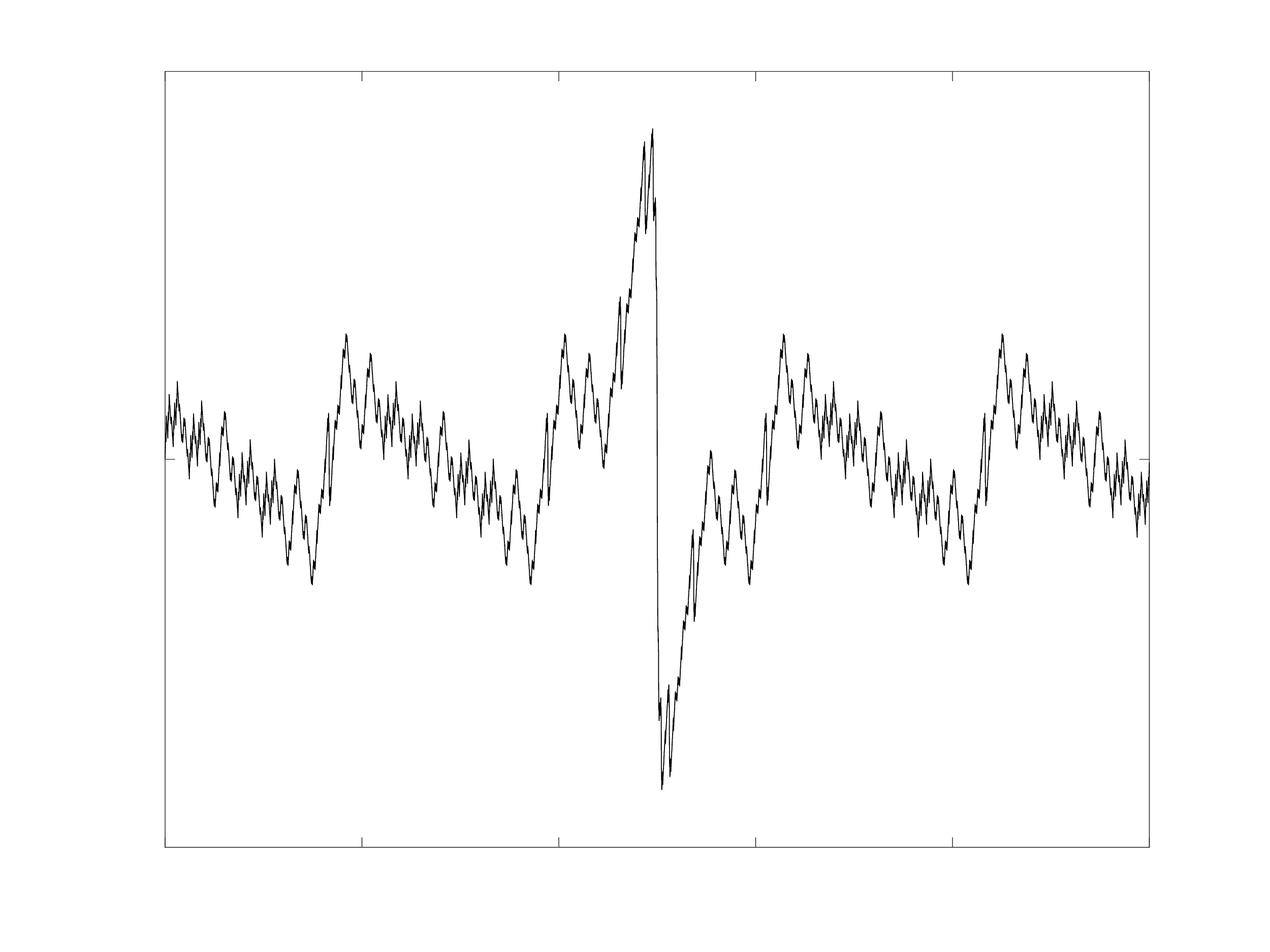, height=.3\textwidth, width=.3\textwidth}
\\
\epsfig{file=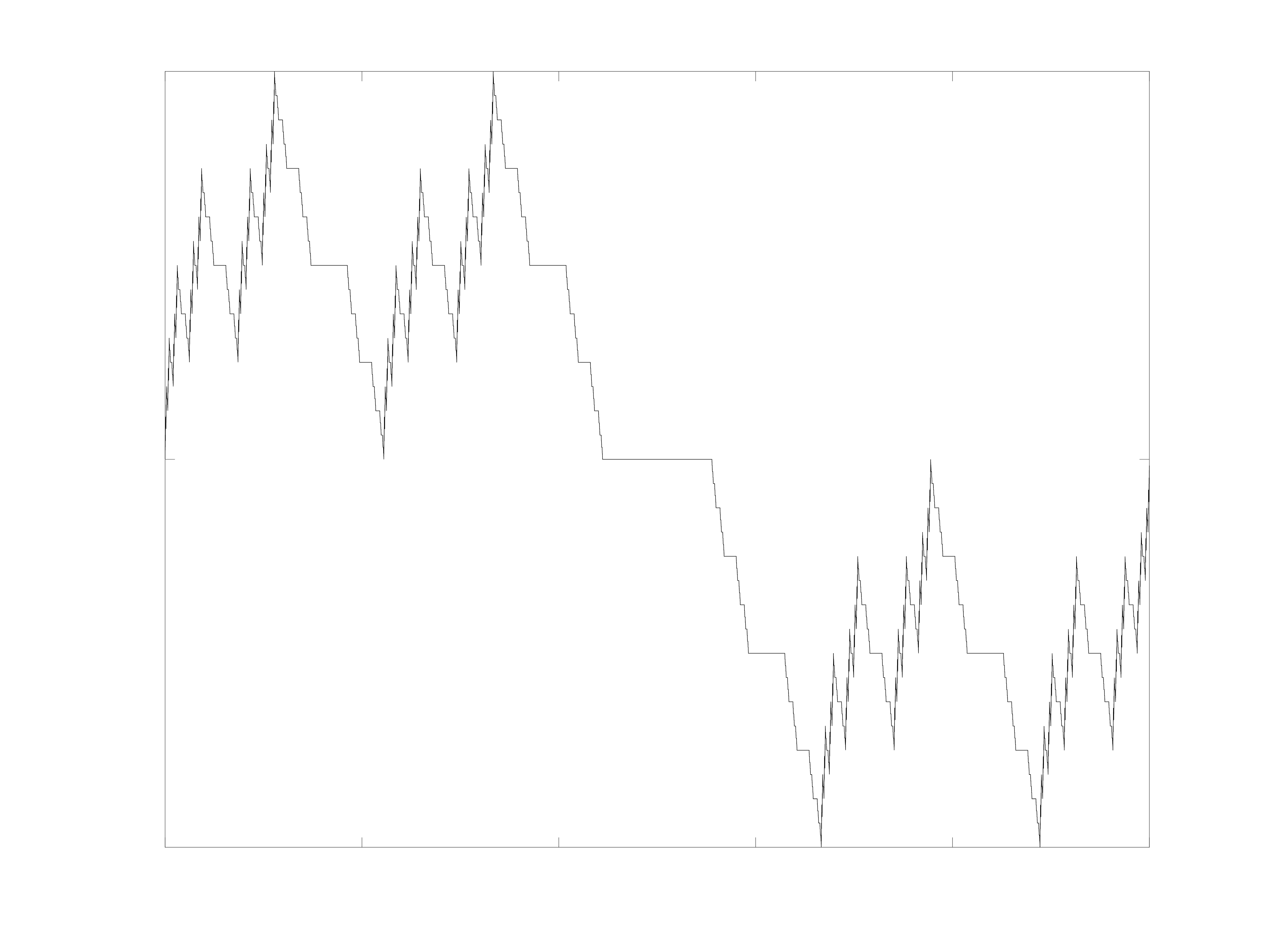, height=.3\textwidth, width=.3\textwidth} \quad
\epsfig{file=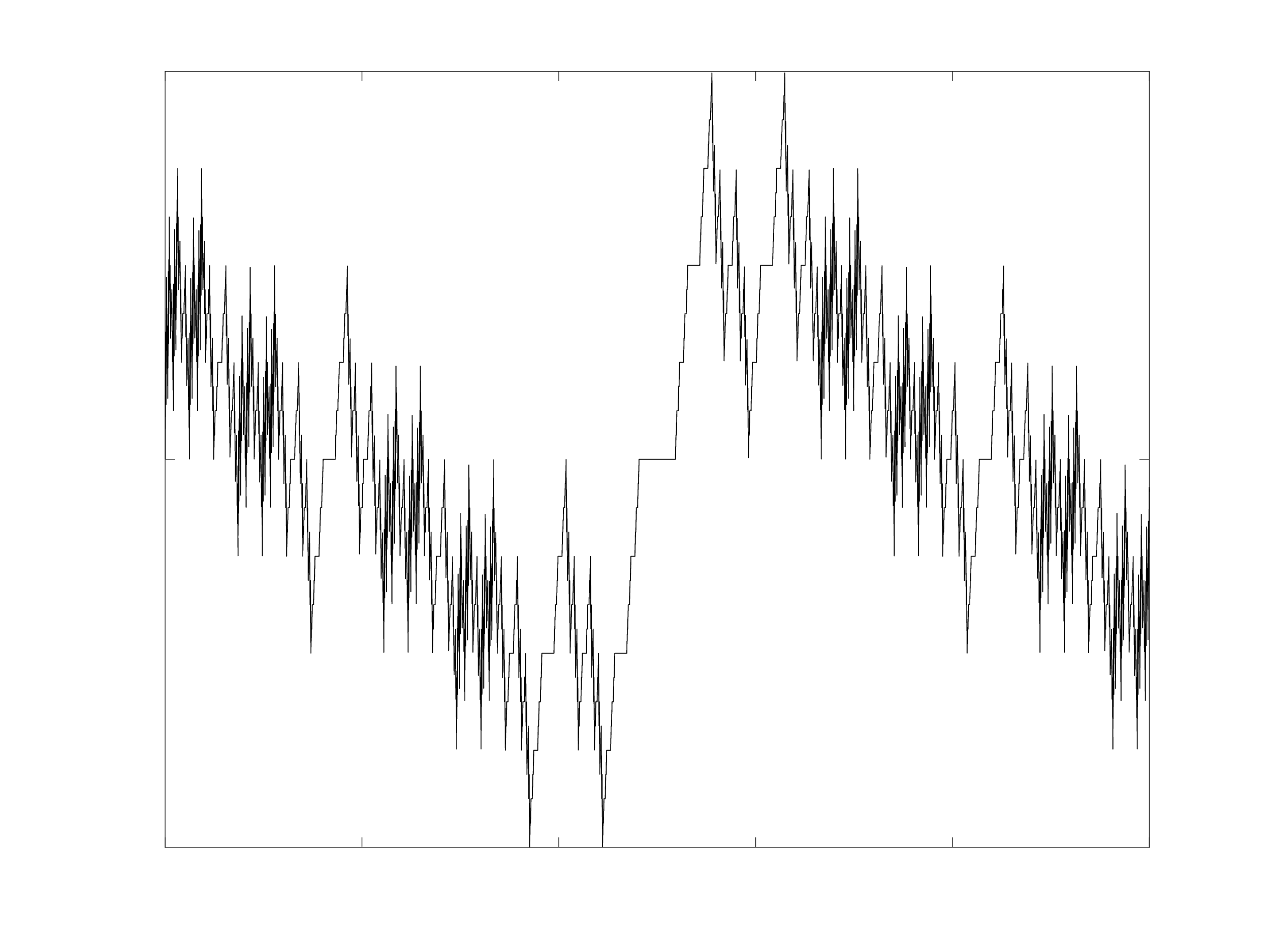, height=.3\textwidth, width=.3\textwidth} \quad
\epsfig{file=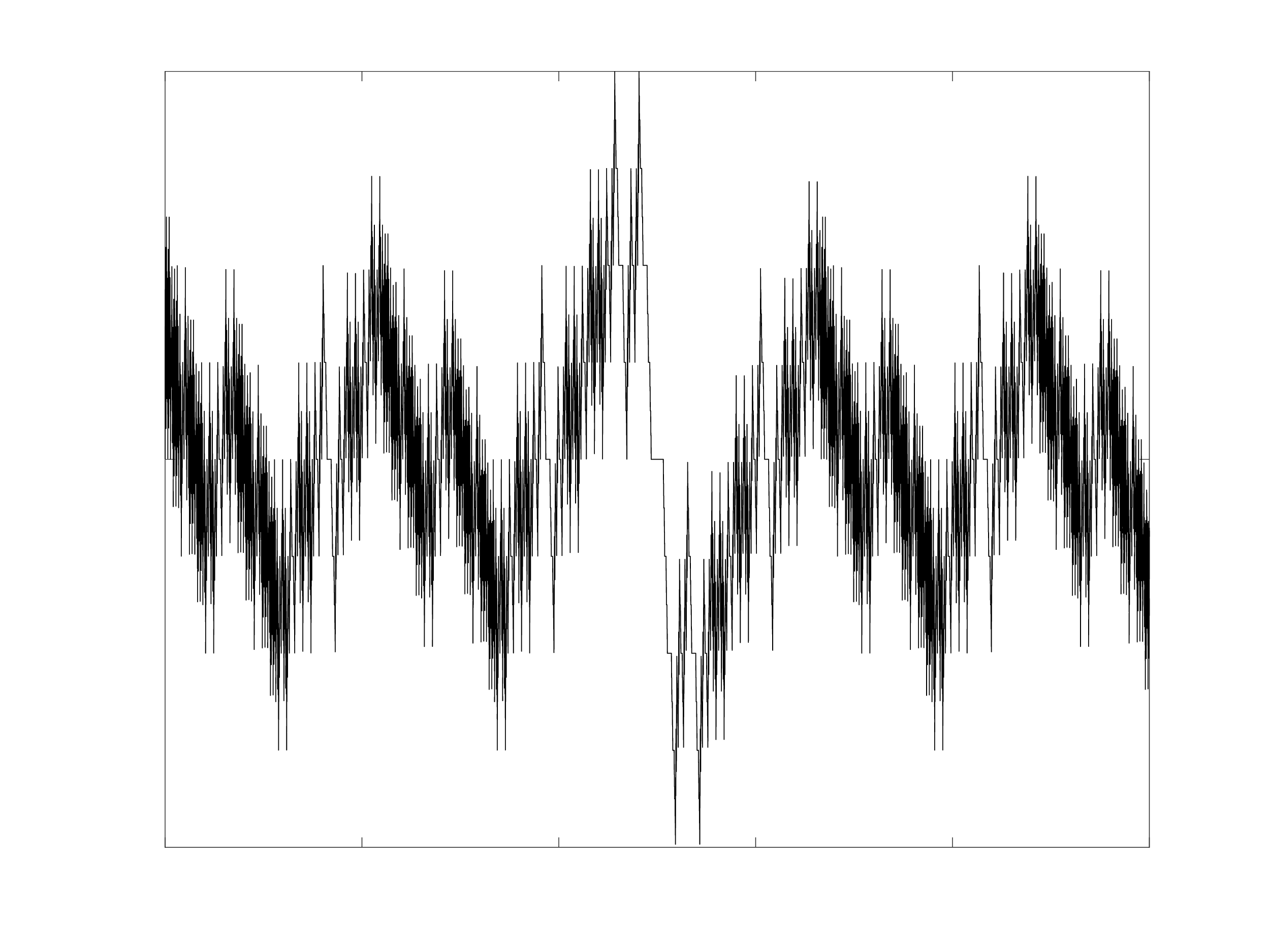, height=.3\textwidth, width=.3\textwidth}
\\
\epsfig{file=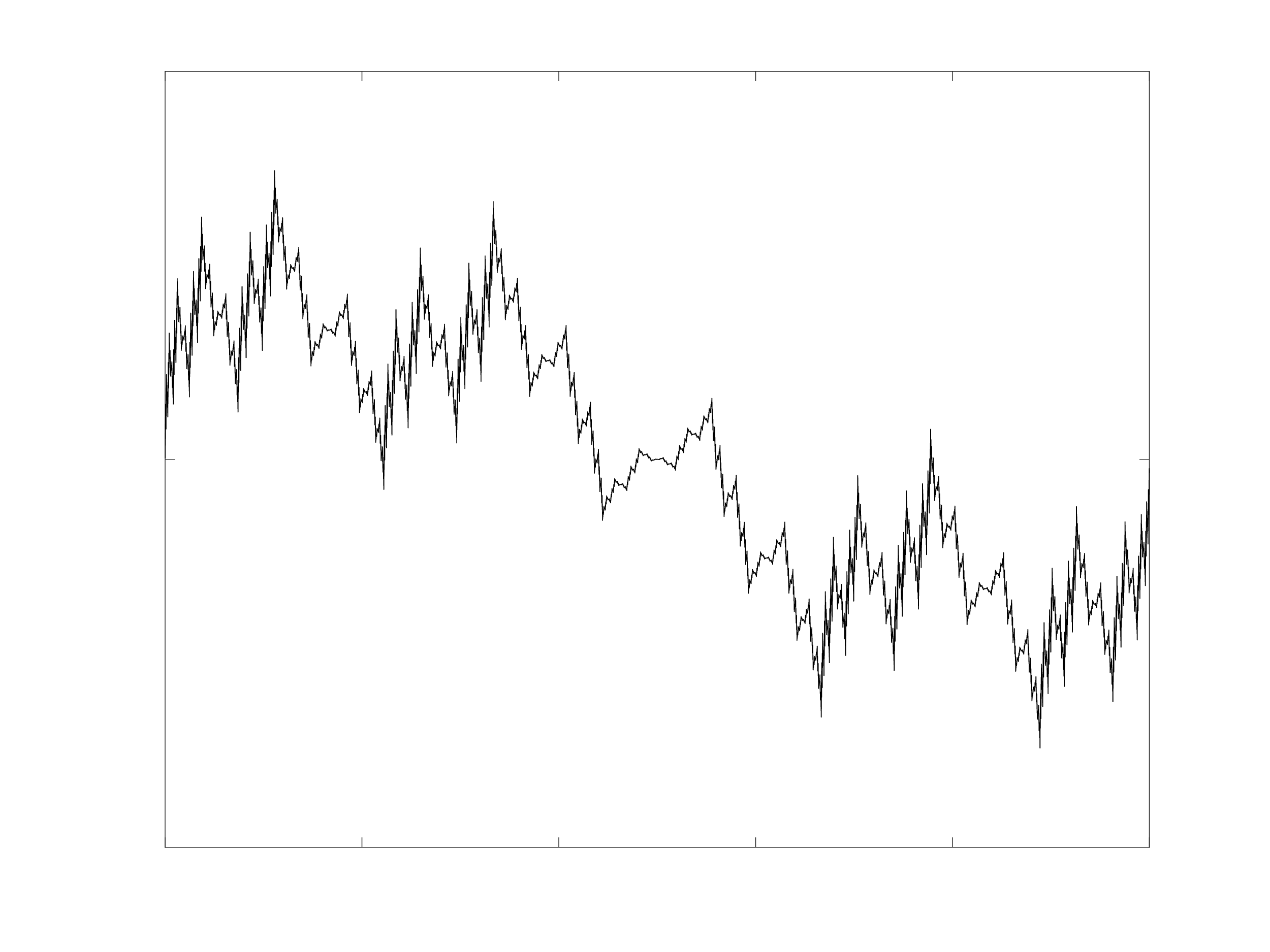, height=.3\textwidth, width=.3\textwidth} \quad
\epsfig{file=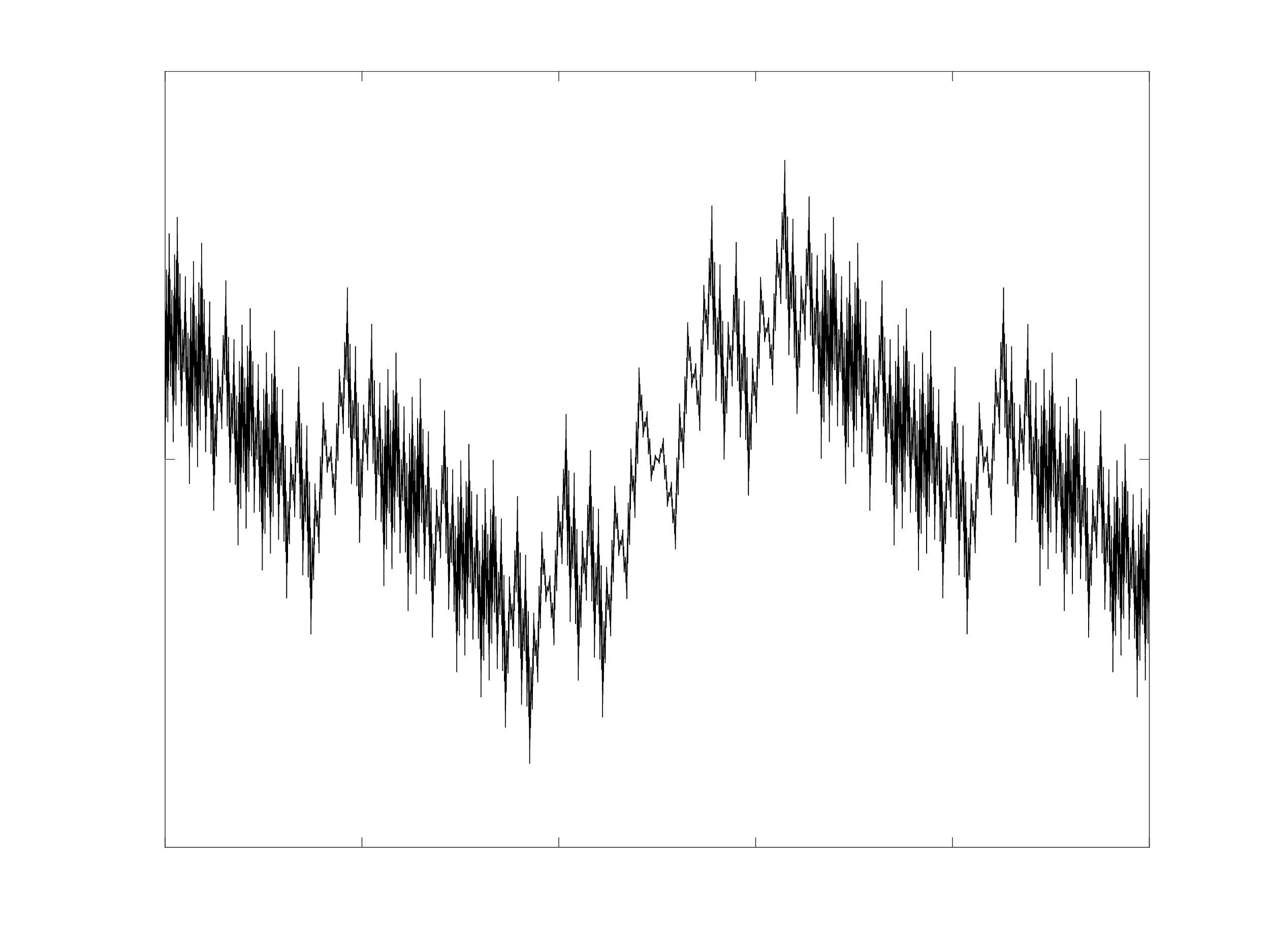, height=.3\textwidth, width=.3\textwidth} \quad
\epsfig{file=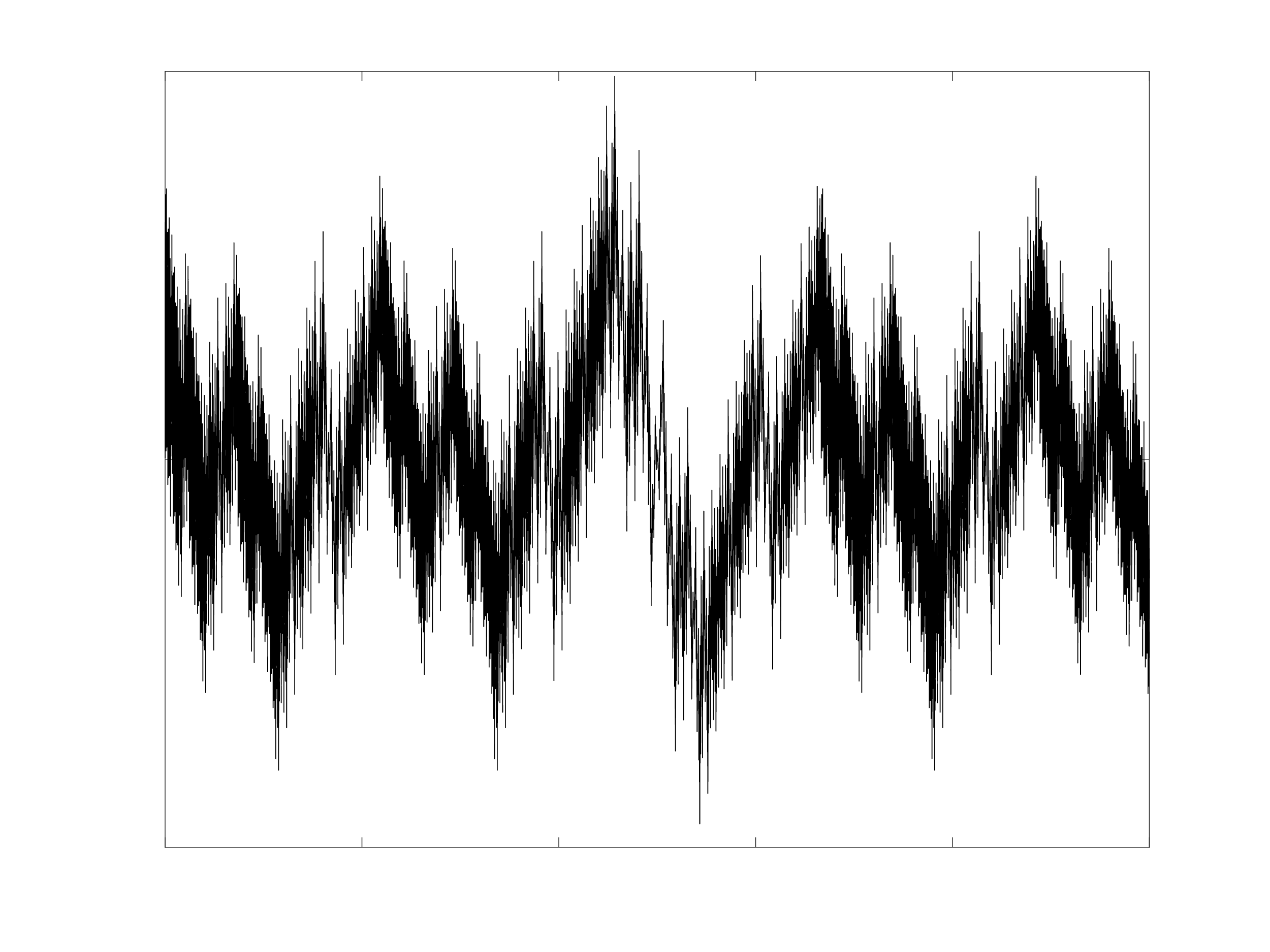, height=.3\textwidth, width=.3\textwidth}
\caption{Graphs of $M_{k,a}$ for various $a$ and $k$. Top row: $a=1/6$; second row: $a=1/3$; third row: $a=1/2$; bottom row: $a=a_0\approx .5592$. In each row, from left to right, $k=1$, $k=2$ and $k=3$. All graphs are symmetric with respect to the point $(1/2,0)$ (center of each box).}
\label{fig:M-graphs}
\end{center}
\end{figure}

\section{Statement of results}

We first address the continuity of $M_{k,a}$. From the functional equation \eqref{eq:Okamoto-FE} it follows easily that $F_a$ is H\"older continuous with exponent 
\[
\gamma:=-\log_3 \max\{a,|1-2a|\}.
\]
%\[
%b:=\max\{a,|1-2a|\}=\begin{cases}
%a & \mbox{if $a\geq 1/3$},\\
%1-2a & \mbox{if $a<1/3$}.
%\end{cases}
%\]
This means that there is a constant $C$ such that $|F_a(x)-F_a(y)|\leq C|x-y|^\gamma$ for all $x,y\in[0,1]$.

\begin{proposition} \label{prop:Holder-continuity}
For each $k$, $M_{k,a}$ is H\"older continuous with exponent $\alpha$ for any $0<\alpha<\gamma$. More precisely, there is a constant $C_k$ such that
\[
|M_{k,a}(x)-M_{k,a}(y)|\leq C_k|x-y|^\gamma \left(\log_3\frac{1}{|x-y|}\right)^k \qquad \forall x,y\in[0,1].
\]
\end{proposition}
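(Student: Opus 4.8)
The plan is to differentiate the functional equation \eqref{eq:Okamoto-FE} $k$ times in $a$ and exploit the resulting ``approximately self-affine'' recursion, arguing by induction on $k$. Since $a\mapsto F_a(x)$ is real-analytic (see \cite{Kobayashi}), differentiation under the parameter is justified and each $M_{k,a}$ is a well-defined bounded function on $[0,1]$; write $\rho:=\max\{a,|1-2a|\}=3^{-\gamma}$. Applying $\partial^k/\partial a^k$ to \eqref{eq:Okamoto-FE} and using the Leibniz rule — noting that $a$ and $1-2a$ are affine in $a$, so only the first two Leibniz terms survive — gives, for $k\geq 1$,
\[
M_{k,a}(x)=a\,M_{k,a}(3x)+k\,M_{k-1,a}(3x),\qquad x\in[0,1/3],
\]
\[
M_{k,a}(x)=(1-2a)M_{k,a}(3x-1)-2k\,M_{k-1,a}(3x-1),\qquad x\in(1/3,2/3],
\]
\[
M_{k,a}(x)=a\,M_{k,a}(3x-2)+k\,M_{k-1,a}(3x-2),\qquad x\in(2/3,1],
\]
where for $k=1$ an additive constant ($+1$, $-1$ respectively) appears in the last two lines; such constants are irrelevant below. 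Each branch expresses $M_{k,a}$ as a contraction (factor of modulus $\le\rho$) of a rescaled copy of itself, plus a correction term controlled by $M_{k-1,a}$ with coefficient at most $2k$. The base case $k=0$ is precisely the known Hölder continuity of $F_a$ (with the vacuous logarithmic factor).

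For the inductive step, the key quantity is the maximal oscillation $\omega_n^{(k)}:=\max_I \osc(M_{k,a},I)$ over all triadic intervals $I=[j3^{-n},(j+1)3^{-n}]$ of level $n$. Since each level-$(n+1)$ interval is carried by the relevant branch map $x\mapsto 3x-d$ onto a level-$n$ interval, the three displayed equations give, after taking the supremum over a level-$(n+1)$ interval and then the maximum over all of them,
\[
\omega_{n+1}^{(k)}\le \rho\,\omega_n^{(k)}+2k\,\omega_n^{(k-1)},
\]
the additive constants dropping out because they do not affect oscillation. I would then solve this recursion. Writing $v_n:=\omega_n^{(k)}\rho^{-n}$ and dividing through by $\rho^{n+1}$ turns it into $v_{n+1}\le v_n+2k\rho^{-1}\omega_n^{(k-1)}\rho^{-n}$; inserting the inductive hypothesis $\omega_n^{(k-1)}\le C_{k-1}\rho^n n^{k-1}$ gives $v_{n+1}\le v_n + C n^{k-1}$, and summing the telescoping inequality together with $\sum_{m=1}^{n-1}m^{k-1}\le n^k/k$ yields $v_n\le C_k n^k$, i.e.
\[
\omega_n^{(k)}\le C_k\,\rho^n n^k=C_k\,3^{-n\gamma}n^k,\qquad n\ge 1.
\]
The boundedness of $M_{k,a}$, hence finiteness of $\omega_0^{(k)}$, is what starts the recursion.

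Finally I would convert the oscillation bound into a modulus-of-continuity estimate by the standard two-interval argument. Given $x\neq y$, choose $n\ge 1$ with $3^{-(n+1)}\le|x-y|<3^{-n}$; then $x$ and $y$ lie in at most two adjacent level-$n$ triadic intervals, so
\[
|M_{k,a}(x)-M_{k,a}(y)|\le 2\omega_n^{(k)}\le 2C_k\,3^{-n\gamma}n^k.
\]
Since $3^{-n}\le 3|x-y|$ we have $3^{-n\gamma}\le 3^\gamma|x-y|^\gamma$, and since $|x-y|<3^{-n}$ we have $n<\log_3(1/|x-y|)$; substituting both yields the asserted inequality (with a new constant $C_k$) for all $|x-y|<1/3$, the range that carries the modulus-of-continuity information. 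The qualitative statement that $M_{k,a}$ is Hölder continuous of every exponent $\alpha<\gamma$ on all of $[0,1]$ then follows, since $t^{\gamma-\alpha}(\log_3(1/t))^k\to0$ as $t\to0$ makes the precise bound dominate $C\,|x-y|^\alpha$ for small $|x-y|$, while for $|x-y|$ bounded below the Hölder-$\alpha$ inequality is immediate from the boundedness of $M_{k,a}$.

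The main obstacle is the inductive bookkeeping that produces exactly one extra power of $\log_3(1/|x-y|)$ at each order: the correction term $k\,M_{k-1,a}$ feeds the previous oscillation bound $\rho^n n^{k-1}$ into the summation step, and one must verify that the accumulation gives the sharp exponent $n^k$ rather than something larger. A secondary technical point is establishing at the outset that each $M_{k,a}$ is bounded, so that the oscillations $\omega_n^{(k)}$ are finite and the recursion is meaningful; I would take this from the real-analyticity of $a\mapsto F_a(x)$ in \cite{Kobayashi}. No single step is deep — the care lies in setting up the oscillation recursion correctly and tracking the logarithmic factors.
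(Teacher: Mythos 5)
Your argument is correct in its main structure, but it takes a genuinely different route from the paper's. The paper never touches the functional equation: it differentiates Kobayashi's closed-form series \eqref{eq:Okamoto-series} term by term with the generalized Leibniz rule, bounds the $n$-th term by $Cb^nn^k$ (with $b=3^{-\gamma}$), and sums tails from the first ternary digit where $x$ and $y$ disagree. That route has two costs your approach avoids. First, the paper's per-term estimate contains the factor $\bigl(\frac{n-l_n}{a}+\frac{2l_n}{|1-2a|}+1\bigr)^k$, which forces a separate computation at $a=1/2$; your oscillation recursion $\omega_{n+1}^{(k)}\le\rho\,\omega_n^{(k)}+2k\,\omega_n^{(k-1)}$ never divides by $|1-2a|$, so $a=1/2$ needs no special treatment. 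Second, because the paper's bounds are phrased in terms of common ternary prefixes rather than oscillations over intervals, it must invoke the symmetry property (Lemma \ref{lem:symmetry}) to handle $x,y$ in adjacent ternary intervals, whereas your two-adjacent-interval step goes through directly with oscillations. Your solution of the recursion (telescoping $v_n:=\omega_n^{(k)}\rho^{-n}$ against $\sum m^{k-1}\le n^k/k$) is sound, and your restriction of the sharp inequality to $|x-y|<1/3$, with the complementary range absorbed into the H\"older-$\alpha$ statement, is in fact the right call: the displayed inequality cannot hold all the way up to $|x-y|=1$, since for $a=1/2$, $x=3^{-n}$, $y=1$ one has $|M_{k,1/2}(x)-M_{k,1/2}(y)|\asymp n^k2^{-n}$ by Lemma \ref{lem:difference-formulas}, while the right-hand side is $O(3^{-nk})$.

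The one step you cannot leave to the bare citation is the seed of the recursion: the finiteness of $\omega_0^{(k)}$, i.e.\ the uniform boundedness of $M_{k,a}$ on $[0,1]$. Real-analyticity of $a\mapsto F_a(x)$ for each \emph{fixed} $x$ gives only the pointwise existence of $M_{k,a}(x)$; pointwise existence of derivatives does not yield a bound uniform in $x$, and without $\omega_0^{(k)}<\infty$ the recursion says nothing. The gap is easily repaired, but it must be said: Kobayashi's series converges uniformly in $x$ and locally uniformly in complex $a$ on $\{\max\{|a|,|1-2a|\}<1\}$, so $a\mapsto F_a(x)$ is holomorphic with bounds independent of $x$, and Cauchy's estimates give $\sup_x|M_{k,a}(x)|<\infty$; alternatively, realize $F_a$ as the fixed point of the uniform contraction $T_a$ on $C[0,1]$, which depends holomorphically on the parameter, so that $\partial_a^kF_a$ is a $C[0,1]$-valued (hence bounded) function whose pointwise values are $M_{k,a}(x)$. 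With that supplied, your proof is complete; a further minor point worth a sentence is that the three branch formulas agree at the shared endpoints $1/3$ and $2/3$ (both sides reduce to expressions in $F_a(0)=0$, $F_a(1)=1$, which are constant in $a$), so the recursion really does apply to closed triadic intervals as your oscillation bookkeeping requires.
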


Our next result gives the box-counting dimension of the graph of $M_{k,a}$. We denote the box-counting dimension of a set $E$ by $\dim_B E$.

\begin{theorem} \label{thm:box-dimension}
For each $k\in\NN$ and $a\in(0,1)$, we have
\[
\dim_B \graph(M_{k,a})=\dim_B \graph(F_a)=\begin{cases}
1+\log_3(4a-1) & \mbox{if $a\geq 1/2$},\\
1 & \mbox{if $a\leq 1/2$}.
\end{cases}
\]
\end{theorem}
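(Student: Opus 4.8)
The plan is to reduce the computation to two-sided estimates on the oscillation sums
\[
S_n^{(k)}:=\sum_{i=1}^{3^n}\osc\!\left(M_{k,a},I_i^n\right), \qquad \osc(f,I):=\sup_I f-\inf_I f,
\]
where $I_1^n,\dots,I_{3^n}^n$ are the triadic intervals of generation $n$. The standard box-counting estimate for graphs of continuous functions says that the number of $3^{-n}$-mesh squares meeting $\graph(M_{k,a})$ is comparable to $3^n\bigl(1+S_n^{(k)}\bigr)$, so that
\[
\dim_B\graph(M_{k,a})=1+\lim_{n\to\f}\frac{\log\bigl(1+S_n^{(k)}\bigr)}{n\log 3}
\]
whenever the limit exists. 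Hence it suffices to show $S_n^{(k)}\le C_k\,n^k(4a-1)^n$ together with $S_n^{(k)}\ge \delta\,(4a-1)^n$ for $a\ge 1/2$, and $S_n^{(k)}\le C_k\,n^k$ for $a\le 1/2$; the polynomial factors are annihilated by the logarithm, and the trivial bound $\dim_B\ge 1$ handles the lower side when $a\le1/2$.

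First I would differentiate the functional equation \eqref{eq:Okamoto-FE} $k$ times in $a$. Since $\partial_a^j a=0$ for $j\ge 2$, the Leibniz rule yields the coupled system
\begin{align*}
M_{k,a}(x)&=a\,M_{k,a}(3x)+k\,M_{k-1,a}(3x), &&0\le x\le 1/3,\\
M_{k,a}(x)&=(1-2a)M_{k,a}(3x-1)-2k\,M_{k-1,a}(3x-1)+c_k, &&1/3<x\le 2/3,\\
M_{k,a}(x)&=a\,M_{k,a}(3x-2)+k\,M_{k-1,a}(3x-2)+d_k, &&2/3<x\le 1,
\end{align*}
with constants $c_k,d_k$ (nonzero only when $k\le 1$). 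Because adding a constant leaves oscillation unchanged and each branch carries a generation-$(n+1)$ interval onto a generation-$n$ interval, subadditivity of $\osc$ gives the recursion
\[
S_{n+1}^{(k)}\le\bigl(2a+|1-2a|\bigr)S_n^{(k)}+4k\,S_n^{(k-1)}.
\]
For $a\le 1/2$ the bracket equals $1$, and since $M_{0,a}=F_a$ is monotone we have $S_n^{(0)}\equiv 1$; induction on $k$ then gives $S_n^{(k)}\le C_k n^k$, hence $\dim_B=1$. For $a\ge 1/2$ the bracket is $4a-1$ and the exact self-affinity of $F_a$ gives $S_n^{(0)}=(4a-1)^n$; the same induction yields $S_n^{(k)}\le C_k n^k(4a-1)^n$, settling the upper bound.

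The substance is the matching lower bound for $a\ge 1/2$. Here I would package the functions into the vector $\mathbf M_a=(M_{0,a},\dots,M_{k,a})^{\top}$, which by the system above is genuinely self-affine: on the $s$-th branch $\mathbf M_a=A_s\,\mathbf M_a\circ T_s+(\text{const})$, where $T_s$ is the affine branch map and each $A_s$ is lower-bidiagonal with diagonal entry $\lambda_s\in\{a,\,1-2a,\,a\}$. Iterating over an address $\omega\in\{1,2,3\}^n$ gives $\osc(M_{k,a},I_\omega)=\osc\bigl(\sum_{j=0}^k (A_\omega)_{k,j}M_{j,a},[0,1]\bigr)$ with $A_\omega=A_{\omega_1}\cdots A_{\omega_n}$ and leading entry $(A_\omega)_{k,k}=\prod_i\lambda_{\omega_i}$. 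The crux is the lemma that $1,M_{0,a},\dots,M_{k,a}$ are linearly independent on $[0,1]$: evaluating at $x=3^{-n}$ gives $M_{j,a}(3^{-n})=\tfrac{n!}{(n-j)!}a^{\,n-j}$, and a vanishing linear combination forces (after dividing by $a^n$) a polynomial in $n$ to vanish at all large $n$, hence all coefficients are zero. Consequently the seminorm $f\mapsto\osc(f,[0,1])$ restricts to a genuine norm on $V:=\operatorname{span}\{M_{0,a},\dots,M_{k,a}\}$, and by equivalence of norms on this finite-dimensional space there is $\delta>0$ with $\osc\bigl(\sum_j c_j M_{j,a}\bigr)\ge\delta\|c\|$ for every covector $c$. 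Taking $c=\bigl((A_\omega)_{k,j}\bigr)_j$ and using $\|c\|\ge|(A_\omega)_{k,k}|=\prod_i|\lambda_{\omega_i}|$ gives $\osc(M_{k,a},I_\omega)\ge\delta\prod_i|\lambda_{\omega_i}|$, and summing over $\omega$ produces $S_n^{(k)}\ge\delta\,(4a-1)^n$.

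I expect the lower bound to be the main obstacle, precisely because of cancellation: the off-diagonal entries of $A_\omega$ grow polynomially in $n$ relative to the diagonal, so applying the reverse triangle inequality branch-by-branch loses control and can return a negative (useless) bound. The linear-independence lemma is what circumvents this, replacing a term-by-term estimate by a single uniform geometric nondegeneracy of the curve $x\mapsto\mathbf M_a(x)$. I would finally note that the boundary value $a=1/2$ (where $1-2a=0$ makes the middle matrix singular) needs no special treatment, since $4a-1=1$ there and the value $1$ agrees with both branches of the formula.
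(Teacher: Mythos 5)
Your proof is correct, but it takes a genuinely different route from the paper's, especially in the lower bound. The paper runs both bounds through the explicit increment formula of Lemma \ref{lem:ternary-increments}, $\Delta_{k,a}(I_{n,j})=a^{n-l(j)-k}(1-2a)^{l(j)-k}P_k(n,l(j))$: its upper bound combines the pointwise oscillation estimate of Lemma \ref{lem:oscillation-bounds} with the count $\binom{n}{l}2^{n-l}$ of ternary intervals having $l$ ones, and its lower bound uses that for $a>1/2$ one has $|(1-2a)n-l|\geq(2a-1)n$ for all $l\leq n$, hence $|P_k(n,l)|\geq\delta n^k$ uniformly in $l$, so every increment is at least $\delta' a^{n-l}(2a-1)^l n^k$. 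You replace the first half by the recursion $S_{n+1}^{(k)}\leq(2a+|1-2a|)S_n^{(k)}+4kS_n^{(k-1)}$ for the oscillation sums, read off directly from the functional equations; a pleasant by-product is that $a=1/2$ needs no separate treatment, whereas the paper must handle that case with Lemmas \ref{lem:difference-formulas} and \ref{prop:box-containment}, since Lemma \ref{lem:oscillation-bounds} is only valid for $a\neq 1/2$. You replace the second half by the vector self-affinity of $(M_{0,a},\dots,M_{k,a})$ together with the linear independence of $\{1,M_{0,a},\dots,M_{k,a}\}$, which makes $c\mapsto\osc\bigl(\sum_j c_jM_{j,a},[0,1]\bigr)$ a norm on $\RR^{k+1}$ and yields the uniform cylinder bound $\osc(M_{k,a},I_\omega)\geq\delta\prod_i|\lambda_{\omega_i}|$. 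This is softer than the paper's argument — the constant $\delta$ is non-effective, and you lose the extra factor $n^k$ that the explicit computation produces (and which the paper reuses in its differentiability results) — but it avoids all bookkeeping with the polynomials $P_k$ and is robust: it would apply verbatim to any triangular self-affine system whose component functions are independent. Two small repairs: in the independence lemma, dividing by $a^n$ does not literally leave a polynomial, since the coefficient of the constant function contributes $c_{-1}a^{-n}$; instead first let $n\to\infty$ in $c_{-1}+\sum_{j}c_j\frac{n!}{(n-j)!}a^{n-j}=0$ to get $c_{-1}=0$ (the remaining terms decay), and only then compare polynomial degrees. Also, $S_n^{(0)}=(4a-1)^n$ for $a>1/2$ uses $\osc(F_a,[0,1])=1$, i.e.\ $F_a([0,1])=[0,1]$, which is true but deserves a line of justification.
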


\subsection{Finite derivatives}

To state our results on differentiability, we need some further notation. Let $(x_i)_i=(x_1,x_2,\dots)$ denote the ternary expansion of a point $x\in(0,1)$, so that $x_i\in\{0,1,2\}$ for each $i$ and $x=\sum_{i=1}^\infty x_i 3^{-i}$. If $x$ has two such representations, we choose the one ending in all zeros. For $x\in(0,1)$ and $n\in\NN$, let 
\[
l_n(x):=\#\{i\leq n: x_i=1\} 
\]
denote the number of $1$'s in the first $n$ ternary digits of $x$. Define the function
\begin{equation} \label{eq:phi}
\phi(a):=\frac{\log(3a)}{\log a-\log|1-2a|}, \qquad a\in\left(0,2/3\right]\backslash\left\{1/3,1/2\right\}.
\end{equation}
We extend $\phi$ continuously to $[0,2/3]$ by setting $\phi(0):=1$, $\phi(1/3):=1/3$, and $\phi(1/2):=0$. It can be shown that $\phi$ is strictly decreasing on $[0,1/2]$, and strictly increasing on $[1/2,2/3]$; see Figure \ref{fig:phi-and-h}. 
For each $a\in(0,1)\backslash\{1/3,1/2\}$, we also define the constant
\[
C_0:=C_0(a):=\frac{1}{\log a-\log|1-2a|}.
\]
Note that $C_0<0$ for $a<1/3$, whereas $C_0>0$ for $a>1/3$. Furthermore, $C_0(a)$ tends to $-\infty$ as $a\nearrow 1/3$, to $+\infty$ as $a\searrow 1/3$, and to $0$ as $a\to 1/2$.

\begin{proposition} \label{prop:only-zero}
Let $k\geq 1$ and $a\in(0,1)$. If $M_{k,a}$ is differentiable at a point $x$, then $M_{k,a}'(x)=0$.
\end{proposition}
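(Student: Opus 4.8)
The plan is to reduce everything to the behaviour of the chord slopes of $M_{k,a}$ over the triadic cylinders containing $x$, and then to exploit the special algebraic structure of the coefficients $a,1-2a,a$. Write the ternary expansion of $x$ as $(x_i)$ and let $I_n=[u_n,v_n]$ be the generation-$n$ triadic interval containing $x$, so $v_n-u_n=3^{-n}$. Since the increment of $F_a$ over $I_n$ is exactly $Q_n:=\prod_{i=1}^n c_{x_i}$ with $c_0=c_2=a$ and $c_1=1-2a$, and $M_{k,a}=\partial_a^k F_a$, the increment of $M_{k,a}$ over $I_n$ is $\partial_a^k Q_n$ and the chord slope over $I_n$ is $\sigma_{k,n}:=3^n\partial_a^k Q_n$. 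First I would record the elementary fact that if $M_{k,a}'(x)=L$ exists then $\sigma_{k,n}\to L$: the chord slope over $I_n$ is a convex combination of the two one-sided difference quotients at $x$ (with the usual minor modification when $x$ is a triadic endpoint, which our ``ending in zeros'' convention turns into a one-sided quotient), both of which tend to $L$.

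Next I would differentiate the trivial relation $Q_{n+1}=c_{x_{n+1}}Q_n$. Because each $c_\epsilon$ is affine in $a$, Leibniz produces only two terms, and after multiplying by $3^{n+1}$ this becomes the clean recursion $\sigma_{k,n+1}=b_{x_{n+1}}\sigma_{k,n}+k\,e_{x_{n+1}}\sigma_{k-1,n}$, where $b_\epsilon=3c_\epsilon$ and $e_\epsilon=3c_\epsilon'$ (so $b_0=b_2=3a$, $b_1=3-6a$, $e_0=e_2=3$, $e_1=-6$). The crucial observation, and the real engine of the proof, is the identity $(1-b_\epsilon)/e_\epsilon=(1-3a)/3$ for every digit $\epsilon\in\{0,1,2\}$; that is, the digit-dependence miraculously cancels. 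This is precisely the feature special to Okamoto's function: the middle slope $1-2a$ has $a$-derivative $-2$, and $1-3(1-2a)=-2(1-3a)$, so the factor $-2$ divides out.

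With these in hand I would run a downward induction on the order. Assuming $\sigma_{p,m}\to\ell_p$ as $m\to\infty$ for some $p\ge1$ (which holds for $p=k$ with $\ell_k=L$), I apply the recursion at level $p$ and solve for $\sigma_{p-1,n}$. Since both $\sigma_{p,n}$ and $\sigma_{p,n+1}$ tend to $\ell_p$ and $|e_{x_{n+1}}|\ge3$, the constancy identity forces $\sigma_{p-1,n}\to\ell_{p-1}:=\frac{1-3a}{3p}\,\ell_p$, \emph{independently of the digits of $x$}. Iterating from $p=k$ down to $p=1$ then yields $\sigma_{0,n}\to\ell_0=\frac{(1-3a)^k}{3^k\,k!}\,L$.

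The payoff is immediate once I observe that $\sigma_{0,n}=3^nQ_n=\prod_{i=1}^n b_{x_i}$. If $L\ne0$ and $a\ne1/3$, then $\ell_0\ne0$, so $\prod_{i\le n}b_{x_i}$ converges to a nonzero limit and hence $b_{x_{n+1}}\to1$; but $b_{x_{n+1}}\in\{3a,\,3-6a\}$, and neither value equals $1$ unless $a=1/3$, so this sequence stays bounded away from $1$, a contradiction. When $a=1/3$ we have $b_\epsilon\equiv1$, so $\sigma_{0,n}\equiv1$ while the formula forces $\ell_0=0$, which is again impossible; this simply reflects the fact that $M_{k,1/3}$ is nowhere differentiable, so the statement holds vacuously there. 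In every case $L=0$. The main obstacle is not any single estimate but getting the bookkeeping of the downward induction exactly right and isolating the constancy identity $(1-b_\epsilon)/e_\epsilon=(1-3a)/3$; once that is in place the argument essentially closes itself, and I would only need to be careful about the triadic-endpoint case in the first step so that $\sigma_{k,n}\to L$ holds without exception.
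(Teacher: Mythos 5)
Your proof is correct, and it takes a genuinely different route from the paper's. The paper has no standalone argument for this proposition: it deduces it from the complete characterization of differentiability points (Propositions \ref{prop:special-cases}, \ref{prop:one-half}, \ref{prop:large-a-derivative} and \ref{prop:small-a-derivative}), which rests on the polynomial increment formula of Lemma \ref{lem:ternary-increments}, the growth estimate of Lemma \ref{lem:polynomial-growth}, the no-finite-limit Lemma \ref{lem:no-polynomial-limit}, and a double case analysis (on $a$: $a\geq 2/3$, $a=1/3$, $a=1/2$, $1/3<a<2/3$, $0<a<1/3$; and, within the main range, on whether $l_n(x)/n$ stays away from $1-2a$ or converges to it). You replace all of this by a single downward induction on the normalized chord slopes $\sigma_{p,n}$, whose engine is the digit-independence of $(1-b_\epsilon)/e_\epsilon=(1-3a)/3$ --- an identity the paper never isolates, though the same structure appears in a different normalization in its difference equations \eqref{eq:difference-eq1}--\eqref{eq:difference-eq2} and in the downward induction inside Lemma \ref{lem:no-polynomial-limit}. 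I checked the key steps: the two-term Leibniz recursion (valid because each $c_\epsilon$ is affine in $a$); the identity for all three digits (for the digit $1$, $1-(3-6a)=-2(1-3a)$ and $e_1=-6$, so the $-2$ cancels); the induction step (digit-independence plus $|e_\epsilon|\geq 3$ and boundedness of $b_\epsilon$ do force $\sigma_{p-1,n}\to\frac{1-3a}{3p}\ell_p$); and the endgame ($\sigma_{0,n}=\prod_{i\leq n}b_{x_i}$, so a nonzero limit would force $b_{x_{n+1}}\to 1$, impossible when $a\neq 1/3$, while for $a=1/3$ one gets $1=\ell_0=0$, so no point of differentiability exists at all). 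Two remarks on what each approach buys: your argument handles $a=1/2$ (where $b_1=0$) with no special care, whereas the paper treats that case separately in Proposition \ref{prop:one-half}; and it recovers the nowhere-differentiability of $M_{k,1/3}$, i.e.\ Proposition \ref{prop:special-cases}(b), as a free by-product. Conversely, the paper's heavier machinery produces the exact description of $D_{k,a}^0$ (Theorem \ref{thm:differentiability-summary}), of which the present proposition is then a costless corollary, while your argument, being purely a rigidity statement about limits of chord slopes, gives no information about \emph{which} points are points of differentiability.
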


\begin{theorem} \label{thm:differentiability-summary}
Let $k\geq 1$ and $a\in(0,1)$. For $x\in(0,1)$ and $n\in\NN$, define 
$$r_n(x):=l_n(x)-n\phi(a).$$
\begin{enumerate}[(a)]
\item If $0<a<1/3$, then $M_{k,a}'(x)=0$ if and only if $r_n(x)+k|C_0|\log n \to -\infty$ as $n\to\infty$;
\item If $1/3<a<2/3$ and $a\neq 1/2$, then $M_{k,a}'(x)=0$ if and only if $r_n(x)-kC_0\log n \to \infty$ as $n\to\infty$;
\item If $2/3\leq a<1$ or $a=1/3$, then $M_{k,a}$ is nowhere differentiable.
\end{enumerate}
\end{theorem}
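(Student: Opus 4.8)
The plan is to reduce the whole statement to the behaviour of $M_{k,a}$ on the level-$n$ triadic interval $I_n(x)=[t_n,t_n+3^{-n}]$ containing $x$, where $t_n:=\sum_{i=1}^n x_i3^{-i}$. Iterating \eqref{eq:Okamoto-FE} shows that $F_a$ is an affine copy of itself on $I_n(x)$: writing $g_n(a):=a^{\,n-l_n(x)}(1-2a)^{\,l_n(x)}$ for the signed increment of $F_a$ across $I_n(x)$,
\[
F_a(t_n+3^{-n}t)=F_a(t_n)+g_n(a)F_a(t),\qquad t\in[0,1].
\]
Differentiating this identity $k$ times in $a$ and using the Leibniz rule gives the approximate self-affinity relation
\[
M_{k,a}(t_n+3^{-n}t)=M_{k,a}(t_n)+\sum_{i=0}^k\binom{k}{i}g_n^{(i)}(a)\,M_{k-i,a}(t),\qquad t\in[0,1],
\]
which exhibits $M_{k,a}$ on $I_n(x)$ as a combination of rescaled copies of $M_{0,a},\dots,M_{k,a}$ with coefficients $g_n^{(i)}(a)$. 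Taking $t=1$ and $t=0$ and using $M_{j,a}(0)=M_{j,a}(1)=0$ for $j\ge1$ together with $M_{0,a}(0)=0$, $M_{0,a}(1)=1$, the increment of $M_{k,a}$ across $I_n(x)$ is exactly $g_n^{(k)}(a)$.

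Next I would set up the elementary but decisive dictionary: \emph{$M_{k,a}'(x)=0$ if and only if $3^n\osc(M_{k,a},I_n(x))\to0$}. For the ``if'' direction one squeezes an arbitrary $y$ with $3^{-(n+1)}\le|y-x|<3^{-n}$ between $x$ and the two geometrically adjacent level-$n$ intervals and bounds $|M_{k,a}(y)-M_{k,a}(x)|$ by the oscillations there; since passing to a geometric neighbour changes $l_n$ by at most $1$ (a carry turns trailing $2$'s into $0$'s, neither of which carries a $1$, and alters only the single incremented non-$2$ digit), the defining condition is unaffected, and Proposition \ref{prop:Holder-continuity} controls the tail. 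For the ``only if'' direction, if $3^n\osc(M_{k,a},I_n(x))\not\to0$ along a subsequence, then $I_n(x)$ contains points whose $M_{k,a}$-values differ by at least $\delta3^{-n}$, so one of them lies within $3^{-n}$ of $x$ with value differing by $\ge(\delta/2)3^{-n}$; this keeps a difference quotient bounded away from $0$, and by Proposition \ref{prop:only-zero} the only admissible finite derivative is $0$, so $M_{k,a}$ is not differentiable at $x$.

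The heart of the matter is then to estimate $\osc(M_{k,a},I_n(x))$. The self-affine relation gives the upper bound $\osc(M_{k,a},I_n(x))\le C\max_i|g_n^{(i)}(a)|$, while the endpoint computation gives the lower bound $\osc(M_{k,a},I_n(x))\ge|g_n^{(k)}(a)|$. A Leibniz (equivalently Bell-polynomial) expansion yields $g_n^{(k)}(a)=g_n(a)\,Q_{n,k}$, where $Q_{n,k}$ is a polynomial in $l_n(x)$ and $n-l_n(x)$ whose degree-$k$ part equals $\big(\tfrac{n-l_n(x)}{a}-\tfrac{2l_n(x)}{1-2a}\big)^k$. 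Using the definitions of $\phi$ and $C_0$ (so that $\log a-\log|1-2a|=1/C_0$ and $\log(3a)=\phi/C_0$) one checks the clean identity
\[
\log\big(3^n|g_n(a)|\big)=-\frac{r_n(x)}{C_0},
\]
whence $\log\big(3^n|g_n^{(k)}(a)|\big)=-r_n(x)/C_0+\log|Q_{n,k}|$. Near the threshold the leading coefficient of $Q_{n,k}$ is of order $n$, so $\log|Q_{n,k}|=k\log n+O(1)$, and $3^n\osc(M_{k,a},I_n(x))\to0$ becomes $-r_n(x)/C_0+k\log n\to-\infty$; substituting $-1/C_0=1/|C_0|$ for $a<1/3$ and keeping $-1/C_0$ for $1/3<a<2/3$ turns this into exactly conditions (a) and (b). For (c), when $a\ge2/3$ one has $3^n|g_n(a)|=(3a)^{\,n-l_n(x)}(3|1-2a|)^{\,l_n(x)}\ge1$, so $3^n|g_n^{(k)}(a)|\not\to0$ and $M_{k,a}$ is nowhere differentiable; for $a=1/3$ a direct evaluation gives $3^ng_n^{(k)}(1/3)\asymp\big(n-3l_n(x)\big)^k$, which fails to tend to $0$ for every $x$, yielding nowhere differentiability again (recovering the result of \cite{Dalaklis} for $k=1$).

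The step I expect to be hardest is turning the oscillation estimate into the \emph{precise} $k\log n$ term in the lower bound, because the factor $Q_{n,k}$ can degenerate when $l_n(x)\approx n(1-2a)$, driving its leading coefficient (and possibly $Q_{n,k}$ itself) below order $n^k$. The saving dichotomy is that such degeneracy forces $n(1-2a)-l_n(x)=o(n)$, hence $r_n(x)=l_n(x)-n\phi(a)$ of order $n$ unless $(1-2a)=\phi(a)$; thus away from the coincidental value of $a$ the degeneracy occurs only far from the threshold, where the exponential factor $\exp(-r_n(x)/C_0)$ alone forces $3^n\osc\to0$ or $\to\infty$ outright, so the exact power of $\log n$ matters only where $Q_{n,k}$ is genuinely of order $n^k$. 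In the residual case where even $r_n(x)$ stays bounded on the bad subsequence, I would recover a non-vanishing difference quotient by passing to a one-level-finer subinterval of $I_n(x)$ on which $Q_{n+1,k}$ no longer degenerates. Making this case analysis—and the comparison $\osc\le C\max_i|g_n^{(i)}|$—fully rigorous is where the real work lies.
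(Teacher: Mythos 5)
Your skeleton is, in substance, the paper's own proof (Propositions \ref{prop:special-cases}, \ref{prop:large-a-derivative}, \ref{prop:small-a-derivative}): your $g_n^{(k)}=g_n\,Q_{n,k}$ is Lemma \ref{lem:ternary-increments} with $Q_{n,k}=a^{-k}(1-2a)^{-k}P_k(n,l_n)$, your oscillation bound is Lemma \ref{lem:oscillation-bounds}, your threshold identity is \eqref{eq:rn}, and your non-degenerate analysis is Case 1 of Proposition \ref{prop:large-a-derivative}. The genuine gap sits exactly where you place the ``real work'': the degenerate frequency $l_n(x)\approx(1-2a)n$, and with it the entire case $a=1/3$. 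Two of your claims there fail. \emph{First}, ``the exponential factor $\exp(-r_n/C_0)$ alone forces $3^n\osc\to\infty$'' is unjustified: your only lower bound on the oscillation is the endpoint increment $|g_n|\,|Q_{n,k}|$, and in the degenerate regime $Q_{n,k}$ can be arbitrarily small, indeed exactly zero along a subsequence (take $k=1$, $1-2a=p/q\in\QQ$, and $x$ with $l_{qm}(x)=pm$ for all $m$; then $Q_{qm,1}=0$), so the exploding exponential gets multiplied by $0$ and no blow-up follows. \emph{Second}, at $a=1/3$ the claimed asymptotic $3^n g_n^{(k)}(1/3)\asymp(n-3l_n)^k$ is false: when $n-3l_n$ stays bounded, the degree-$(k-1)$ part $R_k$ of $P_k$ dominates (already for $k=2$ one has $P_2(n,l_n)\approx -2n/9$ there); by Lemma \ref{lem:polynomial-convergence} the true size is $\sim n^{k/2}q_k\bigl((n/3-l_n)/\sqrt{2n/9}\bigr)$, which degenerates near roots of $q_k$, so the leading-term computation cannot deliver nowhere-differentiability.

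What closes the gap---and what the paper actually uses---is Lemma \ref{lem:no-polynomial-limit}: the difference equations \eqref{eq:difference-eq1}--\eqref{eq:difference-eq2} express $P_k$ at consecutive levels through $P_{k-1}$, and downward induction to $P_0\equiv 1$ shows $(P_k(n,l_n))_n$ cannot tend to $0$ (nor to any finite limit). Your ``one-level-finer subinterval'' remark gestures at this mechanism, but as stated it is misplaced and insufficient: it is attached to a residual case (``$r_n$ bounded on the bad subsequence'') that is \emph{empty} for $a\neq 1/3$, since degeneracy forces $|r_n|\asymp n$ there, while the case that genuinely needs it, $a=1/3$, is precisely where your saving dichotomy admits it breaks down; moreover one level of refinement only controls $P_{k-1}$, not $P_0$, so for $k\geq 2$ you need the window version
\[
\max_{0\le i\le k}\bigl|P_k(n+i,l_{n+i}(x))\bigr|\ \ge\ c_k>0,
\]
proved by the same induction. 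With this, since $\osc(M_{k,a};I_n(x))\ge\max_{0\le i\le k}|\Delta_{k,a}(I_{n+i}(x))|$, you get $3^n\osc(M_{k,a};I_n(x))\ge c_k'\,3^n|g_n(a)|$ unconditionally; hence $M_{k,a}'(x)=0$ forces $3^n|g_n(a)|=e^{-r_n/C_0}\to 0$, which is impossible at $a=1/3$ (there $3^n|g_n|\equiv 1$) and otherwise expels $x$ from the degenerate regime, landing you in the region where your $k\log n$ bookkeeping is valid. Modulo this repair, your proposal is correct and coincides with the paper's argument.
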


The case $a=1/2$ is a bit special; we characterize the differentiability of $M_{k,1/2}$ in Proposition \ref{prop:one-half} below.

Defining the set
\[
D_{k,a}^0:=\{x\in(0,1): M_{k,a}'(x)=0\},
\]
we obtain the following consequence of the last theorem, which makes precise that the functions $M_{k,a}$ become progressively ``less differentiable" as $k$ increases.

\begin{corollary} \label{cor:strictly-descending-sets}
For each $0<a<2/3$ except $a=1/3$, the sequence of sets $(D_{k,a}^0)_{k\geq 0}$ is strictly decreasing; that is,
\[
D_{0,a}^0\supsetneq D_{1,a}^0 \supsetneq D_{2,a}^0 \supsetneq  \dots.
\]
Moreover, for each $k\geq 0$, $D_{k,a}^0\backslash D_{k+1,a}^0$ has positive Hausdorff dimension.
\end{corollary}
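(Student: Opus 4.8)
The plan is to reduce the whole statement to the asymptotic characterizations of Theorem~\ref{thm:differentiability-summary}, which turn membership in $D_{k,a}^0$ into a condition on the single sequence $r_n(x)=l_n(x)-n\phi(a)$. Fix $a\in(0,2/3)$ with $a\neq 1/3$. For $a\neq 1/2$ the relevant characterization is Theorem~\ref{thm:differentiability-summary}; the value $a=1/2$ is handled identically using Proposition~\ref{prop:one-half} in its place, so I suppress it below. The nesting $D_{k,a}^0\supseteq D_{k+1,a}^0$ is then a one-line comparison: in case (a), since $|C_0|\log n\ge 0$ we have $r_n+k|C_0|\log n\le r_n+(k+1)|C_0|\log n$, so the right-hand condition tending to $-\infty$ forces the left one; case (b) is the same with $+\infty$ and the opposite sign on the logarithmic term. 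For the endpoint $k=0$ I use $M_{0,a}=F_a$ together with the (easy direction of the) classical fact from \cite{Allaart} that $r_n(x)\to-\infty$ in case (a), resp. $r_n(x)\to+\infty$ in case (b), implies $F_a'(x)=0$; this alone gives both $D_{1,a}^0\subseteq D_{0,a}^0$ and that the witness set below lies in $D_{0,a}^0$.

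To prove strictness and the dimension bound at once, I would exhibit for each $k\ge 0$ an uncountable set $E_k\subseteq D_{k,a}^0\setminus D_{k+1,a}^0$ of positive Hausdorff dimension. The idea is to prescribe the count of $1$'s so that $r_n(x)$ lands exactly halfway between the thresholds for $D_k$ and $D_{k+1}$. Set
\[
L_n:=\Big\lfloor n\phi(a)-\big(k+\tfrac12\big)|C_0|\log n\Big\rfloor\ \ (\text{case (a)}),\qquad
L_n:=\Big\lfloor n\phi(a)+\big(k+\tfrac12\big)C_0\log n\Big\rfloor\ \ (\text{case (b)}).
\]
Because $\phi(a)\in(0,1)$ for every $a\in(0,2/3)$ and $\log(n+1)-\log n\to 0$, the increments $L_{n+1}-L_n$ lie in $\{0,1\}$ for all $n\ge N_0$, so $L_n$ is realizable as a digit-counting function. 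Let $S:=\{i\ge N_0: L_i>L_{i-1}\}$ and let $E_k$ consist of all $x$ whose ternary digits satisfy $x_i=1$ for $i\in S$, $x_i=0$ for $i<N_0$, and $x_i\in\{0,2\}$ chosen freely for $i\ge N_0$, $i\notin S$. Every $x\in E_k$ has $l_n(x)=\#(S\cap[1,n])=L_n+O(1)$, hence $r_n(x)=\mp\big(k+\tfrac12\big)|C_0|\log n+O(1)$, and the comparison above gives $r_n+k|C_0|\log n\to-\infty$ while $r_n+(k+1)|C_0|\log n\to+\infty$ in case (a), and symmetrically in case (b); thus $E_k\subseteq D_{k,a}^0\setminus D_{k+1,a}^0$.

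It remains to compute $\dim_H E_k$. The set $E_k$ is a Cantor-type set with the digit forced on $S$ and free among two symbols off $S$, so it has $2^{\#\{i\le n:\,i\notin S\}}=2^{\,n-l_n}$ cylinders of depth $n$, each of diameter $3^{-n}$. Distinct depth-$n$ cylinders of $E_k$ are $3^{-n}$-separated (two points differing first at a free position $j\le n$ differ by at least $3^{-j}$), so a ball of radius of order $3^{-n}$ meets only boundedly many such cylinders. Putting the uniform measure on $E_k$ and applying the mass distribution principle, together with $l_n/n\to\phi(a)$, yields
\[
\dim_H E_k=(1-\phi(a))\log_3 2,
\]
which is strictly positive since $\phi(a)<1$. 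In particular $D_{k,a}^0\setminus D_{k+1,a}^0\neq\emptyset$, so the inclusions are strict.

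The main obstacle is the construction and dimension estimate for $E_k$; the nesting itself is trivial once Theorem~\ref{thm:differentiability-summary} is in hand. The two delicate points are verifying that the prescribed profile $L_n$ has increments in $\{0,1\}$ — where the sublinearity of $\log n$ and the bound $0<\phi(a)<1$ are essential — and the lower bound for $\dim_H E_k$, which requires controlling the measure of arbitrary balls (not just cylinders) through the $3^{-n}$-separation noted above. A final point to check carefully is the $k=0$ endpoint, where one needs the classical implication $r_n(x)\to\mp\infty\Rightarrow F_a'(x)=0$ from \cite{Allaart} to align it with the cases $k\ge 1$.
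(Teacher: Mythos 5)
Your proof is correct and follows essentially the same route as the paper: the nesting is read off from the characterizations in Propositions \ref{prop:one-half}, \ref{prop:large-a-derivative} and \ref{prop:small-a-derivative}, and strictness together with positive Hausdorff dimension is obtained exactly as in the paper's Theorem \ref{thm:big-set-difference}, by prescribing the positions of the $1$'s so that $r_n(x)$ lands between the $k$- and $(k+1)$-thresholds (your counting profile $L_n$ with offset $k+\tfrac12$ is just the inverse-function form of the paper's position sequence $b_n$ with offset $k+\delta$), yielding the same witness sets of dimension $(1-\phi(a))\log_3 2$. The only cosmetic differences are that you fix $\delta=\tfrac12$ rather than taking $\delta\in(0,1)$ arbitrary, and that you treat the $k=0$ endpoint explicitly via the easy implication $r_n(x)\to\mp\infty\Rightarrow F_a'(x)=0$ from \cite{Allaart}, a point the paper leaves implicit.
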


%In Section \ref{sec:differentiability} we construct for each $k$ specific points $x$ such that $M_{k,a}'(x)=0$ but $M_{k+1,a}$ is not differentiable at $x$.

\begin{figure}
\begin{center}
\epsfig{file=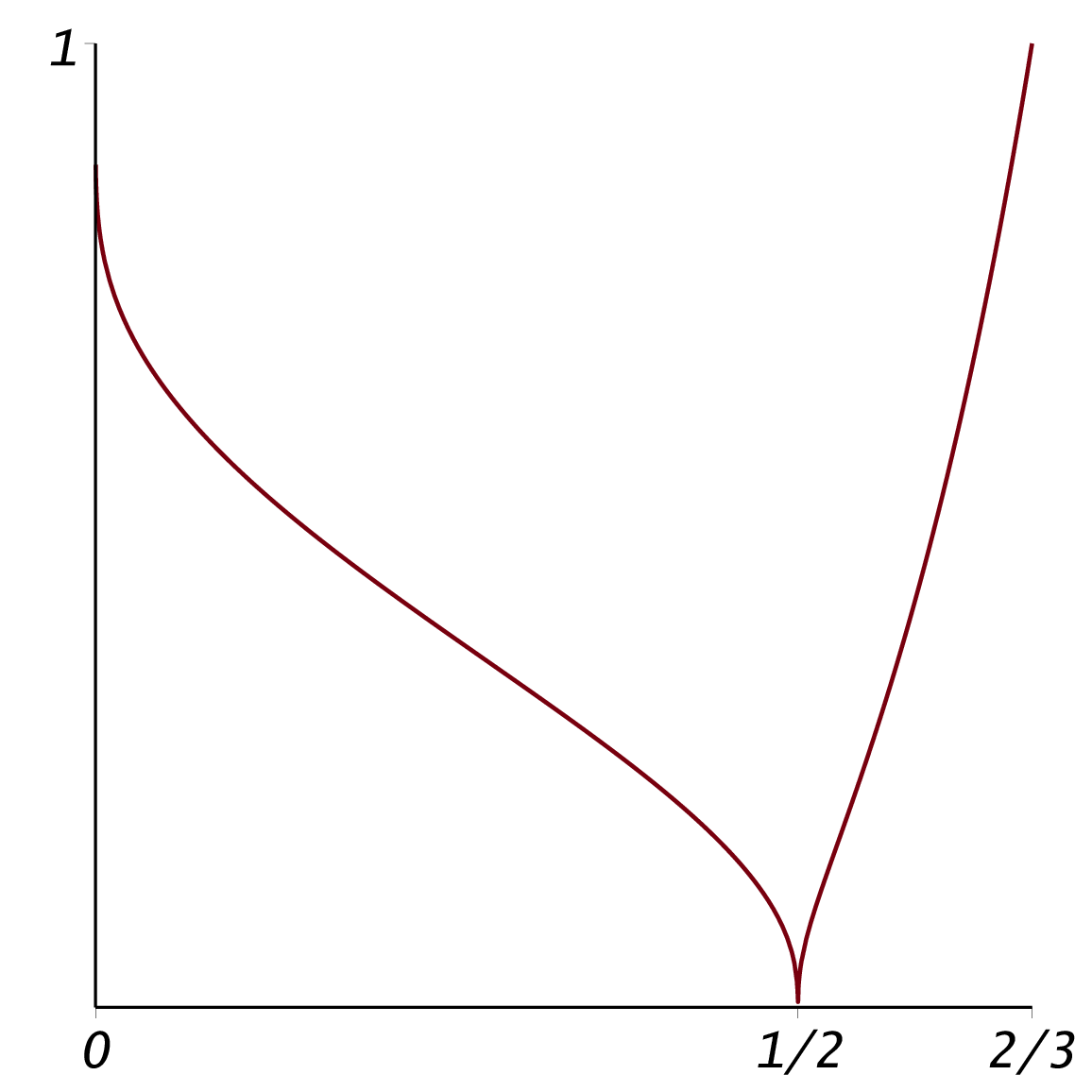, height=.2\textheight, width=.35\textwidth} \qquad\qquad
\epsfig{file=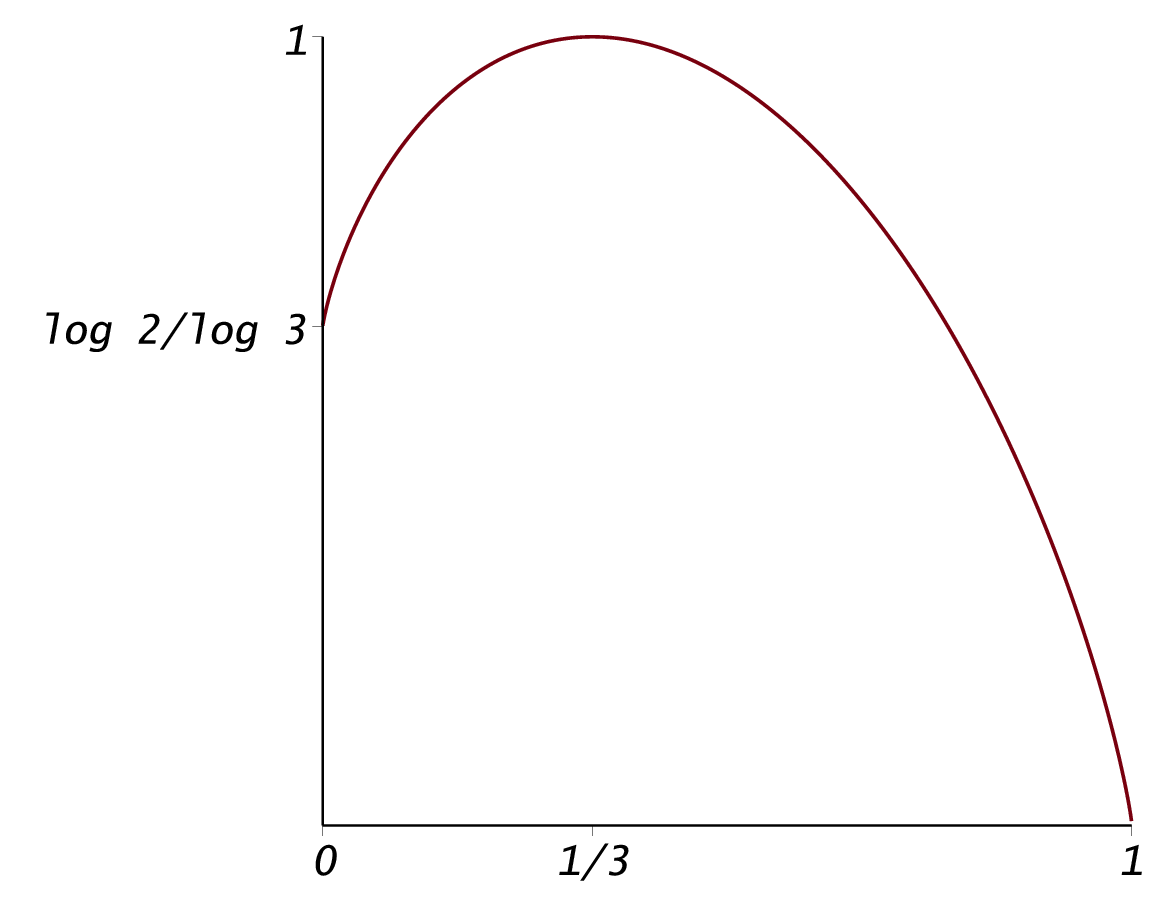, height=.2\textheight, width=.35\textwidth}
\caption{Graphs of $\phi$ (left) and $h$ (right)}
\label{fig:phi-and-h}
\end{center}
\end{figure}

Another consequence of Theorem \ref{thm:differentiability-summary} is that we obtain for $k\geq 1$ nearly the same trichotomy as in Okamoto's theorem. We include in the next result also the Hausdorff dimensions of the exceptional sets. To this end, define the functions
\begin{equation*}
h(p):=\frac{-p\log p-(1-p)\log(1-p)+(1-p)\log 2}{\log 3}, \qquad 0\leq p\leq 1,
\end{equation*}
where, following standard convention, we set $0\log 0\equiv 0$; and
\begin{equation} \label{eq:d}
d(a):=h(\phi(a)), \qquad 0\leq a\leq 2/3.
\end{equation}
The significance of the function $h$ is that it gives the Hausdorff dimension of the set of points $x$ in whose ternary expansion the digit $1$ appears with a given frequency, namely
\[
\dim_H \left\{x\in(0,1): \lim_{n\to\infty} \frac{l_n(x)}{n}=p\right\}=h(p),
\]
where $\dim_H E$ denotes the Hausdorff dimension of a set $E$.
Depending on $p$, the limit may also be replaced with a $\liminf$ or $\limsup$ and the equality with a weak or strict inequality. The function $h$ is hence maximized at $p=1/3$, with $h(1/3)=1$; see Figure \ref{fig:phi-and-h}.  We can further interpret $\phi(a)$ as the ``critical value" of the frequency of $1$'s at which the differentiability behavior changes. This makes the dimension statements in the next theorem plausible.

Finally, let $N_{k,a}$ denote the set of points at which $M_{k,a}$ has neither a finite nor an infinite derivative.

\begin{theorem} \label{thm:M_k-differentiability}
Let $a_0$ be as in Theorem \ref{thm:Okamoto}. Then for each $k\geq 1$,
\begin{enumerate}[(a)]
\item $M_{k,a}$ is nowhere differentiable if $2/3\leq a<1$ or $a=1/3$;
\item $M_{k,a}$ is nondifferentiable almost everywhere but $\dim_H D_{k,a}^0=d(a)>0$ if $a_0\leq a<2/3$;
\item $M_{k,a}$ is differentiable almost everywhere but $\dim_H N_{k,a}=\dim_H\big((0,1)\backslash D_{k,a}^0\big)=d(a)>0$ if $0<a<a_0$ and $a\neq 1/3$.
\end{enumerate}
\end{theorem}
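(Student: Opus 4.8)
The plan is to derive the entire theorem from Proposition~\ref{prop:only-zero} and Theorem~\ref{thm:differentiability-summary}, together with the Besicovitch--Eggleston dimension theory for the digit-frequency sets mentioned after~\eqref{eq:d}. By Proposition~\ref{prop:only-zero}, $M_{k,a}$ is differentiable exactly on $D_{k,a}^0$, so $(0,1)\setminus D_{k,a}^0$ is precisely the non-differentiability set, the disjoint union of $N_{k,a}$ and the set of infinite-derivative points. The dimension inputs I will use are, for $p\ge 1/3$, $\dim_H\{x:\liminf_n l_n(x)/n\ge p\}=\dim_H\{x:\limsup_n l_n(x)/n\ge p\}=h(p)$, and, for $p\le 1/3$, $\dim_H\{x:\liminf_n l_n(x)/n\le p\}=\dim_H\{x:\limsup_n l_n(x)/n\le p\}=h(p)$; the mechanism behind the $\liminf$/$\limsup$ versions is the classical fact that a set on which $l_n/n$ oscillates between levels $\alpha$ and $\beta$ has Hausdorff dimension $\min\{h(\alpha),h(\beta)\}$, so the extreme level nearest $1/3$ controls the dimension. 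I also record that $\phi(a_0)=1/3$: for $a_0>1/2$, clearing the logarithm in~\eqref{eq:phi} turns $\phi(a_0)=1/3$ into $54a_0^3-27a_0^2=1$, the defining equation of $a_0$. Since $\phi$ is decreasing on $[0,1/2]$ and increasing on $[1/2,2/3]$, this places the regimes: $\phi(a)\ge 1/3$ for $a_0\le a<2/3$, $\phi(a)<1/3$ for $1/3<a<a_0$, and $\phi(a)>1/3$ for $0<a<1/3$. Because $\phi(a)<1$ and $h(p)>0$ for $0\le p<1$, we have $d(a)=h(\phi(a))>0$ throughout.

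Part~(a) is immediate from Theorem~\ref{thm:differentiability-summary}(c). For the almost-everywhere dichotomy, the strong law of large numbers gives $l_n(x)/n\to 1/3$ for Lebesgue-a.e.\ $x$, so $r_n(x)=l_n(x)-n\phi(a)\sim n(1/3-\phi(a))$. When $0<a<a_0$ with $a\notin\{1/3,1/2\}$ we have $\phi(a)\ne 1/3$, so $r_n(x)\to\pm\infty$ linearly, dominating the logarithmic correction; the relevant criterion of Theorem~\ref{thm:differentiability-summary}(a) or~(b) then holds for a.e.\ $x$, giving a.e.\ differentiability in~(c). When $a_0<a<2/3$ we have $\phi(a)>1/3$, so $r_n(x)\to-\infty$ and the criterion fails a.e., giving a.e.\ non-differentiability in~(b). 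The borderline $a=a_0$ uses the law of the iterated logarithm: with $\phi(a_0)=1/3$ one has $\liminf_n (l_n(x)-n/3)/\sqrt{n\log\log n}<0$ a.e., so $r_n(x)-kC_0\log n\not\to+\infty$ and $M_{k,a_0}'(x)=0$ fails a.e. The case $a=1/2$ in~(c) is covered by Proposition~\ref{prop:one-half}, with the same conclusions.

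For the dimension upper bounds I pass to frequency sets. If $x\in D_{k,a}^0$ with $a_0\le a<2/3$, then $r_n(x)\to+\infty$ by Theorem~\ref{thm:differentiability-summary}(b), whence $\liminf_n l_n(x)/n\ge\phi(a)$; thus $D_{k,a}^0\subseteq\{\liminf_n l_n/n\ge\phi(a)\}$, of dimension $h(\phi(a))=d(a)$ since $\phi(a)\ge 1/3$. For $0<a<a_0$ (with $a\ne 1/3,1/2$), a point outside $D_{k,a}^0$ must violate the criterion of Theorem~\ref{thm:differentiability-summary}: for $0<a<1/3$ this forces $\limsup_n l_n/n\ge\phi(a)$, and for $1/3<a<a_0$ it forces $\liminf_n l_n/n\le\phi(a)$, so in either case $(0,1)\setminus D_{k,a}^0$ lies in a frequency set of dimension $h(\phi(a))=d(a)$ (using $\phi(a)>1/3$, resp.\ $\phi(a)<1/3$). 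As $N_{k,a}\subseteq(0,1)\setminus D_{k,a}^0$, every exceptional set in~(b) and~(c) has dimension at most $d(a)$. The matching lower bound in~(b) is easy: for any $p>\phi(a)$ each $x$ with $\lim_n l_n/n=p$ satisfies $r_n(x)\to+\infty$ faster than $\log n$, so $\{\lim_n l_n/n=p\}\subseteq D_{k,a}^0$; letting $p\downarrow\phi(a)$ in a countable union yields $\dim_H D_{k,a}^0\ge\sup_{p>\phi(a)}h(p)=h(\phi(a))=d(a)$, completing~(b).

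The crux is the lower bound $\dim_H N_{k,a}\ge d(a)$ in~(c); together with the chain $d(a)\le\dim_H N_{k,a}\le\dim_H\big((0,1)\setminus D_{k,a}^0\big)\le d(a)$ it forces equality everywhere in~(c). The tool is the increment of $F_a$ over the order-$n$ triadic interval $I_n(x)$, $|F_a(I_n(x))|=a^{n-l_n(x)}|1-2a|^{l_n(x)}$, giving $3^n|F_a(I_n(x))|=e^{-r_n(x)/C_0}$ and, after $k$ differentiations in $a$, $3^n|M_{k,a}(I_n(x))|\asymp n^k e^{-r_n(x)/C_0}$ to leading order; the factor $n^k$ is exactly the source of the $kC_0\log n$ shift in Theorem~\ref{thm:differentiability-summary}. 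I would build $x$ with $\lim_n l_n(x)/n=\phi(a)$ (already a set of dimension $h(\phi(a))$) while making $r_n(x)$ oscillate: along one sparse sequence of scales $r_n(x)-kC_0\log n\to+\infty$, forcing $3^n M_{k,a}(I_n(x))\to 0$, and along another $r_n(x)-kC_0\log n\to-\infty$ with a prescribed alternating sign, forcing $|3^n M_{k,a}(I_n(x))|\to\infty$ with changing sign. Such an $x$ admits no derivative, finite or infinite, so $x\in N_{k,a}$; and because the imposed oscillations are of size $o(n)$ they perturb neither the limiting frequency nor the dimension. The main obstacle lies here, in two places: first, the heuristic $3^n|M_{k,a}(I_n(x))|\asymp n^k e^{-r_n/C_0}$ must be upgraded to uniform two-sided bounds along the constructed scales, and the difference quotients over arbitrary (non-triadic) intervals must be controlled via the approximate self-affinity of $M_{k,a}$ and the Hölder estimate of Proposition~\ref{prop:Holder-continuity}, so that an infinite derivative is genuinely excluded and not merely a finite one; second, one must verify by a mass-distribution argument that the subtree cut out by the oscillation requirements retains the full dimension $h(\phi(a))$ of the ambient frequency set, despite the constraints imposed along the sparse sequences of scales. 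Granting these, the lower bound $\dim_H N_{k,a}\ge d(a)$ follows and the theorem is proved.
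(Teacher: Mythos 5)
Your proposal follows essentially the same route as the paper, which settles (a) by Theorem \ref{thm:differentiability-summary}(c) and (b), (c) by sandwiching $D_{k,a}^0$ between digit-frequency sets (this is exactly Corollary \ref{cor:derivativ-zero-rough}) and then arguing as in \cite[Section 4]{Allaart}; for $a\neq 1/2$ your text is a correct unpacking of that citation. Three comments on what you call the crux. (i) It is easier than you think: no explicit construction and no mass-distribution argument are needed. Take the Bernoulli measure $\mu$ on ternary digits giving digit $1$ probability $\phi(a)$. By the SLLN and the law of the iterated logarithm, $\mu$-a.s.\ $l_n/n\to\phi(a)$ while $r_n$ exceeds $+\sqrt{n}$ and drops below $-\sqrt{n}$ infinitely often; these $\sqrt{n}$-scale fluctuations swamp the $kC_0\log n$ correction, so by Lemmas \ref{lem:ternary-increments} and \ref{lem:polynomial-growth} the triadic slopes $3^n\Delta_{k,a}(I_n(x))$ tend to $0$ along one subsequence and blow up along another, $\mu$-a.s. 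Hence $\mu(N_{k,a})=1$ and $\dim_H N_{k,a}\geq\dim_H\mu=h(\phi(a))=d(a)$. (ii) You never need non-triadic intervals to \emph{exclude} an infinite derivative: $M_{k,a}'(x)=\pm\infty$ forces $3^n\Delta_{k,a}(I_n(x))\to\pm\infty$, so a subsequence of triadic slopes tending to $0$ already rules it out. (iii) Your ``prescribed alternating sign'' cannot be arranged: when $l_n(x)/n\to\phi(a)\neq 1-2a$, Lemmas \ref{lem:ternary-increments} and \ref{lem:polynomial-growth} make the sign of $\Delta_{k,a}(I_n(x))$ eventually constant. It is also unnecessary, by (ii).

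The genuine gap is the case $a=1/2$ of part (c), which you dismiss with ``covered by Proposition \ref{prop:one-half}.'' That proposition does give a.e.\ differentiability and $\dim_H\big((0,1)\setminus D_{k,1/2}^0\big)=\log_3 2=d(1/2)$, since the complement is, modulo a countable set, $\{x:\sup_n l_n(x)\leq k\}$. But it gives no lower bound for $\dim_H N_{k,1/2}$: for that you must produce a set of dimension $\log_3 2$ of points with \emph{neither} a finite \emph{nor} an infinite derivative, and your oscillation mechanism is vacuous here, because at every candidate point (one with $l(x)=l\leq k$) Lemma \ref{lem:difference-formulas} gives $3^n|\Delta_{k,1/2}(I_n(x))|\asymp(3/2)^n n^{k-l}\to\infty$; the triadic slopes never tend to $0$, so ruling out an infinite derivative genuinely requires the run-length analysis of non-triadic increments. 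That is precisely what Theorem \ref{thm:infinite-derivatives-half}(c) and Corollary \ref{cor:full-dimension} supply: with $s=\log_3 2$ one has $\mathcal{H}^s(F_{k,l}^\infty)=0$ for $l<k$, while $\{x:l(x)=l\}$ has positive $\mathcal{H}^s$-measure and contains no point of finite differentiability, whence $\dim_H N_{k,1/2}=s$. Without importing this (or reproving the necessity half of the run-length condition), your part (c) is incomplete at $a=1/2$. A minor related slip: your crux paragraph uses the sign conventions of $C_0>0$ (i.e.\ $1/3<a<a_0$); for $0<a<1/3$ one has $C_0<0$ and the roles of the two sparse subsequences are interchanged, though the criteria you quoted earlier from Theorem \ref{thm:differentiability-summary} are stated correctly.
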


(The dimensions of the exceptional sets are equal to $d(a)$ also for Okamoto's function (i.e. $k=0$); see \cite{Allaart}.) Note the exceptional role of the parameter value $a=1/3$ in the above theorem: it is an isolated point in the set of values of $a$ for which $M_{k,a}$ is nowhere differentiable. We contrast this with Okamoto's function itself, which for $a=1/3$ is the identity function and hence differentiable everywhere.

The graph of $d$ is shown in Figure \ref{fig:d}. Note that, since $\phi(a_0)=1/3$, $d(a)$ attains its maximum value of $1$ at both $a=1/3$ and $a=a_0$.

%Define the auxiliary functions
%\begin{equation} \label{eq:phi}
%\phi(a):=\frac{\log(3a)}{\log a-\log|2a-1|}, \qquad a\in(0,2/3]\backslash\{1/3,1/2\},
%\end{equation}
%and
%\begin{equation*}
%h(p):=\frac{-p\log p-(1-p)\log(1-p)+(1-p)\log 2}{\log 3}, \qquad 0\leq p\leq 1,
%\end{equation*}
%where, following standard convention, we set $0\log 0\equiv 0$. We extend $\phi$ continuously to $[0,2/3]$ by setting $\phi(0):=\lim_{a\downarrow 0}\phi(a)=1$, $\phi(1/3):=\lim_{a\to 1/3}\phi(a)=1/3$, and $\phi(1/2):=\lim_{a\to 1/2}\phi(a)=0$. Note that $\phi(2/3)=1$. 

\begin{figure}
\begin{center}
\epsfig{file=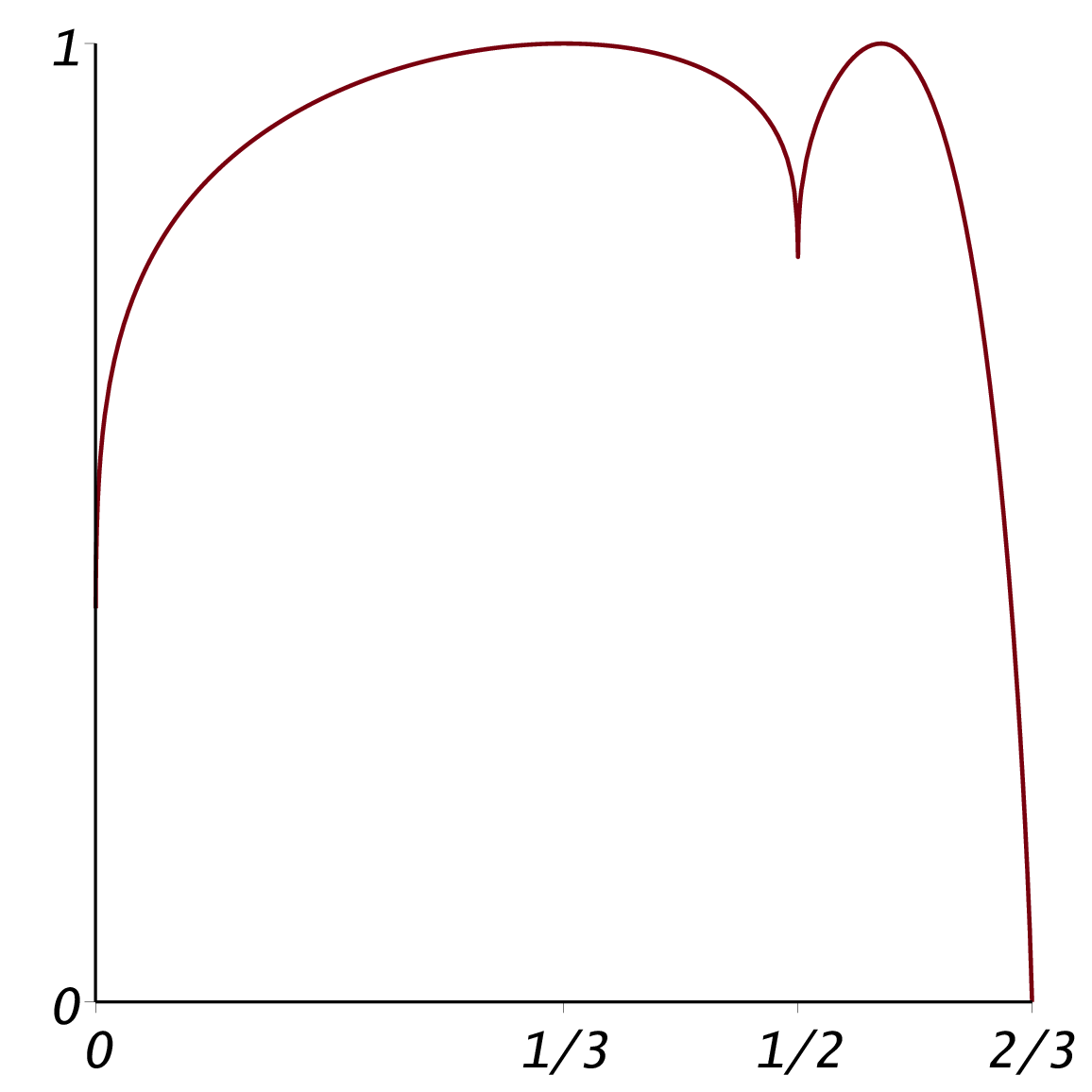, height=.2\textheight, width=.35\textwidth}
\caption{Graph of $d(a)$. Note that $d(0)=0$ and $d(1/2)=\log_3 2$.}
\label{fig:d}
\end{center}
\end{figure}

%\begin{theorem} \label{thm:exceptional-sets}
%Fix $k\in\NN$.
%\begin{enumerate}[(i)]
%\item For each $a\in(0,1)$ and $x\in(0,1)$, $M_{k,a}'(x)$ is equal to $0$ or $\pm \infty$ or does not exist. In other words,
%\[
%D_{k,a}^0\cup D_{k,a}^\infty\cup N_{k,a}=(0,1).
%\]
%\item For each $a\in[a_0,2/3]$, $\dim_H D_{k,a}^0=d(a)$.
%\item For each $a\in(0,a_0]$, $\dim_H N_{k,a}=d(a)$.
%\end{enumerate}
%\end{theorem}

\subsection{Infinite derivatives}

A large part of the paper is devoted to the infinite derivatives of $M_{k,a}$. We will be interested in the sets
\[
D_{k,a}^{+\f}:=\{x:M_{k,a}'(x)=+\f\}, \qquad D_{k,a}^{-\f}:=\{x:M_{k,a}'(x)=-\f\},
\]
and
\[
D_{k,a}^{\pm\f}:=D_{k,a}^{+\f}\cup D_{k,a}^{-\f}.
\]
Here we find fundamentally different behavior between the parameter regions $0<a<1/2$ and $1/2<a<1$, but also between even and odd $k$. Again the case $a=1/2$ is special; we state some precise results in Section \ref{sec:infinite-derivative-one-half}.

For the case $0<a<1/2$, an important difference between $M_{k,a}$ for $k\geq 1$ and Okamoto's function is that the latter is monotone increasing and can hence only have an infinite derivative of $+\f$, whereas for $k\geq 1$, both $+\f$ and $-\f$ occur as infinite derivatives. However, the sets of points where these values occur have different Hausdorff dimensions. Precisely, we prove the following, in which $d(a)=h(\phi(a))$ as defined in \eqref{eq:d}, and 
\[
\tilde{d}(a):=h(1-2a)=-\frac{2a\log a+(1-2a)\log(1-2a)}{\log 3}.
\]
Essentially, if $M_{k,a}$ has an infinite derivative at $x$, its sign depends on whether the frequency of $1$'s in the ternary expansion of $x$ is greater or less than $1-2a$.

\begin{theorem} \label{thm:D-inf-dimensions}
Let $0<a<1/2$. Then $\dim_H D_{k,a}^{\pm\f}=d(a)$. Furthermore, if $k$ is even, then $\dim_H D_{k,a}^{+\f}=d(a)$. On the other hand, if $k$ is odd, we have the following:
\begin{enumerate}[(a)]
\item If $0<a<1/3$, then 
\[
\dim_H D_{k,a}^{+\f}=d(a)>\tilde{d}(a)=\dim_H D_{k,a}^{-\f}.
\]
\item If $a=1/3$, then
\[
\dim_H D_{k,a}^{+\f}=\dim_H D_{k,a}^{-\f}=1.
\]
\item If $1/3<a<1/2$, then 
\[
\dim_H D_{k,a}^{+\f}=\tilde{d}(a)<d(a)=\dim_H D_{k,a}^{-\f}.
\]
\end{enumerate}
\end{theorem}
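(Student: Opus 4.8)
The plan is to reduce everything to the asymptotics of the increments of $M_{k,a}$ over triadic intervals and then to a Besicovitch--Eggleston-type computation of Hausdorff dimension for digit-frequency sets. First I would record the exact increment formula. Since $M_{k,a}=\partial_a^k F_a$ and, for $0<a<1/2$, the increment of $F_a$ over the level-$n$ triadic interval $I_n(x)$ containing $x$ equals $P_n:=a^{n-l_n}(1-2a)^{l_n}$ (all three affine pieces have positive slope), differentiating this finite difference $k$ times shows that the increment of $M_{k,a}$ over $I_n(x)$ is exactly $\partial_a^k P_n$. Writing $P_n=e^{\psi}$ with $\psi=(n-l_n)\log a+l_n\log(1-2a)$ and using the Bell-polynomial expansion of $\partial_a^k e^\psi$, the leading term is $P_n(\psi')^k=P_n Q_n^k$, where
\[
Q_n=Q_n(x):=\frac{n-l_n}{a}-\frac{2l_n}{1-2a},
\]
and the remaining terms are $O(P_n\,n^{k-1})$. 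Thus the normalized slope over $I_n(x)$ satisfies $3^n\partial_a^kP_n=(3^nP_n)\,Q_n^k\,(1+O(1/n))$ whenever $Q_n$ is of order $n$, i.e.\ whenever the running frequency $l_n/n$ stays away from the value $1-2a$ at which $g(p):=(1-p)/a-2p/(1-2a)$ vanishes.

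Second, I would translate slope behavior into derivative behavior. Taking logarithms and using $\log(3^nP_n)=-r_n/C_0$ (which follows from $\phi(a)=C_0\log(3a)$), one gets
\[
\log\big|3^n\partial_a^kP_n\big|=-\frac{r_n}{C_0}+k\log n+k\log|g(l_n/n)|+O(1),
\]
so the magnitude of the slope tends to $+\f$ exactly under the conditions of Theorem \ref{thm:differentiability-summary} with the inequality reversed (namely $r_n+k|C_0|\log n\to+\f$ when $a<1/3$, and $r_n-kC_0\log n\to-\f$ when $1/3<a<1/2$), while its sign is $\sgn(Q_n)^k$. The key structural point is that $3^nP_n>0$: for even $k$ the slope is eventually positive, whereas for odd $k$ its sign is positive iff $l_n/n<1-2a$ and negative iff $l_n/n>1-2a$. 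I would then invoke (the analogue for $M_{k,a}$ of) the standard lemma that for these approximately self-affine functions an infinite derivative of a given sign is equivalent to the normalized triadic slopes, together with the neighboring increments, tending to that signed infinity; the approximate self-affinity contributes only the lower-order $M_{k-1,a}$ corrections from the functional equation, which are controlled by Proposition \ref{prop:Holder-continuity} and do not affect the leading asymptotics.

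Third, this reduces each of $D_{k,a}^{+\f}$ and $D_{k,a}^{-\f}$ to a set described by two digit-frequency constraints: a \emph{magnitude} constraint forcing $l_n/n$ onto the correct side of the critical value $\phi(a)$, and (for odd $k$) a \emph{sign} constraint forcing $l_n/n$ onto the correct side of $1-2a$. The entire case division is governed by the comparison between $\phi(a)$ and $1-2a$: one checks $\phi(a)<1-2a$ for $0<a<1/3$, $\phi(a)>1-2a$ for $1/3<a<1/2$, and $\phi(1/3)=1-2\cdot\tfrac13=\tfrac13$. Using $\dim_H\{x:\lim l_n/n=p\}=h(p)$ together with its $\liminf$/$\limsup$ versions, the dimension of each set equals $\sup h(p)$ over the admissible $p$-interval. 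For the upper bound I cover $D_{k,a}^{\pm\f}$ by $\{\liminf l_n/n\ge\phi(a)\}$ (resp.\ with threshold $1-2a$ for the sign) and exploit monotonicity of $h$ on either side of $1/3$; for the lower bound I construct, for $p$ strictly on the admissible side of the threshold, a Bernoulli measure with digit-$1$ probability $p$ whose typical points satisfy $l_n/n\to p$ with the required linear drift in $r_n$, hence genuinely lie in the infinite-derivative set, and apply the mass distribution principle, finally letting $p$ approach the threshold. Optimizing $h$ yields $d(a)=h(\phi(a))$ when $\phi(a)$ is binding and $\tilde d(a)=h(1-2a)$ when $1-2a$ is binding, producing exactly the three stated cases; at $a=1/3$ both thresholds coincide with the entropy-maximizing frequency $1/3$, giving dimension $h(1/3)=1$.

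The hardest part, I expect, is the rigorous passage from triadic-slope asymptotics to a genuine two-sided infinite derivative of the correct sign. Two issues demand care. Near the sign-boundary frequency $1-2a$ the leading term $P_nQ_n^k$ degenerates, so the Bell-polynomial remainder can dominate and flip the sign; this is avoided by keeping the constructed measures' frequencies bounded away from $1-2a$ and by using $\liminf$ thresholds in the covering for the upper bound. At the exact critical frequency $\phi(a)$, the $\sqrt n$-fluctuations of $r_n$ prevent $r_n+k|C_0|\log n$ from converging, so points with $l_n/n\to\phi(a)$ and typical fluctuations have \emph{no} derivative at all (they populate $N_{k,a}$ rather than the infinite-derivative set). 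The lower bound therefore cannot be taken at $p=\phi(a)$ directly; instead one approaches $\phi(a)$ through frequencies $\phi(a)+\eps$ (or $\phi(a)-\eps$) carrying a linear drift that forces the magnitude condition almost surely, and then recovers $d(a)$ from the countable-union formula for $\dim_H$ as $\eps\to0$.
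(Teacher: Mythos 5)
Your overall route coincides with the paper's: the increment formula with leading term $\{(1-2a)n-l_n\}^k$ (your Bell-polynomial expansion is exactly Lemma \ref{lem:ternary-increments}), a two-threshold frequency dichotomy ($\phi(a)$ for magnitude, $1-2a$ for sign), and an Eggleston-type dimension computation with $\eps\to 0$ --- which is precisely how the paper deduces Theorem \ref{thm:D-inf-dimensions} from Theorem \ref{thm:infinite-derivative-detail}. The necessity (upper bound) half of your argument is sound: an infinite derivative forces the nested triadic secant slopes to blow up with a single sign, which forces $\lambda_*\geq\phi(a)$ and pins $l_n/n$ to one side of $1-2a$, and the resulting covers give $d(a)$ resp.\ $\tilde{d}(a)$ as upper bounds.

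The genuine gap is in the sufficiency step, i.e.\ the inclusion of frequency sets \emph{into} $D_{k,a}^{\pm\f}$ that your lower bound needs. The ``standard lemma'' you invoke --- that an infinite derivative is \emph{equivalent} to the normalized triadic slopes, plus neighboring increments, tending to a signed infinity --- does not exist for non-monotone functions, and your proposed justification fails quantitatively. The available oscillation bound (Lemma \ref{lem:oscillation-bounds}, which is what Proposition \ref{prop:Holder-continuity} gives you) is $\osc(M_{k,a};I_{n,j})\leq C_k a^{n-l(j)}|1-2a|^{l(j)}n^k$, and this is of the \emph{same order} as the increment $|\Delta_{k,a}(I_{n,j})|\asymp a^{n-l(j)}|1-2a|^{l(j)}n^k$ in the regime relevant to the dimension bounds; so the ``corrections'' are not lower order and cannot be absorbed into the leading asymptotics. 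Since $a<1/2$, there is no analogue of Lemma \ref{prop:box-containment} to trap the graph between endpoint values, and the paper must work around this with a two-scale argument in the proof of Theorem \ref{thm:alternating-infinities}: for increment scale $n$ it descends to depth $n+m_n$ with $m_n=\lfloor n^{1/4}\rfloor$, uses $P_k(n,l_n+c_n)\sim P_k(n,l_n)$ for $c_n=o(\sqrt{n}\,)$ (Lemma \ref{lem:polynomial-convergence}) to show that \emph{all} level-$(n+m_n)$ increments between $x$ and $x+h$ retain the correct sign, and only then exploits the geometric factor $b^{m_n}$ in the depth-$(n+m_n)$ oscillation bound to beat the polynomial factor $n^k$. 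Without this mechanism (or a substitute), blow-up of the slopes over $I_n(x)$ and its neighbors does not yield a genuine two-sided infinite derivative of the asserted sign, so the lower bounds $\dim_H D_{k,a}^{+\f}\geq d(a)$ and $\dim_H D_{k,a}^{-\f}\geq \tilde{d}(a)$ (and hence the stated equalities) remain unproved in your outline.
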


Theorem \ref{thm:D-inf-dimensions} shows that derivatives of $+\f$ dominate when $a<1/3$, whereas derivatives of $-\f$ dominate when $a>1/3$. Note that (b) was proved in \cite{Dalaklis} for the case $k=1$. 

We do not know the Hausdorff dimension of $D_{k,a}^{-\f}$ for even $k$, but we show in Section \ref{sec:infinite-derivative-small-a} that this set is uncountable even then, and derive a fairly high lower bound for its dimension in the case $k=2$. We suspect that the Hausdorff dimension of $D_{k,a}^{-\f}$ is positive when $k\equiv 2 \pmod{4}$, but zero when $k\equiv 0 \pmod{4}$. In Section \ref{sec:infinite-derivative-small-a} we also give some fairly sharp explicit conditions, in terms of the frequency of $1$'s in the ternary expansion of $x$, for $M_{k,a}$ to have an infinite derivative at $x$.

As stated above, the number $1-2a$ plays the role of a critical frequency of $1$'s which separates points in $D_{k,a}^{-\f}$ from points in $D_{k,a}^{+\f}$, at least for odd $k$. When the frequency of $1$'s is exactly $1-2a$, the sign of the infinite derivative at $x$ depends in a subtle way on the deviation of $l_n(x)$ from $(1-2a)n$, and a surprising alternating pattern emerges. 

\begin{theorem} \label{thm:alternating-pattern}
Let $0<a<1/2$ and $k\geq 1$. There is a polynomial $q_k$ of degree $k$, depending only on $k$ and not on $a$, having a set of $k$ distinct real roots $t_1<t_2<\dots<t_k$ which is symmetric about $0$, such that the following holds: Suppose the limit
\begin{equation} \label{eq:delta-x}
\delta(x):=\lim_{n\to\f} \frac{(1-2a)n-l_n(x)}{\sqrt{n}}
\end{equation}
exists and is finite. Put $t_0:=-\f$ and $t_{k+1}:=+\f$. For $i=0,1,\dots,k$, if
\[
t_i\sqrt{2a(1-2a)}<\delta(x)<t_{i+1}\sqrt{2a(1-2a)},
\]
then 
\[
M_{k,a}'(x)=\begin{cases}
+\f & \mbox{if $k-i$ is even},\\
-\f & \mbox{if $k-i$ is odd}.
\end{cases}
\]
\end{theorem}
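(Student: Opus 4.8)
\emph{Setup via triadic increments.} The plan is to reduce everything to the behaviour of $M_{k,a}$ on the triadic interval $I_n(x)=[\sum_{i\le n}x_i3^{-i},\,\sum_{i\le n}x_i3^{-i}+3^{-n}]$ containing $x$. Iterating the functional equation \eqref{eq:Okamoto-FE} $n$ times shows that $F_a$ restricted to $I_n(x)$ is an affine copy of $F_a$ with vertical scaling factor
\[
g(a):=a^{\,n-l_n(x)}(1-2a)^{l_n(x)},
\]
so the increment of $F_a$ across $I_n(x)$ equals $g(a)$. Since the endpoints of $I_n(x)$ do not depend on $a$, differentiating $k$ times gives that the increment of $M_{k,a}$ across $I_n(x)$ is exactly $g^{(k)}(a)$, and the normalized slope across $I_n(x)$ is $3^n g^{(k)}(a)$. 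Writing $g^{(k)}=g\,Q_k$, the Leibniz rule yields $Q_0=1$ and the recursion $Q_{k+1}=Q_k'+(g'/g)\,Q_k$, where a direct computation gives the clean expression
\[
\frac{g'(a)}{g(a)}=\frac{S_n}{a(1-2a)},\qquad S_n:=(1-2a)n-l_n(x).
\]
Because $0<a<1/2$ we have $g(a)>0$, so $\sgn\big(3^ng^{(k)}(a)\big)=\sgn Q_k(a)$; moreover $3^ng(a)\to\f$ (exponentially when $a\neq1/3$, since $1-h(1-2a)>0$, while $3^ng(1/3)\equiv1$), so the magnitude of the slope is governed entirely by $Q_k$.

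\emph{Emergence of Hermite polynomials.} Next I would extract the leading asymptotics of $Q_k$ under the hypothesis $S_n=\delta\sqrt n+o(\sqrt n)$. Grading monomials by assigning weight $\tfrac12$ to $S_n$ and weight $1$ to $n$ (and weight $0$ to functions of $a$), one checks inductively from the recursion that $Q_k$ has top weight $k/2$, with top part $\sum_{j\equiv k\,(2)}c_{k,j}(a)\,S_n^{\,j}n^{(k-j)/2}$; here both differentiation (through $S_n'=-2n$) and multiplication by $g'/g$ raise the weight by exactly $\tfrac12$. Setting $S_n=\delta\sqrt n$ and $\rho_k(\delta):=\sum_j c_{k,j}(a)\delta^j$, so that $Q_k\sim n^{k/2}\rho_k(\delta)$, the coefficient recursion collapses to $\rho_{k+1}=\frac{\delta}{w}\rho_k-2\rho_k'$ with $w:=a(1-2a)$ and $\rho_0=1$. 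Substituting $\rho_k=w^{-k}p_k$ gives $p_{k+1}=\delta p_k-2w\,p_k'$, and the rescaling $\delta=\sigma t$ with $\sigma:=\sqrt{2a(1-2a)}$ turns this into the probabilists' Hermite recursion $P_{k+1}=\sigma(tP_k-P_k')$. Hence $\rho_k(\delta)=(\sigma/w)^k\,\mathrm{He}_k(\delta/\sigma)$, giving
\[
Q_k(a)=n^{k/2}\Big[(\sigma/w)^k\,\mathrm{He}_k\big(\delta/\sigma\big)+o(1)\Big],\qquad \sigma/w>0.
\]
Thus $q_k:=\mathrm{He}_k$ is the required polynomial: it has degree $k$, depends only on $k$, and its zeros $t_1<\dots<t_k$ are real, simple and symmetric about $0$.

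\emph{Sign pattern.} Since $\mathrm{He}_k$ is monic with simple real roots, on $(t_i,t_{i+1})$ (with $t_0=-\f$, $t_{k+1}=+\f$) exactly $k-i$ of its linear factors are negative, so $\sgn \mathrm{He}_k=(-1)^{k-i}$. Under the hypothesis $t_i\sigma<\delta(x)<t_{i+1}\sigma$, i.e.\ $t_i<\delta/\sigma<t_{i+1}$, the value $\mathrm{He}_k(\delta/\sigma)$ is nonzero with sign $(-1)^{k-i}$; combined with the previous paragraph this forces $3^ng^{(k)}(a)\to+\f$ when $k-i$ is even and $\to-\f$ when $k-i$ is odd. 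This is exactly the claimed alternation, once the slopes are shown to determine the derivative.

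\emph{From triadic slopes to a genuine derivative (the main obstacle).} The remaining and hardest step is to upgrade ``the normalized slope across $I_n(x)$ tends to $\pm\f$'' to ``$M_{k,a}'(x)=\pm\f$''. The difficulty is that $M_{k,a}$ is not monotone, and a crude estimate shows that its oscillation across $I_n(x)$ is of the \emph{same} order $g(a)n^{k/2}$ as the increment, so a single-scale comparison is not by itself conclusive. My plan is to differentiate the iterated functional equation to obtain, on $I_n(x)$, an expansion $M_{k,a}=\text{const}+\sum_{j=0}^k\binom{k}{j}g^{(j)}(a)\,M_{k-j,a}\circ T_n$, where $T_n$ is the increasing affine map of $I_n(x)$ onto $[0,1]$. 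The top term $g^{(k)}(a)\,F_a\circ T_n$ is \emph{monotone} (since $F_a$ is increasing for $a<1/2$) with the sign of $g^{(k)}(a)$, while the lower-order terms are smaller by a factor $O(n^{-1/2})$ and are controlled by the Hölder bound of Proposition \ref{prop:Holder-continuity}. Because passing to an adjacent level-$n$ interval changes $l_n(x)$, hence $S_n$, only by $O(1)$, the dominant sign is shared by all neighbouring intervals; feeding this into the one-sided difference-quotient analysis (and using that the existence of $\delta(x)$ regularizes the asymptotics across scales) should yield $M_{k,a}'(x)=\pm\f$ with the sign determined above. Making this comparison fully rigorous---in particular, ruling out cancellation of the monotone dominant term by the lower-order oscillations as $y\to x$---is where the real work lies.
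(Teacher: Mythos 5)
Your first half is correct, and it is essentially the paper's own route in different clothing: your $Q_k$ equals the paper's $P_k(n,l)/(a(1-2a))^k$ from Lemma \ref{lem:ternary-increments}, your recursion $Q_{k+1}=Q_k'+(g'/g)Q_k$ is Lemma \ref{lem:P-recursion}, and your asymptotic $Q_k\sim n^{k/2}(\sigma/w)^k\,q_k(\delta/\sigma)$ is Lemma \ref{lem:polynomial-convergence} (your weight-grading bookkeeping replaces the paper's induction on the $a$-derivatives of $P_k$, which is fine). The paper's $q_k$, defined by $q_{k+1}=tq_k-q_k'$, are indeed the probabilists' Hermite polynomials you identify, and your sign count on $(t_i,t_{i+1})$ matches the paper's Lemma \ref{lem:polynomial-roots}. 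The normalization issues ($3^n g\to\f$ for $a\neq 1/3$, $3^ng\equiv 1$ for $a=1/3$) are also handled correctly.

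The gap is the step you yourself flag as unfinished, and it is not a routine verification: converting ``$3^n\Delta_{k,a}(I_n(x))\to\pm\f$'' into ``$M_{k,a}'(x)=\pm\f$''. Your sketch does not contain the idea that makes this work. The observation that a horizontally adjacent level-$n$ interval changes $l_n$ only by $O(1)$ controls neighbors at a single scale, but the real difficulty is vertical: as $y\to x^+$, the pair $(x,y)$ typically straddles a ternary rational $z$, and one must control $M_{k,a}$ on intervals $[z,z+3^{-n_2}]$ with $n_2$ much larger than $n$. On such an interval the digit count is frozen near $l_n(x)$ while the scale grows, so the relevant argument $\{(1-2a)n_2-l\}/\sqrt{2a(1-2a)n_2}$ drifts to $+\f$ as $n_2$ increases and, whenever $i<k$, crosses the remaining roots of $q_k$: for $n_2-n$ of order $\sqrt{n}$ the increments just to the right of $z$ can have the \emph{wrong} sign. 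Thus your ``dominant monotone term'' does not have a consistent sign across the scales you need, and the existence of $\delta(x)$ does not regularize these intervals, because they do not contain $x$. What rescues the argument --- and is the paper's key device --- is that any such sign reversal comes with an exponentially small prefactor: the paper fixes an intermediate scale $m_n=\lfloor n^{1/4}\rfloor=o(\sqrt{n})$, shows that all level-$(n+m_n)$ increments in the relevant window are still nonnegative (stability of the Hermite argument under perturbations of $l$ by $o(\sqrt n)$, i.e.\ \eqref{eq:bounded-sequence-equivalence}), and bounds everything below scale $n+m_n$ by the crude oscillation estimate of Lemma \ref{lem:oscillation-bounds}, whose factor $b^{m_n}$ with $b=\max\{a,1-2a\}<1$ beats the polynomial loss $n^k$. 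Without this (or an equivalent multi-scale damping argument) you cannot rule out cancellation of your main term, so the proposal as it stands does not prove the theorem.
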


The polynomials $\{q_k\}$ are given by a simple recursion; see \eqref{eq:q-polynomials} below. For ease of presentation we have assumed the existence of the limit $\delta(x)$ in \eqref{eq:delta-x}, but this assumption can be weakened. A full statement is given in Theorem \ref{thm:alternating-infinities}.

\bigskip

We next consider the case $a>1/2$. Before stating the result, we first recall some facts about unique $\beta$-expansions. Given a number $1<\beta<2$, a {\em $\beta$-expansion} of a number $x$ is an expression of the form
\[
x=\sum_{n=1}^\infty \frac{c_n}{\beta^n},
\]
where $c_n\in\{0,1\}$ for all $n$. Such an expansion exists if and only if $x\in I_\beta:=[0,\frac{1}{\beta-1}]$, but it is in general not unique. Let $\mathcal{U}_\beta$ denote the set of all $x\in I_\beta$ having a unique expansion. 

\begin{theorem} \label{thm:infinite-derivatives}
For each $k\in\NN$ and $a\in(1/2,1)$, we have
\[
\dim_H D_{k,a}^{+\infty}=\dim_H D_{k,a}^{-\infty}=\frac{\log\beta}{\log 3}\dim_H \mathcal{U}_\beta,
\]
where $\beta:=1/a$.
\end{theorem}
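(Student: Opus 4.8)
The plan is to reduce the derivative of $M_{k,a}$ to the behaviour of the increments over the triadic cylinders, and then translate the resulting combinatorial condition into a statement about unique $\beta$-expansions. Write $s_n=\sum_{i\le n}x_i3^{-i}$ for the left endpoint of the level-$n$ cylinder $I_n(x)=[s_n,s_n+3^{-n}]$ containing $x$, and $P_n(x):=a^{\,n-l_n(x)}(1-2a)^{l_n(x)}$ for the increment of $F_a$ over $I_n(x)$. Differentiating the scaling relation $F_a(s_n+3^{-n}t)=F_a(s_n)+P_n(x)F_a(t)$ $k$ times in $a$ yields the self-affine identity
\[
M_{k,a}(s_n+3^{-n}t)=M_{k,a}(s_n)+\sum_{i=0}^{k}\binom{k}{i}\bigl(\partial_a^i P_n\bigr)\,M_{k-i,a}(t),\qquad t\in[0,1].
\]
First I would prove a computational lemma on $\Delta_n:=\partial_a^k P_n$, the increment of $M_{k,a}$ over $I_n(x)$: expanding by Leibniz and using $1-2a<0$, every nonvanishing term carries the common sign $(-1)^{l_n(x)}$, so that $\sgn\Delta_n=(-1)^{l_n(x)}$ exactly for $n\ge k$, while the sum of magnitudes (which involves no cancellation) gives $c_1|P_n|n^k\le|\Delta_n|\le c_2|P_n|n^k$ with constants depending only on $a,k$. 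Thus the slope of $M_{k,a}$ across $I_n(x)$ has \emph{exactly} the same sign as that of $F_a$ and differs in size only by the positive factor $n^k$; this is the mechanism behind the $k$-independence of the answer.

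Next I would use the identity above at $t=1$ (where $M_{k-i,a}(1)=\partial_a^{k-i}F_a(1)=\delta_{ik}$) and at $t=\sigma^n x$ to express the difference quotients of $M_{k,a}$ to the endpoints of $I_n(x)$ as $3^n\Delta_n$ times bounded positive factors, plus terms smaller by a factor $n^{-1}$. A standard reduction to a countable family of triadic test points then shows that $M_{k,a}'(x)\in\{+\infty,-\infty\}$ forces the sign $(-1)^{l_n(x)}$ of these quotients to stabilise; since $l_n(x)$ is nondecreasing, this occurs only if $x$ has finitely many ternary digits equal to $1$. Consequently $D_{k,a}^{+\infty}$ (resp. $D_{k,a}^{-\infty}$) is a countable union of affine copies, indexed by the finite prefixes with an even (resp. odd) number of $1$'s, of the analogous set for points whose ternary expansion uses only the digits $0$ and $2$ --- that is, points on the Cantor set.

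On the Cantor set the coding $c_i:=x_i/2\in\{0,1\}$ is natural, and with $\beta:=1/a$ the normalized heights $v_n:=F_a(\sigma^n x)$ satisfy $v_n=\tfrac{1-a}{a}\sum_{i\ge1}c_{n+i}\beta^{-i}$, identifying the local geometry of $M_{k,a}$ near $x$ with the $\beta$-expansion of the tail of $(c_i)$. The heart of the proof is to show that, after the last $1$, $x$ lies in $D_{k,a}^{\pm\infty}$ if and only if its $\{0,2\}$-tail codes a point of the univoque set $\mathcal U_\beta$: uniqueness of the $\beta$-expansion is exactly what prevents the difference quotients to intermediate (non-endpoint) points from changing sign, so that the blow-up forced by $3^n|\Delta_n|\asymp(3a)^{n}n^k\to\infty$ occurs with a definite sign. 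I expect this equivalence to be the main obstacle, for two reasons: the lower-order terms $\binom{k}{i}(\partial_a^iP_n)M_{k-i,a}(\cdot)$ with $i<k$ must be shown not to spoil the sign (they do not, being smaller by powers of $n$), and on the upper-bound side one must rule out that the extra factor $n^k$ --- which can turn points where $F_a$ has a finite derivative into points where $M_{k,a}$ has an infinite one --- enlarges the set beyond the univoque-coded one. Both points are controlled by the sign lemma, which confines $D_{k,a}^{\pm\infty}$ to the univoque coding irrespective of $k$; the remaining equivalence can then be established directly or by transfer from the $F_a$ case of \cite{Allaart} (and \cite{Dalaklis} for $k=1$, $a=1/3$).

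Finally I would convert the symbolic description into dimension. The admissible $\{0,2\}$-tails form a subshift carried isomorphically onto the univoque subshift $\mathsf U_\beta$, hence share its topological entropy $h_{\mathrm{top}}(\mathsf U_\beta)$. Reading these sequences as ternary numbers (cylinders of size $3^{-n}$) gives a set of Hausdorff dimension $h_{\mathrm{top}}(\mathsf U_\beta)/\log 3$; combined with the known identity $\dim_H\mathcal U_\beta=h_{\mathrm{top}}(\mathsf U_\beta)/\log\beta$, each affine copy has dimension $\tfrac{\log\beta}{\log 3}\dim_H\mathcal U_\beta$, and a countable union does not change this. The matching lower bound is obtained by transporting a measure of maximal entropy for $\mathsf U_\beta$ (or Bernoulli approximations to it) to the ternary coding and computing its local dimension by a mass-distribution argument. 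Since the even- and odd-prefix families are related by prepending a single $1$ (the affine map $x\mapsto(1+x)/3$) and hence have equal entropy, both $D_{k,a}^{+\infty}$ and $D_{k,a}^{-\infty}$ attain the common value $\tfrac{\log\beta}{\log 3}\dim_H\mathcal U_\beta$.
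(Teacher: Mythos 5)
Your overall skeleton matches the paper's: a sign/growth lemma for the increments $\Delta_{k,a}(I_n(x))$ (your sign claim $\sgn\Delta_n=(-1)^{l_n(x)}$ and the bounds $|\Delta_n|\asymp|P_n|n^k$ are correct for $a>1/2$, and they are Lemma 8.1 in the paper), the reduction to points with finitely many ternary digits $1$, the $\{0,2\}$-coding as $\beta$-expansions with $\beta=1/a$, and the $\log\beta/\log 3$ dimension transfer. The genuine gap is at what you yourself call the heart of the proof: the claimed \emph{exact} equivalence ``$x\in D_{k,a}^{\pm\infty}$ iff the $\{0,2\}$-tail codes a point of $\mathcal{U}_\beta$.'' What the increment analysis actually yields (Proposition 8.2 of the paper) is a \emph{pair of non-matching conditions}: writing $L_a^{+}(x)=\limsup_{n\to\infty}\sum_{j\geq 1}a^{j}\delta_2(x_{n+j})$ and similarly $L_a^{-}$, strict inequality $L_a^{\pm}(x)<1$ is sufficient for an infinite one-sided derivative, while only the weak inequality $L_a^{\pm}(x)\leq 1$ is necessary. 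Points in the boundary zone $L_a^{\pm}(x)=1$ are undecided, and this zone genuinely contains univoque codings (recurrent univoque sequences, e.g.\ of Thue--Morse type, have tail sums with limsup exactly $1$); moreover the limsup runs over \emph{all} positions $n$, not just those with $x_n=0$ (resp.\ $2$), so neither condition literally coincides with the univoque criterion. Hence the iff you postulate is not obtainable ``directly or by transfer from \cite{Allaart}'' --- \cite{Allaart} does not prove it either --- and your entropy computation, which needs the admissible tails to form a subshift isomorphic to the univoque subshift, rests on an unproved (and at boundary points likely false or at least undecidable-by-these-methods) identification.

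The paper closes this gap by a mechanism entirely absent from your proposal: it defines $E_a:=\{x:\sup_n l_n(x)\in 2\NN,\ L_a^{+}(x)<1,\ L_a^{-}(x)<1\}$, uses monotonicity of $L_a^{\pm}$ in $a$ to get the sandwich $E_a\subset D_{k,a}^{+\infty}\subset\bigcap_{a'<a}E_{a'}$, proves $\dim_H E_a=\frac{-\log a}{\log 3}\dim_H\mathcal{U}_{1/a}$ via a bi-H\"older map onto (a set ``close'' to) $\mathcal{U}_{1/a}$ as in \cite[Section 5]{Allaart}, and then invokes the \emph{continuity} of $\beta\mapsto\dim_H\mathcal{U}_\beta$ (from \cite{KKL,Allaart-Kong}) to make the two sides of the sandwich meet as $a'\nearrow a$. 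To repair your argument you would either have to prove your exact characterization (hard, and beyond the known increment estimates) or replace it by this two-sided approximation plus the continuity input; as written, the dimension identity does not follow.
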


For the case $k=0$ (i.e.~$F_a$), this result was established in \cite{Allaart}. Comparing with the previous theorem, we observe that for $a>1/2$ the sets $\dim_H D_{k,a}^{+\infty}$ and $\dim_H D_{k,a}^{-\infty}$ have the same dimension, whereas for $a<1/2$ they do not (unless $a=1/3$).

The size of the univoque set $\mathcal{U}_\beta$ was described in the celebrated theorem of Glendinning and Sidorov \cite{GlenSid}. Let $\varphi:=(1+\sqrt{5})/2$ be the golden ratio, and recall that the {\em Thue-Morse sequence} is the sequence $(t_j)_{j=0}^\infty$ of $0$'s and $1$'s given by $t_j=s_j \mod 2$, where $s_j$ is the number of $1$'s in the binary representation of $j$. Thus,
\begin{equation}
(t_j)_{j=0}^\infty=0110\ 1001\ 1001\ 0110\ 1001\ 0110\ 0110\ 1001\ \dots
\label{eq:Thue-Morse}
\end{equation}
Let $\hat{a}\approx .5595$ be the unique root in $(0,1)$ of the equation $\sum_{j=1}^\infty t_j a^j=1$. Its reciprocal $\hat{\beta}:=1/\hat{a}\approx 1.787$ is known as the {\em Komornik-Loreti constant}, introduced in \cite{KomLor}. (It is the smallest base in which the number 1 has a unique expansion.)

\begin{theorem}[Glendinning and Sidorov] \label{thm:Glendinning-Sidorov}
The set $\mathcal{U}_\beta$ is:
\begin{enumerate}[(i)]
\item empty if $1<\beta\leq\varphi$;
\item countably infinite if $\varphi<\beta<\hat{\beta}$;
\item uncountable but of Hausdorff dimension $0$ if $\beta=\hat{\beta}$;
\item of positive Hausdorff dimension if $\hat{\beta}<\beta\leq 2$.
\end{enumerate}
\end{theorem}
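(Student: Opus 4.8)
The plan is to translate the statement into symbolic dynamics and then read off each case from the topological entropy of a subshift. Write $\alpha(\beta)=(\alpha_i)_{i\ge1}$ for the quasi-greedy expansion of $1$ in base $\beta$, i.e.\ the lexicographically largest expansion of $1$ in $\{0,1\}$-digits that does not end in $0^\infty$. The fundamental tool is the lexicographic (Parry-type) characterization of unique expansions of Erd\H{o}s--Jo\'o--Komornik and Komornik--Loreti: letting $\sigma$ denote the left shift and $\bar c_i:=1-c_i$, a sequence $(c_i)\in\{0,1\}^\NN$ is the unique expansion of $\sum_i c_i\beta^{-i}$ if and only if
\[
\sigma^n(c)\prec\alpha(\beta)\ \text{whenever } c_n=0, \qquad \overline{\sigma^n(c)}\prec\alpha(\beta)\ \text{whenever } c_n=1,
\]
for every $n\ge1$. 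Let $\mathbf U_\beta\subset\{0,1\}^\NN$ be the set of such sequences, so that $\mathcal U_\beta$ is its image under the coding map $(c_i)\mapsto\sum_i c_i\beta^{-i}$. Since this map carries a length-$n$ cylinder to an interval of length comparable to $\beta^{-n}$, the cardinality of $\mathcal U_\beta$ matches that of $\mathbf U_\beta$ and one obtains the dimension formula $\dim_H\mathcal U_\beta=h_{\mathrm{top}}(\mathbf U_\beta)/\log\beta$.

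Next I would track the landmark values of $\alpha(\beta)$. The map $\beta\mapsto\alpha(\beta)$ is strictly increasing for the lexicographic order, and the two thresholds in the theorem are exactly $\alpha(\varphi)=(10)^\infty$ and $\alpha(\hat\beta)=(t_i)_{i\ge1}=1101\,0011\,\dots$, the Thue--Morse sequence read from index $1$; this last identity is precisely the defining relation $\sum_{j\ge1}t_j\hat a^{\,j}=1$ of the Komornik--Loreti constant. With these in hand the four cases reduce to comparing $\alpha(\beta)$ against $(10)^\infty$ and against the Thue--Morse sequence.

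For (i), $\beta\le\varphi$ gives $\alpha(\beta)\preceq(10)^\infty$, and a short combinatorial argument shows the two admissibility inequalities force any $c\in\mathbf U_\beta$ to be constant; thus only the trivial expansions $0^\infty,1^\infty$ of the endpoints survive, and $\mathcal U_\beta$ is empty once these are excluded. For (ii), $\varphi<\beta<\hat\beta$, one shows every admissible sequence is eventually periodic: because $\alpha(\beta)$ lies strictly below the Thue--Morse sequence, the admissible words cannot sustain the aperiodicity needed for exponential branching, so $\mathbf U_\beta$ is countable, while the weaker bound $\alpha(\beta)\succ(10)^\infty$ already permits arbitrarily long nontrivial admissible prefixes, making it infinite. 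For (iii) and (iv) the size is governed by $h_{\mathrm{top}}(\mathbf U_\beta)$ through the dimension formula, and the heart of the matter is that $h_{\mathrm{top}}(\mathbf U_\beta)=0$ for $\beta\le\hat\beta$ but $h_{\mathrm{top}}(\mathbf U_\beta)>0$ for $\beta>\hat\beta$: at and below the Komornik--Loreti constant the admissible words inherit the subexponential word-complexity of the Thue--Morse subshift, whereas once $\alpha(\beta)\succ(t_i)$ exponentially many admissible words of each length appear. At $\beta=\hat\beta$ itself, zero entropy forces $\dim_H\mathcal U_{\hat\beta}=0$, yet the nonperiodic, self-similar structure of the Thue--Morse sequence still yields a continuum of admissible sequences, so $\mathcal U_{\hat\beta}$ is uncountable.

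The main obstacle is this sharp dichotomy at $\hat\beta$: one must prove simultaneously that the univoque subshift has \emph{zero} entropy and yet is uncountable exactly at the Komornik--Loreti constant, and that the entropy turns strictly positive immediately above it. This rests on the fine combinatorics of the Thue--Morse sequence---its overlap-free structure and the substitution $0\mapsto01$, $1\mapsto10$---which is what pins the growth rate of admissible words to be subexponential precisely up to $\hat\beta$. Everything else, namely the lexicographic characterization, the monotonicity of $\alpha(\beta)$, and the passage from entropy to Hausdorff dimension, is routine once the identities $\alpha(\varphi)=(10)^\infty$ and $\alpha(\hat\beta)=(t_i)_{i\ge1}$ are established.
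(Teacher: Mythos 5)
You should first be aware that the paper contains no proof of this statement: Theorem \ref{thm:Glendinning-Sidorov} is imported verbatim from Glendinning and Sidorov \cite{GlenSid}, and is used as a black box (via Corollary \ref{cor:infinite-derivative-size}). So there is no internal argument to compare against; measured instead against the original proof and its modern treatments, your framework is the standard and correct one — the lexicographic characterization of univoque sequences, the threshold identities $\alpha(\varphi)=(10)^\infty$ and $\alpha(\hat{\beta})=(t_i)_{i\geq 1}$, monotonicity of $\beta\mapsto\alpha(\beta)$, and Thue--Morse combinatorics are exactly the ingredients Glendinning and Sidorov use.

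However, your sketch has one genuinely broken inference and leaves the actual cores asserted rather than proved. The broken step is in case (ii): you derive countability from the claim that admissible words ``cannot sustain the aperiodicity needed for exponential branching,'' i.e.\ from subexponential word growth. That implication is false, and case (iii) is itself the counterexample: at $\beta=\hat{\beta}$ the univoque subshift has zero topological entropy yet is uncountable. Countability in (ii) must instead come from the structural fact that for $\varphi<\beta<\hat{\beta}$ every univoque sequence is \emph{eventually periodic}, with period drawn from a finite family of doubled Thue--Morse blocks determined by which interval between consecutive Thue--Morse approximants of $\hat{\beta}$ contains $\beta$; this renormalization argument via the substitution $0\mapsto 01$, $1\mapsto 10$ is the real content of Glendinning--Sidorov's proof of (ii), and you state the conclusion without supplying the mechanism. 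Likewise, in (iii) ``the self-similar structure yields a continuum'' needs the explicit embedding of an uncountable family of admissible sequences built from Thue--Morse blocks, and in (iv) one must exhibit, for each $\beta>\hat{\beta}$, a positive-entropy subshift (e.g.\ a full shift on two long admissible words) inside the univoque subshift. Finally, the dimension formula $\dim_H\mathcal{U}_\beta=h_{\mathrm{top}}(\mathbf{U}_\beta)/\log\beta$ is not ``routine'': your cylinder-to-interval comparability claim fails in general (two sequences agreeing in their first $n$ digits can have values far closer than $\beta^{-n}$), and the formula is a nontrivial theorem of Komornik--Kong--Li \cite{KKL} whose proof required correction in \cite{Allaart-Kong} — both cited in this very paper. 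For the theorem at hand you only need the two easy directions (zero entropy plus a covering argument gives $\dim_H\mathcal{U}_{\hat{\beta}}=0$; an embedded self-similar Cantor set gives positive dimension for $\beta>\hat{\beta}$), and the proof should be organized around those rather than the full dimension formula.
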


It was shown furthermore in \cite{KKL} (with a corrected proof in \cite{Allaart-Kong}) that the function $\beta\mapsto (\log\beta)\dim_H \mathcal{U}_\beta$ is an increasing devil's staircase. Thus, we obtain the following consequences for the set $D_{k,a}^\infty$:

\begin{corollary} \label{cor:infinite-derivative-size}
For each $k\in\NN$, the set $D_{k,a}^{+\infty}$ is:
\begin{enumerate}[(i)]
\item empty if $1>a\geq 1/\varphi\approx 0.6180$;
\item countably infinite, containing only rational points, if $\hat{a}<a<1/\varphi$;
\item of Hausdorff dimension $0$ if $a=\hat{a}$;
\item of positive Hausdorff dimension if $0<a<\hat{a}$.
\end{enumerate}
Moreover, on the interval $1/2<a<\hat{a}$, the function $a\mapsto \dim_H D_{k,a}^{+\infty}$ is continuous and nonincreasing in the manner of a devil's staircase; that is, there is a countable family $(I_j)_{j\in\NN}$ of disjoint subintervals of $(1/2,\hat{a})$, whose union has full Lebesgue measure in $(1/2,\hat{a})$, such that $\dim_H D_{k,a}^{+\infty}$ is constant on $I_j$ for each $j$.
\end{corollary}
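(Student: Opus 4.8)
The plan is to deduce the corollary from Theorem~\ref{thm:infinite-derivatives} together with the Glendinning--Sidorov trichotomy (Theorem~\ref{thm:Glendinning-Sidorov}), \emph{upgrading} the dimension identity of Theorem~\ref{thm:infinite-derivatives} to the finer cardinality statements (i)--(iii). The point is that, for $a\in(1/2,1)$, the proof of Theorem~\ref{thm:infinite-derivatives} does more than compute a dimension: it sets up, via ternary expansions, an explicit bijection between $D_{k,a}^{+\infty}$ and $\mathcal{U}_\beta$ (where $\beta=1/a$), the factor $\log\beta/\log 3$ arising as the change-of-base factor between the base-$\beta$ coding of $\mathcal{U}_\beta$ and the base-$3$ coding of $x$. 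I would first isolate from that proof the statement that this correspondence is a genuine bijection (in particular injective), so that $D_{k,a}^{+\infty}$ inherits from $\mathcal{U}_\beta$ not only its Hausdorff dimension but also its emptiness/countability/uncountability status. Extracting this faithful correspondence is exactly the step that goes beyond Theorem~\ref{thm:infinite-derivatives}, and I flag it as the main obstacle below.

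With the correspondence in hand, parts (i)--(iii) are immediate transfers. For (i), $a\geq 1/\varphi$ forces $\beta=1/a\leq\varphi$, so $\mathcal{U}_\beta$ is empty by Theorem~\ref{thm:Glendinning-Sidorov}(i); since the correspondence embeds $D_{k,a}^{+\infty}$ into $\mathcal{U}_\beta$, we get $D_{k,a}^{+\infty}=\emptyset$. For (iii), $a=\hat a$ gives $\beta=\hat\beta$, and Theorem~\ref{thm:infinite-derivatives} with Theorem~\ref{thm:Glendinning-Sidorov}(iii) yields $\dim_H D_{k,\hat a}^{+\infty}=(\log\hat\beta/\log 3)\dim_H\mathcal{U}_{\hat\beta}=0$. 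For (ii), $\hat a<a<1/\varphi$ gives $\varphi<\beta<\hat\beta$, so $\mathcal{U}_\beta$ is countably infinite and hence so is $D_{k,a}^{+\infty}$. The rationality assertion needs one extra input: in the regime $\varphi<\beta<\hat\beta$ every element of $\mathcal{U}_\beta$ has an eventually periodic unique expansion (a classical feature of the countable regime). Since the coding converts an eventually periodic binary $\beta$-expansion into an eventually periodic ternary expansion, the corresponding $x$ is rational.

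For (iv) I would split the range $(0,\hat a)$. On $1/2<a<\hat a$ we have $\hat\beta<\beta<2$, so $\dim_H\mathcal{U}_\beta>0$ by Theorem~\ref{thm:Glendinning-Sidorov}(iv) and hence $\dim_H D_{k,a}^{+\infty}>0$ by Theorem~\ref{thm:infinite-derivatives}. For $0<a<1/2$ the relevant input is Theorem~\ref{thm:D-inf-dimensions}: when $k$ is even, $\dim_H D_{k,a}^{+\infty}=d(a)>0$, while for odd $k$ the dimension equals $d(a)$ (for $a<1/3$), $1$ (for $a=1/3$), or $\tilde d(a)=h(1-2a)$ (for $1/3<a<1/2$). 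All of these are strictly positive, since $\phi(a)\in(0,1)$ for $a\in(0,1/2)$ and $0<1-2a<1$ for $1/3<a<1/2$, while $h(p)>0$ for $0\leq p<1$. The boundary value $a=1/2$ is covered by the analysis of Section~\ref{sec:infinite-derivative-one-half}, which gives positive dimension there as well. This exhausts $(0,\hat a)$.

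Finally, for the devil's-staircase statement on $(1/2,\hat a)$, write $g(\beta):=(\log\beta)\dim_H\mathcal{U}_\beta$, so that Theorem~\ref{thm:infinite-derivatives} reads
\[
\dim_H D_{k,a}^{+\infty}=\frac{1}{\log 3}\, g(1/a),\qquad 1/2<a<\hat a.
\]
By \cite{KKL} (corrected in \cite{Allaart-Kong}), $g$ is a continuous nondecreasing function that is constant on each of a countable family of disjoint subintervals of $(\hat\beta,2)$ whose union has full Lebesgue measure, i.e.\ an increasing devil's staircase. Since $a\mapsto 1/a$ is a decreasing bi-Lipschitz bijection of $(1/2,\hat a)$ onto $(\hat\beta,2)$ (its derivative $-1/a^2$ being bounded above and below in modulus on this interval, which is bounded away from $0$), the composition $a\mapsto g(1/a)$ is continuous and nonincreasing, constant exactly on the preimages $I_j$ of the plateaus of $g$, and $\bigcup_j I_j$ has full measure in $(1/2,\hat a)$ because the reciprocal map preserves Lebesgue-null complements. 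Dividing by $\log 3$ preserves all these properties, giving the claimed devil's staircase for $a\mapsto\dim_H D_{k,a}^{+\infty}$. The main obstacle throughout is the very first step: confirming that the correspondence furnished by the proof of Theorem~\ref{thm:infinite-derivatives} is a genuine bijection (so that emptiness and exact countability, not merely dimension, transfer), including the careful treatment of the degenerate endpoint expansions that are excluded from $\mathcal{U}_\beta$ in the convention of Theorem~\ref{thm:Glendinning-Sidorov}; and, for the rationality claim in (ii), invoking the eventually-periodic description of $\mathcal{U}_\beta$ rather than only its cardinality.
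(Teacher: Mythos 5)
Your handling of parts (iii), (iv) and of the staircase statement is correct and is exactly the paper's (implicit) route: the paper offers no written proof of this corollary, presenting it as an immediate consequence of Theorem \ref{thm:infinite-derivatives}, Theorem \ref{thm:Glendinning-Sidorov}, and the result of \cite{KKL,Allaart-Kong}. You are in fact more careful than that one-line derivation in noting that (iv) on $(0,1/2]$ is not covered by Theorem \ref{thm:infinite-derivatives} at all and needs Theorem \ref{thm:D-inf-dimensions} (and, at $a=1/2$, Section \ref{sec:infinite-derivative-one-half}: e.g.\ $F_{k,0}^{\f}\subset D_{k,1/2}^{+\f}$ has dimension $\log_3 2$). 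The composition argument with the bi-Lipschitz map $a\mapsto 1/a$ is also fine.

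The genuine gap is in (i) and (ii), and it is precisely at the step you flagged: the proof of Theorem \ref{thm:infinite-derivatives} does \emph{not} set up a bijection between $D_{k,a}^{+\infty}$ and $\mathcal{U}_\beta$, and no such bijection can be extracted from it. What that proof establishes, via Proposition \ref{prop:limsup-conditions}, is only the sandwich $E_a\subset D_{k,a}^{+\infty}\subset\bigcap_{a'<a}E_{a'}$, where $E_a$ is defined by the \emph{strict} inequalities $L_a^{\pm}(x)<1$, while membership in $D_{k,a}^{+\infty}$ forces only the \emph{weak} inequalities $L_a^{\pm}(x)\le 1$; the coding map $\phi$ is then shown to be bi-H\"older and ``close'' to $\mathcal{U}_{1/a}$ only in a dimension-preserving sense. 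Points with $L_a^{\pm}(x)=1$ are left unclassified by Proposition \ref{prop:limsup-conditions}, and these are exactly the points on which (i) and (ii) turn. Concretely, at $a=1/\varphi$ the point $x=(0.2020\,20\ldots)_3$ has $l(x)=0$ and $L_a^{+}(x)=L_a^{-}(x)=a/(1-a^2)=1$ (since $a^2+a=1$), so it passes the necessary condition; hence emptiness in (i) cannot be obtained by embedding $D_{k,a}^{+\infty}$ into $\mathcal{U}_\varphi=\emptyset$, but only by showing directly that such boundary points fail to have an infinite derivative, i.e.\ by analyzing the regime where the leading coefficient $c_k^{(n)}$ in the proof of Proposition \ref{prop:limsup-conditions} tends to $0$ and the lower-order terms decide the sign. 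The same equality case must be controlled to get the exact countability and the ``only rational points'' claim in (ii) (your appeal to eventual periodicity of unique expansions in the countable regime is the right second ingredient, but it applies only after the boundary points are disposed of). For $k=0$ this boundary analysis is carried out in \cite{Allaart}; for $k\ge 1$ it would have to be redone in the presence of the polynomial factors $P_k(n,l)$, and neither the paper nor your proposal supplies it. So you located the obstacle correctly, but the mechanism you propose for overcoming it is not available.
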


(The same statements hold also for $D_{k,a}^{-\infty}$.)

\begin{remark}
{\rm
We do not know if $D_{k,a}^{\pm\infty}$ is uncountable when $a=\hat{a}$. For $k=0$ (i.e. Okamoto's function $F_a$) this question was answered in the affirmative in \cite{Allaart-2017a}. But for $k\geq 1$ the situation is more complicated. 
}
\end{remark}

The remainder of this article is organized as follows. In Section \ref{sec:prelim}, we give functional equations for $M_{k,a}$, describe a useful symmetry property, prove Proposition \ref{prop:Holder-continuity}, and develop crucial expressions for the increments and oscillation of $M_{k,a}$ over ternary rational intervals. In Section \ref{sec:box-dimension} we prove Theorem \ref{thm:box-dimension}. Section \ref{sec:differentiability} deals with the finite derivatives of $M_{k,a}$, proving Proposition \ref{prop:only-zero} and Theorems \ref{thm:differentiability-summary}-\ref{thm:M_k-differentiability}. The infinite derivatives of $M_{k,a}$ are discussed in Sections \ref{sec:infinite-derivative-one-half}--\ref{sec:infinite-derivative-high}: Section \ref{sec:infinite-derivative-one-half} deals with the special case $a=1/2$; Section \ref{sec:infinite-derivative-small-a} treats the case $a<1/2$ and proves Theorems \ref{thm:D-inf-dimensions} and \ref{thm:alternating-pattern}; and Section \ref{sec:infinite-derivative-high} deals with the case $a>1/2$, proving Theorem \ref{thm:infinite-derivatives}.

\section{Preliminaries} \label{sec:prelim}

For a number $x\in[0,1]$, we identify $x$ with its ternary expansion. Thus we write simply $M_{k,a}(x_1x_2\dots)$ instead of $M_{k,a}(0.x_1x_2\dots)$, where $x_i\in\{0,1,2\}$ for all $i$. We also write $\sigma(x)$, or simply $\sigma x$, to denote $3x\!\!\mod 1$, so $\sigma$ acts as the left shift on the ternary expansion of $x$. If $(x_i)$ is the ternary expansion of $x$, we express this by $x=(0.x_1x_2\dots)_3$, i.e. $x=\sum_{i=1}^\f x_i/3^i$.

\begin{lemma}[Functional equations] \label{lem:FE}
For $x=(0.x_1x_2\dots)_3$, we have
\begin{gather}
F_a(x)=\begin{cases}
aF_a(\sigma x) & \mbox{if $x_1=0$},\\
(1-2a)F_a(\sigma x)+a & \mbox{if $x_1=1$},\\
aF_a(\sigma x)+1-a & \mbox{if $x_1=2$},
\end{cases} \label{eq:Fa-FE} \\
M_{1,a}(x)=\begin{cases}
aM_{1,a}(\sigma x)+F_a(\sigma x) & \mbox{if $x_1=0$},\\
(1-2a)M_{1,a}(\sigma x)-2F_a(\sigma x)+1 & \mbox{if $x_1=1$},\\
aM_{1,a}(\sigma x)+F_a(\sigma x)-1 & \mbox{if $x_1=2$},
\end{cases} \label{eq:M1-FE}
%M_a^{(2)}(x)=\begin{cases}
%aM_a^{(2)}(\sigma x)+2M_a^{(1)}(\sigma x) & \mbox{if $x_1=0$ or $2$},\\
%(1-2a)M_a^{(2)}(\sigma x)-4M_a^{(1)}(\sigma x)+1 & \mbox{if $x_1=1$},
%\end{cases} \label{M2-FE}
\end{gather}
and for $k\geq 2$,
\begin{equation} \label{eq:Mk-FE}
M_{k,a}(x)=\begin{cases}
aM_{k,a}(\sigma x)+kM_{k-1,a}(\sigma x) & \mbox{if $x_1=0$ or $2$},\\
(1-2a)M_{k,a}(\sigma x)-2k M_{k-1,a}(\sigma x) & \mbox{if $x_1=1$}.
\end{cases}
\end{equation}
\end{lemma}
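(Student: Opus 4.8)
The plan is to derive all three functional equations by differentiating the original functional equation \eqref{eq:Okamoto-FE} for $F_a$ repeatedly with respect to $a$, using the fact (established in \cite{Kobayashi}) that $F_a(x)$ is real analytic in $a$ for each fixed $x$. Since $M_{k,a}=\partial^k F_a/\partial a^k$ and the partial derivatives with respect to $a$ commute with the substitution $x\mapsto\sigma x$ (which does not involve $a$), I would work branch by branch on the three ternary cases $x_1=0,1,2$, corresponding respectively to the three pieces of \eqref{eq:Okamoto-FE} after one application of the shift.

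First, I would establish \eqref{eq:Fa-FE}, which is just a restatement of \eqref{eq:Okamoto-FE} in terms of the ternary digit $x_1$ and the shift $\sigma$: for $x_1=0$ we have $x\in[0,1/3]$ with $3x=\sigma x$, for $x_1=1$ we have $x\in[1/3,2/3]$ with $3x-1=\sigma x$, and for $x_1=2$ we have $x\in[2/3,1]$ with $3x-2=\sigma x$. (The endpoint ambiguities are handled by the standing convention that we take the ternary expansion ending in zeros, which is consistent with the continuity of $F_a$.) Next, differentiating each branch once in $a$ gives \eqref{eq:M1-FE}: in the branch $x_1=0$, differentiating $a\,F_a(\sigma x)$ yields $a\,M_{1,a}(\sigma x)+F_a(\sigma x)$ by the product rule; in the branch $x_1=1$, differentiating $(1-2a)F_a(\sigma x)+a$ yields $(1-2a)M_{1,a}(\sigma x)-2F_a(\sigma x)+1$; and in the branch $x_1=2$, differentiating $a\,F_a(\sigma x)+1-a$ yields $a\,M_{1,a}(\sigma x)+F_a(\sigma x)-1$.

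For the general case \eqref{eq:Mk-FE} with $k\geq 2$, I would apply the Leibniz rule for the $k$-th derivative of a product to the multiplicative factors $a$ and $1-2a$, which are linear in $a$. Differentiating $a\,F_a(\sigma x)$ exactly $k$ times in $a$ produces $a\,M_{k,a}(\sigma x)+k\,M_{k-1,a}(\sigma x)$, since only the zeroth and first derivatives of the factor $a$ are nonzero; the constant terms $1-a$ (for $x_1=2$) and $a$ (for $x_1=1$) contribute nothing for $k\geq 2$. This explains why the branches $x_1=0$ and $x_1=2$ merge into a single case once $k\geq 2$: their additive constants have vanished under the second and higher derivatives. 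Similarly, $k$-fold differentiation of $(1-2a)F_a(\sigma x)+a$ gives $(1-2a)M_{k,a}(\sigma x)-2k\,M_{k-1,a}(\sigma x)$.

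The only genuinely delicate point, and the one I would treat with care, is the interchange of the $a$-differentiation with the evaluation and the shift map, together with the behavior at the dyadic-type breakpoints $x=1/3$ and $x=2/3$ where the definition of $F_a$ switches branches. The justification rests on the real analyticity of $a\mapsto F_a(x)$ from \cite{Kobayashi}, which guarantees that all the partial derivatives $M_{k,a}(x)$ exist and that term-by-term differentiation of the functional equation is legitimate for $x$ in the interior of each ternary interval; the boundary points form a set that can be handled separately using the ternary-expansion convention, and the identities then extend by the continuity asserted in Proposition \ref{prop:Holder-continuity}. I expect this analyticity-and-continuity bookkeeping to be the main (though routine) obstacle, while the algebraic derivation of the recursions themselves is straightforward.
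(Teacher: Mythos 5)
Your proposal is correct and follows essentially the same route as the paper: the paper's proof likewise observes that \eqref{eq:Fa-FE} is just a restatement of \eqref{eq:Okamoto-FE} and that \eqref{eq:M1-FE} and \eqref{eq:Mk-FE} follow by repeatedly differentiating \eqref{eq:Fa-FE} with respect to $a$, the differentiation being legitimate because $a\mapsto F_a(x)$ is real analytic for each fixed $x$. Your additional Leibniz-rule bookkeeping and the remark that the additive constants die for $k\geq 2$ (merging the $x_1=0$ and $x_1=2$ branches) simply make explicit what the paper leaves implicit.
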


\begin{proof}
The functional equation for $F_a$ is a restatement of \eqref{eq:Okamoto-FE}. The others follow from it by repeated differentiation of \eqref{eq:Fa-FE} with respect to $a$.
\end{proof}

\begin{lemma}[Symmetry] \label{lem:symmetry}
Let $I=[x_l,x_r]=[j3^{-n},(j+1)3^{-n}]$ be a ternary interval. Then for each $k\in\NN_0$,
\begin{equation} \label{eq:symmetry}
M_{k,a}(x_l+u)-M_{k,a}(x_l)=M_{k,a}(x_r)-M_{k,a}(x_r-u) \qquad\mbox{for all $u\in[0,3^{-n}]$}.
\end{equation}
In particular, for each $k\geq 1$ we have
\[
M_{k,a}(1-x)=-M_{k,a}(x) \qquad \mbox{for all $x\in[0,1]$}.
\]
\end{lemma}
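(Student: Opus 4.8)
The plan is to derive everything from the single reflection identity for Okamoto's function itself, namely $F_a(1-x)=1-F_a(x)$, together with the self-affine structure encoded in the functional equations of Lemma~\ref{lem:FE}. First I would establish $F_a(1-x)=1-F_a(x)$: the function $x\mapsto 1-F_a(1-x)$ is continuous and satisfies the same functional equation \eqref{eq:Okamoto-FE} (one checks each of the three branches directly), so by the uniqueness of the continuous solution it must equal $F_a$. Since $F_a(x)$ is real analytic in $a$ for each fixed $x$, I may differentiate this identity $k$ times with respect to $a$. For $k\geq1$ the constant $1$ disappears and I obtain $M_{k,a}(1-x)=-M_{k,a}(x)$, which is precisely the ``in particular'' claim; for $k=0$ the identity is just $F_a(1-x)=1-F_a(x)$.

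For the interval statement I would first rewrite \eqref{eq:symmetry}. Writing $R(x):=x_l+x_r-x$ for the reflection of $I$ about its midpoint and substituting $x=x_l+u$ (so that $R(x)=x_r-u$), the claimed identity becomes $M_{k,a}(x)+M_{k,a}(R(x))=M_{k,a}(x_l)+M_{k,a}(x_r)$ for all $x\in I$. Thus it suffices to show that the function $x\mapsto M_{k,a}(x)+M_{k,a}(R(x))$ is constant on $I$, since evaluating at $x=x_l$ then pins the constant to $M_{k,a}(x_l)+M_{k,a}(x_r)$. I would prove this constancy by induction on the level $n$ of the interval, establishing the statement for all $k\in\NN_0$ simultaneously at each level. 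The base case $n=0$ is the interval $[0,1]$, for which the assertion reduces (using $M_{k,a}(0)=M_{k,a}(1)=0$ for $k\geq1$, and $F_a(0)=0$, $F_a(1)=1$ for $k=0$) exactly to the global symmetry just established.

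For the inductive step, fix a level-$(n+1)$ interval $I'$ with common first ternary digit $x_1$, and let $I=\sigma I'$ be the associated level-$n$ interval. Two elementary geometric facts drive the argument: every interior point $x$ of $I'$ and its reflection $R'(x)$ share the first digit $x_1$ (they lie in the same level-$(n+1)$ interval), and $\sigma$ conjugates the reflection of $I'$ to that of $I$, i.e. $\sigma(R'(x))=R(\sigma x)$. Applying the appropriate branch of \eqref{eq:Fa-FE}, \eqref{eq:M1-FE} or \eqref{eq:Mk-FE}—the \emph{same} branch for $x$ and for $R'(x)$—the leading terms combine into $\lambda\big(M_{k,a}(\sigma x)+M_{k,a}(R(\sigma x))\big)$, where $\lambda=a$ if $x_1\in\{0,2\}$ and $\lambda=1-2a$ if $x_1=1$; this is constant in $x$ by the inductive hypothesis at level $n$ for index $k$. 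The lower-order terms combine into a multiple of $M_{k-1,a}(\sigma x)+M_{k-1,a}(R(\sigma x))$ (or of $F_a(\sigma x)+F_a(R(\sigma x))$ when $k=1$), constant by the inductive hypothesis at level $n$ for index $k-1$; and the additive constants simply double. Hence $M_{k,a}(x)+M_{k,a}(R'(x))$ is constant on $I'$, completing the induction.

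The main obstacle is the bookkeeping in the inductive step: for each of the three digit values and each $k$ one must verify that the lower-order and constant terms in Lemma~\ref{lem:FE} really do assemble into reflection-symmetric pieces, which is exactly why the induction must run over all $k$ at once, so that the $M_{k-1,a}$ contribution is already controlled at level $n$. A minor technical point is the behavior at the ternary-rational endpoints, where the first digit is ambiguous; since $M_{k,a}$ is continuous by Proposition~\ref{prop:Holder-continuity}, it is enough to verify \eqref{eq:symmetry} for $u$ in the open interval and then extend to the endpoints by continuity.
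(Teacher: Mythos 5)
Your proposal reaches the right conclusion and most of its steps are sound, but it both diverges from the paper's argument and contains one logical flaw that must be repaired before it could be spliced into this paper. First, the comparison. The paper's proof is three sentences: it observes that $M_{0,a}=F_a$ satisfies \eqref{eq:symmetry} (for fixed $x_l,x_r,u$ this is an identity between two real-analytic functions of $a$), differentiates that identity $k$ times with respect to $a$ to transfer it to every $M_{k,a}$ at once, and then deduces the ``in particular'' statement by applying \eqref{eq:symmetry} on $[0,1]$ together with $M_{k,a}(0)=M_{k,a}(1)=0$ for $k\geq 1$, proved by induction from the functional equations. You invert this order: you prove only the global reflection $F_a(1-x)=1-F_a(x)$ (by uniqueness of the continuous solution of \eqref{eq:Okamoto-FE}), differentiate in $a$ to get $M_{k,a}(1-x)=-M_{k,a}(x)$, and then localize to ternary intervals by an induction on the level $n$, coupled across the indices $k$ and $k-1$ through Lemma \ref{lem:FE}. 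This is considerably more work, but it is self-contained in a way the paper's proof is not: the paper's opening assertion that $F_a$ satisfies \eqref{eq:symmetry} is left unproved there, and your induction specialized to $k=0$ is exactly a proof of it. Both arguments rest on the same key trick of differentiating an identity in $a$; you simply apply it to a weaker input and compensate with the scale induction.

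The flaw is in your final paragraph, where you dispose of the ternary-rational endpoints by invoking the continuity of $M_{k,a}$ ``by Proposition \ref{prop:Holder-continuity}.'' Inside this paper that is circular: the paper's proof of Proposition \ref{prop:Holder-continuity} uses Lemma \ref{lem:symmetry} precisely to handle pairs of points lying in adjacent ternary intervals, so the proposition cannot be used to prove the lemma. Nor is continuity of $M_{k,a}$ in $x$ free from some other source: $M_{k,a}$ is defined only as a pointwise-in-$x$ derivative with respect to $a$, and its continuity in $x$ is exactly what Proposition \ref{prop:Holder-continuity} is for. Two repairs are available. The minimal one within your framework: the functional equations of Lemma \ref{lem:FE} are valid for \emph{either} ternary expansion of an ambiguous point, because the two corresponding evaluations of \eqref{eq:Fa-FE} agree for every $a\in(0,1)$ (the classical consistency of Okamoto's equation at interval endpoints) and therefore remain equal after differentiating in $a$; representing the right endpoint of each interval by its expansion ending in $2^\f$, every point of the \emph{closed} interval $I'$ then carries a representative expansion with the same first digit, the same branch applies throughout, $\sigma$ still conjugates the two reflections, and your induction yields constancy on all of $I'$ with no appeal to continuity of $M_{k,a}$. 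The more economical repair collapses your argument into the paper's: run your scale induction only for $k=0$, where the endpoint issue is handled by the classical continuity of $F_a$ (which depends on nothing in this paper); this gives \eqref{eq:symmetry} for $F_a$, and then, for fixed $x_l,x_r,u$, both sides of \eqref{eq:symmetry} are real-analytic in $a$ and agree for all $a$, so their $k$-th derivatives in $a$ agree, which is \eqref{eq:symmetry} for every $k$.
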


\begin{proof}
Note that $M_{0,a}=F_a$ satisfies \eqref{eq:symmetry}. Hence, by differentiating repeatedly with respect to $a$, we see that $M_{k,a}$ satisfies this property for all $k\in\NN$. The second statement follows from the first since, by the functional equations \eqref{eq:M1-FE} and \eqref{eq:Mk-FE} and using $F_a(0)=0$ and $F_a(1)=1$, it follows easily by induction that $M_{k,a}(0)=M_{k,a}(1)=0$ for all $k\geq 1$.
\end{proof}

The following explicit expression for $F_a(x)$ was first given in \cite{Kobayashi}.

\begin{lemma} \label{lem:Okamoto-series}
For $x=(0.x_1 x_2\dots)_3$, we have for $F_a(x)$ the explicit expression
\begin{equation} \label{eq:Okamoto-series}
F_a(x)=\sum_{n=0}^\f a^{n-l_n(x)}(1-2a)^{l_n(x)}q(x_{n+1}),
\end{equation}
where $q(0)=0, q(1)=a$ and $q(2)=1-a$.
\end{lemma}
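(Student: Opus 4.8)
The plan is to iterate the functional equation \eqref{eq:Fa-FE} and collect terms. First I would rewrite \eqref{eq:Fa-FE} in the uniform one-step form
\[
F_a(x)=m(x_1)\,F_a(\sigma x)+q(x_1),
\]
where $m(0)=m(2)=a$, $m(1)=1-2a$, and $q$ is the function in the statement. This packages the three cases into a single affine relation between $F_a(x)$ and $F_a(\sigma x)$, with the multiplier determined by whether the leading digit is $1$.

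Next I would prove by induction on $n$ the finite identity
\[
F_a(x)=\Big(\prod_{i=1}^n m(x_i)\Big)F_a(\sigma^n x)+\sum_{j=0}^{n-1}\Big(\prod_{i=1}^j m(x_i)\Big)q(x_{j+1}),
\]
with the convention that the empty product equals $1$. The base case $n=0$ is trivial, and the inductive step is obtained by applying the one-step relation to $F_a(\sigma^n x)$ and using $\sigma^{n+1}x=\sigma(\sigma^n x)$. I would then simplify the products: among the digits $x_1,\dots,x_j$ exactly $l_j(x)$ equal $1$ (each contributing a factor $1-2a$) while the remaining $j-l_j(x)$ digits lie in $\{0,2\}$ (each contributing $a$), so that
\[
\prod_{i=1}^j m(x_i)=a^{\,j-l_j(x)}(1-2a)^{l_j(x)}.
\]

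The only real point requiring care is the convergence of the series, i.e.\ showing the remainder term tends to $0$. Since $F_a$ maps $[0,1]$ into $[0,1]$ we have $|F_a(\sigma^n x)|\le 1$, and writing $\rho:=\max\{a,|1-2a|\}$ one checks that $\rho<1$ for every $a\in(0,1)$ (as $0<a<1$ forces $|1-2a|<1$). Hence the remainder coefficient satisfies
\[
\bigl|a^{\,n-l_n(x)}(1-2a)^{l_n(x)}\bigr|\le \rho^{\,n}\longrightarrow 0,
\]
so letting $n\to\infty$ yields the claimed series after relabeling the summation index $j$ as $n$. I would close with a brief remark that, for triadic rational $x$, the convention of using the ternary expansion ending in all zeros makes $q(x_{n+1})=0$ for large $n$, so the series terminates and remains consistent with the functional equation; this is the only boundary subtlety, and it is straightforward to verify. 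The remainder estimate is the crux, but it is elementary, so no serious obstacle is anticipated.
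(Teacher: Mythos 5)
Your proof is correct, and it is worth noting that the paper itself does not prove this lemma at all: it simply cites \cite{Kobayashi}, where the expansion \eqref{eq:Okamoto-series} first appeared. Your argument---packaging \eqref{eq:Fa-FE} into the single affine recursion $F_a(x)=m(x_1)F_a(\sigma x)+q(x_1)$, iterating to obtain the finite identity with $\prod_{i=1}^j m(x_i)=a^{\,j-l_j(x)}(1-2a)^{l_j(x)}$, and killing the remainder via $\rho:=\max\{a,|1-2a|\}<1$---is the natural and standard route, and it gives a self-contained proof where the paper offers only a reference. One small point deserves more care than you give it: the claim that $F_a$ maps $[0,1]$ into $[0,1]$ is not immediate when $a>1/2$, since then $1-2a<0$ and $F_a$ is non-monotone, so one would need a short separate argument (e.g.\ comparing $\max F_a$ and $\min F_a$ via the self-affine structure) to see that the range stays in $[0,1]$. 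However, your remainder estimate only needs $F_a$ to be \emph{bounded} on $[0,1]$, which is automatic because $F_a$ is continuous on a compact interval; replacing the bound $1$ by $K:=\sup_{[0,1]}|F_a|<\infty$ makes the step airtight with no additional work. Your closing remark on triadic rationals correctly matches the paper's convention of choosing the expansion ending in $0^\f$, under which the series terminates and agrees with the functional equation at those points.
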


Define the closed intervals
$$I_{n,j}:=[j3^{-n},(j+1)3^{-n}], \qquad n\in\NN, \quad j=0,1,\dots,3^n-1.$$ 
Further, for $n\in\NN$ and $x\in(0,1)$, let $I_n(x)$ denote the ternary interval $I_{n,j}$ containing $x$. (If $x=j/3^n$, we choose $I_n(x):=[j/3^n,(j+1)/3^n]$ for definiteness.)

\begin{proof}[Proof of Proposition \ref{prop:Holder-continuity}]
Write $l_n:=l_n(x)$ for brevity. Differentiating \eqref{eq:Okamoto-series} $k$ times with respect to $a$ using Leibniz' generalized product rule gives
\begin{equation} \label{eq:M-ternary-expression}
M_{k,a}(x)=\sum_{n=0}^\f \sum_{i_1+i_2+i_3=k} \binom{k}{i_1,i_2,i_3} \frac{\partial^{i_1}}{\partial a^{i_1}}a^{n-l_n}\cdot \frac{\partial^{i_2}}{\partial a^{i_2}}(1-2a)^{l_n} \cdot \frac{\partial^{i_3}}{\partial a^{i_3}}q(x_{n+1}),
\end{equation}
where we sum over triples $(i_1,i_2,i_3)$ of nonnegative integers summing to $k$. Assume first that $a\neq 1/2$. Since $|\partial^i q(x_{n+1})/\partial a^i|\leq 1$ for all $i\geq 0$, we can estimate
\begin{align*}
&\left|\sum_{i_1+i_2+i_3=k} \binom{k}{i_1,i_2,i_3} \frac{\partial^{i_1}}{\partial a^{i_1}}a^{n-l_n}\cdot \frac{\partial^{i_2}}{\partial a^{i_2}}(1-2a)^{l_n} \cdot \frac{\partial^{i_3}}{\partial a^{i_3}}q(x_{n+1})\right|\\
&\qquad \leq \sum_{i_1+i_2+i_3=k} \binom{k}{i_1,i_2,i_3}\frac{(n-l_n)!}{(n-l_n-i_1)!}a^{n-l_n-i_1} \cdot \frac{l_n!}{(l_n-i_2)!}|1-2a|^{l_n-i_2} 2^{i_2} \cdot 1\\
&\qquad \leq \sum_{i_1+i_2+i_3=k} \binom{k}{i_1,i_2,i_3}(n-l_n)^{i_1}a^{n-l_n-i_1} \cdot l_n^{i_2}|1-2a|^{l_n-i_2} 2^{i_2} \cdot 1^{i_3}\\
&\qquad = a^{n-l_n}|1-2a|^{l_n}\left(\frac{n-l_n}{a}+\frac{2l_n}{|1-2a|}+1\right)^k\\
&\qquad \leq Ca^{n-l_n}|1-2a|^{l_n} n^k \leq Cb^n n^k,
\end{align*}
where $C$ is a sufficiently large constant, $b:=\max\{a,|1-2a|\}$, and the second-to-last inequality holds for all $n$, since $l_n\leq n$.

Now let $x,y\in[0,1]$ with $x<y$ and take $m\in\NN$ such that $3^{-m-1}\leq|x-y|<3^{-m}$. Suppose first that $x$ and $y$ belong to the same ternary interval $I_{m,j}$ of order $m$. Then $x$ and $y$ agree in their first $m$ ternary digits. Since the terms in the summation \eqref{eq:M-ternary-expression} for $n<m$ depend only on the first $m$ ternary digits of $x$, it follows from the above that
\[
|M_{k,a}(x)-M_{k,a}(y)|\leq 2\sum_{n=m}^\f Cb^n n^k\leq C'b^m m^k
\]
for some constant $C'$ depending only on $b$. If instead, $x$ and $y$ belong to adjacent ternary intervals, say $I_{m,j}$ and $I_{m,j+1}$, we let $z=(j+1)3^{-m}$ denote the common endpoint of $I_{m,j}$ and $I_{m,j+1}$ and apply the above estimate to the pairs $(x,z)$ and $(z,y)$ to get the same inequality but with $2C'$ in place of $C'$. Note that, although $x$ and $z$ do not have their first $m$ ternary digits in common, we have by the symmetry property of Lemma \ref{lem:symmetry} that
\[
M_{k,a}(x)-M_{k,a}(z)=M_{k,a}(j3^{-m})-M_{k,a}(j3^{-m}+z-x),
\]
and the points $j3^{-m}$ and $j3^{-m}+z-x$ do agree in their first $m$ digits. Now observe that
\[
b^m m^k=b^{-1}(3^{m+1})^{\log_3 b}m^k<b^{-1}|x-y|^{-\log_3 b}\left(\log\frac{1}{|x-y|}\right)^k.
\]
Recalling that $\gamma=-\log_3 b$, this completes the proof for the case $a\neq 1/2$.

When $a=1/2$, we have that
\[
\frac{\partial^i}{\partial a^i}(1-2a)^{l_n}\bigg|_{a=1/2}=\begin{cases}
l_n!(-2)^{l_n} & \mbox{if $i=l_n$},\\
0 & \mbox{otherwise}.
\end{cases}
\]
Thus, the $n$th term in \eqref{eq:M-ternary-expression} vanishes when $l_n>k$, while for $l_n\leq k$ we can estimate its absolute value by
\begin{align*}
\sum_{i_1+i_3=k-l_n} &\binom{k}{i_1,l_n,i_3}(n-l_n)^{i_1}\left(1/2\right)^{n-l_n-i_1}l_n! 2^{l_n}\\
&=\left(1/2\right)^n 2^{2l_n}\frac{k!}{(k-l_n)!} \sum_{i_1=0}^{k-l_n} \binom{k-l_n}{i_1}\big(2(n-l_n)\big)^{i_1}\\
&\leq k! 2^{2k} \left(1/2\right)^n \big(2(n-l_n)+1\big)^{k-l_n}
\leq C_k \left(1/2\right)^n n^k
\end{align*}
for a suitably large constant $C_k$. The rest of the proof proceeds in the same way as before.
\end{proof}

We next develop some explicit expressions for the increments of $M_{k,a}$ over ternary rational intervals, as well as bounds on the oscillation over such intervals. 
For $n\in\NN$ and $j=0,1,\dots,3^n-1$, define
\[
\Delta_{k,a}(I_{n,j}):=M_{k,a}\left(\frac{j+1}{3^n}\right)-M_{k,a}\left(\frac{j}{3^n}\right).
\]
In what follows, we also let $l(i)$ denote the number of $1$'s in the ternary representation of $i$ for any non-negative integer $i$. %And if $(i_1,i_2,\dots)$ is a sequence in $\{0,1,2\}^\NN$, we let $(0.i_1,i_2,\dots)_3$ denote the real number whose ternary expansion is $(i_1,i_2,\dots)$, i.e. $(0.\mathbf{i})_3=\sum_{j=1}^\f i_j 3^{-j}$. 
We write $0^\f$ for the infinite sequence of all $0$'s.

We first consider the case $a\neq 1/2$. The description of the increments of $M_{k,a}$ over ternary rational intervals given in the next lemma is central to many of the proofs in this paper.

\begin{lemma} \label{lem:ternary-increments}
Assume $a\neq 1/2$. There is a polynomial $P_k(n,l)$ in $n$ and $l$ of the form
\[
P_k(n,l)=\{(1-2a)n-l\}^k+R_k(n,l),
\]
where $R_k(n,l)$ is a polynomial of degree at most $k-1$ in $n$ and $l$, such that, for all $n\in\NN$ and $0\leq i<3^n$,
\[
\Delta_{k,a}(I_{n,i})=a^{n-l(i)-k}(1-2a)^{l(i)-k}P_k(n,l(i)).
\]
\end{lemma}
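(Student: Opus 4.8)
The plan is to reduce everything to the case $k=0$ and then differentiate $k$ times in $a$. Throughout I write $(m)_j:=m(m-1)\cdots(m-j+1)$ for the falling factorial, with $(m)_0:=1$ and the convention $(m)_j=0$ when $j>m\ge 0$; this is a polynomial of degree $j$ in $m$.

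First I would establish the case $k=0$: for every $n$ and every $0\le i<3^n$,
\[
\Delta_{0,a}(I_{n,i})=a^{\,n-l(i)}(1-2a)^{l(i)}.
\]
I would prove this by induction on $n$ using the self-affine structure in \eqref{eq:Fa-FE}. Writing the ternary digits of $i$ as $x_1\cdots x_n$ and peeling off the leading digit $d:=x_1$, the interval $I_{n,i}$ lies inside $I_{1,d}$, and on $I_{1,d}$ the function $F_a$ equals $s(d)\,F_a(3x-d)+(\text{const})$, where $s(0)=s(2)=a$ and $s(1)=1-2a$. Since $3\cdot\tfrac{i}{3^n}-d=\tfrac{i'}{3^{n-1}}$ and $3\cdot\tfrac{i+1}{3^n}-d=\tfrac{i'+1}{3^{n-1}}$, where $i'$ has ternary digits $x_2\cdots x_n$, this yields $\Delta_{0,a}(I_{n,i})=s(d)\,\Delta_{0,a}(I_{n-1,i'})$, and the formula follows by induction (the base case $n=0$ being $F_a(1)-F_a(0)=1$). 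This telescoping automatically produces the correct sign when $a>1/2$, since $\Delta$ is a signed increment.

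Next, since $\Delta_{k,a}(I_{n,i})=M_{k,a}(\tfrac{i+1}{3^n})-M_{k,a}(\tfrac{i}{3^n})=\partial_a^k\big[F_a(\tfrac{i+1}{3^n})-F_a(\tfrac{i}{3^n})\big]=\partial_a^k\Delta_{0,a}(I_{n,i})$, I would differentiate $a^m(1-2a)^l$ (with $m:=n-l(i)$, $l:=l(i)$) $k$ times via the Leibniz rule, using $\partial_a^j a^m=(m)_j a^{m-j}$ and $\partial_a^{k-j}(1-2a)^l=(l)_{k-j}(-2)^{k-j}(1-2a)^{l-(k-j)}$. Factoring out $a^{m-k}(1-2a)^{l-k}$ gives
\[
\Delta_{k,a}(I_{n,i})=a^{\,m-k}(1-2a)^{l-k}\,P_k(n,l),\qquad P_k(n,l):=\sum_{j=0}^k\binom{k}{j}(m)_j(l)_{k-j}(-2)^{k-j}a^{k-j}(1-2a)^j.
\]
Because $m=n-l$, each $(m)_j=(n-l)_j$ and each $(l)_{k-j}$ is a polynomial in $n,l$, so $P_k$ is a polynomial in $n$ and $l$ with $a$-dependent coefficients. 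To identify its top-degree part, I would use $(n-l)_j=(n-l)^j+(\text{lower order})$ and $(l)_{k-j}=l^{k-j}+(\text{lower order})$; the degree-$k$ homogeneous part of the $j$-th summand is then $\binom{k}{j}\big[(1-2a)(n-l)\big]^j\big[(-2a)l\big]^{k-j}$, and summing over $j$ the binomial theorem collapses this to
\[
\big[(1-2a)(n-l)-2al\big]^k=\big[(1-2a)n-l\big]^k.
\]
Setting $R_k(n,l):=P_k(n,l)-\{(1-2a)n-l\}^k$ leaves a polynomial of degree at most $k-1$, which is precisely the claimed form.

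The computation is essentially routine, so the only points needing care are the orientation bookkeeping in the first step (ensuring the sign of $\Delta_{0,a}$ is handled correctly for $a>1/2$ and that $\Delta$ is consistently a signed increment) and the degree accounting in the last step. One further point worth flagging is that when $n-l(i)<k$ or $l(i)<k$ the displayed formula involves negative powers of $a$ or of $1-2a$; these are only apparent, since in that regime the corresponding falling factorials $(m)_j$ (resp. $(l)_{k-j}$) vanish, and the factorization remains a genuine identity between real numbers for every $a\in(0,1)\setminus\{1/2\}$.
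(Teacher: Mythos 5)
Your proof is correct and follows essentially the same route as the paper's: differentiate the $k=0$ increment formula $\Delta_{0,a}(I_{n,i})=a^{n-l(i)}(1-2a)^{l(i)}$ $k$ times in $a$ via the Leibniz rule, factor out $a^{n-l-k}(1-2a)^{l-k}$, and collapse the top-degree homogeneous part with the binomial theorem to obtain $\{(1-2a)n-l\}^k$. Your only additions are the inductive verification of the $k=0$ case (which the paper states without proof) and the explicit treatment, via vanishing falling factorials, of the degenerate cases $n-l(i)<k$ or $l(i)<k$, a point the paper's notation $\frac{(n-l)!}{(n-l-j)!}$, $\frac{l!}{(l-k+j)!}$ leaves implicit.
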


(By the degree of a polynomial $R(n,l)$, we mean the maximum value of $p+q$ such that $R(n,l)$ contains a term $c_{p,q}n^p l^q$. Although $P_k(n,l)$ and $R_k(n,l)$ also depend on the parameter $a$, we suppress this dependence in the notation.)
For example, 
\begin{align*}
R_1(n,l)&=0,\\
R_2(n,l)&=-(1-2a)\{(1-2a)n-l\}-2al, %\label{eq:R-polynomial-2}\\
%R_3(n,l)&=-3(1-2a)\{(1-2a)n-l\}^2-6al\{(1-2a)n-l\} \\
%&\qquad\qquad +2(1-2a)^2\{(1-2a)n-l\}+4a(1-4a)l,
%\label{eq:R-polynomial-3}
%\Delta_{1,a}(I_{n,i})=a^{n-l(i)-k}(1-2a)^{l(i)-k}\{(1-2a)n-l(i)\},\\
%\Delta_{2,a}(I_{n,i})=a^{n-l(i)-k}(1-2a)^{l(i)-k}\left(\big\{(1-2a)n-l(i)\big\}^2-(1-2a)^2 n+(1-4a)l(i)\right),
\end{align*}
etc. %It is sometimes useful to write the polynomials in this form.

\begin{proof}
Note that
\[
\Delta_{0,a}(I_{n,i})=a^{n-l(i)}(1-2a)^{l(i)}.
\]
Write $l(i)=l$ for short. Differentiating $k$ times with respect to $a$ gives, using Leibniz's rule,
\begin{align}
\Delta_{k,a}(I_{n,i})&=\sum_{j=0}^k \binom{k}{j}\frac{\partial^j}{\partial a^j}\big(a^{n-l}\big)\cdot\frac{\partial^{k-j}}{\partial a^{k-j}}\big((1-2a)^l\big)  \label{eq:Leibniz-formula} \\
&=\sum_{j=0}^k \binom{k}{j}\frac{(n-l)!}{(n-l-j)!} a^{n-l-j} \frac{l!}{(l-k+j)!} (1-2a)^{l-k+j}(-2)^{k-j} \notag \\
&=a^{n-l-k}(1-2a)^{l-k}\sum_{j=0}^k \binom{k}{j}\frac{(n-l)!}{(n-l-j)!} a^{k-j} \frac{l!}{(l-k+j)!} (1-2a)^{j}(-2)^{k-j}. \notag
\end{align}
Now
\[
\frac{(n-l)!}{(n-l-j)!}=(n-l)^j + \ \mbox{lower order terms}
\]
and similarly,
\[
\frac{l!}{(l-k+j)!}=l^{k-j} + \ \mbox{lower order terms}.
\]
Thus,
\begin{align*}
\Delta_{k,a}(I_{n,i})%&=\sum_{j=0}^k \binom{k}{j}(n-l)^j l^{k-j}a^{n-l-j}(1-2a)^{l-k+j}(-2)^{k-j}+R_k(n,l)\\
&=a^{n-l-k}(1-2a)^{l-k}\left(\sum_{j=0}^k \binom{k}{j}(n-l)^j l^{k-j}(-2a)^{k-j}(1-2a)^{j}+R_k(n,l)\right)\\
&=a^{n-l-k}(1-2a)^{l-k}\left(\big\{(n-l)(1-2a)-2al\big\}^k+R_k(n,l)\right)\\
&=a^{n-l-k}(1-2a)^{l-k}\left(\{(1-2a)n-l\}^k+R_k(n,l)\right),
\end{align*}
for some polynomial $R_k(n,l)$ of degree at most $k-1$.
\end{proof}

For an interval $I$ and function $f$, denote
\[
\osc(f;I):=\sup_{x,y\in I}|f(x)-f(y)|.
\]

\begin{lemma} \label{lem:oscillation-bounds}
Assume $a\neq 1/2$. There is for each $k\in\NN_0$ a constant $C_k$ such that, for every $n\in\NN$ and $j=0,1,\dots,3^n-1$,
\begin{equation} \label{eq:oscillation-bound}
\osc(M_{k,a};I_{n,j})\leq C_k a^{n-l(j)}|1-2a|^{l(j)}n^k.
\end{equation}
\end{lemma}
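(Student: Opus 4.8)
The plan is to use the functional equations of Lemma~\ref{lem:FE} to represent the whole vector $(M_{0,a},\dots,M_{k,a})$ on a ternary interval $I_{n,j}$ as a single linear map applied to the same vector on $[0,1]$, and then to bound the relevant entry of that linear map by a path-counting argument. The non-trivial point will be to recover the sharp diagonal weight $a^{n-l(j)}|1-2a|^{l(j)}$ rather than the crude $b^n$ with $b=\max\{a,|1-2a|\}$.

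First I would collect the functions into a column vector $\mathbf{M}=(M_{0,a},\dots,M_{k,a})^{\mathsf T}$. By Lemma~\ref{lem:FE}, for $x=(0.x_1x_2\dots)_3$ one has $\mathbf{M}(x)=A_{x_1}\mathbf{M}(\sigma x)+\mathbf{b}_{x_1}$, where $A_{x_1}$ is the $(k+1)\times(k+1)$ lower-bidiagonal matrix whose diagonal entries all equal $a$ and whose subdiagonal entries are $(A_{x_1})_{i,i-1}=i$ when $x_1\in\{0,2\}$, while for $x_1=1$ the diagonal entries all equal $1-2a$ and $(A_1)_{i,i-1}=-2i$; here $\mathbf{b}_{x_1}$ is a constant vector (nonzero only in its first two coordinates). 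Iterating over the ternary digits $j_1,\dots,j_n$ of $j$ then gives, for $x\in I_{n,j}$,
\[
\mathbf{M}(x)=B\,\mathbf{M}(\sigma^n x)+\mathbf{c}, \qquad B:=A_{j_1}A_{j_2}\cdots A_{j_n},
\]
with $\mathbf{c}$ constant. Since $\sigma^n$ maps $I_{n,j}$ onto $[0,1]$ and $\mathbf{c}$ cancels in differences, reading off the $k$-th coordinate would yield
\[
\osc(M_{k,a};I_{n,j})\le \sum_{i=0}^{k}|B_{k,i}|\,\osc(M_{i,a};[0,1]),
\]
where each $\osc(M_{i,a};[0,1])$ is finite by the continuity in Proposition~\ref{prop:Holder-continuity}. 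It then suffices to bound the entries $|B_{k,i}|$.

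To estimate $B_{k,i}$, I would use that each $A_{x_1}$ is lower bidiagonal, so the $(k,i)$ entry of $B$ is a sum over lattice paths $k=r_0\ge r_1\ge\cdots\ge r_n=i$ in which every step either stays (contributing a diagonal factor $d_m\in\{a,1-2a\}$) or drops by one (contributing a subdiagonal factor of modulus at most $2r_{m-1}\le 2k$). Such a path has exactly $k-i$ drops; writing $S$ for its set of drop steps, there are $\binom{n}{k-i}$ choices of $S$, each contributing at most $(2k)^{k-i}$ from its subdiagonal factors. The decisive observation concerns the diagonal factors: a path omits precisely the $|S|=k-i$ diagonal entries indexed by $S$, so its diagonal product has modulus $\prod_{m\notin S}|d_m|=a^{n-l(j)}|1-2a|^{l(j)}\big/\prod_{m\in S}|d_m|$. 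Since $a\neq 1/2$, the number $b':=\min\{a,|1-2a|\}$ is strictly positive, whence $\prod_{m\in S}|d_m|\ge (b')^{k-i}\ge (b')^{k}$. Combining these estimates would give
\[
|B_{k,i}|\le \binom{n}{k-i}(2k)^{k-i}(b')^{-k}\,a^{n-l(j)}|1-2a|^{l(j)}\le (2k)^{k}(b')^{-k}\,a^{n-l(j)}|1-2a|^{l(j)}\,n^{k},
\]
and summing over $i$ would produce the claim with $C_k=(k+1)(2k)^{k}(b')^{-k}\max_{0\le i\le k}\osc(M_{i,a};[0,1])$.

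The main obstacle is exactly the sharpness of the diagonal weight: the off-diagonal entries $B_{k,i}$ with $i<k$ are each missing up to $k$ diagonal factors, and the bound would break down if those missing factors could be arbitrarily small. This is where the hypothesis $a\neq 1/2$ is essential, since it guarantees $b'>0$ and lets the at most $k$ omitted factors be absorbed into the constant $C_k$; the polynomial factor $n^k$ then arises solely from the path count $\binom{n}{k-i}\le n^{k}$. (The case $k=0$ is the degenerate $1\times1$ version of this argument, recovering the exact self-affine scaling of $F_a$.)
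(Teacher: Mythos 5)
Your proposal is correct, but it proves the lemma by a genuinely different route than the paper. The paper's own proof is a one-line deduction from the proof of Proposition \ref{prop:Holder-continuity}: there, the series \eqref{eq:M-ternary-expression} (obtained by differentiating Kobayashi's formula \eqref{eq:Okamoto-series} term by term) has its $m$th term bounded by $C a^{m-l_m}|1-2a|^{l_m} m^k$, and since the terms with index $m<n$ are common to all points of $I_{n,j}$, the oscillation over $I_{n,j}$ is controlled by the tail $\sum_{m\geq n} a^{m-l_m}|1-2a|^{l_m}m^k \leq a^{n-l(j)}|1-2a|^{l(j)}\sum_{i\geq 0} b^i (n+i)^k$, which produces the stated bound. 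You instead vectorize the functional equations of Lemma \ref{lem:FE} into a cocycle $\mathbf{M}(x)=A_{j_1}\cdots A_{j_n}\mathbf{M}(\sigma^n x)+\mathbf{c}$ of lower-bidiagonal transfer matrices and estimate the entries of the product by counting lattice paths: the diagonal steps reproduce exactly the weight $a^{n-l(j)}|1-2a|^{l(j)}$ (up to at most $k$ omitted factors, absorbed via $b'=\min\{a,|1-2a|\}>0$, which is precisely where $a\neq 1/2$ enters), and the $\binom{n}{k-i}$ choices of drop positions produce the $n^k$. Both arguments hinge on the same structural fact --- the first $n$ digits of every point of $I_{n,j}$ are those of $j$, so the sharp diagonal weight is forced --- but they package it differently: the paper's series computation yields the H\"older modulus of Proposition \ref{prop:Holder-continuity} and this lemma simultaneously from one estimate, while your transfer-matrix argument isolates the approximate self-affinity cleanly, makes the combinatorial origin of the polynomial factor $n^k$ transparent, and needs only finiteness of $\osc(M_{i,a};[0,1])$ (i.e., continuity, which you may legitimately quote from Proposition \ref{prop:Holder-continuity} without circularity) rather than the full term-by-term series bounds. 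One cosmetic remark: to cover the right endpoint of $I_{n,j}$, whose ternary expansion does not begin with the digits of $j$, you should either note that the closed-interval form of the functional equation holds by continuity, or take the oscillation over the interior and invoke continuity; this is the same minor issue the paper handles via Lemma \ref{lem:symmetry}.
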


\begin{proof}
This is immediate from the proof of Proposition \ref{prop:Holder-continuity}.
\end{proof}

%Fix an interval $I_{n,j}$, and let $l:=l(j)$. Let $x\in I_{n,j}$ be arbitrary, and let $x_0$ be the left endpoint of $I_{n,j}$. Write $x_0=(0.\ii 0^\f)_3$ and $x=(0.\ii i_{n+1}i_{n+2}\dots)_3$, where $\ii$ is a word of length $n$. Also put $x_m=(0.\ii i_{n+1}\dots i_{n+m}0^\f)_3$ for $m\in\NN$. By Lemma \ref{lem:ternary-increments}, there is a constant $C$ such that for each $m\in\NN$ and each $\nu$, 
%\[
%|\Delta_{k,a}(I_{n+m,\nu})|\leq Ca^{n+m-l(\nu)-k}|1-2a|^{l(\nu)-k}(n+m)^k.
%\]
%If $I_{n+m,\nu}$ is a subinterval of $I_{n,j}$, then $l(\nu)\geq l$ and we can simplify the above estimate to
%\[
%|\Delta_{k,a}(I_{n+m,\nu})|\leq Ca^{n-l-k}|1-2a|^{l-k}\gamma^m(n+m)^k,
%\]
%where $\gamma:=\max\{a,|1-2a|\}<1$.
%Since for each $m$, $M_{k,a}(x_m)-M_{k,a}(x_{m-1})$ is the sum of at most two such increments and $x_m\to x$ as $m\to\f$, the continuity of $M_{k,a}$ yields
%\begin{align*}
%M_{k,a}(x)-M_{k,a}(x_0)&=\sum_{m=1}^\f \big\{M_{k,a}(x_m)-M_{k,a}(x_{m-1})\big\}\\
%&\leq 2Ca^{n-l-k}|1-2a|^{l-k}\sum_{m=1}^\f \gamma^m(n+m)^k\\
%&\leq C'a^{n-l-k}|1-2a|^{l-k}n^k
%\end{align*}
%for some constant $C'$. Thus, the lemma follows.
%\end{proof}

For $a=1/2$ we have the following explicit expression for the increments over ternary intervals.

\begin{lemma} \label{lem:difference-formulas}
Assume $0\leq l\leq k\leq n$, and let $j\in\{0,1,\dots,3^n-1\}$ be such that $l(j)=l$. Then
\[
\Delta_{k,1/2}(I_{n,j})=\frac{k!}{(k-l)!}\cdot\frac{(n-l)!}{(n-k)!}\left(1/2\right)^{n-k}(-2)^l.
\]
\end{lemma}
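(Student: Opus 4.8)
The plan is to derive the formula for $\Delta_{k,1/2}(I_{n,j})$ by specializing the Leibniz-style computation from the proof of Lemma \ref{lem:ternary-increments} to the value $a=1/2$, where the factor $(1-2a)$ and its derivatives become singular (in the sense of collapsing many terms to zero). First I would recall that $\Delta_{0,1/2}(I_{n,j})=a^{n-l}(1-2a)^{l}$ evaluated near $a=1/2$, and that we are differentiating $k$ times in $a$ before setting $a=1/2$. The key observation, already used in the proof of Proposition \ref{prop:Holder-continuity}, is that
\[
\frac{\partial^i}{\partial a^i}(1-2a)^{l}\bigg|_{a=1/2}=\begin{cases} l!(-2)^{l} & \mbox{if $i=l$},\\ 0 & \mbox{otherwise}. \end{cases}
\]
This forces a dramatic simplification: in the Leibniz expansion $\Delta_{k,1/2}(I_{n,j})=\sum_{j'=0}^k \binom{k}{j'}\frac{\partial^{j'}}{\partial a^{j'}}(a^{n-l})\cdot\frac{\partial^{k-j'}}{\partial a^{k-j'}}((1-2a)^{l})$, only the single term with $k-j'=l$ survives when we set $a=1/2$.

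Concretely, I would write the two derivative factors explicitly. The surviving term has $j'=k-l$, contributing $\binom{k}{k-l}$ times $\frac{\partial^{k-l}}{\partial a^{k-l}}a^{n-l}\big|_{a=1/2}=\frac{(n-l)!}{(n-k)!}(1/2)^{n-k}$ (using $a^{n-l}$ differentiated $k-l$ times, which requires $n-l\geq k-l$, i.e. $n\geq k$, consistent with the hypothesis $l\leq k\leq n$), times the factor $l!(-2)^{l}$ from the $(1-2a)^{l}$ derivative. Multiplying these together with $\binom{k}{k-l}=\frac{k!}{l!(k-l)!}$ gives
\[
\Delta_{k,1/2}(I_{n,j})=\frac{k!}{l!(k-l)!}\cdot\frac{(n-l)!}{(n-k)!}\left(1/2\right)^{n-k}\cdot l!(-2)^{l}
=\frac{k!}{(k-l)!}\cdot\frac{(n-l)!}{(n-k)!}\left(1/2\right)^{n-k}(-2)^{l},
\]
which is exactly the claimed formula after the $l!$ cancels. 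I should confirm the hypothesis $0\leq l\leq k\leq n$ guarantees all factorials are well defined and nonnegative, and that the constraint $k-l\leq n-l$ (needed so that the $(k-l)$-th derivative of $a^{n-l}$ does not vanish identically) holds precisely because $k\leq n$.

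I do not anticipate a serious obstacle here; the main point is purely bookkeeping, namely correctly identifying which single term in the Leibniz sum survives at $a=1/2$ and assembling the multinomial/factorial factors without error. The one subtlety worth stating carefully is the justification that differentiation with respect to $a$ commutes with taking the increment $\Delta$ and that evaluating the derivatives at $a=1/2$ is legitimate — but this is already established by the real-analyticity of $F_a(x)$ in $a$ cited in the introduction and used implicitly throughout Section \ref{sec:prelim}. Thus the proof reduces to the displayed computation, and the only care needed is tracking the index $j'=k-l$ and verifying the factorial identity $\binom{k}{k-l}\cdot l!=k!/(k-l)!$.
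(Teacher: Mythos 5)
Your proof is correct and is essentially identical to the paper's: both start from the Leibniz expansion \eqref{eq:Leibniz-formula}, set $a=1/2$, and observe that only the term with index $k-l$ survives, after which the factorial bookkeeping gives the stated formula. The paper states this in one line; your verification of the surviving term and the identity $\binom{k}{k-l}\,l!=k!/(k-l)!$ is exactly the computation it leaves implicit.
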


\begin{proof}
Starting from \eqref{eq:Leibniz-formula}, set $a=1/2$ and note that since $l\leq k\leq n$, only the term with $j=k-l$ does not vanish.
\end{proof}

In the next lemma and its proof, we write $M_k:=M_{k,1/2}$ to avoid clutter.

\begin{lemma} \label{prop:box-containment}
Fix $n\geq 2k-1$, and let $I=[x_l,x_r]=I_{n,j}$ be a ternary interval of order $n$. Then 
\[
\min\{M_{k}(x_l),M_{k}(x_r)\}\leq M_{k}(x)\leq \max\{M_{k}(x_l),M_{k}(x_r)\} \qquad\forall\,x\in I.
\]
In other words, the graph of $M_{k}$ over $I$ lies inside the rectangular `box' with opposite vertices $(x_l,M_{k}(x_l))$ and $(x_r,M_{k}(x_r))$ and edges parallel to the coordinate axes.
\end{lemma}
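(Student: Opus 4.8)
The plan is to prove the containment first at all ternary-rational points by induction on the order of refinement, and then pass to arbitrary $x\in I$ by density and continuity. Throughout write $M_k:=M_{k,1/2}$ and let $J:=[\min\{M_k(x_l),M_k(x_r)\},\ \max\{M_k(x_l),M_k(x_r)\}]$ denote the vertical extent of the box; since $J$ is an interval it is convex, which is the only feature of it I will use. For each $N\geq n$, list the order-$N$ ternary points in $I$ as $x_l=p_0<p_1<\dots<p_{3^{N-n}}=x_r$, so that the successive differences $M_k(p_{m+1})-M_k(p_m)$ are precisely the increments $\Delta_{k,1/2}(I_{N,\cdot})$ of Lemma \ref{lem:difference-formulas}. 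The inductive statement (on $N$) is that $M_k(p_m)\in J$ for every such point; the base case $N=n$ is immediate, since the only points are $x_l$ and $x_r$.

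The heart of the argument is the inductive step, which localizes to a single three-subinterval estimate. Fix an order-$N$ subinterval $I_{N,i}=[p,q]\subseteq I$ with $M_k(p),M_k(q)\in J$, and let $y_1,y_2$ be its two new interior ternary points of order $N+1$. Since $l(3i)=l(3i+2)=l(i)$ and $l(3i+1)=l(i)+1$, the difference formula shows that the three child-increments over $I_{N+1,3i},I_{N+1,3i+1},I_{N+1,3i+2}$ carry signs $(-1)^{l(i)},(-1)^{l(i)+1},(-1)^{l(i)}$; I will also record that an increment \emph{vanishes} whenever the index has more than $k$ ones, because every term of the Leibniz sum \eqref{eq:Leibniz-formula} then dies at $a=1/2$. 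Thus the two outer steps share a sign and the middle step has the opposite sign. Writing $D(m,l)$ for the magnitude of an order-$m$ increment whose index has $l$ ones (given explicitly by Lemma \ref{lem:difference-formulas}), a short computation yields the ratio
\[
\frac{D\big(N+1,l(i)+1\big)}{D\big(N+1,l(i)\big)}=\frac{2\big(k-l(i)\big)}{N+1-l(i)},
\]
which is $\leq 1$ exactly when $N+1\geq 2k-l(i)$. As $N\geq n\geq 2k-1$, this holds for every $l(i)\geq 0$; this is the sole place the hypothesis $n\geq 2k-1$ enters, and it is sharp, the binding case being $l(i)=0$ at the very first subdivision.

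With the sign pattern $+,-,+$ (the pattern $-,+,-$ being symmetric) and the magnitude domination $D(N+1,l(i))\geq D(N+1,l(i)+1)$ in hand, bookkeeping of the partial sums gives $M_k(p)\leq M_k(y_1),M_k(y_2)\leq M_k(q)$: the first step rises to $M_k(y_1)$, the smaller middle step cannot fall below $M_k(p)$, and the last step rises to $M_k(q)$; the degenerate cases $l(i)=k$ (vanishing middle step) and $l(i)>k$ (all three steps vanish) are even easier. Hence $M_k(y_1),M_k(y_2)\in[\min\{M_k(p),M_k(q)\},\max\{M_k(p),M_k(q)\}]\subseteq J$ by convexity, which completes the inductive step and shows all order-$N$ ternary values lie in $J$ for every $N$. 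Since these points are dense in $I$ as $N\to\infty$, the continuity of $M_k$ (Proposition \ref{prop:Holder-continuity}) together with the closedness of $J$ gives $M_k(x)\in J$ for all $x\in I$, as claimed. The main obstacle is exactly the magnitude comparison above: the middle child's increment points the "wrong way," and one must verify it is dominated by its neighbors — which is precisely what the threshold $n\geq 2k-1$ guarantees.
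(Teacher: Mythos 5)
Your proof is correct and follows essentially the same route as the paper's: induction over successive ternary subdivisions using the explicit increment formula of Lemma \ref{lem:difference-formulas} at $a=1/2$, with the hypothesis $n\geq 2k-1$ entering through an equivalent magnitude comparison (your middle-child versus outer-child ratio $2(k-l)/(N+1-l)\leq 1$ is algebraically the same condition as the paper's child-to-parent ratio $\tfrac12\cdot(m+1-l)/(m+1-k)\leq 1$). The only cosmetic difference is that you handle both interior points by direct bookkeeping of the three signed steps (and explicitly note the degenerate cases $l>k$), where the paper disposes of the second interior point via the symmetry property of Lemma \ref{lem:symmetry}.
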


Note the conclusion of the lemma fails for $n\leq 2k-2$, as in that case the functional equation shows that $M_{k}(0)=M_{k}(3^{-n})=0$.

\begin{proof}
Since $M_{k}$ is continuous, the conclusion will follow by induction if we show that for each $m\geq 2k-1$ and each $j\in\{0,1,\dots,3^m-1\}$, $M_k\big((3j+1)3^{-m-1}\big)$ and $M_k\big((3j+2)3^{-m-1}\big)$ lie between $M_k(j3^{-m})$ and $M_k((j+1)3^{-m})$. By symmetry (cf. Lemma \ref{lem:symmetry}), it is enough to show this for $(3j+1)3^{-m-1}$.
Let $l:=l(j)$. Note that
\[
M_{k}\left(\frac{j+1}{3^m}\right)-M_{k}\left(\frac{j}{3^m}\right)=\Delta_{k,1/2}(I_{m,j})
\]
and
\[
M_{k}\left(\frac{3j+1}{3^{m+1}}\right)-M_{k}\left(\frac{j}{3^m}\right)=\Delta_{k,1/2}(I_{m+1,3j}).
\]
Since $l(3j)=l(j)=l$, Lemma \ref{lem:difference-formulas} shows that $\Delta_{k,1/2}(I_{m,j})$ and $\Delta_{k,1/2}(I_{m+1,3j})$ have the same sign, and
\[
\frac{|\Delta_{k,1/2}(I_{m+1,3j})|}{|\Delta_{k,1/2}(I_{m,j})|}=1/2\cdot \frac{m+1-l}{m+1-k}\leq 1,
\]
since $m\geq 2k-1\geq 2k-l-1$. Hence, $M_k\big((3j+1)3^{-m-1}\big)$ lies between $M_k(j3^{-m})$ and $M_k((j+1)3^{-m})$, as desired.
\end{proof}

\section{Box-counting dimension of the graph} \label{sec:box-dimension}

\begin{proof}[Proof of Theorem \ref{thm:box-dimension}]
Let $G_{k,a}$ denote the graph of $M_{k,a}$.

{\em Step 1: Upper bound.} Assume first that $a\neq 1/2$. For fixed $n$, the number of words $\mathbf{i}\in\{0,1,2\}^n$ with $l$ zeros is
$\binom{n}{l}2^{n-l}.$
%We have $\binom{n}{l}$ choices for the location of the 1s, and the remaining $n-l$ digits are independently chosen to be 0 or 2. 
Lemma \ref{lem:oscillation-bounds} implies that the number of $3^{-n}$-mesh cubes intersecting $G_{k,a}$ is bounded above by
\begin{align*}
N_{3^{-n}}(G_{k,a})&\leq C_k\left(3^n+2\right)n^k\sum_{l=0}^n\binom{n}{l}2^{n-l}a^{n-l}|1-2a|^l\\
%&\leq C\left(3^n+2\right)n^{k}\sum_{\ell=0}^n\binom{n}{\ell}(2a)^{n-\ell}|1-2a|^\ell\\
&= C_k\left(3^n+2\right)n^{k}\big(2a+|1-2a|\big)^n, %=C_k\left(3^n+2\right)n^{k}(4a-1)^n,
\end{align*}
where $C_k$ is the constant from Lemma \ref{lem:oscillation-bounds}.
The stated upper bound follows upon observing that
\[
2a+|1-2a|=\begin{cases} 1 & \mbox{if $a<1/2$},\\
4a-1 & \mbox{if $a>1/2$}.
\end{cases}
\]

If $a=1/2$, then by Lemmas \ref{lem:difference-formulas} and \ref{prop:box-containment}, we have, for an interval $I_{n,j}$ with $l(j)=l$,
\[
\osc(M_{k,1/2};I_{n,j})=|\Delta_{k,1/2}(I_{n,j})|=\frac{k!}{(k-l)!}\cdot\frac{(n-l)!}{(n-k)!}\left(1/2\right)^{n-k}2^{l},
\] 
for $0\leq l\leq k\leq n$. Furthermore, if $l(j)>k$, then $M_{k,1/2}$ is constant over $I_{n,j}$ and so the graph of $M_{k,1/2}$ over $I_{n,j}$ intersects only one $3^{-n}$-mesh square. Thus, for $n\geq k$,
\begin{align*}
N_{3^{-n}}\left(G_{k,1/2}\right)&\leq 3^n+\left(3^n+2\right)\sum_{l=0}^k\binom{n}{l}2^{n-l}\frac{k!}{(k-l)!}\cdot\frac{(n-l)!}{(n-k)!}\left(1/2\right)^{n-k}2^l\\
&=3^n+\left(3^n+2\right)2^k\sum_{l=0}^k\binom{k}{l}\frac{n!}{(n-k)!}\\
%&\leq 3^n+\left(3^n+2\right)2^k\sum_{l=0}^k\binom{k}{l} n^k\\
&\leq 3^n+\left(3^n+2\right)2^{2k}n^k,
%&\leq 3^n+2\left(3^n+2\right)n^{2k}\sum_{\ell=0}^k\binom{n}{\ell},
\end{align*}
yielding the desired upper bound of 1.

{\em Step 2: Lower bound.} The lower bound is trivial for $a\leq 1/2$, so let $a>1/2$. Then $1-2a<0$, and so $|(1-2a)n-l|\geq|1-2a|n=(2a-1)n$ for all $0\leq l\leq n$. By Lemma \ref{lem:ternary-increments}, there exists therefore a constant $\delta>0$  such that for all sufficiently large $n$,
\[
|\Delta_{k,a}(I_{n,j})|\geq a^{n-l(j)}(2a-1)^{l(j)}\delta n^k, \qquad \mbox{for all $0\leq j<3^n$}.
\]
(Here we have included the factors $a^{-k}$ and $(2a-1)^{-k}$ in the constant $\delta$.) We may further assume $n$ is so large that $\delta n^k\geq 1$.
Since the oscillation of $M_{k,a}$ over $I_{n,j}$ is at least $|\Delta_{k,a}(I_{n,j})|$, the number of $3^{-n}$-mesh squares that intersect the graph of $M_{k,a}$ over $I_{n,j}$ is at least
\[
3^n |\Delta_{k,a}(I_{n,j})|\geq 3^n a^{n-l(j)}(2a-1)^{l(j)}.
\]
Summing over $j=0,1,\dots,3^n-1$ and noting once more that for each $l\in\{0,1,\dots,n\}$ there are precisely $\binom{n}{l}2^{n-l}$ values of $j$ with $l(j)=l$, we get
\begin{equation*}
N_{3^{-n}}\left(G_{k,a}\right)\geq 3^n\sum_{l=0}^n\binom{n}{l}2^{n-l}a^{n-l}(2a-1)^l
=3^n\big(2a+(2a-1)\big)^n=3^n(4a-1)^n
\end{equation*}
for all sufficiently large $n$, yielding the desired lower bound.
\end{proof}

\section{Finite derivatives} \label{sec:differentiability}

In this section we consider the set of points where $M_{k,a}$ has a finite derivative. Throughout, $P_k(n,l)$ denotes the polynomial from Lemma \ref{lem:ternary-increments}.

\begin{lemma} \label{lem:no-polynomial-limit}
Assume $a\neq 1/2$. Let $x\in(0,1)$ and put $l_n:=l_n(x)$. Then, for $k\geq 1$, the sequence $(P_k(n,l_n))_n$ cannot have a finite limit as $n\to\infty$.
\end{lemma}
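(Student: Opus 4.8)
The plan is to argue by induction on $k$, exploiting the elementary fact that the consecutive differences of a convergent sequence must tend to $0$. Writing $l_n:=l_n(x)$ and setting $\Delta_n:=P_k(n+1,l_{n+1})-P_k(n,l_n)$, a finite limit of $(P_k(n,l_n))_n$ would force $\Delta_n\to0$. The heart of the matter is to show that, along the ternary path of $x$, $\Delta_n$ is always a nonzero multiple of $P_{k-1}(n,l_n)$, so that $\Delta_n\to0$ propagates a finite limit down from index $k$ to index $k-1$.

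To obtain such a one-step reduction, I would first record how the zeroth increments scale under appending a digit. If $l(j)=l$, then appending a $0$ produces the child interval $I_{n+1,3j}$ with $l(3j)=l$ and $\Delta_{0,a}(I_{n+1,3j})=a\,\Delta_{0,a}(I_{n,j})$, while appending a $1$ produces $I_{n+1,3j+1}$ with $l(3j+1)=l+1$ and $\Delta_{0,a}(I_{n+1,3j+1})=(1-2a)\,\Delta_{0,a}(I_{n,j})$. Differentiating these two identities $k$ times in $a$ by Leibniz' rule, and then substituting the representation $\Delta_{m,a}(I_{p,q})=a^{p-l(q)-m}(1-2a)^{l(q)-m}P_m(p,l(q))$ from Lemma \ref{lem:ternary-increments}, the powers of $a$ and $1-2a$ cancel and I expect to arrive at the clean discrete recursions
\[
P_k(n+1,l)=P_k(n,l)+k(1-2a)P_{k-1}(n,l), \qquad P_k(n+1,l+1)=P_k(n,l)-2ka\,P_{k-1}(n,l).
\]
Since a step from $n$ to $n+1$ along the expansion of $x$ either fixes $l_n$ (digit $0$ or $2$) or increases it by $1$ (digit $1$), these two formulas give exactly $\Delta_n=\kappa_n P_{k-1}(n,l_n)$ with $\kappa_n\in\{k(1-2a),\,-2ka\}$, and $|\kappa_n|\ge k\min\{|1-2a|,2a\}>0$ because $a\neq1/2$ and $a\neq0$.

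With these recursions in hand the induction is immediate. For the base case $k=1$ one has $P_0\equiv1$, so $\Delta_n=\kappa_n\in\{1-2a,-2a\}$ is bounded away from $0$; hence $\Delta_n\not\to0$ and $(P_1(n,l_n))_n$ cannot converge. For the inductive step with $k\ge2$, a finite limit of $(P_k(n,l_n))_n$ forces $\Delta_n\to0$, and dividing by $\kappa_n$ (legitimate since $|\kappa_n|$ is bounded away from $0$) yields $P_{k-1}(n,l_n)=\Delta_n/\kappa_n\to0$, so $(P_{k-1}(n,l_n))_n$ has the finite limit $0$, contradicting the induction hypothesis.

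The main obstacle is really the discovery and verification of the difference identities above; once the increment is seen to collapse $k$ to $k-1$, everything follows formally. I would caution against the more naive route of writing $P_k(n,l_n)=\{(1-2a)n-l_n\}^k+R_k(n,l_n)$ and trying to track the growth of $u_n:=(1-2a)n-l_n$ directly. Already for $k=1$ the sequence $u_n$ can stay bounded, so it is the divergence of the increments, not unboundedness, that does the work; and for larger $k$ the balance between the $u_n^k$ term and the genuinely order-$n^{\lfloor k/2\rfloor}$ pure-$n$ part of $P_k$ is delicate, so a direct magnitude estimate does not cleanly close. Routing the argument through the exact increment recursions sidesteps all of these complications.
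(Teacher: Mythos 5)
Your proof is correct and is essentially the paper's own argument: you derive the same two difference equations $P_k(n+1,l)-P_k(n,l)=k(1-2a)P_{k-1}(n,l)$ and $P_k(n+1,l+1)-P_k(n,l)=-2kaP_{k-1}(n,l)$, and use the fact that increments of a convergent sequence tend to $0$ to push the conclusion down to $P_0\equiv 1$. The only cosmetic difference is that you package the descent as a formal induction on $k$ with base case $k=1$, whereas the paper runs the cascade from $k$ down to $0$ and derives the contradiction there; the mathematics is identical.
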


\begin{proof}
Suppose, to the contrary, that $\lim_{n\to\infty} P_k(n,l_n)$ exists and is finite. Then
    \begin{equation*}%\label{lim:PkDLim}
       \lim_{n\to\infty} \big[P_k(n+1,l_{n+1})-P_k(n,l_n)\big]= 0.
    \end{equation*}
		From the functional equations in Lemma \ref{lem:FE} (or, alternatively, from the precise expressions in the proof of Lemma \ref{lem:ternary-increments}), we can derive the difference equations
		\begin{align}
		P_k(n+1,l)-P_k(n,l)&=k(1-2a)P_{k-1}(n,l_n), \label{eq:difference-eq1}\\
		P_k(n+1,l+1)-P_k(n,l)&=-2kaP_{k-1}(n,l_n) \label{eq:difference-eq2}.
		\end{align}
		Since $l_{n+1}\in\{l_n,l_n+1\}$, it follows that $\lim_{n\to\infty} P_{k-1}(n,l_n) = 0$.
  %  By the difference formulas for $P_k$ we have that 
  %  \begin{equation}
  %      P_k(n+1,l_{n+1})-P_k(n,l_n) = 
	%			\begin{cases} -2kaP_{k-1}(n,l_n) & \text{ when } x_n = 1, \\
  %      k(1-2a)P_{k-1}(n,l_n) & \text{ when } x_n = 0,2.
  %      \end{cases}
  %  \end{equation}
  %  And so \eqref{lim:PkDLim} implies that $\lim_{n\to\infty} P_{k-1}(n,l_n) = 0$. 
		By induction, we then conclude that $\lim_{n\to\infty} P_0(n,l_n) = 0$. However, by definition, $P_0(n,l_n) = 1$ and we have a contradiction. 
\end{proof}

\begin{lemma} \label{lem:polynomial-growth}
Suppose
\[
\liminf_{n\to\infty}\left|\frac{l_n(x)}{n}-(1-2a)\right|>0.
\]
Then there exists $\delta>0$ such that $|P_k(n,l_n(x))|\geq\delta n^k$ for all sufficiently large $n$. In particular, this holds when $a>1/2$.
\end{lemma}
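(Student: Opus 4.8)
The plan is to exploit the leading-term decomposition of $P_k$ supplied by Lemma \ref{lem:ternary-increments}, namely $P_k(n,l)=\{(1-2a)n-l\}^k+R_k(n,l)$ with $R_k$ of degree at most $k-1$, and to show that the leading term dominates. First I would translate the hypothesis into a quantitative bound. Writing $l_n:=l_n(x)$, the assumption $\liminf_{n\to\infty}|l_n/n-(1-2a)|>0$ furnishes an $\eta>0$ and an $N$ such that $|l_n/n-(1-2a)|\geq\eta$, and hence $|(1-2a)n-l_n|\geq\eta n$, for all $n\geq N$. Consequently the leading term satisfies $\big|\{(1-2a)n-l_n\}^k\big|\geq\eta^k n^k$.

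The second step is to control the remainder. Since $R_k(n,l)$ has degree at most $k-1$ and $0\leq l_n\leq n$, each monomial $c_{p,q}n^p l_n^q$ occurring in $R_k(n,l_n)$ with $p+q\leq k-1$ obeys $|c_{p,q}n^p l_n^q|\leq|c_{p,q}|\,n^{p+q}\leq|c_{p,q}|\,n^{k-1}$; summing over the finitely many monomials gives $|R_k(n,l_n)|\leq C n^{k-1}$ for a constant $C$ depending only on $k$ and $a$. Combining the two estimates via the reverse triangle inequality yields
\[
|P_k(n,l_n)|\geq \eta^k n^k-C n^{k-1}=n^{k-1}\big(\eta^k n-C\big)\geq \tfrac{1}{2}\eta^k n^k
\]
as soon as $n\geq 2C/\eta^k$. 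Thus the conclusion holds with $\delta:=\eta^k/2$ for all sufficiently large $n$.

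For the final assertion I would check that the $\liminf$ hypothesis is automatic when $a>1/2$. In that regime $1-2a<0$, so for every $n$ one has $l_n/n-(1-2a)=l_n/n+(2a-1)\geq 2a-1>0$, whence $\liminf_{n\to\infty}|l_n/n-(1-2a)|\geq 2a-1>0$ and the first part applies with $\eta=2a-1$.

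This argument is essentially routine, and I do not anticipate a serious obstacle. The only point requiring genuine care is the remainder bound in the second step: one must invoke $l_n\leq n$ to convert the statement that $R_k$ has \emph{total} degree at most $k-1$ (jointly in $n$ and $l$) into the clean univariate estimate $|R_k(n,l_n)|=O(n^{k-1})$. Once that reduction is made, the reverse triangle inequality finishes the proof immediately.
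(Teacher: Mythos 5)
Your proposal is correct and takes essentially the same approach as the paper: the paper's entire proof is the single sentence ``This follows immediately from Lemma \ref{lem:ternary-increments},'' and your argument is precisely the routine verification being left implicit there (the leading-term lower bound $\eta^k n^k$, the estimate $|R_k(n,l_n)|\leq Cn^{k-1}$ via $l_n\leq n$, and the reverse triangle inequality). Nothing to correct.
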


\begin{proof}
This follows immediately from Lemma \ref{lem:ternary-increments}.
\end{proof}

Recall that $I_n(x)$ denotes the ternary interval $I_{n,j}$ containing $x$.
Throughout this section, we use the following well-known fact: If $M_{k,a}'(x)=d\in\RR$, then
\[
\frac{\Delta_{k,a}(I_n(x))}{|I_n(x)|}=3^n \Delta_{k,a}(I_n(x))\to d \qquad\mbox{as $n\to\infty$},
\]
%where $I_n(x)$ denotes the ternary interval $I_{n,j}$ containing $x$. (If $x=j/3^n$, we choose $I_n(x):=[j/3^n,(j+1)/3^n]$ for definiteness.)

Recall the definition of $\phi(a)$ from \eqref{eq:phi}. It is not difficult to verify that 
\begin{gather}
0<a<\frac13 \qquad \Longrightarrow \qquad \frac13<\phi(a)<1-2a, \label{eq:where-is-phi-1}\\
\frac13<a<\frac12 \qquad \Longrightarrow \qquad \frac13>\phi(a)>1-2a. \label{eq:where-is-phi-2}
\end{gather}

We first collect a few special cases.

\begin{proposition} \mbox{\ } \label{prop:special-cases}
\begin{enumerate}[(a)]
\item If $a\geq 2/3$, then $M_{k,a}$ does not have a finite derivative at any point.
\item If $a= 1/3$ and $k\geq 1$, then $M_{k,a}$ does not have a finite derivative at any point.
\end{enumerate}
\end{proposition}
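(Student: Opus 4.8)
The plan is to reduce both parts to the elementary criterion recalled just above the proposition: if $M_{k,a}$ has a finite derivative $d$ at $x$, then $3^n\Delta_{k,a}(I_n(x))\to d$ as $n\to\f$. Writing $l_n:=l_n(x)$ and using the increment formula of Lemma \ref{lem:ternary-increments} (which applies since both $a\geq 2/3$ and $a=1/3$ avoid the value $1/2$), I would observe that a necessary condition for finite differentiability at $x$ is convergence of
\[
3^n\Delta_{k,a}(I_n(x))=3^n a^{n-l_n-k}(1-2a)^{l_n-k}P_k(n,l_n).
\]
So in each case it suffices to show that this sequence cannot converge, for every $x$.

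For part (b) I would exploit the coincidence that at $a=1/3$ one has $1-2a=1/3=a$, so the two geometric bases agree and the prefactor collapses to $(1/3)^{n-2k}$, giving
\[
3^n\Delta_{k,1/3}(I_n(x))=3^{2k}P_k(n,l_n)=9^kP_k(n,l_n).
\]
Convergence of the left-hand side would force $(P_k(n,l_n))_n$ to have a finite limit, which Lemma \ref{lem:no-polynomial-limit} rules out for $k\geq 1$; hence $M_{k,1/3}$ has no finite derivative anywhere.

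For part (a), since $a\geq 2/3$ implies $a>1/2$, Lemma \ref{lem:polynomial-growth} supplies $\delta>0$ with $|P_k(n,l_n)|\geq\delta n^k$ for all large $n$, and it remains only to bound the geometric prefactor below. Because $2/3\leq a<1$ gives $a>2a-1=|1-2a|>0$, the product $a^{n-l_n}(2a-1)^{l_n}$ is smallest when $l_n=n$, so $a^{n-l_n}(2a-1)^{l_n}\geq(2a-1)^n$, and $3(2a-1)=6a-3\geq 1$. Absorbing the fixed factors $a^{-k}(2a-1)^{-k}$ into a constant $C$, I would conclude
\[
\big|3^n\Delta_{k,a}(I_n(x))\big|\geq C(6a-3)^n\delta n^k\geq C\delta n^k\to\f
\]
for $k\geq 1$, so no finite derivative exists. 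For $k=0$ this argument only yields a lower bound bounded away from $0$ rather than divergence, but there $M_{0,a}=F_a$ and the claim is exactly Theorem \ref{thm:Okamoto}(a).

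The two parts are short once Lemmas \ref{lem:no-polynomial-limit}--\ref{lem:polynomial-growth} are in hand, and I expect no serious obstacle. The one point requiring care is the degenerate value $a=1/3$: the blow-up mechanism of part (a) is unavailable there, because the prefactor $3^n a^{n-l_n}(1-2a)^{l_n}$ equals $1$ rather than growing, so the argument must instead rest on the genuine non-convergence of $P_k(n,l_n)$ itself. This is precisely why the hypothesis $k\geq 1$ is essential in (b): for $k=0$ we have $P_0\equiv 1$, consistent with $F_{1/3}(x)=x$ being differentiable everywhere.
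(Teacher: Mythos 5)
Your proposal is correct and follows essentially the same route as the paper: part (a) combines the lower bound $3^n a^{n-l_n}|1-2a|^{l_n}\geq 1$ with Lemma \ref{lem:polynomial-growth} to force $|3^n\Delta_{k,a}(I_n(x))|\to\infty$, and part (b) reduces, via Lemma \ref{lem:ternary-increments}, to the non-convergence of $P_k(n,l_n)$ guaranteed by Lemma \ref{lem:no-polynomial-limit}. Your explicit handling of $k=0$ in (a) and your constant $3^{2k}$ in (b) (the paper writes $3^{-2k}$, an inconsequential slip) are minor refinements, not a different argument.
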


\begin{proof}
Let $x\in(0,1)$ and write $l_n:=l_n(x)$. 

(a) Suppose $a\geq 2/3$. Then $3^n a^{n-l_n}|1-2a|^{l_n}\geq 1$ for all $n$, and by Lemma \ref{lem:polynomial-growth}, $|P_k(n,l_n)|\to\infty$. So by Lemma \ref{lem:ternary-increments}, $3^n\Delta_{k,a}(I_n(x))$ does not have a finite limit for any $x$. Hence, $M_{k,a}$ does not have a finite derivative at any point.

(b) Suppose $a=1/3$ and $k\geq 1$. Then by Lemma \ref{lem:ternary-increments} we have that 
    \begin{equation*}
        3^n\Delta_{k,a}(I_{n,j}) = 3^{-2k}P_k(n,l_n),
    \end{equation*}
		but this does not have a finite limit by Lemma \ref{lem:no-polynomial-limit}. Hence, $M_{k,a}'(x)$ does not exist as a finite number.
\end{proof}

\begin{remark}
{\rm
For $k=1$, the conclusion of Proposition \ref{prop:special-cases} (b) was proved in \cite{Dalaklis}. We observe that this result is quite different from the situation for $k=0$, as $M_{0,1/3}$ is the identity function. 
}
\end{remark}

The case $a=1/2$ is also special:

\begin{proposition} \label{prop:one-half}
Let $a=1/2$ and $x\in(0,1)$. Then $M_{k,a}$ has a finite derivative at $x$ if and only if there is an $n\in\NN$ such that $l_n(x)=k+1$ and $3^n x\not\in\mathbb{Z}$. In that case, $M_{k,a}'(x)=0$.
%Then $M_{k,a}$ has a finite derivative at $x$ if and only if $x$ lies in the interior of an interval $I_{n,j}$ with $n\geq k+1$, $0\leq j<3^n$ and $l(j)\geq k+1$. In that case, $M_{k,a}'(x)=0$.
\end{proposition}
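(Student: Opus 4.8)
The plan is to reduce everything to one structural fact: at $a=1/2$, the function $M_{k,1/2}$ is \emph{constant} on every ternary interval $I_{N,j}$ whose index satisfies $l(j)>k$. This is exactly the observation already used in Step 1 of the proof of Theorem \ref{thm:box-dimension}, and it also follows directly from the series \eqref{eq:M-ternary-expression}: at $a=1/2$ the $n$-th term vanishes as soon as $l_n>k$, so for $y\in I_{N,j}$ with $l(j)>k$ (hence $l_n(y)>k$ for all $n\geq N$) one has $M_{k,1/2}(y)=\sum_{n=0}^{N-1}T_n(y)$, where $T_n(y)$ denotes the $n$-th summand. Each $T_n$ with $n\leq N-1$ depends only on $y_1,\dots,y_{n+1}$, hence only on the common prefix of length $N$, so $M_{k,1/2}(y)$ does not depend on the tail of $y$ (the right endpoint being handled by continuity).

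Next I would reformulate the hypothesis: the condition ``there is an $n$ with $l_n(x)=k+1$ and $3^nx\notin\mathbb{Z}$'' is equivalent to ``$x$ lies in the \emph{interior} of some ternary interval $I_{n,j}$ with $l(j)=k+1$,'' because $3^nx\notin\mathbb{Z}$ says precisely that $x$ is an interior point of $I_n(x)$, and there $l(j)=l_n(x)$. Sufficiency is then immediate: such an interval has $l(j)=k+1>k$, so by the structural fact $M_{k,1/2}$ is constant on a neighborhood of $x$, whence $M_{k,1/2}'(x)=0$.

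For necessity I would show that when the condition fails there is no finite derivative, splitting into the only two failure modes. \emph{(i)} If $x$ has at most $k$ ones in total, then $l_n(x)\leq k$ for every $n$, and Lemma \ref{lem:difference-formulas} applies to $I_n(x)$ with $l=l_n(x)$; since $\tfrac{k!}{(k-l_n)!}\cdot\tfrac{(n-l_n)!}{(n-k)!}\geq 1$ and $|(-2)^{l_n}|\geq 1$, one gets $|3^n\Delta_{k,1/2}(I_n(x))|\geq 2^k(3/2)^n\to\infty$, so the ternary difference quotients are unbounded and a finite derivative is impossible. \emph{(ii)} The remaining failure case is $x=j/3^{n_0}$, where position $n_0$ carries the $(k+1)$-th one and $x$ terminates there (exactly $k+1$ ones). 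Here the intervals $[x,x+3^{-n}]$ have index with $l=k+1>k$, so $M_{k,1/2}$ is constant to the right and the right derivative is $0$; but the left interval $[x-3^{-n},x]$ has index $3^{n-n_0}j-1$, whose ternary digits are $x_1\cdots x_{n_0-1}\,0\,\underbrace{2\cdots2}$, carrying exactly $k$ ones. Lemma \ref{lem:difference-formulas} then gives $\Delta_{k,1/2}=k!(1/2)^{n-k}(-2)^k$, so the left difference quotient equals $(-1)^k k!\,2^{2k}(3/2)^n\to\pm\infty$; thus the left derivative is infinite and no finite derivative exists.

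The main obstacle is case \emph{(ii)}: one must verify that these terminating ternary rationals are genuinely the only points with at least $k+1$ ones where the hypothesis fails, and that at them the two-sided behavior is asymmetric (finite, indeed zero, on the right; infinite on the left). This rests on the borrow computation identifying $l(3^{n-n_0}j-1)=k$, together with the bookkeeping showing that any $x$ with at least $k+2$ ones, or with exactly $k+1$ ones but a nonzero digit beyond position $n_0$, is automatically interior to an interval with $l=k+1$ and hence already covered by the sufficiency argument.
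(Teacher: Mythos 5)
Your proof is correct, and its skeleton coincides with the paper's: both rest on the fact that $M_{k,1/2}$ is constant on every ternary interval $I_{N,j}$ with $l(j)>k$ (you derive this from the series \eqref{eq:M-ternary-expression}, the paper from the functional equation of Lemma \ref{lem:FE} --- the two are equivalent), and both dispose of the case $\sup_n l_n(x)\le k$ by the same appeal to Lemma \ref{lem:difference-formulas} and the blow-up of $3^n|\Delta_{k,1/2}(I_n(x))|$. The one genuine difference is the remaining boundary case, where $x$ is a ternary rational terminating at its $(k+1)$-th digit $1$: the paper passes to $\bar{x}=1-x$, observes that the terminating expansion of $\bar{x}$ contains exactly $k$ ones, and invokes the symmetry Lemma \ref{lem:symmetry} to reduce to the previous case; you instead compute directly, via the borrow identity $l(3^{n-n_0}j-1)=k$, that the increment over the left interval $[x-3^{-n},x]$ equals $k!\,(1/2)^{n-k}(-2)^k$, so the left difference quotients blow up while the right derivative is $0$. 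Both routes are valid: the symmetry trick avoids your borrow bookkeeping, while your computation is self-contained and makes the asymmetric one-sided behavior at these points explicit. One small caveat: your computation shows the left difference quotients are unbounded along the sequence $x-3^{-n}$, which is exactly what is needed to rule out a finite derivative, but it does not by itself establish the stronger parenthetical claim that the left derivative \emph{is} infinite (that would require controlling all left difference quotients, e.g.\ via Lemma \ref{prop:box-containment}); this does not affect the proposition.
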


\begin{proof}
Let $x=(0.x_1x_2\dots)_3$, and first assume the stated condition. It follows from the functional equation (Lemma \ref{lem:FE}) that the graph of $M_{k,1/2}$ is flat on $I_n(x)$, and since $x$ lies in the interior of this interval, $M_{k,1/2}'(x)=0$.

Next, suppose $l:=\sup_n l_n(x)\leq k$. Then Lemma \ref{lem:difference-formulas} implies that
\[
3^n|\Delta_{k,1/2}(I_n(x))|=\frac{k!}{(k-l)!}\cdot \frac{(n-l)!}{(n-k)!}\left(\frac32\right)^n 2^{k+l}\to\infty,
\]
so $M_{k,1/2}'(x)$ does not exist as a finite real number.

Finally, if $l_n(x)=k+1$ and $3^n x\in\mathbb{Z}$, we can assume, by choosing $n$ minimal with this property, that $x_n=1$. Put $\bar{x}:=1-x$. Then the terminating ternary expansion of $\bar{x}$ contains exactly $k$ $1$'s, because we can alternatively write $x=(0.x_1\dots x_{n-1}02^\f)_3$. So by the previous case, $M_{k,1/2}$ does not have a finite derivative at $\bar{x}$. But then, by symmetry (Lemma \ref{lem:symmetry}), it does not have one at $x$ either.
\end{proof}

%For $m\geq 1$, let $j_m$ be the integer such that the right endpoint of $I_{n+m,j_m}$ is $x$. Then $l(j_m)=l_n(x)-1=k$. Hence, by Lemma \ref{lem:difference-formulas},
%\[
%3^{n+m}|\Delta_{k,1/2}(I_{n+m,j_m})|=k!2^{2k}\left(\frac32\right)^{n+m}\to\f,
%\]
%so that the left derivative of $M_{k,1/2}$ at $x$ is $\pm\f$.

%Finally, suppose $l:=\sup_n l_n(x)\leq k$. In that case, Lemma \ref{lem:difference-formulas} implies that
%\[
%3^n|\Delta_{k,1/2}(I_n(x))|=\frac{k!}{(k-l)!}\cdot \frac{(n-l)!}{(n-k)!}\left(\frac32\right)^n 2^{k+l}\to\infty,
%\]
%so again, $M_{k,1/2}'(x)$ does not exist as a finite real number.
%It follows from the functional equation (Lemma \ref{lem:FE}) that the graph of $M_{k,1/2}$ is flat on $I_{n,j}$ if $l(j)\geq k+1$, hence $M_{k,1/2}'(x)=0$ for $x$ in the interior of $I_{n,j}$. If $x$ is an endpoint of $I_{n,j}$, then one of the one-sided derivatives at $x$ is $0$, but the other is infinite. Finally, if $x$ does not lie in $I_{n,j}$ for any $n$ and $j$ with $l(j)\geq k+1$, this means that $l:=\sup_n l_n(x)\leq k$. In that case, Lemma \ref{lem:difference-formulas} implies that
%\[
%3^n|\Delta_{k,1/2}(I_n(x))|=\frac{k!}{(k-l)!}\cdot \frac{(n-l)!}{(n-k)!}\left(\frac32\right)^n 2^{k+l}\to\infty,
%\]
%so $M_{k,1/2}'(x)$ does not exist as a finite real number.

\begin{corollary} \label{cor:one-half-diff-ae}
For each $k\geq 1$, $M_{k,1/2}$ is differentiable almost everywhere.
\end{corollary}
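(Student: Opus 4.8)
The plan is to deduce this directly from the characterization of differentiability points in Proposition \ref{prop:one-half}, combined with Borel's normal number theorem. First I would pin down the set on which $M_{k,1/2}$ fails to have a finite derivative. By Proposition \ref{prop:one-half}, $x\in(0,1)$ is a point of differentiability precisely when there exists $n$ with $l_n(x)=k+1$ and $3^n x\notin\mathbb{Z}$. Since $l_n(x)$ is nondecreasing in $n$ and increases by at most $1$ at each step, the value $k+1$ is attained by some $l_n(x)$ if and only if the total number of $1$'s in the ternary expansion of $x$ is at least $k+1$.

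Next I would split the non-differentiability set into two pieces. Let $A:=\{x\in(0,1): \sup_n l_n(x)\le k\}$ be the set of $x$ whose ternary expansion contains at most $k$ ones, and let $B$ be the countable set of ternary rationals in $(0,1)$. If $x\notin A$, then the total number of $1$'s is at least $k+1$, so there is an $n$ with $l_n(x)=k+1$; if in addition $x\notin B$, then $3^n x\notin\mathbb{Z}$ for every $n$, and Proposition \ref{prop:one-half} yields $M_{k,1/2}'(x)=0$. Hence every point of non-differentiability lies in $A\cup B$.

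It remains to show that $A\cup B$ is Lebesgue-null. The set $B$ is countable, hence null. For $A$, I would invoke Borel's normal number theorem (equivalently, the strong law of large numbers for the i.i.d.\ ternary digits under Lebesgue measure): for almost every $x$ the digit $1$ occurs with asymptotic frequency $1/3$, so in particular $l_n(x)\to\f$ and the ternary expansion of $x$ contains infinitely many $1$'s. Thus $A$ is contained in the complement of a full-measure set and is therefore null. Combining the two estimates shows that the set of points where $M_{k,1/2}$ is not differentiable has measure zero, so $M_{k,1/2}$ is differentiable almost everywhere.

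There is no serious obstacle here: the substance is already contained in Proposition \ref{prop:one-half}, and the only care required is the bookkeeping that separates the genuinely ``flat-only'' points (those with at most $k$ ones, forming the null set $A$) from the measure-zero collection of ternary rationals in $B$, on which the second clause $3^n x\notin\mathbb{Z}$ of the proposition may fail.
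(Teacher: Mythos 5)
Your proof is correct and follows essentially the same route as the paper: the paper also deduces the result from Proposition \ref{prop:one-half} together with the fact that Lebesgue-almost every $x$ has infinitely many $1$'s in its ternary expansion (which simultaneously guarantees $l_n(x)=k+1$ for some $n$ and rules out $x$ being a ternary rational). Your explicit decomposition into the set $A$ of points with at most $k$ ones and the countable set $B$ of ternary rationals just makes this bookkeeping visible; it is not a different argument.
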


\begin{proof}
By Proposition \ref{prop:one-half}, $M_{k,1/2}'(x)=0$ whenever the ternary expansion of $x$ contains infinitely many $1$'s. This is the case for Lebesgue-almost all $x\in(0,1)$.
\end{proof}

We now get to the main case: $0<a<2/3$ and $a\not\in\{1/3,1/2\}$. We divide this case in two parts. For both, we introduce the constant
\begin{equation} \label{eq:C0}
C_0:=C_0(a):=\frac{1}{\log a-\log|1-2a|}.
\end{equation}
Note that $C_0<0$ for $a<1/3$, whereas $C_0>0$ for $a>1/3$. Furthermore, $C_0(a)$ tends to $-\infty$ as $a\nearrow 1/3$, to $+\infty$ as $a\searrow 1/3$, and to $0$ as $a\to 1/2$.

The next two propositions show that the only possible finite value of $M_{k,a}'$ is $0$, and characterize exactly the set of points where this happens.

\begin{proposition} \label{prop:large-a-derivative}
Let $1/3<a<2/3$ and $a\neq 1/2$. For $x\in(0,1)$ and $n\in\NN$, set 
\[
r_n: = l_n(x)-n\phi(a). 
\]
Then $M_{k,a}$ has a finite derivative at $x$ if and only if 
\begin{equation} \label{eq:overshoot}
r_n-kC_0\log n \to \infty \qquad\mbox{as $n\to\infty$}, 
\end{equation}
in which case $M_{k,a}'(x)=0$. 
\end{proposition}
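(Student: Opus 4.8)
The plan is to reduce everything to the behavior of the normalized increment $3^n\Delta_{k,a}(I_n(x))$ and then to rewrite the relevant expressions in terms of $r_n$. First I would record the identity that drives the whole argument: writing $l_n:=l_n(x)$, taking logarithms, and using $\log a-\log|1-2a|=1/C_0$ together with $\log(3a)=\phi(a)/C_0$, one obtains
\[
3^n a^{\,n-l_n}|1-2a|^{l_n}=\exp\!\left(-\frac{r_n}{C_0}\right).
\]
Combining this with Lemma \ref{lem:ternary-increments} gives
\[
\big|3^n\Delta_{k,a}(I_n(x))\big|=a^{-k}|1-2a|^{-k}\exp\!\left(-\frac{r_n}{C_0}\right)\big|P_k(n,l_n)\big|.
\]
Since $1/3<a<2/3$ and $a\neq1/2$ force $C_0>0$, and since by Proposition \ref{prop:only-zero} the only possible finite value of $M_{k,a}'(x)$ is $0$, the statement comes down to proving the equivalence $\exp(-r_n/C_0)\,|P_k(n,l_n)|\to0 \Leftrightarrow r_n-kC_0\log n\to\infty$, together with the fact that the left-hand condition actually produces a two-sided derivative equal to $0$.

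For the implication $(\Leftarrow)$ I would argue sufficiency for genuine differentiability, not merely for the triadic increments. Given $y$ near $x$, choose $n$ with $3^{-n-1}\le|y-x|<3^{-n}$, so that $x$ and $y$ lie in the same order-$n$ interval or in two adjacent ones. The key combinatorial observation is that incrementing or decrementing a base-$3$ integer changes its number of $1$'s by exactly one, so the neighbors $I_{n,j\pm1}$ of $I_n(x)=I_{n,j}$ satisfy $|l(j\pm1)-l_n|=1$; hence by Lemma \ref{lem:oscillation-bounds} all the oscillations involved are bounded by a constant multiple of $a^{\,n-l_n}|1-2a|^{l_n}n^k$. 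Splitting at the common endpoint in the adjacent case then gives
\[
\frac{|M_{k,a}(y)-M_{k,a}(x)|}{|y-x|}\le C\,3^n a^{\,n-l_n}|1-2a|^{l_n}n^k=C\exp\!\left(-\frac{r_n-kC_0\log n}{C_0}\right),
\]
which tends to $0$ as $y\to x$ because $C_0>0$. This proves $M_{k,a}'(x)=0$. The same computation with the crude bound $|P_k(n,l_n)|\le C n^k$ (valid since $0\le l_n\le n$) yields at once that $\exp(-r_n/C_0)|P_k(n,l_n)|\to0$ under the hypothesis, which is the easy half of the displayed equivalence.

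The main obstacle is the converse $(\Rightarrow)$, where I need a \emph{lower} bound on $|P_k(n,l_n)|$ that is not generally available, and I would argue the contrapositive. If $r_n-kC_0\log n\not\to\infty$, choose a subsequence $(n_j)$ with $r_{n_j}\le kC_0\log n_j+K$, and set $s_n:=(1-2a)n-l_n$, so that $P_k(n,l_n)=s_n^k+R_k(n,l_n)$ with $\deg R_k\le k-1$, and $s_n=-\gamma_0 n-r_n$ where $\gamma_0:=\phi(a)-(1-2a)>0$ (by \eqref{eq:where-is-phi-2} for $a<1/2$, and because $\phi(a)>0>1-2a$ for $a>1/2$). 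If $|s_{n_j}|\ge\varepsilon n_j$ along a further subsequence, the leading term dominates, so $|P_k(n_j,l_{n_j})|\ge\tfrac12\varepsilon^k n_j^k$, while $r_{n_j}\le kC_0\log n_j+K$ gives $\exp(-r_{n_j}/C_0)\ge e^{-K/C_0}n_j^{-k}$, and the product stays bounded away from $0$. In the remaining case $|s_{n_j}|=o(n_j)$, which forces $r_{n_j}\approx-\gamma_0 n_j\to-\infty$ and hence $\exp(-r_{n_j}/C_0)\to\infty$ exponentially; the product then diverges unless $|P_k(n_j,l_{n_j})|$ is exponentially small. This last, genuinely delicate, possibility (which for $k=1$ corresponds to $(1-2a)n_j$ being exponentially close to the integer $l_{n_j}$) I would rule out by a \emph{neighbor} argument: shifting from $n_j$ to $n_j\pm1$ moves $s$ by a nonzero amount of order $1$ while leaving $r$ of order $-\gamma_0 n_j$, so at the shifted index $|P_k|$ is no longer exponentially small whereas the exponential factor is still of size $e^{\Theta(n_j)}$, making the normalized increment blow up there. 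In every case $3^n\Delta_{k,a}(I_n(x))\not\to0$, so by Proposition \ref{prop:only-zero} and the increment criterion recalled above, $M_{k,a}$ has no finite derivative at $x$, which completes the contrapositive.
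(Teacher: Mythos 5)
Your forward direction ($\Leftarrow$) is correct and is essentially the paper's own argument: the neighbor-interval bound from Lemma \ref{lem:oscillation-bounds} together with the identity $3^na^{n-l_n}|1-2a|^{l_n}=e^{-r_n/C_0}$ gives a genuine two-sided derivative equal to $0$.

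The converse, however, has a genuine gap, and it is a circularity. At both places where you must rule out a finite \emph{nonzero} derivative, you invoke Proposition \ref{prop:only-zero}. But in this paper Proposition \ref{prop:only-zero} is not an independent input: it is deduced \emph{from} Propositions \ref{prop:special-cases}, \ref{prop:large-a-derivative} and \ref{prop:small-a-derivative} (``Together, Propositions \ref{prop:special-cases}, \ref{prop:large-a-derivative} and \ref{prop:small-a-derivative} imply Proposition \ref{prop:only-zero}\dots''), so it cannot be used to prove the present statement. Concretely, your contrapositive only shows that $3^n\Delta_{k,a}(I_n(x))$ stays bounded away from $0$, or blows up, along a subsequence; this is perfectly consistent with $M_{k,a}'(x)=d$ for some finite $d\neq 0$, and nothing in your argument excludes that. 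The paper closes exactly this hole with a ratio argument that you are missing: when Lemma \ref{lem:polynomial-growth} gives $|P_k(n,l_n)|\geq\delta n^k$, the difference equations \eqref{eq:difference-eq1}--\eqref{eq:difference-eq2} force $|P_k(n+1,l_{n+1})|/|P_k(n,l_n)|\to 1$, so the ratio of consecutive normalized increments is eventually close to either $3a>1$ or $3|1-2a|<1$; hence $3^n\Delta_{k,a}(I_n(x))$ cannot converge to any nonzero finite limit, and any finite derivative must equal $0$. You need this (or a substitute) before your reduction to the displayed equivalence is legitimate.

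A second, smaller gap is the ``neighbor'' step in your Case B, which is unjustified as stated for $k\geq 2$. Exponential smallness of $P_k(n,l_n)=s_n^k+R_k(n,l_n)$ does not mean $s_n$ is near $0$; it means $s_n^k\approx -R_k(n,l_n)$, so knowing that $s$ moves by an amount of order $1$ under a shift gives no control by itself. What a shift actually does is given by \eqref{eq:difference-eq1}--\eqref{eq:difference-eq2}: $P_k(n+1,l_{n+1})-P_k(n,l_n)$ is a nonzero constant times $P_{k-1}(n,l_n)$, and if $P_{k-1}(n,l_n)$ is itself exponentially small, then $P_k$ remains exponentially small after one shift. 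The correct repair is an induction down to $P_0\equiv 1$ (if $P_k$ were exponentially small at $k+1$ consecutive indices, then $P_{k-1},\dots,P_0$ would be exponentially small as well, a contradiction); this is precisely the mechanism behind the paper's Lemma \ref{lem:no-polynomial-limit}, which is what the paper uses at the corresponding point of its proof (its Case 2, where the exponential factor tends to $\infty$ and $P_k(n,l_n)\not\to 0$), and what you should cite or reprove in place of the ``order $1$ shift of $s$'' heuristic. With the ratio argument added and Case B repaired along these lines, your contrapositive scheme --- which splits on subsequences rather than on the full-sequence behavior of $l_n/n$ as the paper does --- would go through.
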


\begin{proof}
First assume \eqref{eq:overshoot}. Let $y\neq x$ be arbitrary and choose $n\in\NN$ so that $3^{-(n+1)}<|x-y|<3^{-n}$. Let $j$ be the integer such that $x\in I_{n,j}$; then $y\in I_{n,j-1}\cup I_{n,j}\cup I_{n,j+1}$. Since $l(j-1)=l(j)\pm 1$ and $l(j+1)=l(j)\pm 1$, Lemma \ref{lem:oscillation-bounds} and the triangle inequality yield that 
    \begin{equation}\label{ineq:DiffQuo}
        \left\vert\frac{M_{k,a}(x)-M_{k,a}(y)}{x-y}\right\vert \leq 3^{n+1}(2B_k)a^{n-l_n(x)}\vert1-2a\vert^{l_n(x)}n^k,
    \end{equation}
    where $B_k:=C_k a/|1-2a|$. The definition of $\phi(a)$ is equivalent to $3a^{1-\phi(a)}|1-2a|^{\phi(a)}=1$, and so
    \begin{equation}\label{eq:rn}
        3^na^{n-l_n(x)}\vert1-2a\vert^{l_n(x)}n^k = \left(\frac{|1-2a|}{a}\right)^{r_n}n^k=e^{-r_n/C_0}n^k\to 0,
    \end{equation}
		as can be seen by taking logarithms. Hence, $M_{k,a}'(x)=0$.

Conversely, suppose $M_{k,a}'(x)=d\in\RR$. Then certainly $3^n\Delta_{k,a}(I_n(x))\to d$. Write $l_n:=l_n(x)$ for short. We consider two cases.

\medskip
{\em Case 1.} Suppose $\liminf\big|\frac{l_n}{n}-(1-2a)\big|>0$. Then by Lemma \ref{lem:polynomial-growth} there exists $\delta>0$ such that for all large enough $n$, $|P_k(n,l_n)|\geq\delta n^k$. It follows that
\[
\lim_{n\to\infty} \frac{|P_k(n+1,l_{n+1})|}{|P_k(n,l_n)|}=1,
\]
since the difference equations \eqref{eq:difference-eq1} and \eqref{eq:difference-eq2} imply that $P_k(n+1,l_{n+1})-P_k(n,l_n)=o(n^{k-1})$. Since $|1-2a|<1/3$ for $1/3<a<2/3$, we can choose $\eps>0$ so small that 
\[
3|1-2a|(1+\eps)<1<3a(1-\eps), 
\]
and then choose $n_0$ so large that 
\[
1-\eps<\frac{|P_k(n+1,l_{n+1})|}{|P_k(n,l_n)|}<1+\eps \qquad\forall n\geq n_0.
\]
Now
\[
\frac{3^{n+1}|\Delta_{k,a}(I_{n+1}(x))|}{3^n|\Delta_{k,a}(I_n(x))|}=3a^{1-(l_{n+1}-l_n)}|1-2a|^{l_{n+1}-l_n}\cdot \frac{|P_k(n+1,l_{n+1})|}{|P_k(n,l_n)|},
\]
and since $3a^{1-(l_{n+1}-l_n)}|1-2a|^{l_{n+1}-l_n}=3a$ or $3|1-2a|$, it follows that the above ratio cannot converge to $1$. Hence, $M_{k,a}'(x)=d=0$. But then, for all large enough $n$,
\[
\delta \left(\frac{|1-2a|}{a}\right)^{r_n}n^k=3^n a^{n-l_n}|1-2a|^{l_n}\delta n^k\leq 3^n\Delta_{k,a}(I_n(x))\to 0,
\]
and taking logarithms yields \eqref{eq:overshoot}.

\medskip
{\em Case 2.} Suppose alternatively that $\lim_{n\to\infty}\frac{l_n}{n}=1-2a$. This is of course only possible if $a<1/2$.
Note by \eqref{eq:where-is-phi-2} that $1-2a<\phi(a)$, and since $a>|1-2a|$, it follows that
\[
3^n a^{n-l_n}|1-2a|^{l_n}\to \infty.
\]
Further, by Lemma \ref{lem:no-polynomial-limit}, $P_k(n,l_n)$ cannot tend to $0$. Therefore, $3^n|\Delta_{k,a}(I_n(x))|$ cannot have a finite limit as $n\to\infty$, contradicting that $M_{k,a}'(x)=d\in\RR$.
\end{proof}

\begin{proposition} \label{prop:small-a-derivative}
Let $0<a<1/3$. let $C_0$ be defined by \eqref{eq:C0}. For $x\in(0,1)$ and $n\in\NN$, define $r_n$ as in Proposition \ref{prop:large-a-derivative}. Then $M_{k,a}$ has a finite derivative at $x$ if and only if 
\[
r_n+k|C_0|\log n \to -\infty \qquad \mbox{as $n\to\infty$}, 
\]
in which case $M_{k,a}'(x)=0$. 
\end{proposition}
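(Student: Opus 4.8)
The plan is to mirror the proof of Proposition~\ref{prop:large-a-derivative}, adapting every estimate to the regime $0<a<1/3$, where the roles of $a$ and $1-2a$ are interchanged ($a<1/3<1-2a$) and where $C_0<0$. Writing $l_n:=l_n(x)$, the single computation driving everything is the identity, valid for $a<1/2$,
\[
3^n a^{n-l_n}(1-2a)^{l_n}n^k=\left(\frac{1-2a}{a}\right)^{r_n}n^k=\exp\!\left(\frac{1}{|C_0|}\bigl(r_n+k|C_0|\log n\bigr)\right),
\]
which follows from the defining relation $3a^{1-\phi(a)}(1-2a)^{\phi(a)}=1$ together with $\log\frac{1-2a}{a}=1/|C_0|>0$; note the sign change from the case $a>1/3$ in \eqref{eq:C0}. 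Since $1/|C_0|>0$, the hypothesis $r_n+k|C_0|\log n\to-\infty$ is \emph{exactly} the statement that this quantity tends to $0$.

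\emph{Sufficiency.} Given the hypothesis, I would bound the difference quotient as in Proposition~\ref{prop:large-a-derivative}: for $y$ with $3^{-(n+1)}<|x-y|<3^{-n}$, the point $y$ lies in one of the three order-$n$ ternary intervals adjacent to $I_n(x)$, whose $l$-counts differ from $l_n$ by at most $1$, so Lemma~\ref{lem:oscillation-bounds} yields $|M_{k,a}(x)-M_{k,a}(y)|/|x-y|\le 3^{n+1}(2B_k)a^{n-l_n}(1-2a)^{l_n}n^k$ for a suitable constant $B_k$. By the displayed identity this bound is a constant times $\exp(\frac{1}{|C_0|}(r_n+k|C_0|\log n))\to 0$, so $M_{k,a}'(x)=0$.

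\emph{Necessity.} Suppose $M_{k,a}'(x)=d\in\RR$, so that $3^n\Delta_{k,a}(I_n(x))\to d$. I would split according to whether Lemma~\ref{lem:polynomial-growth} applies. If $\liminf_n|l_n/n-(1-2a)|>0$, then $|P_k(n,l_n)|\ge\delta n^k$, and the difference equations \eqref{eq:difference-eq1}--\eqref{eq:difference-eq2} give $P_k(n+1,l_{n+1})-P_k(n,l_n)=O(n^{k-1})$, whence $|P_k(n+1,l_{n+1})|/|P_k(n,l_n)|\to 1$. Writing the successive ratio of $3^n|\Delta_{k,a}(I_n(x))|$ as $3a^{1-(l_{n+1}-l_n)}(1-2a)^{l_{n+1}-l_n}\cdot|P_k(n+1,l_{n+1})|/|P_k(n,l_n)|$ and using that the first factor is either $3a<1$ or $3(1-2a)>1$ (this is where $a<1/3$ enters, reversing the inequalities of Proposition~\ref{prop:large-a-derivative}), I can pick $\eps>0$ with $3a(1+\eps)<1<3(1-2a)(1-\eps)$ and conclude that the ratio stays bounded away from $1$. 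Since a nonzero finite $d$ would force this ratio to $1$, we get $d=0$; then $3^n|\Delta_{k,a}(I_n(x))|\ge c\,\delta\,e^{r_n/|C_0|}n^k\to 0$ (with $c=a^{-k}(1-2a)^{-k}$) yields $r_n+k|C_0|\log n\to-\infty$ after taking logarithms.

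The remaining regime, $\liminf_n|l_n/n-(1-2a)|=0$, is where I expect the main difficulty to lie. Here the exponential prefactor $3^n a^{n-l_n}(1-2a)^{l_n}=e^{r_n/|C_0|}$ tends to $+\infty$ (using $\phi(a)<1-2a$ from \eqref{eq:where-is-phi-1}, at least along the subsequence realizing the liminf), so the increments are no longer governed by the prefactor and the polynomial factor $P_k(n,l_n)$ becomes decisive: a finite limit of $3^n|\Delta_{k,a}(I_n(x))|$ would force $P_k(n,l_n)\to 0$, contradicting Lemma~\ref{lem:no-polynomial-limit}, so a finite derivative is impossible in this regime. The delicate point I would have to treat with care is reaching this contradiction for the full sequence rather than only along the subsequence on which $l_n/n\to 1-2a$—that is, confirming that the dichotomy "Case~1 versus the borderline frequency" is genuinely exhaustive, precisely the subtlety appearing in the corresponding Case~2 of Proposition~\ref{prop:large-a-derivative}.
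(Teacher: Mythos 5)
Your proposal follows exactly the route the paper intends: the paper's own ``proof'' of this proposition is the single remark that it is analogous to Proposition \ref{prop:large-a-derivative}, and your adaptations are the right ones --- the identity $3^n a^{n-l_n}(1-2a)^{l_n}n^k=\exp\bigl(\tfrac{1}{|C_0|}(r_n+k|C_0|\log n)\bigr)$, the sufficiency estimate via Lemma \ref{lem:oscillation-bounds}, and the Case 1 ratio argument with the correctly reversed inequalities $3a(1+\eps)<1<3(1-2a)(1-\eps)$ are all sound, as is the deduction that $d=0$ and then $r_n+k|C_0|\log n\to-\f$ by taking logarithms.

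The one incomplete step is the one you flagged yourself, and as the proposal stands it is a genuine gap: in your Case 2, where $\liminf_n|l_n(x)/n-(1-2a)|=0$, the prefactor $e^{r_n/|C_0|}$ is only known to blow up along the subsequence $(n_i)$ realizing the liminf, so boundedness of $3^n\Delta_{k,a}(I_n(x))$ forces $P_k(n_i,l_{n_i})\to 0$ only along that subsequence, and this does \emph{not} contradict Lemma \ref{lem:no-polynomial-limit}, which only rules out a finite limit of the full sequence. (You are right that the paper shares this elision: in Proposition \ref{prop:large-a-derivative} its Case 1 and its Case 2, where the full limit $l_n/n\to 1-2a$ is assumed, are not exhaustive either.) The gap can be closed by a local, quantitative version of Lemma \ref{lem:no-polynomial-limit}, exploiting the fact that $r_n$ varies slowly. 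Since $r_{n+1}-r_n\in\{-\phi(a),\,1-\phi(a)\}$, we have $|r_{n+1}-r_n|<1$; and since $\eta:=1-2a-\phi(a)>0$ by \eqref{eq:where-is-phi-1}, we have $r_{n_i}\geq\tfrac{\eta}{2}n_i$ for all large $i$, hence $r_n\geq\tfrac{\eta}{2}n_i-(k+1)$ for every $n$ in the fixed-length window $n_i\leq n\leq n_i+k+1$. If $3^n\Delta_{k,a}(I_n(x))$ is bounded, then $|P_k(n,l_n)|\leq C e^{-r_n/|C_0|}$ is exponentially small in $n_i$ throughout this window; the difference equations \eqref{eq:difference-eq1}--\eqref{eq:difference-eq2}, whose factors $k(1-2a)$ and $-2ka$ are bounded away from zero, then propagate this exponential smallness from $P_k$ down to $P_{k-1},\dots,P_1,P_0$ on successively shorter windows, ending with $|P_0(n_i,l_{n_i})|$ exponentially small --- contradicting $P_0\equiv 1$. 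This shows no finite derivative can exist whenever $\liminf_n|l_n(x)/n-(1-2a)|=0$, which is exactly what your Case 2 needs and makes your two cases exhaustive.
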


\begin{proof}
The proof is analogous to that of Proposition \ref{prop:large-a-derivative}, and is omitted.
\end{proof}

Together, Propositions \ref{prop:special-cases}, \ref{prop:large-a-derivative} and \ref{prop:small-a-derivative} imply Proposition \ref{prop:only-zero} and Theorem \ref{thm:differentiability-summary}.

%\begin{proof}[Proof of Theorem \ref{thm:differentiability-summary}]
%Statements (a), (b) and (c) follow from Propositions \ref{prop:small-a-derivative}, \ref{prop:large-a-derivative} and \ref{prop:special-cases}, respectively.
%\end{proof}

\begin{corollary} \label{cor:derivativ-zero-rough}
Let $x\in(0,1)$ and define
\[
\lambda_*(x):=\liminf_{n\to\infty}\frac{l_n(x)}{n}, \qquad \lambda^*(x):=\limsup_{n\to\infty}\frac{l_n(x)}{n}.
\]
\begin{enumerate}[(a)]
\item If $0<a<1/3$, then 
\[
\{x:\lambda^*(x)<\phi(a)\}\subset D_{k,a}^0\subset \{x:\lambda^*(x)\leq\phi(a)\}.
\]
%and $\lambda^{*}<\phi(a)$ then $M'_{k,a}(x) = 0$.
\item If $1/3<a<2/3$, then
\[
\{x:\lambda_*(x)>\phi(a)\}\subset D_{k,a}^0\subset \{x:\lambda_*(x)\geq\phi(a)\}.
\]
% $\lambda_{*}>\phi(a)$ then $M'_{k,a}(x) = 0$. 
\end{enumerate}
\end{corollary}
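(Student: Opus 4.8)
The plan is to read off both inclusions directly from the differentiability characterizations already established: Proposition \ref{prop:small-a-derivative} for case (a) and Proposition \ref{prop:large-a-derivative} for case (b). The guiding observation is that $r_n/n = l_n(x)/n - \phi(a)$, so $r_n$ grows or decays linearly in $n$ at rate $\lambda-\phi(a)$ whenever the frequency $l_n(x)/n$ stays bounded away from the critical value $\phi(a)$; in that regime the logarithmic correction $k|C_0|\log n$ (resp. $-kC_0\log n$) is negligible compared with $r_n$. The entire content of the corollary is that this linear term controls membership in $D_{k,a}^0$ away from the boundary frequency $\phi(a)$, which is precisely why one inclusion is strict and the other is not.

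For case (a), where $0<a<1/3$, Proposition \ref{prop:small-a-derivative} gives that $x\in D_{k,a}^0$ if and only if $r_n+k|C_0|\log n\to -\infty$. For the left inclusion, suppose $\lambda^*(x)<\phi(a)$; then for some $\eps>0$ we have $l_n(x)/n<\phi(a)-\eps$ for all large $n$, whence $r_n<-\eps n$ and the linear term swamps the logarithm, giving $r_n+k|C_0|\log n\to -\infty$, so $x\in D_{k,a}^0$. For the right inclusion I argue by contraposition: if $\lambda^*(x)>\phi(a)$, then along some subsequence $(n_j)$ we have $l_{n_j}(x)/n_j>\phi(a)+\eps$, so $r_{n_j}>\eps n_j\to +\infty$, and hence $r_n+k|C_0|\log n$ cannot tend to $-\infty$, forcing $x\notin D_{k,a}^0$. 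This yields $D_{k,a}^0\subset\{x:\lambda^*(x)\le\phi(a)\}$.

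Case (b), where $1/3<a<2/3$ and $a\ne 1/2$, is entirely symmetric, using Proposition \ref{prop:large-a-derivative} together with $C_0>0$: here $x\in D_{k,a}^0$ iff $r_n-kC_0\log n\to +\infty$. If $\lambda_*(x)>\phi(a)$ then $r_n>\eps n$ eventually, forcing $r_n-kC_0\log n\to +\infty$; and if $\lambda_*(x)<\phi(a)$ then $r_{n_j}<-\eps n_j$ along a subsequence, so the limit cannot be $+\infty$. The borderline value $a=1/2$, for which $\phi(1/2)=0$, is not covered by Proposition \ref{prop:large-a-derivative} and must be handled separately via Proposition \ref{prop:one-half}: there the right inclusion $D_{k,1/2}^0\subset\{x:\lambda_*(x)\ge 0\}$ is automatic since $\lambda_*\ge 0$ always, while $\lambda_*(x)>0$ forces the ternary expansion of $x$ to have infinitely many nonzero digits (so $3^n x\notin\mathbb{Z}$ for every $n$) and $l_n(x)\to\infty$ (so $l_n(x)=k+1$ for some $n$), whence $M_{k,1/2}'(x)=0$.

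The argument is routine once the propositions are in place; the only point requiring care, and the reason the statement takes the asymmetric form with a strict inclusion on one side and a non-strict one on the other, is the boundary frequency $\lambda=\phi(a)$. There the sign of the limit is governed by the logarithmic term together with the finer fluctuations of $r_n$, and hence cannot be decided from $\lambda_*(x)$ or $\lambda^*(x)$ alone; these are exactly the points the corollary leaves undetermined. I would simply flag this so that the reader understands the inclusions cannot be improved to equalities.
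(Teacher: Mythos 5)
Your proof is correct and takes essentially the same route as the paper: both read the two inclusions directly off Propositions \ref{prop:small-a-derivative} and \ref{prop:large-a-derivative}, noting that away from the critical frequency $\phi(a)$ the linear term $r_n$ (of order $\pm\eps n$, eventually or along a subsequence) dominates the logarithmic correction. In fact you are slightly more careful than the paper, whose proof handles case (a) and dismisses (b) as ``analogous'' without observing that $a=1/2$ lies outside the scope of Proposition \ref{prop:large-a-derivative}; your separate treatment of that value via Proposition \ref{prop:one-half} closes this small gap.
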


\begin{proof}
Let $0<a<1/3$ and $\lambda^{*}<\phi(a)$. Then $r_n=l_n(x)-\phi(a)n\leq -\delta n$ for some fixed $\delta>0$ and all large enough $n$, so certainly $r_n+k|C_0|\log n\to-\infty$. On the other hand, if $\lambda^{*}>\phi(a)$, then there is a subsequence $(n_i)$ and a $\delta>0$ such that $r_{n_i}\geq \delta n_i$ for all $i$. Hence (a) follows from Proposition \ref{prop:small-a-derivative}. The proof of (b) is analogous.
\end{proof}

\begin{proof}[Proof of Theorem \ref{thm:M_k-differentiability}]
Statement ({\em a}) is the same as Theorem \ref{thm:differentiability-summary} (c). Statements ({\em b}) and ({\em c}) follow from Corollary \ref{cor:derivativ-zero-rough} in the same way as in \cite[Section 4]{Allaart}.
\end{proof}

\begin{theorem} \label{thm:big-set-difference}
For $a\in(0,2/3)\backslash\big\{1/3\big\}$ and any $k\geq 1$ there exist uncountably many points $x$ such that $M'_{k,a}(x) = 0$ but $M_{k+1,a}$ does not have a finite derivative at $x$. Moreover, the set of such points has positive Hausdorff dimension.
\end{theorem}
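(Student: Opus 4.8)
The plan is to exploit the characterizations in Theorem~\ref{thm:differentiability-summary} directly, producing points $x$ whose digit-frequency profile $r_n(x)=l_n(x)-n\phi(a)$ sits in a ``window'' that satisfies the divergence condition for level $k$ but fails it for level $k+1$. The key observation is that, passing from $k$ to $k+1$, the threshold shifts only by the logarithmic term $C_0\log n$ (with $|C_0|\log n$ for $a<1/3$). Thus I would try to engineer points for which $r_n$ grows (or decays) at a rate \emph{between} $kC_0\log n$ and $(k+1)C_0\log n$, so that the $k$-condition holds while the $(k+1)$-condition marginally fails. Because the gap between consecutive thresholds is itself of size $C_0\log n$, I would target a sequence with $r_n\sim -c\log n$ for a suitable constant $c$ with $k|C_0|<c<(k+1)|C_0|$ (in the case $0<a<1/3$; the case $1/3<a<2/3$ is symmetric with $r_n\sim c\log n$).

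The concrete construction I would use is to prescribe the ternary digits of $x$ so that $l_n(x)$ tracks $n\phi(a)$ up to a controlled logarithmic correction. First I would fix a target frequency equal to $\phi(a)$ (which by \eqref{eq:where-is-phi-1}--\eqref{eq:where-is-phi-2} lies strictly inside $(0,1)$), and then, over a sequence of increasingly long blocks $B_1,B_2,\dots$, distribute $1$'s among the digits $\{0,2\}$ so that the running count $l_n$ follows $n\phi(a)$ with a deliberate deficit of roughly $c\log n$ for the chosen $c$. Concretely, within each dyadic-length block I would place slightly fewer $1$'s than the ``balanced'' number dictated by $\phi(a)$, accumulating the logarithmic deficit, and I would have freedom in \emph{how} the non-$1$ digits are chosen between $0$ and $2$. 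This freedom is exactly what yields uncountably many points and positive dimension: the positions forced to be $1$ (or forced to be non-$1$) only pin down a density-$\phi(a)$ sub-collection of digits, while the binary choice between $0$ and $2$ at the remaining coordinates is free. A standard Frostman/mass-distribution argument on the resulting Cantor-type set, or the frequency-dimension formula $\dim_H\{x:\lim l_n/n=p\}=h(p)$ recalled in the text, then gives Hausdorff dimension at least $h(\phi(a))=d(a)>0$.

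The main steps, in order, are: (i) reduce to choosing a deficit rate $c$ with $k|C_0|<c<(k+1)|C_0|$ (for $a<1/3$; analogously $kC_0<c<(k+1)C_0$ for $a>1/3$) and confirm via Theorem~\ref{thm:differentiability-summary} that $r_n+k|C_0|\log n\to-\infty$ while $r_n+(k+1)|C_0|\log n\not\to-\infty$; (ii) build a family of points realizing $r_n=-c\log n+o(\log n)$ with the prescribed frequency of $1$'s equal to $\phi(a)$, retaining free binary choices at a positive-density set of coordinates; (iii) verify that this family is uncountable and compute a lower bound for its Hausdorff dimension using the entropy function $h$. For the failure direction at level $k+1$, I would ensure that $r_n+(k+1)|C_0|\log n$ stays bounded below (say oscillates or tends to $+\infty$ along a subsequence), which combined with Proposition~\ref{prop:small-a-derivative} (resp.\ Proposition~\ref{prop:large-a-derivative}) guarantees $M_{k+1,a}$ has \emph{no} finite derivative there; this is cleaner than trying to show the derivative is a nonzero real, since Proposition~\ref{prop:only-zero} already forbids nonzero finite derivatives.

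The hard part will be the simultaneous control of two competing requirements: the logarithmic deficit must be tuned precisely enough to separate the $k$ and $k+1$ thresholds, yet the underlying frequency of $1$'s must remain exactly $\phi(a)$ so that the dimension bound $d(a)$ is attained rather than some smaller value. In other words, the logarithmic correction cannot be allowed to perturb the leading-order linear frequency, but it must be robust enough that it persists for \emph{every} $x$ in the constructed family regardless of the free $0/2$ choices. Getting the dimension estimate sharp (equal to $d(a)$, matching Corollary~\ref{cor:strictly-descending-sets}) rather than merely positive may require a more careful mass-distribution argument, but for the stated theorem positivity of the dimension suffices, so I would first secure a clean positive lower bound and only then refine it if the sharper value is needed.
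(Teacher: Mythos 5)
Your strategy for $a\neq 1/2$ is exactly the paper's: the paper places the $n$th digit $1$ at position $b_n=\lfloor n/\phi(a)-\frac{k+\delta}{\log 3a}\log n\rfloor$ with $0<\delta<1$, which (since $\phi(a)=C_0\log 3a$) produces precisely your deficit $r_n\sim -c\log n$ with $c=(k+\delta)|C_0|$ strictly between $k|C_0|$ and $(k+1)|C_0|$, leaves all remaining digits free in $\{0,2\}$, and then checks via Propositions \ref{prop:small-a-derivative} and \ref{prop:large-a-derivative} that the level-$k$ condition holds while the level-$(k+1)$ condition fails. So your steps (i)--(iii) are sound and coincide with the paper's argument in that range of $a$.

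There are two points to fix, one of which is a genuine gap: your argument does not cover $a=1/2$, which the theorem includes. Parts (a) and (b) of Theorem \ref{thm:differentiability-summary} explicitly exclude $a=1/2$, the constant $C_0(a)=1/(\log a-\log|1-2a|)$ is not defined there, and $\phi(1/2)=0$, so there are no consecutive logarithmic thresholds to separate and the whole mechanism degenerates. The paper treats this case separately using Proposition \ref{prop:one-half}: for $a=1/2$ the difference $D_{k,1/2}^0\backslash D_{k+1,1/2}^0$ is, up to a countable set of endpoints, the set of $x$ with exactly $k+1$ ones in their ternary expansion, a countable union of affine copies of the Cantor set, of dimension $\log_3 2$. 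The second point is a quantitative overclaim rather than an error in the conclusion: the construction you describe essentially fixes the positions of the $1$'s and leaves only the $0/2$ choices free, so the resulting Cantor-type set has Hausdorff dimension $(1-\phi(a))\log_3 2$, not $h(\phi(a))=d(a)$; the discrepancy is the positional entropy $\bigl(-p\log p-(1-p)\log(1-p)\bigr)/\log 3$ with $p=\phi(a)$, which your set does not realize because the $1$-positions carry no freedom. Since the theorem only asks for positive dimension (as you note), this does not break the proof, but the claim that the frequency-dimension formula gives a lower bound of $h(\phi(a))$ should be replaced by the correct bound $(1-\phi(a))\log_3 2>0$, which is also what the paper states.
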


\begin{proof}
We first consider the case $a=1/2$. For $x\in(0,1)$, let $l(x):=\sup_n l_n(x)$. If $x$ is not a ternary rational point, then the condition of Proposition \ref{prop:one-half} simplifies to $l(x)\geq k+1$. Thus, the set difference $D_{k,1/2}^0\backslash D_{k+1,1/2}^0$ is, up to a countable set of endpoints, equal to the set of points $x$ such that $l(x)=k+1$. This set has Hausdorff dimension $\log_3 2$, as it is a countable union of affine copies of the ternary Cantor set.

Next, we assume $0<a<1/3$. Recall that $C_0<0$ in this case.
Fix an arbitrary $\delta\in(0,1)$. We construct a point $x$ with the desired property by prescribing the positions of the 1's in its ternary expansion. For $n\in\NN$, let
\[
b_n:=\left\lfloor\frac{n}{\phi(a)}-\frac{k+\delta}{\log 3a}\log n\right\rfloor,
\]
and put $x:=\sum_{j=1}^\infty d_j 3^{-j}$, where
\begin{equation} \label{eq:special-ternary-expansion}
d_j=\begin{cases}
1 & \mbox{if $j=b_n$ for some $n\in\NN$},\\
0\ \mbox{or}\ 2\ \mbox{arbitrarily} & \mbox{otherwise}.
\end{cases}
\end{equation}
We first verify that $M_{k,a}'(x)=0$. For all large enough $n$,
\begin{align*}
l_{b_n}(x)&-\phi(a)b_n+k|C_0|\log b_n=n-\phi(a)b_n+k|C_0|\log b_n\\
&\leq n-\phi(a)\left(\frac{n}{\phi(a)}-\frac{k+\delta}{\log 3a}\log n-1\right)+k|C_0|\log\left(\frac{n}{\phi(a)}-\frac{(k+\delta)}{\log 3a}\log n\right)\\
&\leq \phi(a)\left(\frac{k+\delta}{\log 3a}\log n+1\right)+k|C_0|\log\left(\frac{2n}{\phi(a)}\right)\\
&=-(k+\delta)|C_0|\log n+\phi(a)+k|C_0|\{\log n+\log 2-\log\phi(a)\}\\
&=-\delta |C_0|\log n+\phi(a)+k|C_0|\{\log 2-\log\phi(a)\}\to -\infty.
\end{align*}
Here the second-to-last step follows from the definitions of $\phi(a)$ and $C_0$.
Now if $j\in\NN$ is arbitrary, choose $n$ so that $b_{n-1}<j\leq b_{n}$. Then
\begin{align*}
l_j(x)-\phi(a)j+k|C_0|\log j &\leq l_{b_{n}}(x)-\phi(a)b_{n-1}+k|C_0|\log b_{n}\\
&\leq l_{b_{n}}(x)-\phi(a)b_{n}+k|C_0|\log b_{n}+\phi(a)\{b_{n}-b_{n-1}\}\\
&\to -\infty,
\end{align*}
because $b_n-b_{n-1}$ is bounded in $n$. Thus, we have verified the condition of Proposition \ref{prop:small-a-derivative}. A similar calculation, which we omit, shows that $l_j(x)-\phi(a)j+(k+1)|C_0|\log j\to +\infty$. Hence, $M_{k,a}'(x)=0$ but $M_{k+1,a}'(x)$ does not exist as a finite real number.

It is not difficult to show that the set of points $x$ whose ternary expansion is given by \eqref{eq:special-ternary-expansion} has Hausdorff dimension $(1-\phi(a))\log_3 2>0$. Hence, the set of points for which $M_{k,a}'(x)=0$ but $M_{k+1,a}$ does not have a finite derivative at $x$ has at least this dimension. We leave the details to the interested reader.

Finally, the case where $1/3<a<2/3, a\neq 1/2$ can be dealt with in a similar way; we omit the details.
\end{proof}

\begin{proof}[Proof of Corollary \ref{cor:strictly-descending-sets}]
That the differences $D_{k,a}^0\backslash D_{k+1,a}^0$ are descending in $k$ for each $a\in(0,2/3)\backslash \{1/3\}$ follows immediately from Propositions \ref{prop:one-half}, \ref{prop:large-a-derivative} and \ref{prop:small-a-derivative}. That they have positive Hausdorff dimension is the result of Theorem \ref{thm:big-set-difference}.
\end{proof}

\section{Infinite derivatives: $a=1/2$} \label{sec:infinite-derivative-one-half}

In this section we describe the infinite derivatives of $M_{k,1/2}$. Since the parameter $a$ will be fixed at $1/2$, we drop it from the notation and write simply $M_k$ instead of $M_{k,1/2}$.
 
For $x\in[0,1]$, $n\in\NN$ and $d\in\{0,2\}$, let 
\begin{equation} \label{eq:run-length}
\rho_n(x;d):=\max\{m\geq 0: x_{n+1}=\dots=x_{n+m}=d\}
\end{equation}
be the run length of consecutive digits $d$ starting at the $(n+1)$th ternary digit of $x$, where we make the convention that the maximum of the empty set is zero.

\begin{theorem} \label{thm:infinite-derivatives-half}
Fix $k\in\NN$. Let $x\in(0,1)$, and let $l:=l(x):=\sup_n l_n(x)$ be the total number of $1$'s in the ternary expansion of $x$. 
\begin{enumerate}[(a)]
\item If $l>k$, then $M_k$ does not have an infinite derivative at $x$.
\item If $l=k$, then $M_k$ has an infinite derivative at $x$ if and only if Eidswick's condition holds (see \cite{Eidswick}): 
\[
cn-\rho_n(x;d)\to\f, \qquad d=0,2,
\]
where $c:=\log_2 3-1$. If this is the case, $M_k'(x)=(-1)^k\cdot \f$.
\item If $l<k$, then sufficient for $M_k$ to have an infinite derivative at $x$ is that there exists $N\in\NN$ such that
\[
\rho_n(x;d)\leq \log_2 n-\log_2(k-l)-2 \qquad \forall n\geq N, \forall d\in\{0,2\};
\]
and necessary is that there exists $N\in\NN$ such that
\[
\rho_n(x;d)\leq \log_2 n-\log_2(k-l) \qquad \forall n\geq N, \forall d\in\{0,2\}.
\]
Moreover, if $M_k$ has an infinite derivative at $x$, then $M_k'(x)=(-1)^l\cdot\f$.
\end{enumerate}
\end{theorem}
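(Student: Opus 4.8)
My plan is to treat the three cases separately, using throughout the explicit increment formula of Lemma~\ref{lem:difference-formulas} and the box-containment property of Lemma~\ref{prop:box-containment}. Write $C:=F_{1/2}$ for the Cantor function and abbreviate $M_k=M_{k,1/2}$. I fix $x$ and take $N$ large enough that $l_N(x)=l$, so that $x_{N+1},x_{N+2},\dots\in\{0,2\}$ and $l_n(x)=l$ for all $n\ge N$; ternary rationals I handle separately via the symmetry $M_k(1-x)=-M_k(x)$ of Lemma~\ref{lem:symmetry}. For \emph{part (a)}, if $l>k$ then $l_n(x)>k$ for some finite $n$, whence $\Delta_{k,1/2}(I_n(x))=0$ (the $k$-th $a$-derivative of $a^{n-l}(1-2a)^l$ retains a factor $1-2a$, which vanishes at $a=1/2$). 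Thus $M_k$ takes equal values at the endpoints of $I_n(x)$, and box-containment (Lemma~\ref{prop:box-containment}, valid since $n\ge 2k-1$) forces $M_k$ to be constant on $I_n(x)$; being an interior point, $x$ then has $M_k'(x)=0$, so no infinite derivative exists.

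For \emph{part (b)}, with $l=k$, I would show that on $I_N(x)$ the graph of $M_k$ is an affine image of $C$: since $l(3j+1)=k+1>k$ makes the middle-third increment vanish, the three subinterval increments of any level-$n$ ($k$ ones) interval are $\tfrac12\Delta,0,\tfrac12\Delta$, matching the self-similar increments $\tfrac12,0,\tfrac12$ of $C$. An induction then gives $M_k(j3^{-N}+s)=M_k(j3^{-N})+\Delta_{k,1/2}(I_{N,j})\,C(3^N s)$, so $M_k'(x)=\Delta_{k,1/2}(I_{N,j})\,3^N\,C'(\sigma^N x)$, where the prefactor is a nonzero constant of sign $(-1)^k$ (from the $(-2)^k$ in Lemma~\ref{lem:difference-formulas}). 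Since $C$ is increasing, $M_k'(x)$ is infinite precisely when $C'(\sigma^N x)=+\infty$, which by Eidswick's theorem \cite{Eidswick} is equivalent to $cn-\rho_n(x;d)\to\infty$ (the shift by $N$ is immaterial), and then $M_k'(x)=(-1)^k\cdot\infty$.

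\emph{Part (c)} is the heart of the matter. Now $l<k$, so $l+1\le k$ and the middle-third increment no longer vanishes: a level-$n$ ($l$ ones) interval splits into increments of sign $(-1)^l,(-1)^{l+1},(-1)^l$ and magnitudes $A_n,B_n,A_n$, where $B_n/A_n=2(k-l)/(n+1-l)\to0$. Thus $M_k$ climbs by $A_n$, dips back by $B_n$, then climbs by $A_n$; measured from the left endpoint, $M_k$ therefore revisits every value in the narrow band $[A_n-B_n,A_n]$ \emph{twice}, once near the top of the first third and once in the middle third. The forward increments give $3^n|\Delta_{k,1/2}(I_n(x))|\sim n^{k-l}(3/2)^n\to\infty$ with sign $(-1)^l$, so the only candidate for an infinite derivative is $(-1)^l\cdot\infty$. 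The run lengths decide exactly whether $M_k(x)$ falls into these overlap bands: a configuration $x_{n+1}=0,\ x_{n+2}=\dots=x_{n+R+1}=2$ places $M_k(x)$ at value $A_n-\varepsilon$ with $\varepsilon\asymp A_n2^{-R}$, and $A_n-\varepsilon$ lies in $[A_n-B_n,A_n]$ iff $2^{-R}\lesssim(k-l)/n$, i.e. iff $R\gtrsim\log_2 n-\log_2(k-l)$. When this happens cofinally (for runs of $2$'s, or symmetrically of $0$'s via Lemma~\ref{lem:symmetry}), there are points $y\to x$ in the adjacent middle third with $M_k(y)=M_k(x)$, so difference quotients vanish and no infinite derivative can exist; this gives the \emph{necessary} condition. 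Conversely, if $\rho_n(x;d)\le\log_2 n-\log_2(k-l)-2$, then $M_k(x)$ stays a definite distance (at least $\asymp B_n$, thanks to the factor-$4$ margin) below each band at every scale, and I would combine this separation with the increment formula (Lemma~\ref{lem:difference-formulas}) and box-containment (Lemma~\ref{prop:box-containment}) to prove $(-1)^l\,[M_k(y)-M_k(x)]/(y-x)\to+\infty$ for all $y\to x$, yielding the \emph{sufficient} condition and the value $(-1)^l\cdot\infty$.

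The main obstacle is the sufficiency half of part (c): converting the scale-by-scale separation of $M_k(x)$ from the overlap bands into a genuine, uniform lower bound on the difference quotient for \emph{all} nearby $y$, including those that cross an interval boundary where a ternary carry changes the local count of $1$'s and flips the sign of the increments. Controlling these boundary crossings, and verifying that the factor-$4$ safety margin ($-2$ in $\log_2$) is exactly what is needed to dominate the accumulated backward dips, is where the real work lies; by contrast, the necessity direction becomes transparent once the overlap-band mechanism is isolated.
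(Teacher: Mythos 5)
Your parts (a) and (b) are correct and are essentially the paper's own argument: once the prefix contains more than $k$ ones the increment of $M_k$ over $I_n(x)$ vanishes and the graph is flat there, and when $l=k$ the graph over $I_N(x)$ is an affine copy of the Cantor function (increments $\tfrac12\Delta,0,\tfrac12\Delta$), so Eidswick's theorem applies with sign $(-1)^k$. Your necessity mechanism in (c) is also sound, and is in fact the paper's inequality in geometric clothing: saying that $M_k(x)$ has entered the overlap band $[A_n-B_n,A_n]$ is exactly saying that $M_k(z_n)-M_k(x)\leq 0$ for $z_n:=0.x_1\dots x_{n-1}20^\infty$, and a nonpositive right difference quotient at infinitely many scales already contradicts an infinite derivative (your IVT point $y$ with $M_k(y)=M_k(x)$ is a pleasant but unnecessary extra step). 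One caveat: your band estimates carry unspecified constants ($\varepsilon\asymp A_n2^{-R}$, $B_n\asymp A_n(k-l)/n$), so as written they only yield necessity of $\rho_n(x;d)\leq\log_2 n-\log_2(k-l)+O(1)$. The exact threshold in the theorem comes from the sharp computation the paper performs: the bracket $\frac{(n-l+r)!}{(n-k+r)!}-2^{r+1}(k-l)\frac{(n-l-1)!}{(n-k)!}$ is negative for all large $n$ precisely under $2^{r+1}(k-l)>2n$, i.e. $r>\log_2 n-\log_2(k-l)$.

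The genuine gap is the sufficiency half of (c), which you leave as a plan, and whose quantitative core is in fact not salvageable as stated. Two ingredients are missing. First, the passage from the test sequence to arbitrary $y\to x$ is not something to be fought with ``accumulated backward dips''; it is dissolved at the outset by the paper's reduction: if $y>x$ first disagrees with $x$ at digit $n$ (so $x_n=0$, $y_n\in\{1,2\}$), then Lemmas \ref{prop:box-containment} and \ref{lem:difference-formulas} give $M_k(y)\geq M_k(z_n)$ (for $l$ even the middle-third increment is negative and the last-third increment positive, so on either third the minimum is at $z_n$), and since $y-x<2(z_n-x)$ it suffices to prove divergence along $(z_n)_{n\in E}$, $E=\{n>n_0:x_n=0\}$; carries and sign flips never enter. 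Second, one needs a lower bound on $M_k(z_n)-M_k(x)$ that survives multiplication by $3^n$. Here your key claim—that the separation below the band is uniformly $\asymp B_n$—is false near the threshold: bounding $M_k(x)\leq M_k(\mathbf{i}02^r10^\infty)$ by \eqref{eq:Mx-bounds} and using the paper's closed form $M_k(\mathbf{i}20^\infty)-M_k(\mathbf{i}02^r10^\infty)=(-2)^l\frac{k!}{(k-l)!}(1/2)^{n-k+r+1}A_k(n,l,r)$, one finds that when $2^{r+2}(k-l)$ is comparable to $n$ the separation is only of order $B_n(\log n)/n=o(B_n)$. The derivative still blows up, but seeing this requires the sharper accounting the paper uses: under $r\leq\log_2 n-\log_2(k-l)-2$ one shows $A_k(n,l,r)>0$ for large $n$, and then the fact that $A_k(n,l,r)$ is an \emph{integer} upgrades this to $A_k\geq 1$, giving a quotient of order at least $3^n(1/2)^{n+r}\geq (3/2)^n/n\to\infty$. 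So the sufficiency direction needs both the reduction and this integrality (or an equally sharp) estimate; the proposal as written does not close it.
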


(An exact characterization would involve the entire sequence $(\rho_n(x;d))$ of run lengths in a complicated way, and even though one can be written down, it is not enlightening and therefore we omit it here. On the other hand, the conditions given above are simple to understand, and are sharp to within a small constant.)

\begin{corollary} \label{cor:full-dimension}
For all $l\leq k$, let $F_{k,l}^\f$ be the set
\[
F_{k,l}^\f:=\{x\in[0,1]: l(x)=l\ \mbox{and}\ M_k'(x)=\pm\f\}.
\]
Then $F_{k,l}^\f$ has Hausdorff dimension $s:=\log_3 2$. Furthermore, letting $\mathcal{H}^s(E)$ denote the $s$-dimensional Hausdorff measure of a set $E$, we have that $\mathcal{H}^s(F_{k,l}^\f)=0$ for $l<k$, whereas $\mathcal{H}^s(F_{k,k}^\f)>0$. As a result,
\[
\mathcal{H}^s(D_{k,1/2}^{+\f})>0 \quad\mbox{and}\quad \mathcal{H}^s(D_{k,1/2}^{-\f})=0 \quad\mbox{when $k$ is even},
\]
and
\[
\mathcal{H}^s(D_{k,1/2}^{-\f})=0 \quad\mbox{and}\quad \mathcal{H}^s(D_{k,1/2}^{+\f})>0 \quad\mbox{when $k$ is odd}.
\]
\end{corollary}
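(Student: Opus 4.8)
The plan is to prove Corollary \ref{cor:full-dimension} by combining the digit-frequency characterization of Hausdorff dimension (the function $h$, which gives $h(0)=\log_3 2$ for the set of points with asymptotic frequency $0$ of the digit $1$) with the explicit Eidswick-type run-length conditions from Theorem \ref{thm:infinite-derivatives-half}. First I would observe that every point in $F_{k,l}^\f$ has exactly $l$ ones in its ternary expansion, so all but finitely many digits lie in $\{0,2\}$; hence the asymptotic frequency of $1$'s is $0$, placing $F_{k,l}^\f$ inside the set $\{x:\lim_n l_n(x)/n=0\}$ of dimension $h(0)=\log_3 2$. This yields the upper bound $\dim_H F_{k,l}^\f\leq \log_3 2$ immediately.

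For the matching lower bound $\dim_H F_{k,l}^\f\geq \log_3 2$, I would construct a suitably large subset on which the sufficient run-length condition of Theorem \ref{thm:infinite-derivatives-half} holds. Fixing $l$, I would place $l$ ones in fixed early positions and then require that the remaining digits (drawn from $\{0,2\}$) obey a run-length cap $\rho_n(x;d)\leq \log_2 n - \log_2(k-l)-2$ for all large $n$. Since this cap grows like $\log_2 n$, it only forbids exponentially rare long runs, so the set of admissible $\{0,2\}$-sequences still carries essentially the full Cantor-set dimension $\log_3 2$. I would make this precise either by a direct mass-distribution (Frostman) argument on the tree of admissible words, or by noting that forbidding runs longer than $c_n:=\lfloor\log_2 n\rfloor$ removes a negligible proportion at each scale; a standard computation shows the resulting subshift-like set has box and Hausdorff dimension $\log_3 2$. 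This settles $\dim_H F_{k,l}^\f=\log_3 2$ for all $l\leq k$.

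The more delicate claims concern the Hausdorff \emph{measure} $\mathcal{H}^s$ with $s=\log_3 2$, and here the distinction between $l<k$ and $l=k$ is the crux. For $l=k$, part (b) of Theorem \ref{thm:infinite-derivatives-half} characterizes $F_{k,k}^\f$ (up to the countable set of ternary rationals and of points with finitely many digits) as exactly the set of $\{0,2\}$-sequences satisfying Eidswick's condition $cn-\rho_n(x;d)\to\infty$. This is precisely the classical condition under which the Cantor function has a derivative, and it is known (via Eidswick \cite{Eidswick}) that this set has positive $\mathcal{H}^s$-measure; I would invoke this together with the bi-Lipschitz correspondence between $F_{k,k}^\f$ and a full-measure-in-$\mathcal{H}^s$ subset of the ternary Cantor set to conclude $\mathcal{H}^s(F_{k,k}^\f)>0$. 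For $l<k$, by contrast, the necessary condition forces $\rho_n(x;d)\leq \log_2 n-\log_2(k-l)$ for all large $n$; that is, the run lengths must grow \emph{slower} than $\log_2 n$, which is asymptotically stronger than Eidswick's linear-gap condition. The set of Cantor points whose run lengths are eventually bounded by $\log_2 n$ is an $\mathcal{H}^s$-null set (the long runs that Eidswick's condition permits with positive measure are exactly excluded), so $\mathcal{H}^s(F_{k,l}^\f)=0$ for $l<k$.

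The main obstacle is the sharp measure-theoretic dichotomy at the threshold $\rho_n\approx \log_2 n$: proving $\mathcal{H}^s(F_{k,k}^\f)>0$ requires quoting or reproving Eidswick's positive-measure result, while proving $\mathcal{H}^s(F_{k,l}^\f)=0$ for $l<k$ requires a covering argument showing that imposing the eventual bound $\rho_n\leq \log_2 n - c$ drives the $s$-dimensional Hausdorff measure to zero. I would handle the null case by a direct cover: points with $\rho_n(x;d)\leq\log_2 n$ for all $n\geq N$ can be covered by Cantor cylinders in a way that makes $\sum(\mathrm{diam})^s$ arbitrarily small, using that forbidding a long run at each scale contributes a convergent correction. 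Once both measure statements are established, the final conclusions about $D_{k,1/2}^{\pm\f}$ follow directly: by part (c) of Theorem \ref{thm:infinite-derivatives-half} the sign of the infinite derivative is $(-1)^l$ (respectively $(-1)^k$ when $l=k$), so $D_{k,1/2}^{+\f}$ and $D_{k,1/2}^{-\f}$ are, up to $\mathcal{H}^s$-null sets, the disjoint unions of those $F_{k,l}^\f$ with $(-1)^l$ (or $(-1)^k$) of the appropriate sign; since the only set of positive $\mathcal{H}^s$-measure among them is $F_{k,k}^\f$, with sign $(-1)^k$, the asserted pattern in parity of $k$ is immediate.
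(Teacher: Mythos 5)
Your proposal is correct in outline and follows essentially the same strategy as the paper: reduce everything to the run-length conditions of Theorem \ref{thm:infinite-derivatives-half}, get the dimension by trapping $F_{k,l}^\f$ between Cantor-like subsets and a $\log_3 2$-dimensional superset, decide the measure dichotomy by how the run-length threshold ($\log_2 n$ versus linear in $n$) interacts with the natural measure $\mathcal{H}^s|_C$ on the ternary Cantor set $C$, and read off the signs from $M_k'(x)=(-1)^l\cdot\f$. The executions differ slightly: for the lower bound the paper avoids your Frostman-type argument on the log-capped tree by using the simpler sets $K_m$ (digits in $\{0,2\}$ with a forced $0$ and a forced $2$ in each block of $m$ digits), whose runs are \emph{bounded}, hence satisfy all the sufficient conditions simultaneously, and whose dimensions $\frac{(m-2)\log 2}{m\log 3}$ tend to $\log_3 2$; and for the measure statements the paper argues probabilistically, noting that $\mathcal{H}^s|_C$ makes the digits i.i.d.\ uniform on $\{0,2\}$, so that the case $l=k$ follows from the first Borel--Cantelli lemma ($\sum_n 2^{-(c/2)n}<\f$) and the case $l<k$ from a refinement of the second Borel--Cantelli lemma.

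Two spots in your sketch need tightening before it is a proof. First, in your covering argument for $\mathcal{H}^s(F_{k,l}^\f)=0$ you say that forbidding a long run at each scale "contributes a convergent correction"; as stated this is the wrong mechanism. If the proportions $p_j$ of cylinders killed at successive checkpoints had a convergent sum, the surviving mass $\prod_j(1-p_j)$ would stay bounded away from zero and the set would have \emph{positive} $\mathcal{H}^s$-measure. What makes the set null is precisely the divergence $\sum_n 2^{-(\log_2 n-\log_2(k-l))}=(k-l)\sum_n \frac{1}{n}=\f$, which is the computation at the heart of the paper's proof and must appear explicitly in yours. Second, for $\mathcal{H}^s(F_{k,k}^\f)>0$ you only cite the positive-measure statement as "known via Eidswick"; Eidswick's paper gives the pointwise characterization, not the measure statement, so this needs a proof --- the paper supplies a one-line one: $\sum_n 2^{-(c/2)n}<\f$, so by Borel--Cantelli almost every $x$ (with respect to $\mathcal{H}^s|_C$) has $\rho_n(x;d)<(c/2)n$ eventually, which implies Eidswick's condition. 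Finally, a remark in your favor: your parity conclusion (positive measure exactly on the side of sign $(-1)^k$) is the internally consistent one, and it exposes a typo in the corollary as printed, whose displayed statement for odd $k$ is identical to the even case rather than having the roles of $D_{k,1/2}^{+\f}$ and $D_{k,1/2}^{-\f}$ swapped.
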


\begin{proof}[Proof of Theorem \ref{thm:infinite-derivatives-half}]
({\em a}) and ({\em b}): If $x_1\dots x_n$ contains exactly $k$ 1's, including at the $n$th digit, then the graph of $M_k$ over the interval $[0.x_1\dots x_n,0.x_1\dots x_n+3^{-n}]$ is an affine copy of the Cantor devil's staircase $F_{1/2}$. This follows by setting $a=1/2$ in the functional equations \eqref{eq:M1-FE} and \eqref{eq:Mk-FE} and iterating: eventually, all higher order partial derivatives will vanish. Thus, ({\em b}) follows from Eidswick's theorem \cite{Eidswick}. Moreover, if $(x_i)$ contains at least $k+1$ 1's, then $x$ lies in one of the flat segments of a devil's staircase (possibly at an endpoint), so the right derivative of $M_k$ vanishes at $x$, and this gives ({\em a}).

({\em c}) We assume $l$ is even and explore when $M_k'(x)=+\f$. Note that $M_k'(x)$ cannot equal $-\f$ because for all sufficiently large $n$ (namely, all $n$ beyond the position of the last 1), the secant slope of $M_k$ over the interval $I_n(x)=[0.x_1\dots x_n,0.x_1\dots x_n+3^{-n}]$ is positive. Note further that $x$ cannot be a ternary rational point, because $M_k$ has local extrema at such points. Thus, $(x_i)$ does not end in $0^\f$ or $2^\f$, and hence $(x_i)$ contains both infinitely many 0's and infinitely many 2's.

We first consider whether $M_k$ has an infinite {\em right} derivative at $x$. We denote the right derivative by $(M_k)_+'$. Let $n_0$ be the position of the last 1 in $(x_i)$, and define the set $E:=\{n>n_0: x_n=0\}$. For each $n\in E$, put $z_n:=0.x_1\dots x_{n-1}20^\f$. Then $z_n>x$ and $3^{-n}<z_n-x<2\cdot 3^{-n}$. We certainly need that
\begin{equation} \label{eq:the-critical-sequence}
\frac{M_k(z_n)-M_k(x)}{z_n-x}\to\f,
\end{equation}
but this is also sufficient: For arbitrary $y>x$, let $n$ be the index such that $x_1\dots x_{n-1}=y_1\dots y_{n-1}$ and $x_n<y_n$. If $y$ is close enough to $x$, then $n>n_0$, and hence $x_n=0$; i.e. $n\in E$. Since $y_n=1$ or $2$, Lemmas \ref{prop:box-containment} and \ref{lem:difference-formulas} imply that $M_k(y)\geq M_k(z_n)$, recalling that $l$ is even. Thus, if \eqref{eq:the-critical-sequence} holds, then $M_k'(x)=+\f$.

Take $n>n_0$ and assume also that $n\geq 2k$. Let $r:=\rho_n(x;2)$ be the run length of consecutive $2$'s starting with $x_{n+1}$; see \eqref{eq:run-length}. Since the ternary expansion of $x$ contains no more 1's after the $n$th digit, $x$ lies between $0.x_1\dots x_{n-1}02^r 0^\f$ and $0.x_1\dots x_{n-1}02^r 10^\f$, and so
\begin{equation} \label{eq:Mx-bounds}
M_k(x_1\dots x_{n-1}02^r0^\f)\leq M_k(x)\leq M_k(x_1\dots x_{n-1}02^r10^\f)
\end{equation}
by Lemma \ref{prop:box-containment}, since $l$ is even. Hence,
%Then $x>0.x_1\dots x_{n-1}02^r0^\f$, and, by Lemma \ref{lem:difference-formulas} and Proposition \ref{prop:box-containment}, $M_k(x)>M_k(x_1\dots x_{n-1}02^r0^\f)$, since $l$ is even. Hence,
\[
\frac{M_k(z_n)-M_k(x)}{z_n-x}<3^n\big\{M_k(x_1\dots x_{n-1}20^\f)-M_k(x_1\dots x_{n-1}02^r0^\f)\big\}.
\]
To calculate the difference in square brackets, we first compute the corresponding difference for $F_a$ and then differentiate $k$ times. Set $\mathbf{i}:=x_1\dots x_{n-1}$. Note that
\[
F_a(\mathbf{i}20^\f)-F_a(\mathbf{i}0^\f)=a^{n-l-1}(1-2a)^l(1-a) \qquad\mbox{for all $n>n_0$},
\]
and so
\begin{align*}
F_a(\mathbf{i}20^\f)-F_a(\mathbf{i}02^r0^\f)&=a^{n-l-1}(1-2a)^l(1-a)-\sum_{j=n+1}^{n+r}a^{j-l-1}(1-2a)^l(1-a)\\
&=a^{n-l-1}(1-2a)^l\big(1-a-a(1-a^r)\big)\\
%&=a^{n-l-1}(1-2a)^l(1-2a+a^{r+1})\\
&=a^{n-l-1}(1-2a)^{l+1}+a^{n-l+r}(1-2a)^l.
\end{align*}
Differentiating $k$ times and substituting $a=1/2$ gives, after some calculations,
\begin{align}
\begin{split}
M_k(\mathbf{i}&20^\f)-M_k(\mathbf{i}02^r0^\f)\\
&=\frac{k!}{(k-l)!}(-2)^l\left(1/2\right)^{n-k+r}\left[\frac{(n-l+r)!}{(n-k+r)!}-2^{r+1}(k-l)\frac{(n-l-1)!}{(n-k)!}\right].
\end{split}
\label{eq:lower-critical-difference}
\end{align}
Suppose $r>\log_2 n-\log_2(k-l)$. Then $2^{r+1}(k-l)>2n$, and we claim this implies that the difference in square brackets is negative for all sufficiently large $n$. First we estimate
\begin{equation} \label{eq:first-estimate}
\frac{(n-l+r)!}{(n-k+r)!}-2^{r+1}(k-l)\frac{(n-l-1)!}{(n-k)!}<(n+r)^{k-l}-2n(n-k+1)^{k-l-1}.
\end{equation}
Choose $\eps>0$ so small that
\[
(1+\eps)^{k-l}<2(1-\eps)^{k-l-1}.
\]
If $r\geq \eps n$, then for large enough $n$ the factor $2^{r+1}$ in \eqref{eq:first-estimate} will dominate and make the left-hand side negative. So assume $r<\eps n$. Then, for $n$ so large that $n-k+1>(1-\eps)n$,
\begin{equation*}
(n+r)^{k-l}<(1+\eps)^{k-l}n^{k-l}<2(1-\eps)^{k-l-1}n^{k-l}
<2n(n-k+1)^{k-l-1},%<2^{r+1}(k-l)(n-k+1)^{k-l-1},
\end{equation*}
as required. Thus, an infinite derivative at $x$ is impossible. This proves the necessity statement in ({\em c}).

For the sufficiency statement, observe that $M_k(x)\leq M_k(\mathbf{i}02^r10^\f)$, by \eqref{eq:Mx-bounds}. Lemma \ref{lem:difference-formulas} gives
\[
M_k(\mathbf{i}02^r10^\f)-M_k(\mathbf{i}02^r0^\f)=(-2)^l\frac{k!}{(k-l)!}\cdot\frac{(n+r+1-l)!}{(n+r+1-k)!}\left(1/2\right)^{n+r+1-k}.
\]
Subtracting this from \eqref{eq:lower-critical-difference} yields, after some manipulation,
\[
M_k(\mathbf{i}20^\f)-M_k(\mathbf{i}02^r10^\f)=(-2)^l\frac{k!}{(k-l)!}\left(1/2\right)^{n-k+r+1}A_k(n,l,r),
\]
%&=(-2)^l\frac{k!}{(k-l)!}\left(1/2\right)^{(-2)^l\frac{k!}{(k-l)!}-k+r+1}\left[\frac{(n-l+r)!}{(n-k+r)!}\left(2-\frac{n+r+1-l}{n+r+1-k}\right)-2^{r+2}(k-l)\frac{(n-l-1)!}{(n-k)!}\right]\\
where
\[
A_k(n,l,r):=\frac{(n-l+r)!}{(n-k+r+1)!}(n+r+1+l-2k)-2^{r+2}(k-l)\frac{(n-l-1)!}{(n-k)!}.
\]
Since $l$ is even, it follows that $M_k(\mathbf{i}20^\f)-M_k(\mathbf{i}02^r10^\f)>0$ if and only if $A_k(n,l,r)>0$. We can estimate $A_k(n,l,r)$ by
\[
A_k(n,l,r)>(n-k+r+2)^{k-l-1}(n+r+1+l-2k)-2^{r+2}(k-l)(n-l-1)^{k-l-1}.
\]
%\begin{align*}
%&\frac{(n-l+r)!}{(n-k+r+1)!}(n+r+1+l-2k)-2^{r+2}(k-l)\frac{(n-l-1)!}{(n-k)!}\\
%&\qquad >(n-k+r+2)^{k-l-1}(n+r+1+l-2k)-2^{r+2}(k-l)(n-l-1)^{k-l-1}.
%\end{align*}
Assume $r\leq \log_2 n-\log_2(k-l)-2$. Then $2^{r+2}(k-l)\leq n$. If, in fact, $r\leq 2k-l$, then $2^{r+2}(k-l)(n-l-1)^{k-l-1}=O(n^{k-l-1})$ whereas $(n-k+r+2)^{k-l-1}(n+r+1+l-2k)$ is of order $n^{k-l}$. Otherwise, we have
\begin{align*}
(n-k+r+2)^{k-l-1}(n+r+1+l-2k)&>(n+k-l+2)^{k-l-1}(n+1)\\
&>n(n-l-1)^{k-l-1}\\
&\geq 2^{r+2}(k-l)(n-l-1)^{k-l-1}.
\end{align*}
Hence, in either case, $A_k(n,l,r)>0$,
%\[
%\frac{(n-l+r)!}{(n-k+r+1)!}(n+r+1+l-2k)-2^{r+2}(k-l)\frac{(n-l-1)!}{(n-k)!}>0,
%\]
at least for large enough $n$. Since $A_k(n,l,r)$ is an integer (recall $l<k$ and $n\geq 2k$), it follows that
\[
M_k(\mathbf{i}20^\f)-M_k(\mathbf{i}02^r10^\f)\geq 2^l\frac{k!}{(k-l)!}\left(1/2\right)^{n-k+r+1}.
\]
But this implies
\begin{align*}
\lim_{n\to\f, n\in E}\frac{M_k(z_n)-M_k(x)}{z_n-x}&\geq \lim_{n\to\f}\frac{M_k(\mathbf{i}20^\f)-M_k(\mathbf{i}02^r10^\f)}{2\cdot 3^{-n}}\\
&\geq\lim_{n\to\f} 3^n2^{l-1}\frac{k!}{(k-l)!}\left(1/2\right)^{n-k+\rho_n(x;2)+1}\\
&\geq \lim_{n\to\f} 3^n2^{l-1}\frac{k!}{(k-l)!}\left(1/2\right)^{n+\log_2 n}=+\f.
\end{align*}
Hence, $(M_k)_+'(x)=+\f$.

If $l$ is odd, all the signs are reversed and we get $(M_k)_+'(x)=-\f$.

Finally, the symmetry property of Lemma \ref{lem:symmetry}, applied to the full unit interval (that is, $M_k(1-x)=-M_k(x)$ for all $x\in[0,1]$) yields that $(M_k)_-'(x)=(M_k)_+'(1-x)$, so the conditions for $M_k$ to have an infinite {\em left} derivative at $x$ are the same as those for an infinite {\em right} derivative at $1-x$. Since the expansion of $1-x$ is obtained from that of $x$ by changing $0$'s to $2$'s and vice versa, this means we obtain the same conditions for the run lengths of $0$'s.
\end{proof}

\begin{proof}[Proof of Corollary \ref{cor:full-dimension}]
Let $l\leq k$. By Theorem \ref{thm:infinite-derivatives-half}, $M_k$ has an infinite derivative at $x$ whenever $l(x)\leq k$ and the run lengths $\rho_n(x;0)$ and $\rho_n(x;d)$ are bounded. Let $K$ be the set of points $x$ for which this is the case, and for $m\geq 2$, let $K_m$ be the set of points $x=(0.x_1x_2\dots)_3$ such that $x_i\in\{0,2\}$ for all $i$, $x_{jm-1}=0$ for all $j\in\NN$, and $x_{jm}=2$ for all $j\in\NN$. Clearly, $K\supset K_m$ for all $m$. Note that $K_m$ is a Cantor set of $2^{m-2}$ parts with contraction ratio $3^{-m}$ (because the last two digits of each successive block of $m$ digits are fixed and the others can be freely chosen from $\{0,2\}$). Thus,
\[
\dim_H K_m=\frac{(m-2)\log 2}{m\log 3},
\]
and it follows that
\[
\dim_H F_{k,l}^\f\geq \dim_H K\geq\sup_m \dim_H K_m=\log_3 2=s.
\]
% such that $R_n(x;d)\leq m$ for all $n\in\NN$ and $d\in\{0,2\}$. Let $\tilde{K}:=\{x\in K: l(x)=0\}$ and likewise, $\tilde{K}_m:=\{x\in K_m: l(x)=0\}$. Then $K$ is a countable union of affine contracted copies of $\tilde{K}$, and likewise, $K_m$ is a countable union of affine contracted copies of $\tilde{K}_m$. Hence,
%\[
%\dim_H F_{k,l}^\f\geq \dim_H K=\dim_H \tilde{K}=\sup_m \dim_H \tilde{K}_m.
%\]
%Note that $\tilde{K}_m$ is a Cantor set of $2^{m-2}$ parts with contraction ratio $3^{-m}$ (because the last two digits of each successive block of $m$ digits are fixed and the others can be freely chosen). Thus
%\[
%\dim_H \tilde{K}_m=\frac{(m-2)\log 2}{m\log 3},
%\]
%and it follows that $\dim_H F_{k,l}^\f\geq s:=\frac{\log 2}{\log 3}$. 
On the other hand, each $F_{k,l}^\f$ is contained in a countable union of affine copies of the ternary Cantor set $C$, hence $\dim_H F_{k,l}^\f\leq s$.

Next, observe that $\mathcal{H}^s(C)=1$ (see \cite[p.~53]{Falconer}), so $\mathcal{H}^s|_C$ is a probability measure on $C$. In fact, it is the measure that makes the ternary digits of $x$ independent and identically distributed, with digits $0$ and $2$ each having probability $1/2$. Let first $l<k$. By a refinement of the Borel-Cantelli lemma (see \cite[Example 6.5]{Billingsley}), the set of $x\in C$ such that $\rho_n(x;2)>\log_2 n-\log_2(k-l)$ for infinitely many $n$ has full $\mathcal{H}^s$-measure in $C$ because the infinite series
\[
\sum_{n=1}^\infty 2^{-(\log_2 n-\log_2(k-l))}=(k-l)\sum_{n=1}^\infty \frac{1}{n}
\]
diverges, so it follows from the necessary condition in Theorem \ref{thm:infinite-derivatives-half} ({\em c}) that $\mathcal{H}^s(F_{k,l}^\f)=0$.

If $l=k$, then $F_{k,k}^\f$ contains the set 
\[
\{x\in [0,1]: l(x)=k\ \mbox{and}\ \rho_n(x;d)<(c/2)n\ \mbox{for all sufficiently large $n$ and $d\in\{0,2\}$}\},
\]
and this set has full $\mathcal{H}^s$ measure in $\{x\in C: l(x)=k\}$ by the Borel-Cantelli lemma since $\sum_{n=1}^\f 2^{-(c/2)n}<\f$. Hence, $\mathcal{H}^s(F_{k,k}^\f)>0$.
The statements about $\mathcal{H}^s(D_{k,1/2}^{+\f})$ and $\mathcal{H}^s(D_{k,1/2}^{-\f})$ now follow since Theorem \ref{thm:infinite-derivatives-half} shows that $M_k'(x)=(-1)^l\cdot\f$ when $M_k$ has an infinite derivative at $x$.
\end{proof}

\section{Infinite derivatives: $a<1/2$} \label{sec:infinite-derivative-small-a}

In this section, we investigate the infinite derivatives of $M_{k,a}$ when $a<1/2$. While we are unable to state exact ``if and only if" conditions, the first theorem below gives a partial characterization in terms of the (upper and lower) frequency of $1$'s in the ternary expansion of $x$; this will be enough to prove Theorem \ref{thm:D-inf-dimensions}.
Recall the definition of $\phi(a)$ from \eqref{eq:phi}, as well as the inequalities \eqref{eq:where-is-phi-1} and \eqref{eq:where-is-phi-2}. 

\begin{theorem} \label{thm:infinite-derivative-detail}
Let $k\in\NN$ and $x\in(0,1)$, and let
\[
\lambda_*:=\liminf_{n\to\f}\frac{l_n(x)}{n}, \qquad \lambda^*:=\limsup_{n\to\f}\frac{l_n(x)}{n}.
\]
\begin{enumerate}[(a)]
\item Suppose $0<a<1/3$.
\begin{enumerate}[(i)]
\item If $\lambda_*>1-2a,$ then $M_{k,a}'(x)=(-1)^k\cdot\f$.
%\[
%M_{k,a}'(x)=\begin{cases}
%+\f & \mbox{if $k$ is even},\\
%-\f & \mbox{if $k$ is odd}.
%\end{cases}
%\]
\item If 
$$\phi(a)<\lambda_*\leq\lambda^*<1-2a,$$ 
then $M_{k,a}'(x)=+\f$.
\item If $\lambda_*<\phi(a),$ then $M_{k,a}$ does not have an infinite derivative at $x$.
\end{enumerate}
\item Suppose $a=1/3$.
\begin{enumerate}[(i)]
\item If $\lambda^*<1/3,$ then $M_{k,a}'(x)=+\f$.
\item If $\lambda_*>1/3,$ then $M_{k,a}'(x)=(-1)^k\cdot\f$.
\end{enumerate}
\item Suppose $1/3<a<1/2$.
\begin{enumerate}[(i)]
\item If $\lambda^*<1-2a,$ then $M_{k,a}'(x)=+\f$.
\item If 
$$1-2a<\lambda_*\leq\lambda^*<\phi(a),$$ 
then $M_{k,a}'(x)=(-1)^k\cdot\f$.
\item If $\lambda^*>\phi(a),$ then $M_{k,a}$ does not have an infinite derivative at $x$.
\end{enumerate}
\item For all $0<a<1/2$, if 
$$\lambda_*<1-2a<\lambda^*,$$ 
then $M_{k,a}$ does not have an infinite derivative at $x$.
\end{enumerate}
\end{theorem}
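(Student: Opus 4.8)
\emph{Setup and the two driving facts.} Write $l_n:=l_n(x)$, $s_n:=(1-2a)n-l_n$, and $G_n:=3^n a^{n-l_n}(1-2a)^{l_n}$ (recall $1-2a>0$ here). By Lemma \ref{lem:ternary-increments},
\[
3^n\Delta_{k,a}(I_n(x))=a^{-k}(1-2a)^{-k}\,G_n\,P_k(n,l_n), \qquad P_k(n,l_n)=s_n^k+R_k(n,l_n),
\]
with $\deg R_k\le k-1$. Everything is driven by two elementary observations. First, using the defining identity $3a^{1-\phi(a)}(1-2a)^{\phi(a)}=1$, one gets $\tfrac1n\log G_n\to-\tfrac1{C_0}(\lambda-\phi(a))$ whenever $l_n/n\to\lambda$; since $\sgn C_0=\sgn(a-\tfrac13)$, the factor $G_n$ grows (resp.\ decays) exponentially precisely when $l_n/n$ lies on the side of $\phi(a)$ dictated by \eqref{eq:where-is-phi-1}--\eqref{eq:where-is-phi-2}. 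Second, when $|s_n|\gtrsim n$ the leading term dominates, so $\sgn P_k(n,l_n)=\sgn(s_n)^k$: the sign is $+$ when the frequency of $1$'s lies below $1-2a$ and $(-1)^k$ when it lies above. These two facts already predict all signs asserted in (a)(i)--(ii), (b), and (c)(i)--(ii).

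\emph{The necessary condition.} I would first record that, for every $x$ that is not a ternary rational, $M_{k,a}'(x)=\pm\f$ forces $3^n\Delta_{k,a}(I_n(x))\to\pm\f$. Indeed, writing $I_n(x)=[x_l,x_r]$ and $\theta_n:=3^n(x_r-x)\in[0,1]$,
\[
3^n\Delta_{k,a}(I_n(x))=\theta_n\,\frac{M_{k,a}(x_r)-M_{k,a}(x)}{x_r-x}+(1-\theta_n)\,\frac{M_{k,a}(x)-M_{k,a}(x_l)}{x-x_l}
\]
is a convex combination of the two one-sided secants at $x$, both of which tend to $\pm\f$ when $M_{k,a}'(x)=\pm\f$; hence so does the combination. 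The countably many ternary rationals are local extrema of $M_{k,a}$ by Lemma \ref{lem:symmetry} and are discarded. This reduces all three non-existence statements to the behaviour of $3^n\Delta_{k,a}(I_n(x))$.

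\emph{Existence with prescribed sign (cases (a)(i)--(ii), (b), (c)(i)--(ii)).} In each of these cases the hypotheses keep $l_n/n$ bounded away from \emph{both} $\phi(a)$ and $1-2a$ (when $a=1/3$ the two thresholds coincide, but there $G_n\equiv1$). Consequently $G_n\to\f$ exponentially while $\sgn P_k(n,l_n)$ is eventually the constant $\epsilon:=\sgn(s_n)^k$, so $3^n\Delta_{k,a}(I_n(x))\to\epsilon\cdot\f$, with $\epsilon=+1$ below $1-2a$ and $\epsilon=(-1)^k$ above. To promote this to the actual (two-sided) derivative I would adapt the estimates in the proof of Proposition \ref{prop:large-a-derivative}, now bounding the \emph{signed} secant from below rather than its modulus from above: for $y$ with $3^{-(n+1)}\le|x-y|<3^{-n}$ the point $y$ lies in $I_{n,j-1}\cup I_{n,j}\cup I_{n,j+1}$ with $I_{n,j}=I_n(x)$, and because $|s_n|\gtrsim n$ the relevant increments all carry the common sign $\epsilon$ and have size of order $G_n n^k/3^n$, which by Lemma \ref{lem:oscillation-bounds} dominates the oscillation in the tails; weighing the dominant signed increment against these tails yields $\epsilon\,(M_{k,a}(y)-M_{k,a}(x))/(y-x)\ge c\,G_n n^k\to\f$.

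\emph{Non-existence (cases (a)(iii), (c)(iii), (d)) and the main obstacle.} For (a)(iii) ($\lambda_*<\phi(a)$) and (c)(iii) ($\lambda^*>\phi(a)$) there is a subsequence along which $l_n/n$ lands on the decaying side of $\phi(a)$, whence $G_n\to0$ exponentially and, $P_k$ being only polynomial, $3^n\Delta_{k,a}(I_n(x))\to0$ along it, contradicting the necessary condition. For (d), where $\lambda_*<1-2a<\lambda^*$, I would instead show that $3^n\Delta_{k,a}(I_n(x))$---equivalently $P_k(n,l_n)$, since $G_n>0$---changes sign infinitely often. When $k$ is odd this is immediate, as $s_n$ takes values $\gtrsim n$ and $\lesssim -n$ and $s_n^k$ flips accordingly. \textbf{The even-$k$ case is the crux}: there $s_n^k>0$ throughout, so the sign changes are hidden in the transition region $s_n\approx0$, where $R_k$ dominates. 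Here I would invoke the $\sqrt n$-scale structure behind Theorem \ref{thm:alternating-pattern}: on the scale $s_n\sim\sqrt n$, $P_k(n,l_n)$ is governed up to a positive factor by $q_k\!\big(s_n/(\sqrt n\,\sqrt{2a(1-2a)})\big)$, whose $k\ge2$ real roots force $q_k<0$ on an interval. Since the increments of $s_n/\sqrt n$ tend to $0$ while its $\limsup=+\f$ and $\liminf=-\f$, the ratio visits this negative window infinitely often, producing infinitely many $n$ with $P_k(n,l_n)<0$ interlaced with those where $s_n^k$ dominates and $P_k(n,l_n)>0$. This oscillation rules out convergence to $\pm\f$. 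Pinning down this $\sqrt n$-reduction with the correct sign---equivalently, controlling the sign of $R_k$ on the line $l=(1-2a)n$ for even $k$ (for instance $R_2=-2a(1-2a)n<0$ there)---is the principal technical obstacle.
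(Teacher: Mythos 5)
There is a genuine gap at the decisive step: the promotion of the divergent ternary secants to an actual two-sided infinite derivative in cases (a)(i)--(ii), (b), (c)(i)--(ii). You claim that the signed increments, of size of order $G_n n^k 3^{-n}$, ``dominate the oscillation in the tails'' by Lemma \ref{lem:oscillation-bounds}. But that lemma bounds $\osc(M_{k,a};I_{n,j})$ by $C_k a^{n-l(j)}|1-2a|^{l(j)}n^k$, which is of \emph{exactly the same order} $G_n n^k 3^{-n}$ as the increments themselves, and with a constant $C_k$ that is in general much larger than the constant in the lower bound $|P_k(n,l_n)|\geq\delta n^k$ (the $\delta$ coming from Lemma \ref{lem:polynomial-growth} degenerates as $\lambda$ approaches $1-2a$). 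So at a single scale $n$ the adverse contribution from the partial intervals containing $x$ and $y$ can swamp the favorable full increments between them, and the inequality $\epsilon\,(M_{k,a}(y)-M_{k,a}(x))/(y-x)\geq cG_n n^k$ does not follow. This is precisely the difficulty the paper flags explicitly: there is no analogue of Lemma \ref{prop:box-containment} for $a<1/2$. The paper's actual proof (of Theorem \ref{thm:alternating-infinities}, from which Theorem \ref{thm:infinite-derivative-detail} is deduced as a direct consequence) closes the gap by descending $m_n=\lfloor n^{1/4}\rfloor$ extra levels: the stability estimate \eqref{eq:bounded-sequence-equivalence} shows that all increments of order $n+m_n$ inside the relevant block still carry the sign $\epsilon$, so $M_{k,a}(x)$ and $M_{k,a}(x+h)$ differ from the appropriate endpoint values only by oscillations at depth $n+m_n$, which carry an extra factor $b^{m_n}$ with $b=\max\{a,1-2a\}<1$; since $b^{m_n}n^k\to0$, that error is genuinely negligible against the main increment. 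Without this (or some substitute sign-propagation/multiscale device) your existence argument does not close. Note also that at $a=1/3$ one has $G_n\equiv1$, so the main term is only polynomially large and a same-order error term is fatal there as well.

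The rest of your proposal is essentially sound and close to the paper. The convex-combination derivation of the necessary condition $3^n\Delta_{k,a}(I_n(x))\to\pm\f$ is correct, though your claim that ternary rationals are local extrema ``by Lemma \ref{lem:symmetry}'' is unsupported for $a<1/2$ --- and unnecessary, since at a ternary rational the secant over $I_n(x)$ is itself a one-sided difference quotient, so the necessary condition holds there anyway. The decaying-subsequence argument for (a)(iii) and (c)(iii) via $G_{n_i}\to0$ exponentially against polynomial $P_k$ is correct. For (d), you rightly identify the even-$k$ case as the crux, and your proposed resolution --- the $\sqrt{n}$-scale asymptotics $P_k(n,l_n)\sim\big(2a(1-2a)n\big)^{k/2}q_k(\cdot)$ together with the $O(1/\sqrt{n})$ step size of $s_n/\sqrt{n}$ forcing visits to a negative window of $q_k$ --- is exactly the paper's Lemma \ref{lem:polynomial-convergence} and the proof of Theorem \ref{thm:alternating-infinities}(d), so this part follows the paper's route rather than a new one.
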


We do not give a proof of this theorem here, as it is a direct consequence of Theorem \ref{thm:alternating-infinities} below.
This next theorem provides more details about the infinite derivatives in the boundary case of Theorem \ref{thm:infinite-derivative-detail}, when $l_n(x)/n\to 1-2a$. It reveals a surprising alternating pattern when $(1-2a)n-l_n(x)$ is of order $\sqrt{n}$. In order to state it, we first define a sequence $(q_k)$ of polynomials inductively by
\begin{align}
\begin{split}
q_1(t)&=t,\\
q_{k+1}(t)&=tq_k(t)-q_k'(t), \qquad k=1,2,\dots.
\end{split}
\label{eq:q-polynomials}
\end{align}

\begin{lemma} \label{lem:polynomial-roots}
For each $k\in\NN$, $q_k$ has $k$ distinct real zeros, distributed symmetrically about $0$. Furthermore, $q_k$ is even when $k$ is even, and $q_k$ is odd when $k$ is odd.
\end{lemma}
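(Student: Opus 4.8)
The plan is to prove all three claims — that $q_k$ is monic of degree $k$, has the stated parity, and has $k$ distinct real zeros symmetric about $0$ — simultaneously by induction on $k$, with the degree and parity being routine bookkeeping and the reality and distinctness of the zeros forming the crux.

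First I would dispose of the bookkeeping. A one-line induction using \eqref{eq:q-polynomials} shows that $q_k$ is monic of degree $k$: if $q_k(t)=t^k+\cdots$, then $tq_k(t)$ contributes $t^{k+1}$ while $q_k'(t)$ has degree $k-1<k+1$. For the parity, assume $q_k(-t)=(-1)^kq_k(t)$; differentiating gives $q_k'(-t)=(-1)^{k+1}q_k'(t)$, and substituting $t\mapsto -t$ into $q_{k+1}=tq_k-q_k'$ yields $q_{k+1}(-t)=(-1)^{k+1}q_{k+1}(t)$. In particular, once the zeros are known to be real, the zero set of each $q_k$ is automatically symmetric about $0$, so the symmetry claim requires no separate argument.

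The heart of the matter is the reality and distinctness of the zeros, for which I would exploit the Gaussian-weight (Hermite) structure hidden in \eqref{eq:q-polynomials}. Set $g_k(t):=q_k(t)e^{-t^2/2}$. A direct computation gives
\[
g_k'(t)=\big(q_k'(t)-tq_k(t)\big)e^{-t^2/2}=-q_{k+1}(t)e^{-t^2/2},
\]
so the zeros of $q_{k+1}$ are exactly the critical points of $g_k$. Suppose inductively that $q_k$ has $k$ distinct real zeros $s_1<\cdots<s_k$; since $\deg q_k=k$, each is simple, and $g_k$ vanishes precisely at these points (because $e^{-t^2/2}>0$), with $g_k(t)\to 0$ as $t\to\pm\infty$. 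Rolle's theorem produces a critical point of $g_k$ in each of the $k-1$ bounded intervals $(s_i,s_{i+1})$. On the unbounded interval $(-\infty,s_1)$, the function $g_k$ is not identically zero yet tends to $0$ at both ends (at $-\infty$ and at $s_1$), so it attains a nonzero extremum there, giving one more critical point; the same reasoning applies on $(s_k,\infty)$. This yields at least $(k-1)+2=k+1$ distinct zeros of $g_k'$, hence of $q_{k+1}$.

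Finally, since $q_{k+1}$ is a polynomial of degree $k+1$, it has \emph{at most} $k+1$ real zeros, so the count is exact: $q_{k+1}$ has precisely $k+1$ distinct real zeros, completing the induction; the base case $q_1(t)=t$ is immediate. I expect the only delicate point to be the two ``extra'' critical points in the unbounded intervals: here one must use that the Gaussian factor forces $g_k\to 0$ at $\pm\infty$, which is exactly what upgrades the naive Rolle count of $k-1$ into the required $k+1$ and pins the total count down to $k+1$.
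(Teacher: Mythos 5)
Your proof is correct, and it takes a genuinely different route from the paper's. The paper argues directly from the recursion: it evaluates $q_{k+1}$ at the zeros $t_1<\dots<t_k$ of $q_k$ via $q_{k+1}(t_i)=-q_k'(t_i)$, uses the transversality of the crossings of $q_k$ to get alternating signs (hence $k-1$ interior zeros of $q_{k+1}$), and then handles the two outer zeros by a parity case analysis combined with the sign of the leading coefficient at $\pm\infty$. You instead observe that the recursion \eqref{eq:q-polynomials} is exactly the statement that $-q_{k+1}e^{-t^2/2}$ is the derivative of $q_k e^{-t^2/2}$ (these $q_k$ are in fact the probabilists' Hermite polynomials), and then run Rolle's theorem on $g_k=q_k e^{-t^2/2}$: the $k-1$ bounded gaps give $k-1$ critical points, and the Gaussian decay at $\pm\infty$ supplies the two outer ones via the generalized Rolle argument on unbounded intervals, with the degree bound pinning the count at exactly $k+1$. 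What each approach buys: the paper's argument is entirely elementary and self-contained, at the cost of a parity-dependent case split for the extreme zeros; yours eliminates that case split (decay at infinity does the work uniformly), identifies the underlying orthogonal-polynomial structure, and yields the interlacing of the zeros of $q_k$ and $q_{k+1}$ as a free byproduct, which is in the same spirit as (and slightly stronger than) what the paper extracts. One small point worth making explicit in your write-up: the extremum you obtain on $(-\infty,s_1)$ is attained at an interior point precisely because $g_k$ is nonzero somewhere there and vanishes in the limit at both ends, so the supremum of $|g_k|$ is positive and attained; you say this, and it is the only place where any care is needed.
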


\begin{proof}
The second statement is immediate from the definition and induction. It follows that the graph of $q_k$ is symmetric about the $y$-axis when $k$ is even, and is symmetric about the origin when $k$ is odd. Thus, the set of zeros of $q_k$ is symmetric about $0$. Furthermore, $q_k(0)=0$ when $k$ is odd.

We now prove inductively that $q_k$ has $k$ distinct real zeros. This is clear for $k=1$. Suppose the statement is true for $k$, and consider $q_{k+1}(t)=tq_k(t)-q_k'(t)$. We first show that between any two consecutive zeros of $q_k$ there is a zero of $q_{k+1}$. Let $t_1<t_2<\dots<t_k$ be the zeros of $q_k$, and take $i\in\{1,2,\dots,k-1\}$. Since $q_k$ has no double zeros, its graph crosses the $t$-axis transversally (i.e. with a non-zero derivative). Thus, either $q_k'(t_i)<0$ and $q_k'(t_{i+1})>0$ or the other way around; without loss of generality assume the former. Then $q_{k+1}(t_i)=-q_k'(t_i)>0$ and $q_{k+1}(t_{i+1})=-q_k'(t_{i+1})<0$. Therefore, $q_{k+1}$ has a zero between $t_i$ and $t_{i+1}$.

Next, we claim that $q_{k+1}$ has a zero to the left of $t_1$ and another to the right of $t_k$. Assume first that $k$ is even. Then $q_k$ is even, so $q_k'(t_1)<0$ and $q_k'(t_k)>0$. Hence $q_{k+1}(t_1)>0$ and $q_{k+1}(t_k)<0$. Since $q_{k+1}$ is odd with positive leading coefficient, the claim follows. A similar argument applies when $k$ is odd.
\end{proof}

We list the first eight polynomials of the sequence $(q_k)$ here:
\begin{alignat*}{2}
q_1(t)&=t, & \qquad\qquad  q_5(t)&=t^5-10t^3+15t, \\
q_2(t)&=t^2-1, & \qquad\qquad  q_6(t)&=t^6-15t^4+45t^2-15, \\
q_3(t)&=t^3-3t, & \qquad\qquad  q_7(t)&=t^7-21t^5+105t^3-105t, \\
q_4(t)&=t^4-6t^2+3, & \qquad\qquad  q_8(t)&=t^8-28t^6+210t^4-420t^2+105.
\end{alignat*}
%\begin{align*}
%q_1(x)&=x,\\ 
%q_2(x)&=x^2-1, \\
%q_3(x)&=x^3-3x, \\
%q_4(x)&=x^4-6x^2+3, \\
%q_5(x)&=x^5-10x^3+15x, \\
%q_6(x)&=x^6-15x^4+45x^2-15, \\
%q_7(x)&=x^7-21x^5+105x^3-105x, \\
%q_8(x)&=x^8-28x^6+210x^4-420x^2+105.
%\end{align*}

In the extremal cases of Theorem \ref{thm:alternating-infinities} below, we assume that
\begin{equation} \label{eq:not-going-to-zero}
3^n a^{n-l_n(x)}(1-2a)^{l_n(x)}\geq 1 \qquad\mbox{for all sufficiently large $n$}.
\end{equation}
Sufficient for \eqref{eq:not-going-to-zero} is that either (i) $a<1/3$ and $\lambda_*>\phi(a)$; (ii) $a=1/3$; or (iii) $1/3<a<1/2$ and $\lambda^*<\phi(a)$.

Theorem \ref{thm:alternating-pattern} in the Introduction is a special case of the following more general result.

\begin{theorem} \label{thm:alternating-infinities}
Let $0<a<1/2$. Let $t_1<t_2<\dots<t_k$ be the zeros of the polynomial $q_k$ defined by \eqref{eq:q-polynomials}, and put $\tilde{t}_i:=t_i\sqrt{2a(1-2a)}$ for $i=1,\dots,k$. Also define
\[
\delta_*:=\liminf_{n\to\f} \frac{(1-2a)n-l_n(x)}{\sqrt{n}}, \qquad \delta^*:=\limsup_{n\to\f} \frac{(1-2a)n-l_n(x)}{\sqrt{n}}.
\]
\begin{enumerate}[(a)]
\item If $\delta^*<\tilde{t}_1$ and \eqref{eq:not-going-to-zero} holds, then $M_{k,a}'(x)=(-1)^k\cdot\f$.
\item If %for some {\em even} $i\in\{1,2,\dots,k-1\}$ we have
%\begin{equation} \label{eq:between-zeros}
$\tilde{t}_{i}<\delta_*\leq\delta^*<\tilde{t}_{i+1}$ 
%\end{equation}
for some $i\in\{1,2,\dots,k-1\}$, then %$M_{k,a}'(x)=+\f$.
\[
M_{k,a}'(x)=\begin{cases}
+\f & \mbox{if $k-i$ is even},\\
-\f & \mbox{if $k-i$ is odd}.
\end{cases}
\]
%\item If \eqref{eq:between-zeros} holds for some {\em odd} $i\in\{1,2,\dots,k-1\}$, then $M_{k,a}'(x)=-\f$.
\item If $\delta_*>\tilde{t}_k$ and \eqref{eq:not-going-to-zero} holds, then $M_{k,a}'(x)=+\f$.
\item If $\delta^*<\tilde{t}_i<\delta_*$ for some $i\in\{1,2,\dots,k\}$, then $M_{k,a}$ does not have an infinite derivative at $x$.
\end{enumerate}
\end{theorem}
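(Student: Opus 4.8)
The plan is to reduce the whole theorem to the sign and magnitude of the polynomial $P_k(n,l_n)$ from Lemma \ref{lem:ternary-increments}. By that lemma the secant slope of $M_{k,a}$ over $I_n(x)$ is
\[
3^n\Delta_{k,a}(I_n(x)) = a^{-k}(1-2a)^{-k}\,w_n\,P_k(n,l_n), \qquad w_n := 3^n a^{n-l_n}(1-2a)^{l_n} > 0 ,
\]
where $l_n:=l_n(x)$. Since $a,1-2a>0$ here, the sign of this slope is exactly $\sgn P_k(n,l_n)$, while \eqref{eq:not-going-to-zero} guarantees $w_n\ge 1$, so that the magnitude is controlled by $|P_k(n,l_n)|$. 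I would first record the standard fact (used throughout Section \ref{sec:differentiability}) that $M_{k,a}'(x)=\pm\f$ forces $3^n\Delta_{k,a}(I_n(x))\to\pm\f$, the convex-combination argument for the two half-intervals making $I_n(x)$ a legitimate secant for the one-sided derivatives. This reduces the negative statement (d) to showing the slope sequence fails to converge to either $\pm\f$, and reduces (a)--(c) to pinning the signed limit of $3^n\Delta_{k,a}(I_n(x))$, followed by a routine upgrade to a genuine derivative.

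The heart of the argument is an exact generating-function identity. From the explicit sum for $P_k$ in the proof of Lemma \ref{lem:ternary-increments}, with $m:=n-l$, one obtains
\[
\sum_{k\ge 0}\frac{P_k(n,l)}{k!}z^k = \bigl(1+(1-2a)z\bigr)^{m}(1-2az)^{l}.
\]
I would substitute $z=w/\sqrt n$, write $s_n:=(1-2a)n-l_n$, $\sigma:=\sqrt{2a(1-2a)}$ and $\zeta_n:=s_n/(\sigma\sqrt n)$, and expand the logarithm: the coefficient of $w$ is $s_n/\sqrt n=\sigma\zeta_n$, the coefficient of $w^2$ is $-\tfrac12\sigma^2$ up to $O(n^{-1/2})$, and all higher contributions are $O(n^{-1/2})$ uniformly for $w$ in compact sets and $s_n=O(\sqrt n)$. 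Hence, whenever $\zeta_n\to\zeta$,
\[
\bigl(1+(1-2a)\tfrac{w}{\sqrt n}\bigr)^{m}\bigl(1-2a\tfrac{w}{\sqrt n}\bigr)^{l}\longrightarrow \exp\Bigl(\sigma\zeta w-\tfrac12\sigma^2 w^2\Bigr)
\]
uniformly on compact sets. Recognizing the limit as the generating function $\sum_k \sigma^k q_k(\zeta)w^k/k!$ of the probabilists' Hermite polynomials (the recursion \eqref{eq:q-polynomials} is precisely theirs), extracting the coefficient of $w^k$ by Cauchy's formula yields the key asymptotic
\[
\frac{P_k(n,l_n)}{n^{k/2}} = \sigma^k q_k(\zeta_n) + o(1) \qquad\text{uniformly for bounded }\zeta_n .
\]

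The sign analysis is then driven by Lemma \ref{lem:polynomial-roots}: $q_k$ is monic with simple zeros $t_1<\dots<t_k$, so $\sgn q_k\equiv(-1)^{k-i}$ on $(t_i,t_{i+1})$, $\equiv(-1)^k$ on $(-\f,t_1)$, $\equiv+1$ on $(t_k,\f)$, and since the zeros of $q_k'$ interlace those of $q_k$, $|q_k|$ is monotone outside $[t_1,t_k]$ and bounded below by a positive constant on compacta avoiding the zeros. I treat two regimes. When $\zeta_n$ stays bounded the asymptotic applies directly; when $|\zeta_n|\to\f$ (possible in (a),(c)) a dominant-balance check on the same generating function shows $P_k(n,l_n)=s_n^k(1+o(1))$, so $\sgn P_k=(\sgn s_n)^k$ and $|P_k|\to\f$. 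For (b) the hypothesis (recall $\tilde t_i=t_i\sigma$) puts $\zeta_n$ eventually in a compact subinterval of $(t_i,t_{i+1})$, which also makes \eqref{eq:not-going-to-zero} automatic since $l_n/n\to1-2a$; this gives slope sign $(-1)^{k-i}$ and magnitude $\to\f$, i.e. $+\f$ when $k-i$ is even and $-\f$ when $k-i$ is odd. Parts (a) and (c) combine the two regimes, both yielding sign $(-1)^k$, respectively $+1$. For (d) I would read the hypothesis as $\delta_*<\tilde t_i<\delta^*$ (the printed $\delta^*<\tilde t_i<\delta_*$ is impossible since $\delta_*\le\delta^*$); because $s_{n+1}-s_n\in\{1-2a,-2a\}$, the bounded sequence $\zeta_n$ moves in steps $O(n^{-1/2})$ and cannot jump across $t_i$, so it visits both $(t_i-\eps,t_i)$ and $(t_i,t_i+\eps)$ infinitely often, forcing $P_k(n,l_n)$ to change sign infinitely often; the slope sequence then has no limit in $\{\pm\f\}$.

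It remains to upgrade ``$3^n\Delta_{k,a}(I_n(x))\to\pm\f$'' to a genuine one-sided, hence (by the symmetry of Lemma \ref{lem:symmetry}) two-sided, infinite derivative. Here I would follow the method already used for $a=1/2$ and in \cite{Allaart}: for $y$ with $3^{-(n+1)}\le|y-x|<3^{-n}$, compare $M_{k,a}(y)-M_{k,a}(x)$ with the increments $\Delta_{k,a}$ over the order-$n$ triadic intervals straddling $x$, bounding the error by Lemma \ref{lem:oscillation-bounds}; the point is that neighboring intervals carry $l$-values differing from $l_n(x)$ by $O(1)$, so by the asymptotic they share the sign and have comparable magnitude. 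I expect the main obstacle to be precisely the uniform, $\liminf/\limsup$ form of the Hermite asymptotic --- establishing the generating-function limit with $o(1)$ error uniform over all bounded $\zeta_n$ (not merely along sequences where $\delta(x)$ exists), and dovetailing it with the large-$\zeta_n$ regime so that the sign of $P_k$ is controlled over the entire tail. The genuine-derivative upgrade, though technical, is routine given the estimates already developed in the paper.
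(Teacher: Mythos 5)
Your core asymptotic machinery is correct, and it is a genuinely different route to the paper's key lemma: the identity $\sum_{k\geq 0}P_k(n,l)z^k/k!=\bigl(1+(1-2a)z\bigr)^{n-l}(1-2az)^{l}$ is exact (it is the Taylor expansion of $f(a+z)$ for $f(a)=a^{n-l}(1-2a)^{l}$, renormalized), the recursion \eqref{eq:q-polynomials} is indeed the probabilists' Hermite recursion, and coefficient extraction gives $P_k(n,l_n)\sim \sigma^k n^{k/2}q_k(\zeta_n)$, which is precisely Lemma \ref{lem:polynomial-convergence}; the paper instead proves that lemma by induction on the recursion \eqref{eq:P-recursion}. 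Your sign analysis, your reading of (d) (the printed hypothesis should indeed be $\delta_*<\tilde t_i<\delta^*$, as the paper's own proof of (d) implicitly assumes), and your subsequence/dominant-balance treatment of unbounded $\zeta_n$ in (a), (c) all match the paper or can be made rigorous routinely.

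The genuine gap is the step you dismiss as routine: upgrading divergence of the secant slopes $3^n\Delta_{k,a}(I_n(x))$ to an actual infinite derivative. Your plan --- compare $M_{k,a}(y)-M_{k,a}(x)$ with the increments over the order-$n$ ternary intervals straddling $x$ and bound the error by Lemma \ref{lem:oscillation-bounds} --- fails quantitatively: the oscillation bound over an order-$n$ interval is of size $C_k\,a^{n-l_n}(1-2a)^{l_n}n^{k}$, whereas the increments you need to beat are only of size $a^{n-l_n}(1-2a)^{l_n}n^{k/2}$ in the critical regime $l_n=(1-2a)n+O(\sqrt n)$, since there $P_k(n,l_n)\asymp n^{k/2}$; the error term exceeds the main term by a factor $n^{k/2}$. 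The fallback you cite, ``the method already used for $a=1/2$,'' is also unavailable, because it rests on the box-containment Lemma \ref{prop:box-containment}, which the paper explicitly notes has no analogue for $a<1/2$. The paper closes exactly this hole with a device absent from your proposal: introduce the intermediate scale $m_n=\lfloor n^{1/4}\rfloor$, use the stability $P_k(n,l_n+c_n)\sim P_k(n,l_n)$ for $c_n=o(\sqrt n)$ to show that \emph{all} increments of order $n+m_n$ inside the window around $x$ have the correct sign, and only then invoke Lemma \ref{lem:oscillation-bounds} at the finer scale, where the bound carries the factor $b^{m_n}$ with $b=\max\{a,1-2a\}<1$, so the error is $O\bigl(b^{n^{1/4}}n^{k}\bigr)=o(n^{k/2})$ relative to the weight. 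Without this (or an equivalent) mechanism, your argument establishes the secant-slope statements and part (d), but not the infinite derivatives asserted in (a)--(c).
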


\begin{example} \label{ex:infinite-derivatives-M_2}
Let $k=2$. Note that $q_2(t)=t^2-1$, so $\tilde{t}_1=\sqrt{2a(1-2a)}=:c_0$ and $\tilde{t}_2=-c_0$.
Defining
\[
\gamma_*:=\liminf_{n\to\f} \frac{|(1-2a)n-l_n(x)|}{\sqrt{n}}, \qquad \gamma^*:=\limsup_{n\to\f} \frac{|(1-2a)n-l_n(x)|}{\sqrt{n}},
\]
we can state Theorem \ref{thm:alternating-infinities} for this case as follows:
\begin{enumerate}[(i)]
\item If $\gamma^*<c_0$,
%\begin{equation} \label{eq:square-root-inner-k2}
%\limsup_{n\to\f} \frac{|(1-2a)n-l_n(x)|}{\sqrt{n}}<c_0,
%\end{equation}
then $M_{2,a}'(x)=-\f$.
\item If \eqref{eq:not-going-to-zero} holds and $\gamma_*>c_0$,
%\begin{equation} \label{eq:square-root-outer-k2}
%\liminf_{n\to\f} \frac{|(1-2a)n-l_n(x)|}{\sqrt{n}}>c_0,
%\end{equation}
then $M_{2,a}'(x)=+\f$.
\item If $\gamma_*<c_0<\gamma^*$,
%\[
%\liminf_{n\to\f} \frac{|(1-2a)n-l_n(x)|}{\sqrt{n}}<c_0<\limsup_{n\to\f} \frac{|(1-2a)n-l_n(x)|}{\sqrt{n}},
%\]
then $M_{2,a}$ does not have an infinite derivative at $x$.
\end{enumerate}
\end{example}

\begin{example} \label{ex:infinite-derivatives-M_3}
Take $k=3$. Since $q_3(t)=t^3-3t$, we have $\tilde{t}_1=-c_0'$, $\tilde{t}_2=0$ and $\tilde{t}_3=c_0'$, where $c_0':=\sqrt{6a(1-2a)}$. Thus, Theorem \ref{thm:alternating-infinities} implies:
\begin{enumerate}[(i)]
\item If either $\delta_*>c_0'$ and \eqref{eq:not-going-to-zero} holds, or $-c_0'<\delta_*\leq\delta^*<0$,
then $M_{3,a}'(x)=+\f$.
\item If either $\delta^*<-c_0'$ and \eqref{eq:not-going-to-zero} holds, or $0<\delta_*\leq\delta^*<c_0'$,
then $M_{3,a}'(x)=-\f$.
\end{enumerate}
\end{example}

We give one more example, because it illustrates a qualitative difference between $k\equiv 0 \pmod 4$ and $k\equiv 2 \pmod 4$.

\begin{example} \label{ex:infinite-derivatives-M_4}
Take $k=4$. The roots of $q_4(t)=t^4-6t+3$ are $\pm \sqrt{3-\sqrt{6}}$ and $\pm \sqrt{3+\sqrt{6}}$. Hence, setting
\[
c_1:=\sqrt{2a(1-2a)(3-\sqrt{6})}, \qquad c_2:=\sqrt{2a(1-2a)(3+\sqrt{6})},
\]
we have $\tilde{t}_1=-c_2, \tilde{t}_2=-c_1, \tilde{t}_3=c_1$ and $\tilde{t}_4=c_2$.
Define $\gamma_*$ and $\gamma^*$ as in Example \ref{ex:infinite-derivatives-M_2}. Theorem \ref{thm:alternating-infinities} implies:
\begin{enumerate}[(i)]
\item If either \eqref{eq:not-going-to-zero} holds and $\gamma_*>c_2$, or $\gamma^*<c_1$,
then $M_{4,a}'(x)=+\f$.
\item If 
\[
c_1<\delta_*\leq\delta^*<c_2 \qquad\mbox{or}\qquad -c_2<\delta_*\leq\delta^*<-c_1,
\]
then $M_{4,a}'(x)=-\f$.
\end{enumerate}
\end{example}

Before proving the theorem we need a few lemmas. Recall that the polynomials $P_k(n,l)$ were defined in Lemma \ref{lem:ternary-increments}.

\begin{lemma} \label{lem:P-recursion}
For all $k\in\NN$ we have the recursion
\begin{equation} \label{eq:P-recursion}
P_{k+1}(n,l)=a(1-2a)\frac{\partial P_k}{\partial a}(n,l)+\{(1-2a)n-l-(1-4a)k\}P_k(n,l).
\end{equation}
\end{lemma}

\begin{proof}
Since $n$ and $l$ are fixed, we write $\Delta_{k,a}:=\Delta_{k,a}(I_{n,j})$, where $j$ is such that $l(j)=l$. By Lemma \ref{lem:ternary-increments}, $\Delta_{k,a}=a^{n-l-k}(1-2a)^{l-k}P_k(n,l)$. Differentiating with respect to $a$ and simplifying gives
\begin{align*}
\Delta_{k+1,a}=\frac{\partial\Delta_{k,a}}{\partial a} &= a^{n-l-k-1}(1-2a)^{l-k-1}\bigg(a(1-2a)\frac{\partial P_k}{\partial a}(n,l)\\
& \qquad\qquad\qquad +\{(1-2a)n-l-(1-4a)k\}P_k(n,l)\bigg).
\end{align*}
Applying Lemma \ref{lem:ternary-increments} in reverse yields the result.
\end{proof}

\begin{lemma} \label{lem:polynomial-convergence}
Let $(l_n)$ be any sequence of integers such that $0\leq l_n\leq n$. Then
\[
P_k(n,l_n)\sim \big(2a(1-2a)n\big)^{k/2}q_k\left(\frac{(1-2a)n-l_n}{\sqrt{2a(1-2a)n}}\right) \qquad \mbox{as $n\to\f$},
\]
where, as usual, $a_n\sim b_n$ means that $a_n/b_n\to 1$ as $n\to\f$.
%Fix $t\in\RR$, and let 
%$$l_n:=(1-2a)n-t\sqrt{2a(1-2a)n}.$$ 
%Then for each $k\in\NN$,
%\[
%\lim_{n\to\f} n^{-k/2}P_k(n,l_n)=\big(2a(1-2a)\big)^{k/2}q_k(t).
%\]
\end{lemma}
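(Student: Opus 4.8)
The plan is to prove a \emph{stronger, exact} polynomial identity, from which the asymptotic follows by a routine size estimate; this route sidesteps the illegitimate step of differentiating an asymptotic relation. Introduce the variables
\[
u := (1-2a)n - l, \qquad w := \sqrt{2a(1-2a)n}, \qquad s := u/w,
\]
and regard $P_k(n,l)$ as a polynomial $\hat P_k(u,n)$ in $u$ and $n$ (with $a$-dependent coefficients), using $l = (1-2a)n - u$. Equip the monomials with the grading $\deg(u^p n^q) := p + 2q$. Writing $q_k(t) = \sum_j b_{k,j} t^j$ and using that $q_k$ has the parity of $k$, one checks that $w^k q_k(s) = \sum_q b_{k,k-2q}\,(2a(1-2a))^q\, u^{k-2q} n^q$ is \emph{graded-homogeneous of degree $k$}. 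The inductive claim is that
\[
\hat P_k(u,n) = w^k q_k(s) + E_k(u,n),
\]
where $E_k$ gathers only monomials of graded degree at most $k-1$. For $k=1$ this is exact, since $P_1(n,l)=(1-2a)n-l=u=w\,q_1(s)$ and $E_1=0$ (recall $R_1=0$ in Lemma \ref{lem:ternary-increments}).

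The inductive step is the heart of the argument. I would rewrite the recursion of Lemma \ref{lem:P-recursion} in the variables $(u,n)$. Since the derivative there is taken with $n,l$ fixed and $\partial u/\partial a|_{n,l} = -2n$, the chain rule gives $\frac{\partial \hat P_k}{\partial a}\big|_{n,l} = \frac{\partial \hat P_k}{\partial a}\big|_{n,u} - 2n\,\frac{\partial \hat P_k}{\partial u}$, so that
\[
\hat P_{k+1} = a(1-2a)\frac{\partial \hat P_k}{\partial a}\bigg|_{n,u} + \big\{u - (1-4a)k\big\}\hat P_k - 2a(1-2a)\,n\,\frac{\partial \hat P_k}{\partial u}.
\]
Now I track graded degrees: multiplication by $u$ raises the graded degree by $1$, and the operator $-2a(1-2a)n\,\partial_u$ also raises it by $1$ (differentiation lowers by $1$, the factor $n$ raises by $2$), whereas multiplication by the $a$-function $-(1-4a)k$ and the operator $a(1-2a)\,\partial_a|_{n,u}$ (which touches only the $a$-coefficients) both \emph{preserve} graded degree. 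Hence the graded-degree-$(k+1)$ part of $\hat P_{k+1}$ is produced solely by the two degree-raising operations acting on the top part $w^k q_k(s)$ of $\hat P_k$:
\[
u\cdot w^k q_k(s) - 2a(1-2a)\,n\,\frac{\partial}{\partial u}\big(w^k q_k(s)\big) = w^{k+1} s\,q_k(s) - w^{k+1} q_k'(s) = w^{k+1} q_{k+1}(s),
\]
where I used $\partial_u\big(w^k q_k(s)\big) = w^{k-1} q_k'(s)$, the relation $w^2 = 2a(1-2a)n$, and the defining recursion \eqref{eq:q-polynomials}. Everything else — the images of $E_k$ under the two degree-raising operators, and the full images of $\hat P_k$ under the two degree-preserving ones — lands in graded degree at most $k$, and is absorbed into $E_{k+1}$. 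This closes the induction.

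Finally, I would deduce the asymptotic. Along the sequence $l_n$ the identity reads $P_k(n,l_n) = w^k q_k(s_n) + E_k$, with $s_n = \big((1-2a)n - l_n\big)/w$ exactly the argument in the lemma. Bounding a monomial $u^p n^q$ with $p+2q\le k-1$ by writing $|u| = |s_n|w$ and $n = w^2/(2a(1-2a))$ gives $|u|^p n^q \le C(1+|s_n|)^{k-1} w^{k-1}$, so $|E_k| \le C(1+|s_n|)^{k-1} w^{k-1}$. Dividing by $w^k|q_k(s_n)|$ shows the ratio tends to $0$ — hence $P_k(n,l_n)\sim w^k q_k(s_n)$ — whenever $s_n$ stays bounded away from the zeros of $q_k$ or $|s_n|\to\f$ (in which case $|q_k(s_n)|\sim|s_n|^k$); these are precisely the regimes isolated by the strict inequalities in Theorem \ref{thm:alternating-infinities}. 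The main obstacle, and the point the write-up must get exactly right, is the $\partial P_k/\partial a$ term: handled naively it appears to force one to differentiate an asymptotic relation, but the change of variables to $(u,n)$ exposes that its genuinely dangerous part is the chain-rule piece $-2n\,\partial_u$, which combines with the factor $n$ to produce precisely the $-q_k'$ term of \eqref{eq:q-polynomials}, while the leftover $a(1-2a)\,\partial_a|_{n,u}$ piece only perturbs terms of strictly lower graded degree.
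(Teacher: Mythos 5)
Your proof is correct, and it reaches the lemma by a genuinely different route than the paper's. The paper stays in the variables $(n,l)$ and instead strengthens the induction hypothesis: it proves simultaneously, for $j=0,1,\dots,k$, that
$\frac{\partial^j P_k}{\partial a^j}(n,l_n)\sim n^{(k+j)/2}\big(2a(1-2a)\big)^{(k-j)/2}(-2)^j\,q_k^{(j)}\big(g(n)/\sqrt{2a(1-2a)n}\big)$ with $g(n)=(1-2a)n-l_n$, then differentiates the recursion of Lemma \ref{lem:P-recursion} $j$ times in $a$ and recombines the three resulting terms via the identity \eqref{eq:q-derivative-recursion}, $q_{k+1}^{(j)}=-q_k^{(j+1)}+tq_k^{(j)}+jq_k^{(j-1)}$. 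Your change of variables $u=(1-2a)n-l$, $w=\sqrt{2a(1-2a)n}$, $s=u/w$, together with the grading $\deg(u^pn^q)=p+2q$, performs the same bookkeeping purely algebraically: the chain-rule piece $-2n\,\partial_u$ is the unique source of the $-q_k'$ term in \eqref{eq:q-polynomials}, while $\partial_a|_{n,u}$ only perturbs coefficients at the same graded degree, so no induction over the order of the $a$-derivative is needed and one gets the \emph{exact} identity $P_k=w^kq_k(s)+E_k$ with $E_k$ of graded degree at most $k-1$. What your version buys is rigor at the one fragile point of the paper's argument: the paper's induction adds $\sim$-relations term by term, which is not legitimate when the leading parts can cancel, i.e.\ when $s_n$ approaches a zero of one of the $q$-polynomials; your exact identity postpones all asymptotics to a single final division and makes explicit that the conclusion holds precisely when $s_n$ stays bounded away from the zeros of $q_k$ or tends to $\pm\infty$.

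That caveat is real, not an artifact of your method, so the fact that your proof establishes the asymptotic only in those regimes is not a gap. For $k=2$ one computes exactly $P_2(n,l)=u^2-2a(1-2a)n+(4a-1)u$, i.e.\ $E_2=(4a-1)u$; if $a\neq 1/4$ and $l_n$ is the nearest integer to $(1-2a)n-\sqrt{2a(1-2a)n}$ (so that $s_n\to 1$, a zero of $q_2$), then $w^2q_2(s_n)=(u_n-w)(u_n+w)=O(\sqrt{n})$ while $(4a-1)u_n$ is of exact order $\sqrt{n}$, and the ratio in the lemma does not tend to $1$. So the lemma as literally stated, for an \emph{arbitrary} sequence $(l_n)$, is too strong. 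Both your restricted version and the paper's uses of the lemma (in Theorem \ref{thm:alternating-infinities} the hypotheses confine $s_n$ to compact sets avoiding the zeros, or push it to infinity while \eqref{eq:not-going-to-zero} holds) live in the regimes where the asymptotic is valid, so nothing downstream is affected.
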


\begin{proof}
Put $g(n):=(1-2a)n-l_n$. We will prove the statement by induction using the previous lemma. In fact, we will prove a bit more, namely
\begin{equation} \label{eq:polynomial-derivative-convergence}
\frac{\partial^j P_k}{\partial a^j}(n,l_n)\sim n^{(k+j)/2}\big(2a(1-2a)\big)^{(k-j)/2}(-2)^j q_k^{(j)}\left(\frac{g(n)}{\sqrt{2a(1-2a)n}}\right)
\end{equation}
for $j=0,1,\dots,k$, where $q_k^{(j)}$ denotes the $j$th derivative of $q_k$. Setting $j=0$ then gives the lemma.

We will need the following linear recursion for $q_k^{(j)}$:
\begin{equation} \label{eq:q-derivative-recursion}
q_{k+1}^{(j)}(t)=-q_k^{(j+1)}(t)+tq_k^{(j)}(t)+jq_k^{(j-1)}(t).
\end{equation}
For $j=0$ this is just the definition of $q_{k+1}$, provided we put $q_k^{(-1)}\equiv 0$. For $j\geq 1$ it follows readily by induction.

Observe that \eqref{eq:polynomial-derivative-convergence} holds for $k=1$, as $q_1(t)=t$ and
$P_1(n,l_n)=(1-2a)n-l_n$. 
Suppose \eqref{eq:polynomial-derivative-convergence} holds for $k$. Differentiating \eqref{eq:P-recursion} $j$ times with respect to $a$ gives
\begin{align*}
\frac{\partial^j P_{k+1}}{\partial a^j}(n,l) &=a(1-2a)\frac{\partial^{j+1}P_{k}}{\partial a^{j+1}}(n,l) + \{(1-2a)n-l-(1-4a)(k-j)\}\frac{\partial^j P_k}{\partial a^j}(n,l) \\
& \qquad\qquad - \left(2jn-4\sum_{\nu=k-j+1}^k\nu\right)\frac{\partial^{j-1}P_{k}}{\partial a^{j-1}}(n,l),
\end{align*}
for $j\geq 1$. Substituting $l=l_n$ and discarding constant terms yields
\[
\frac{\partial^j P_{k+1}}{\partial a^j}(n,l_n)\sim a(1-2a)\frac{\partial^{j+1}P_{k}}{\partial a^{j+1}}(n,l_n) + g(n)\frac{\partial^j P_k}{\partial a^j}(n,l_n)
 -2jn \frac{\partial^{j-1}P_{k}}{\partial a^{j-1}}(n,l_n).
\]
Applying the induction hypothesis \eqref{eq:polynomial-derivative-convergence} to each term on the right hand side and using \eqref{eq:q-derivative-recursion}, we see that \eqref{eq:polynomial-derivative-convergence} holds for $k+1$ in place of $k$, as desired.
\end{proof}

\begin{proof}[Proof of Theorem \ref{thm:alternating-infinities}]
We will prove ({\em b}) for the case when $k-i$ is even. Let us assume $k$ and $i$ are both odd; the case when both are even is similar. Without loss of generality we may take $i=1$; the assumption then is that $\tilde{t}_1<\delta_*\leq\delta^*<\tilde{t}_2$. Then for some $\eps>0$ and all sufficiently large $n$ we have
\begin{equation} \label{eq:sandwich}
(\tilde{t}_1+\eps)\sqrt{n}\leq (1-2a)n-l_n\leq (\tilde{t}_2-\eps)\sqrt{n}.
\end{equation}
%In fact, we will show that $M_{k,a}'(x)=+\f$ under the weaker assumption
%\begin{equation} \label{eq:sandwich}
%(1-2a)-l_n-\tilde{t}_1\sqrt{n}\to\f \qquad \mbox{and} \qquad (1-2a)-l_n-\tilde{t}_2\sqrt{n}\to-\f.
%\end{equation}
By the symmetry argument outlined at the end of the proof of Theorem \ref{thm:infinite-derivatives}, it suffices to show that $M_{k,a}$ has an infinite right derivative.

Observe that $q_k(t)>0$ for $t\in(t_1,t_2)$. Let 
\[
g(n):=(1-2a)n-l_n, \qquad \tilde{g}(n):\frac{g(n)}{\sqrt{2a(1-2a)n}}, \qquad \tilde{\eps}:=\frac{\eps}{\sqrt{2a(1-2a)n}}.
\]
By \eqref{eq:sandwich}, $t_1+\tilde{\eps}<\tilde{g}(n)<t_2-\tilde{\eps}$ for all large enough $n$. 
%and so $q_k(\tilde{g}(n))>0$ (for large enough $n$). %But to conclude that $P_k(n,l_n)\to\f$ we need to be a bit more precise.
%If for some subsequence $(n_i)$ we have $\tilde{g}(n_i)\to t_1$, then the Mean Value Theorem, along with $q_k(t_1)=0$ gives that
%\[
%\frac{q_k(\tilde{g}(n_i))}{\tilde{g}(n_i)-t_1}\to q_k'(t_1)>0
%\]
%(as $q_k$ does not have double roots), so that $q_k(\tilde{g}(n_i))>1/2 q_k'(t_1)(\tilde{g}(n_i)-t_1)$ for all sufficiently large $i$. This implies, by Lemma \ref{lem:polynomial-convergence} and \eqref{eq:sandwich}, that
%\begin{align*}
%P_k(n_i,l_{n_i})&\sim \big(2a(1-2a)n_i\big)^{k/2}q_k(\tilde{g}(n_i))\\
%&>1/2 q_k'(t_1)\big(2a(1-2a)n_i\big)^{(k-1)/2}\{g(n_i)-\tilde{t}_1\sqrt{n_i}\}\to\f.
%\end{align*}
%Similarly, $P_k(n_i,l_{n_i})\to\f$ if $\tilde{g}(n_i)\to t_2$. If no subsequence of $(\tilde{g}(n))$ converges to either $t_1$ or $t_2$, then there is a $\delta>0$ such that $\min\{\tilde{g}(n)-t_1,t_2-\tilde{g}(n)\}\geq\delta$ for all large enough $n$, and therefore 
Thus, 
$$q_k(\tilde{g}(n))\geq \min\{q_k(t_1+\tilde{\eps}),q_k(t_1-\tilde{\eps})\}>0$$ 
for all large enough $n$, as $q_k$ is unimodal over $(t_1,t_2)$. Hence, Lemma \ref{lem:polynomial-convergence} yields $P_k(n,l_{n})\to\f$, and in fact,
\begin{equation} \label{eq:P_k-estimate}
P_k(n,l_n)\geq \delta n^{k/2} \qquad\mbox{for all large enough $n$},
\end{equation}
where $\delta>0$ is a constant depending only on $a$ and $\eps$.

Observe that, if $l_n$ is replaced with $l_n':=l_n+c_n$ for some bounded sequence $(c_n)$, then \eqref{eq:sandwich} holds also for $l_n'$ in place of $l_n$, so the same argument shows that $P_k(n,l_n+c_n)\to\f$ as well. Even more is true, namely,
\begin{equation} \label{eq:bounded-sequence-equivalence}
P_k(n,l_n+c_n)\sim P_k(n,l_n).
\end{equation}
This is clear if $g(n)\to\pm\f$. But when $k$ is odd, $(g(n))$ cannot have a bounded subsequence in view of \eqref{eq:sandwich} since $(t_1,t_2)$ does not contain $0$. (When $k$ is even, $(t_1,t_2)$ can contain $0$ and so $(g(n))$ can have a bounded subsequence, but even then \eqref{eq:bounded-sequence-equivalence} still holds because $q_k(0)\neq 0$.)

In fact, the sequence $(c_n)$ may even tend to $\pm\f$, as long as it does so more slowly than $\sqrt{n}$. In that case, we have, instead of \eqref{eq:sandwich},
\[
\left(\tilde{t}_1+\frac{\eps}{2}\right)\sqrt{n}\leq g(n)\pm c_n\leq \left(\tilde{t}_2-\frac{\eps}{2}\right)\sqrt{n}
\]
for all large enough $n$, and we conclude \eqref{eq:bounded-sequence-equivalence} in the same way. We will need this fact later on in the proof.

Now let $h>0$ be given, and take $n\in\NN$ such that $3^{-n+1}\leq h<3^{-n+2}$. Let $j$ and $j'$ be the integers such that $x\in I_{n,j}$ and $x+h\in I_{n,j'}$. Then $3\leq j'-j\leq 9$. For $i\in\NN$, let $l(i)$ denote the number of $1$'s in the ternary representation of $i$. Since $|l(i+3^\nu)-l(i)|=1$ for all $i$ and $\nu=0,1,2,\dots$, it follows that
\begin{equation} \label{eq:l-difference}
|l(i)-l(j)|\leq 4 \qquad\mbox{for $i=j,j+1,\dots,j'$}.
\end{equation}
Define constants
\[
b:=\max\{a,1-2a\}, \qquad r:=\min\left\{\frac{a}{1-2a},\frac{1-2a}{a}\right\}.
\]
By Lemma \ref{lem:ternary-increments}, we may write
\[
\Delta_{k,a}(I_{n,i})=a^{n-l(i)-k}(1-2a)^{l(i)-k}P_k(n,l(i)).
\]
By \eqref{eq:l-difference} and \eqref{eq:bounded-sequence-equivalence}, there is an $n_0\in\NN$ such that
\[
P_k(n,l(i))\geq \frac12 P_k(n,l_n) \qquad \mbox{for all $j\leq i\leq j'$ and all $n>n_0$}.
\]
Again using \eqref{eq:l-difference}, it follows from \eqref{eq:P_k-estimate} that
\begin{align}
\begin{split}
\Delta_{k,a}(I_{n,i})&\geq \frac12 r^4 a^{n-l_n-k}(1-2a)^{l_n-k}P_k(n,l_n), \\
&\geq \frac12 r^4 a^{n-l_n-k}(1-2a)^{l_n-k}\delta n^{k/2}, \qquad i=j,\dots,j'.
\end{split}
\label{eq:Delta-lower-estimate}
\end{align}

We have established that the increments over the ternary intervals of order $n$ between $x$ and $x+h$ are large. Furthermore, $x$ and $x+h$ are separated by at least one full ternary interval of order $n$. Next, we must make sure that the oscillations of $M_{k,a}$ over $I_{n,j}$ and $I_{n,j'}$ are not too much bigger than the increments over these intervals, so that $M_{k,a}(x)$ is not too much bigger than $M_{k,a}((j+1)3^{-n})$ and $M_{k,a}(x+h)$ is not too much smaller than $M_{k,a}(j'3^{-n})$. Unfortunately, we do not have an analogue of Lemma \ref{prop:box-containment} for $a<1/2$, so the analysis is by necessity somewhat more technical.

Let $m:=m_n:=\lfloor n^{1/4}\rfloor$, and note that for any $y\in J:=\bigcup_{i=j}^{j'}I_{n,i}$, we have $|l_{n+m_n}(y)-l_{n+m_n}(x)|\leq m_n$. Hence, by \eqref{eq:bounded-sequence-equivalence} applied to $n+m_n$ in place of $n$, we see that
\[
P_k(n+m_n,l_{n+m_n}(y))>\frac12 P_k(n+m_n,l_{n+m_n}(x))>0 \qquad \forall y\in J.
\]
Thus,
\begin{equation} \label{eq:nonnegative-increments}
\Delta_{k,a}(I_{n+m,\nu})\geq 0 \qquad \mbox{for $\nu=3^m j, \dots, 3^m(j'+1)-1$},
\end{equation}
i.e., the increments over the ternary intervals of order $n+m$ in $J$ are nonnegative.
Now we use Lemma \ref{lem:oscillation-bounds} to estimate, for $0\leq\nu<3^m$,
\begin{align*}
\osc(M_{k,a};I_{n+m,3^m j+\nu})&\leq a^{n+m-l(3^m j+\nu)}(1-2a)^{l(3^m j+\nu)}C_k(n+m)^k\\
&\leq a^{n-l_n}(1-2a)^{l_n}b^m C_k(n+m)^k,
\end{align*}
where we used that $l(3^m j+\nu)\geq l(j)=l_n$. Similarly,
\begin{align*}
\osc(M_{k,a};I_{n+m,3^m j'+\nu})&\leq a^{n+m-l(3^m j'+\nu)}(1-2a)^{l(3^m j'+\nu)}C_k(n+m)^k\\
&\leq a^{n-l(j')}(1-2a)^{l(j')}b^m C_k(n+m)^k\\
&\leq r^{-4}a^{n-l_n}(1-2a)^{l_n}b^m C_k(n+m)^k,
\end{align*}
where the last inequality used \eqref{eq:l-difference}. %Since $Q_k(n)$ is a polynomial in $n$ of degree $k$, we may assume for simplicity that $Q_k(n)=D_k n^k$ for some constant $D_k$, and hence, 
%\begin{equation} \label{eq:Q-estimate}
%Q_k(n+m)=D_k(n+m)^k\leq 2^k D_k n^k.
%\end{equation}
Since $m\leq n$ and $x$ lies in one of the intervals $I_{n+m,3^m j+\nu}$ for $0\leq\nu<3^m$, it follows by \eqref{eq:nonnegative-increments} that
\[
M_{k,a}(x)\leq M_{k,a}\left(\frac{j+1}{3^n}\right)+b^m a^{n-l_n}(1-2a)^{l_n}C_k 2^k n^k,
\]
and likewise, since $x+h$ lies in one of the intervals $I_{n+m,3^m j'+\nu}$ for $0\leq\nu<3^m$,
\[
M_{k,a}(x+h)\geq M_{k,a}\left(\frac{j'}{3^n}\right)-r^{-4}b^m a^{n-l_n}(1-2a)^{l_n}C_k 2^k n^k.
\]
Hence,
\begin{align*}
M_{k,a}(x+h)-M_{k,a}(x)&\geq M_{k,a}\left(\frac{j'}{3^n}\right)-M_{k,a}\left(\frac{j+1}{3^n}\right)\\
& \qquad -(1+r^{-4})b^m a^{n-l_n}(1-2a)^{l_n}C_k 2^k n^k\\
&\geq \Delta_{k,a}(I_{n,j+1})-(1+r^{-4})b^m a^{n-l_n}(1-2a)^{l_n}C_k 2^k n^k,
\end{align*}
where we used that $j'\geq j+3$ and $\Delta_{k,a}(I_{n,i})\geq 0$ for $i=j,\dots,j'$. Finally, applying \eqref{eq:Delta-lower-estimate}, we obtain
\begin{align*}
M_{k,a}(x+h)-M_{k,a}(x)&\geq a^{n-l_n}(1-2a)^{l_n}\left\{1/2 r^4\delta n^{k/2}-(1+r^{-4})b^m C_k 2^k n^k\right\}\\
&\geq a^{n-l_n}(1-2a)^{l_n}n^{k/2} \cdot\frac14 r^4\delta,
\end{align*}
for all sufficiently large $n$, since $b^{m_n}n^{k/2}\to 0$ as $n\to\f$. (Recall $m_n=\lfloor n^{1/4}\rfloor$.) 
Note that \eqref{eq:not-going-to-zero} holds in view of \eqref{eq:where-is-phi-1} and \eqref{eq:where-is-phi-2}, because the hypothesis $\tilde{t}_1<\delta_*\leq\delta^*<\tilde{t}_2$ implies that $l_n(x)/n\to 1-2a$.
Recalling that $h<3^{-n+2}$, we conclude that
\begin{equation} \label{eq:final-inequality}
\frac{M_{k,a}(x+h)-M_{k,a}(x)}{h}\geq 3^{n-2} a^{n-l_n}(1-2a)^{l_n}n^{k/2} \cdot\frac14 r^4\delta^k\to\f
\end{equation}
as $n\to\f$. This proves ({\em b}). For ({\em a}) and ({\em c}), observe that the hypotheses do not imply that $l_n(x)/n\to 1-2a$, so we additionally assume \eqref{eq:not-going-to-zero} to ensure \eqref{eq:final-inequality}.

It remains to prove ({\em d}). Suppose $\delta_*<\tilde{t}_i<\delta^*$ for some $i\in\{1,2,\dots,k\}$. Then, as can be seen from the above argument, $P_k(n,l_n)$ oscillates around $0$ infinitely many times, and hence, so does $\Delta_{k,a}(I_n(x))$. Therefore, $M_{k,a}$ cannot have an infinite derivative at $x$.
\end{proof}

\begin{remark}
{\rm
For $a=1/3$, Dalaklis et al.~\cite{Dalaklis} showed that $M_{1,1/3}'(x)=\f$ (resp. $-\f$) if and only if $n-3l_n(x)\to\f$ (resp. $-\f$). This condition is weaker than the one in Theorem \ref{thm:alternating-infinities}. Unfortunately, we have not been able to either prove or disprove a similar result for $k=1$ and $a\neq 1/3$. For $k\geq 2$, however, it is clear from Theorem \ref{thm:alternating-infinities} that if one knows only that $n-3l_n(x)\to\f$, then $M_{k,1/3}'(x)$ can be either $+\f$ or $-\f$ or $M_{k,1/3}$ may not have an infinite derivative at all; finer information, about the asymptotics of $(n-3l_n)/\sqrt{n}$, is needed.
}
\end{remark}

\subsection{The dimension of $D_{k,a}^{\pm\f}$}

We can now prove Theorem \ref{thm:D-inf-dimensions}. Recall that $d(a)=h(\phi(a))$ and $\tilde{d}(a)=h(1-2a)$. It follows from \eqref{eq:where-is-phi-1}, \eqref{eq:where-is-phi-2}, and the graph of $h$ that $\tilde{d}(a)\leq d(a)$, with equality if and only if $a=1/3$.

%\begin{corollary} \label{cor:D-inf-dimension}
%Assume $0<a<1/2$ and let $k\in\NN$. If $k$ is even, then $\dim_H D_{k,a}^{+\f}=d(a)$. On the other hand, if $k$ is odd, we %have the following:
%\begin{enumerate}[(a)]
%\item If $0<a<1/3$, then 
%\[
%\dim_H D_{k,a}^{+\f}=d(a)>\tilde{d}(a)=\dim_H D_{k,a}^{-\f}.
%\]
%\item If $a=1/3$, then
%\[
%\dim_H D_{k,a}^{+\f}=\dim_H D_{k,a}^{-\f}=1.
%\]
%\item If $1/3<a<1/2$, then 
%\[
%\dim_H D_{k,a}^{+\f}=\tilde{d}(a)<d(a)=\dim_H D_{k,a}^{-\f}.
%\]
%\end{enumerate}
%For all $a<1/2$ and $k\in\NN$, the set $D_{k,a}^\f$ has Hausdorff dimension
%\[
%\dim_H D_{k,a}^\f=d(a)=h(\phi(a)).
%\]
%\end{corollary}

\begin{proof}[Proof of Theorem \ref{thm:D-inf-dimensions}]
We sketch the main idea of the proof. Suppose $0<a<1/3$. By Theorem \ref{thm:infinite-derivative-detail} ({\em a}), $D_{k,a}^{+\f}$ is contained in the set $\{x:\lambda_*(x)\geq\phi(a)\}$, and since $\phi(a)>1/3$, this set has dimension $h(\phi(a))=d(a)$ by \cite[Lemma 4.2]{Allaart}, which generalizes a well-known theorem of Eggleston \cite{Eggleston}. On the other hand, for each sufficiently small $\eps>0$, $D_{k,a}^{+\f}$ contains the set $\{x:l_n(x)/n\to\phi(a)+\eps\}$, which has dimension $h(\phi(a)+\eps)$. So, letting $\eps\to 0$ we find that $\dim_H D_{k,a}^{+\f}\geq h(\phi(a))$. If in addition $k$ is odd, then $D_{k,a}^{-\f}$ can similarly be compared to the sets $\{x:\lambda_*(x)\geq 1-2a\}$ and $\{x:l_n(x)/n\to 1-2a+\eps\}$, so in the same way it follows that $\dim_H D_{k,a}^{-\f}=h(1-2a)$. (We note that the sets
\[
\{x:\phi(a)<\lambda_*(x)\leq \lambda^*(x)<1-2a\} \qquad \mbox{and} \qquad \{x: \lambda_*(x)>\phi(a)\}
\]
have the same Hausdorff dimension. The reason is, that the dimension is already determined by the set of points for which the frequency of $1$'s is ``near" $\phi(a)$, so the extra condition $\lambda^*(x)<1-2a$ is irrelevant.)

The proofs of (b) and (c) are similar.
\end{proof}

We do not know the dimension of $D_{k,a}^{-\f}$ when $k$ is even, but we know from Theorem \ref{thm:alternating-infinities} that this set is not empty. For $k=2$ we sketch a proof here that $D_{2,a}^{-\f}$ actually has fairly large dimension. We suspect this to be the case whenever $k \equiv 2 \pmod 4$, whereas for $k\equiv 0 \pmod 4$ the set $D_{k,a}^{-\f}$ seems to be much smaller.

An upper bound for $\dim_H D_{2,a}^{-\f}$ is easy to come by: It follows from Example \ref{ex:infinite-derivatives-M_2} that
\[
D_{2,a}^{-\f}\subset\left\{x:\limsup_{n\to\f} \frac{|(1-2a)n-l_n(x)|}{\sqrt{n}}\leq c_0\right\}\subset \left\{x: \lim_{n\to\f} \frac{l_n(x)}{n}=1-2a\right\},
\]
and so $\dim_H D_{2,a}^{-\f}\leq h(1-2a)=\tilde{d}(a)$. To obtain a lower bound, observe that by Example \ref{ex:infinite-derivatives-M_2}, $D_{2,a}^{-\f}$ contains for each $c<c_0$ the set
\[
E_c:=\{x: |l_n-(1-2a)n|\leq c\sqrt{n}\ \mbox{for all $n$}\}.
\]
Let
\[
E:=\bigcup_{c<c_0} E_c=\bigcup_{c<c_0, c\in\QQ} E_c.
\]
We will now construct a lower estimate for the dimension of $E$. When $a=1/3$, an obvious estimate can be obtained by considering the set of points $x$ whose ternary expansion has a $1$ precisely in every third place, and $0$'s or $2$'s arbitrarily everywhere else. This set has Hausdorff dimension $\log 4/\log 27$, and on this set, $|l_n-(1-2a)n|$ is in fact  bounded. This can be extended to arbitrary rational $a$. To get a better estimate, and also cover the case of irrational $a$, we need to introduce more randomness, though not ``too much". More precisely, we look for a probability measure $\mu$ on $[0,1]$ with large Hausdorff dimension that gives full measure to $E$. A natural candidate is the measure $\mu$ which makes the ternary digits $\{x_i\}$ of a point $x\in[0,1]$ independent, identically distributed (i.i.d.) random variables taking the values $0, 1$ and $2$ with probabilities $a, 1-2a$ and $a$, respectively. By the Strong Law of Large Numbers, $l_n(x)/n\to 1-2a$ with $\mu$-probability 1. Put $\xi_i=1$ if $x_i=1$, and $\xi_i=0$ otherwise; then $\xi_1,\xi_2,\dots$ are i.i.d. random variables under $\mu$ with mean $1-2a$ and variance $\sigma^2:=2a(1-2a)$, and $l_n=\sum_{i=1}^n \xi_i$. The Law of the Iterated Logarithm (LIL - see \cite[Theorem 9.5]{Billingsley}) implies that
\[
\limsup_{n\to\f} \frac{l_n-(1-2a)n}{\sigma\sqrt{2n\log\log n}}=1, \qquad \mu-\mbox{a.s.},
\]
and so, with probability one, $l_n-(1-2a)n>2\sqrt{a(1-2a)n}>c_0\sqrt{n}$ infinitely often. Hence $\mu(E)=0$, which is not what we wanted. The problem is, essentially, that the i.i.d. digit model contains ``too much" randomness. To overcome this, we replace $\mu$ with a Markov measure $\mu_p$, where $p$ is a parameter whose value we will choose later. Unfortunately, in what follows we need to impose a restriction on $a$ and assume $1/8\leq a\leq 3/8$.

Consider the Markovian matrix
\[
\mathcal{P}=[p_{ij}]_{i,j=0}^2=\begin{bmatrix} \displaystyle \frac{1-r}{2} & \displaystyle r & \displaystyle \frac{1-r}{2} \\ \displaystyle \frac{1-p}{2} & \displaystyle  p & \displaystyle \frac{1-p}{2} \\ \displaystyle \frac{1-r}{2} & \displaystyle r & \displaystyle \frac{1-r}{2} \end{bmatrix},
\]
where $p,r\in[0,1]$. We consider $p$ an arbitrary parameter and choose $r$ so that the vector $\mathbf{a}:=[a_0\ \ a_1\ \ a_2]:=[a \ \ 1-2a \ \ a]$ is an invariant measure; that is, $\mathbf{a}\mathcal{P}=\mathbf{a}$. This gives
\begin{equation} \label{eq:r-probability}
r=\frac{(1-2a)(1-p)}{2a},
\end{equation}
assuming this quantity lies between $0$ and $1$. We now let $\mu_p$ be the Markov measure such that $\mu_p(x_1=i)=a_i$ and $\mu_p(x_{n+1}=j|x_n=i)=p_{ij}$, for $i,j=0,1,2$ and $n\in\NN$. By standard theory, $\mu_p$ has entropy
\begin{align*}
H(a,p):&=-\sum_{i=0}^2 a_i\sum_{j=0}^2 p_{ij}\log p_{ij}\\
&=-2a(1-r)\log\left(\frac{1-r}{2}\right)-2ar\log r\\
& \qquad -(1-2a)(1-p)\log\left(\frac{1-p}{2}\right)-(1-2a)p\log p,
\end{align*}
and so $\mu_p$ has Hausdorff dimension
\[
\dim_H \mu_p=\frac{H(a,p)}{\log 3}.
\]
(We recall that the Hausdorff dimension of a probability measure $\mu$ is defined by
\[
\dim_H \mu:=\inf\{\dim_H F: F\ \mbox{is a Borel set with}\ \mu(F)=1\}.)
\]
We wish to choose $p$ so that $\mu_p$ gives full measure to $E$. Note that under $\mu_p$, the random variables $\{\xi_n\}$ are no longer independent, so the process $\{l_n\}$ is not a classical random walk, but rather, a {\em correlated random walk}. We derive a LIL for $\{l_n\}$ by modifying the proof of \cite[Proposition 6.3]{Allaart-2009}, as follows. Put
\begin{gather*}
\tau_1:=\inf\{n\in\NN: x_n=1\},\\
\tau_{2m}:=\inf\{n>\tau_{2m-1}: x_n\neq 1\}, \qquad m\in\NN,\\
\tau_{2m+1}:=\inf\{n>\tau_{2m}: x_n=1\}, \qquad m\in\NN.
\end{gather*}
Let $T_m:=\tau_{m}-\tau_{m-1}$ and $U_m:=\tau_{2m}-\tau_{2m-2}=T_{2m-1}+T_{2m}$ for $m\in\NN$. For $m\geq 2$, $T_m$ has a geometric distribution with parameter $1-p$ if $m$ is even, and with parameter $r$ if $m$ is odd. Furthermore, $T_1, T_2,\dots$ are independent. Thus, for $m\geq 2$,
\begin{gather*}
\EE(T_{2m})=\frac{1}{1-p}, \qquad \var(T_{2m})=\frac{p}{(1-p)^2},\\
\EE(T_{2m-1})=\frac{1}{r}, \qquad \var(T_{2m-1})=\frac{1-r}{r^2},
\end{gather*}
where all expectations and variances are with respect to $\mu_p$. From this, 
\begin{equation} \label{eq:mean-of-U}
\EE(U_m)=\frac{1}{1-p}+\frac{1}{r}=\frac{1}{(1-2a)(1-p)}.
\end{equation}
Now observe that $l_n-(2a-1)n$ attains its local maxima at the times $n=\tau_{2m}-1$, and $l_{\tau_{2m}}-l_{\tau_{2m}-1}$ is bounded. Thus, it suffices to develop a LIL for the process $S_m:=l_{\tau_{2m}}$. We can express $S_m-\EE(S_m)$ as a sum of i.i.d. random variables by $S_m=Z_1+\dots+Z_m$, where
%\[
%Z_m:=l_{\tau_{2m}}-l_{\tau_{2m-2}}-(1-2a)(\tau_{2m}-\tau_{2m-2}), \qquad m\in\NN.
%\]
%We can rewrite $Z_m$ as
\begin{align*}
Z_m:&=l_{\tau_{2m}}-l_{\tau_{2m-2}}-(1-2a)(\tau_{2m}-\tau_{2m-2})\\
&=\tau_{2m}-\tau_{2m-1}-(1-2a)(\tau_{2m}-\tau_{2m-2})\\
&=2a(\tau_{2m}-\tau_{2m-1})-(1-2a)(\tau_{2m-1}-\tau_{2m-2})\\
&=2a T_{2m}-(1-2a)T_{2m-1}.
\end{align*}
This shows that $\EE(Z_m)=0$ and, after some calculation,
\[
\var(Z_m)=4a^2\var(T_{2m})+(1-2a)^2\var(T_{2m-1})=\frac{2a(4a-1+p)}{(1-p)^2}=:s^2,
\]
where we used the independence of $T_{2m-1}$ and $T_{2m}$ and substituted the expression \eqref{eq:r-probability} for $r$. It follows from the classical LIL that
\[
\limsup_{m\to\f}\frac{l_{\tau_{2m}}-(1-2a)\tau_{2m}}{\sqrt{2s^2 m\log\log m}}=1 \qquad \mbox{a.s.}
\]
Let $v(m):=\sqrt{2m\log\log m}$. Since $\tau_{2m}=U_1+\dots+U_m$ and $U_2,\dots,U_m$ are i.i.d. with mean given by \eqref{eq:mean-of-U}, the SLLN implies
\[
\frac{v(\tau_{2m})}{v(m)}\to \frac{1}{\sqrt{(1-2a)(1-p)}} \qquad\mbox{a.s.}
\]
Hence,
\begin{align*}
\limsup_{n\to\f}\frac{l_n-(1-2a)n}{v(n)} &= \limsup_{m\to\f}\frac{l_{\tau_{2m}}-(1-2a)\tau_{2m}}{v(\tau_{2m})}\\
&=\limsup_{m\to\f}\frac{l_{\tau_{2m}}-(1-2a)\tau_{2m}}{sv(m)}\cdot \frac{sv(m)}{v(\tau_{2m})}\\
&=\sqrt{\frac{2a(1-2a)(4a-1+p)}{1-p}} \qquad\mbox{a.s.}
\end{align*}
The corresponding $\liminf$ can be dealt with in a similar way.
Therefore, in order that $\mu_p(E)=1$, we need to choose $p$ so that
\[
\sqrt{\frac{4a(1-2a)(4a-1+p)}{1-p}}<c_0=\sqrt{2a(1-2a)},
\]
i.e. 
\[
p<p_c(a):=1-\frac{8a}{3}.
\]
This is possible if and only if $a<3/8$. Furthermore, substituting $p=p_c(a)$ into \eqref{eq:r-probability}, the requirement that $r\leq 1$ is equivalent to $a\geq 1/8$. Under these restrictions on $a$, we obtain, by continuity,
\[
\dim_H D_{2,a}^{-\f}\geq \dim_H E \geq \dim_H \mu_{p_c(a)}=\frac{H(a,p_c(a))}{\log 3}=:\tilde{H}(a).
\]
For example, if $a=1/3$ we get $\dim_H D_{2,a}^{-\f}\geq 0.9433$. We plot the graph of $\tilde{H}(a)$, together with the upper bound $h(1-2a)$, in Figure \ref{fig:dim-bounds}. We do not know how to obtain good lower bounds for $a<1/8$ or $3/8<a<1/2$.

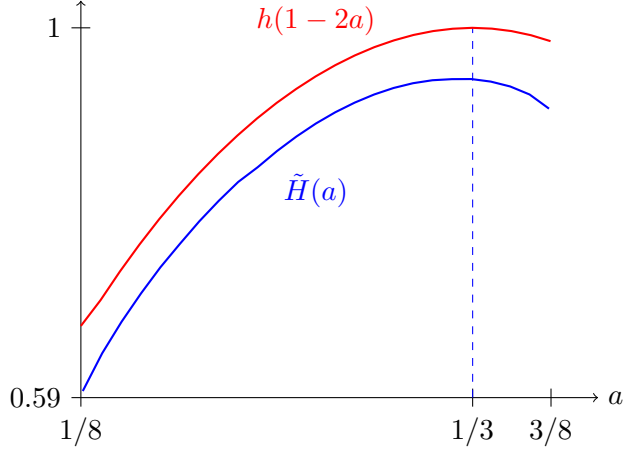
\begin{figure}[h] 
\begin{center}
\begin{tikzpicture}[xscale=25,yscale=12]
% First the coordinate axes:
\draw [->] (0.12,0.59) node[anchor=east] {$0.59$}  -- (0.4,0.59) node[anchor=west] {$a$};
\draw [->] (1/8,0.58) node[anchor=north] {$1/8$} -- (1/8,1.03);
\draw(3/8,0.58) node[anchor=north]{$3/8$}--(3/8,0.6);
\draw(1/3,0.58) node[anchor=north]{$1/3$}--(1/3,0.6);
\draw(0.12,1) node[anchor=east] {$1$}--(0.13,1) ;
\draw[dashed,blue](1/3,0.59) --(1/3,1);
\draw[variable=\a,domain=0.125:0.375,red,thick] plot({\a},{(-2*\a*ln(\a)-(1-2*\a)*ln(1-2*\a))/ln(3)})(0.25,0.98)node[above,scale=1pt]{$h(1-2a)$};
\draw[variable=\a,domain=0.126:0.374,blue,thick] plot({\a},{(-(2/3)*\a*(8*\a-1)*ln((8*\a-1)/6)-(16/3)*\a*(1-2*\a)*ln(4/3)-(8/3)*\a*(1-2*\a)*ln(\a*(1-2*\a))-(1-2*\a)*(1-8*\a/3)*ln(1-8*\a/3))/ln(3)})(0.25,0.85)node[below,scale=1pt] {$\tilde{H}(a)$};
\end{tikzpicture}
\end{center}
\caption{The upper and lower bounds for $\dim_H D_{2,a}^{-\f}$, for $\frac18\leq a\leq\frac38$.}
\label{fig:dim-bounds}
\end{figure}

The situation is different for $k=4$. By Example \ref{ex:infinite-derivatives-M_4}, a necessary condition for $M_{4,a}'(x)=-\f$ is that either
\[
c_1\leq\delta_*\leq\delta^*\leq c_2 \qquad\mbox{or}\qquad -c_2\leq\delta_*\leq\delta^*\leq -c_1.
\]
Although the set (call it $E$) of points $x$ satisfying this condition is clearly uncountable, its dimension should be very small, if not zero. For instance, if $c_1\leq\delta_*\leq\delta^*\leq c_2$, we must have, for every $\eps>0$,
\[
(1-2a)n-(c_2+\eps)\sqrt{n}<l_n<(1-2a)n-(c_1-\eps)\sqrt{n} \qquad\mbox{for all $n$ sufficiently large}.
\] 
This is a very tight restriction, hence $E$ is a very small subset of the set $\{x: l_n(x)/n\to 1-2a\}$. In fact, we do not know whether $\dim_H D_{4,a}^{-\f}>0$.

\section{Infinite derivatives: $a>1/2$} \label{sec:infinite-derivative-high}

The situation regarding infinite derivatives is very different when $a>1/2$. The reason is, that if the ternary expansion of $x$ has infinitely many $1$'s, then the increments $\Delta_{k,a}(I_n(x))$ change sign infinitely many times in view of Lemma \ref{lem:ternary-increments}, because $1-2a<0$. Therefore, the only points at which there can possibly be an infinite derivative are those whose ternary expansion has only a finite number of $1$'s. This condition is necessary, but not sufficient, as we will see below. Contrary to the case $a<1/2$ studied in the previous section, the polynomial $P_k(n,l)$ behaves much more regularly here: Depending on the parity of $k$, there is an integer $n_0\in\NN$ such that either $P_k(n,l)>0$ for all $n\geq n_0$ and all $l\leq n$, or $P_k(n,l)>0$ for all $n\geq n_0$ and all $l\leq n$.

\begin{lemma} \label{lem:alternating-slopes}
Assume $a>1/2$. There is an integer $n_0\in\NN$ such that for all $n>n_0$, the signs of $\Delta_{k,a}(I_{n,j})$ alternate: that is, 
\[
\sgn(\Delta_{k,a}(I_{n,j+1}))=-\sgn(\Delta_{k,a}(I_{n,j})), \qquad j=0,1,\dots,3^n-2. 
\]
Furthermore, for each $n>n_0$ and $0\leq j<3^n$, we have that
\begin{equation} \label{eq:zigzag}
|\Delta_{k,a}(I_{n+1,3j})|=|\Delta_{k,a}(I_{n+1,3j+2})|>|\Delta_{k,a}(I_{n+1,3j+1})|.
\end{equation}
\end{lemma}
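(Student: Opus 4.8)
The plan is to reduce everything to the explicit formula from Lemma~\ref{lem:ternary-increments},
\[
\Delta_{k,a}(I_{n,i})=a^{n-l(i)-k}(1-2a)^{l(i)-k}P_k(n,l(i)),
\]
and to analyze the sign and magnitude of $P_k(n,l)$ uniformly in $l$. The starting observation is that since $a>1/2$ we have $1-2a<0$, so $(1-2a)n-l\le(1-2a)n<0$ and hence $|(1-2a)n-l|=(2a-1)n+l\ge(2a-1)n$ for every $0\le l\le n$. Writing $P_k(n,l)=\{(1-2a)n-l\}^k+R_k(n,l)$ with $\deg R_k\le k-1$, the bound $|n^p l^q|\le n^{k-1}$ for $p+q\le k-1$ and $0\le l\le n$ gives a uniform estimate $|R_k(n,l)|\le Cn^{k-1}$. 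Since the leading term has modulus at least $(2a-1)^k n^k$, I would first fix $n_0$ so large that for all $n>n_0$ and all $0\le l\le n$ the leading term dominates $R_k$; this yields $\sgn P_k(n,l)=\sgn\{(1-2a)n-l\}^k=(-1)^k$ and $|P_k(n,l)|\ge\tfrac12(2a-1)^k n^k>0$, both uniformly in $l$.

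For the alternating-sign statement, combining this sign of $P_k$ with the factor $(1-2a)^{l(i)-k}$ (which has sign $(-1)^{l(i)-k}$) and the positive factor $a^{n-l(i)-k}$ gives $\sgn\Delta_{k,a}(I_{n,i})=(-1)^{l(i)}$. It then remains to check the purely combinatorial fact that $l(j)$ and $l(j+1)$ have opposite parity for every $0\le j\le 3^n-2$. This follows by inspecting base-three addition: if the ternary string of $j$ ends in exactly $t$ trailing $2$'s preceded by a digit $d\in\{0,1\}$ (such a digit exists because $j\ne 3^n-1$), then adding $1$ turns the trailing $2$'s into $0$'s --- which does not change the count of $1$'s --- and replaces $d$ by $d+1$, so $l(j+1)-l(j)=+1$ if $d=0$ and $-1$ if $d=1$. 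In either case the change is odd, so the signs of consecutive increments are opposite.

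For the magnitude statement, I would use that appending the digit $0$, $1$, or $2$ to the base-three string of $j$ produces $3j$, $3j+1$, $3j+2$, whence $l(3j)=l(3j+2)=l(j)=:l$ and $l(3j+1)=l+1$. Substituting these into the formula shows $|\Delta_{k,a}(I_{n+1,3j})|=|\Delta_{k,a}(I_{n+1,3j+2})|$ at once. For the strict inequality I would form the ratio
\[
\frac{|\Delta_{k,a}(I_{n+1,3j+1})|}{|\Delta_{k,a}(I_{n+1,3j})|}=\frac{|1-2a|}{a}\cdot\frac{|P_k(n+1,l+1)|}{|P_k(n+1,l)|},
\]
and note that $|1-2a|/a=(2a-1)/a=2-1/a\in(0,1)$ since $1/2<a<1$. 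Using $|(1-2a)(n+1)-(l+1)|=|(1-2a)(n+1)-l|+1$ together with $|(1-2a)(n+1)-l|\ge(2a-1)(n+1)$ and the uniform bound on $R_k$, the remaining factor is at most $\bigl(1+\tfrac{1}{(2a-1)(n+1)}\bigr)^k$ up to corrections of order $1/n$, hence tends to $1$ uniformly in $l$. Enlarging $n_0$ so that this factor is $<a/(2a-1)>1$ for all $n>n_0$ and all $l$ then forces the displayed ratio to be strictly less than $1$, which is exactly \eqref{eq:zigzag}.

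The main obstacle, as indicated, is the uniform-in-$l$ control of the ratio $|P_k(n+1,l+1)|/|P_k(n+1,l)|$: I must ensure the error polynomial $R_k$ spoils neither the constant sign $(-1)^k$ nor the near-unit ratio of consecutive values, simultaneously for all $l\in\{0,\dots,n\}$. Once this uniform domination of the leading term $\{(1-2a)n-l\}^k$ is in hand, both assertions of the lemma follow from the elementary base-three digit-count bookkeeping above, and a single $n_0$ (the maximum of the finitely many thresholds, chosen to work at levels $n$ and $n+1$) serves for the whole statement.
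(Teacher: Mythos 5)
Your proposal is correct and follows essentially the same route as the paper's proof: both use the formula $\Delta_{k,a}(I_{n,i})=a^{n-l(i)-k}(1-2a)^{l(i)-k}P_k(n,i)$ from Lemma~\ref{lem:ternary-increments}, show that the leading term $\{(1-2a)n-l\}^k$ dominates $R_k(n,l)$ uniformly in $l\le n$ (fixing the sign of $P_k$ and forcing the sign of $\Delta_{k,a}(I_{n,j})$ to flip with the parity of $l(j)$), and then derive \eqref{eq:zigzag} from the identity $l(3j)=l(3j+2)=l(3j+1)-1=l(j)$ together with the factor $|1-2a|/a<1$ and the uniform convergence of $P_k(n+1,l+1)/P_k(n+1,l)$ to $1$. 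Your write-up merely fills in two details the paper leaves implicit (the base-three carry argument for $|l(j+1)-l(j)|=1$ and the explicit estimate showing the $P_k$ ratio tends to $1$ uniformly), which is fine.
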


Thus, for all sufficiently large $n$, the $n$th piecewise linear approximation of $M_{k,a}$ follows the same `zig zag' pattern as Okamoto's function itself.

\begin{proof}
Observe that $(1-2a)n-l<0$ for all $n$ and $l$, and its absolute value increases with $l$. Thus, $|(1-2a)n-l|^k\geq n^k(2a-1)^k$. Furthermore, there is a constant $C>0$ such that $R_k(n,l)\leq Cn^{k-1}$ for all $n$ and all $l\leq n$, where $R_k(n,l)$ is the polynomial from Lemma \ref{lem:ternary-increments}. Put
\[
n_0:=\frac{C}{(2a-1)^k}.
\]
Then $|(1-2a)n-l|^k>R_k(n,l)$ for all $n>n_0$ and all $l\leq n$, so for $n>n_0$ the sign of $P_k(n,l)$ is the same as the sign of $\{(1-2a)n-l\}^k$; that is, $P_k(n,l)>0$ if $k$ is even, and $P_k(n,l)<0$ if $k$ is odd. Therefore, the sign of $\Delta_{k,a}(I_{n,j})$ is equal to $(-1)^k$ times the sign of $(1-2a)^{l(j)}$. Since $|l(j+1)-l(j)|=1$ for all $j$, this proves the alternating signs statement. In addition, we see that $|P_k(n,l)|\to\infty$ as $n\to\infty$, uniformly in $l$.

Next, since $a>1/2$ implies $a>2a-1=|1-2a|$, we can choose $\eps>0$ so that
\[
\frac{a}{|1-2a|}>1+\eps,
\]
and then we can find $n_1\in\NN$ such that
\[
(1+\eps)^{-1}<\frac{P_k(n,l+1)}{P_k(n,l)}<1+\eps \qquad \forall n>n_1,\ \forall l\leq n.
\]
Together with Lemma \ref{lem:ternary-increments}, this establishes \eqref{eq:zigzag}, because
\[
\frac{\Delta_{k,a}(I_{n+1,3j+1})}{\Delta_{k,a}(I_{n+1,3j})}=\frac{|1-2a|}{a}\cdot\frac{P_k(n+1,l(j)+1)}{P_k(n+1,l(j))}.
\]
\end{proof}

We see from the above proof that for all large enough $n$, $P_k(n,l)$ has the same sign as $(1-2a)^{-k}$. Hence, $\Delta_{k,a}(I_{n,j})$ has the same sign as $(1-2a)^{l(j)}$ for all large enough $n$.

For $d,\xi\in\{0,1,2\}$, define
\[
\delta_d(\xi):=\begin{cases}
1 & \mbox{if $\xi=d$},\\
0 & \mbox{otherwise}.
\end{cases}
\]

\begin{proposition} \label{prop:limsup-conditions}
Assume $a>1/2$. Let $x=(0.x_1x_2\dots)_3$, and assume that $l:=\sup_{n\in\NN} l_n(x)<\infty$. Put
\[
L_a^+(x):=\limsup_{n\to\f} \sum_{j=1}^\f a^{j}\delta_2(x_{n+j}).
\]
In order that $M_{k,a}$ have an infinite right derivative at $x$ it is sufficient that $L_a^+(x)<1$, and it is necessary that $L_a^+(x)\leq 1$. Furthermore, if $M_{k,a}$ has an infinite right derivative at $x$, then $(M_{k,a})_+'(x)=(-1)^{l}\cdot\f$.
\end{proposition}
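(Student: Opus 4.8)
The plan is to fix the position $n^{*}$ of the last $1$ in the ternary expansion of $x$, so that $l_n(x)=l$ and $x_n\in\{0,2\}$ for all $n>n^{*}$; by the convention of choosing the expansion ending in $0$'s, $x$ does not end in $2^\f$. First I would record a box-containment property valid for $a>1/2$: by Lemma \ref{lem:alternating-slopes}, for $n$ larger than the $n_0$ there, the three order-$(n{+}1)$ increments inside any $I_{n,j}$ have signs $\eps,-\eps,\eps$, with the outer two equal in magnitude and exceeding the middle one, and a short induction then shows that the graph of $M_{k,a}$ over any sufficiently deep ternary interval lies in the box spanned by its endpoint values (the analogue of Lemma \ref{prop:box-containment}). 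Using this, the infinite right derivative at $x$ reduces to a single critical sequence: for $m\in E:=\{m>n^{*}:x_m=0\}$ put $z_m:=(0.x_1\cdots x_{m-1}2\,0^\f)_3$, so $z_m>x$ with $3^{-m}\le z_m-x\le 2\cdot3^{-m}$. Any $y>x$ close to $x$ first exceeds $x$ at a digit $m$ with $x_m\in\{0,2\}$ and $x_m<y_m$, hence $x_m=0$ and $m\in E$; the box property then forces $M_{k,a}(y)\ge M_{k,a}(z_m)$ when $l$ is even (and $\le$ when $l$ is odd), so the one-sided derivative is governed entirely by the quotients $(M_{k,a}(z_m)-M_{k,a}(x))/(z_m-x)$.

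The crux is to evaluate this critical difference, and here the cleanest route is to differentiate an exact identity for $F_a$. Since $x_m=0$ and $z_m,x$ share their first $m-1$ digits, the functional equation \eqref{eq:Fa-FE} gives exactly
\[
F_a(z_m)-F_a(x)=a^{m-1-l}(1-2a)^{l}(1-a)\bigl(1-\Sigma_m(a)\bigr),\qquad \Sigma_m(a):=\sum_{j\ge1}a^{j}\delta_2(x_{m+j})=\tfrac{a}{1-a}F_a(\sigma^{m}x),
\]
because $F_a(\sigma^{m-1}z_m)=1-a$ and $F_a(\sigma^{m-1}x)=aF_a(\sigma^{m}x)=(1-a)\Sigma_m(a)$. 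As $z_m$ and $x$ are fixed, $M_{k,a}(z_m)-M_{k,a}(x)=\partial_a^{k}\bigl[F_a(z_m)-F_a(x)\bigr]$, so I would apply Leibniz' rule to the product above. The factor $a^{m-1-l}(1-2a)^{l}(1-a)$ contributes order $m^{k-i}$ when differentiated $k-i$ times (the top $m$-power coming from $a^{m-1-l}$), while $\partial_a^{i}(1-\Sigma_m)=-\Sigma_m^{(i)}(a)$ is bounded uniformly in $m$ (its coefficients lie in $\{0,1\}$). Thus only the $i=0$ term is of top order, yielding
\[
M_{k,a}(z_m)-M_{k,a}(x)=(-1)^{l}\,c\,m^{k}a^{m-l}|1-2a|^{l}\bigl(1-\Sigma_m(a)\bigr)\bigl(1+O(1/m)\bigr)
\]
for a positive constant $c$; in particular the sign is $(-1)^{l}\sgn(1-\Sigma_m)$ and, after dividing by $z_m-x\asymp 3^{-m}$, the magnitude is $\asymp(3a)^{m}m^{k}|1-\Sigma_m|\to\f$ whenever $1-\Sigma_m$ stays bounded away from $0$.

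To finish I would use the elementary fact that $L_a^+(x)=\limsup_{m\in E}\Sigma_m(a)$: along a maximal run of $2$'s the tail-sum $\Sigma$ is largest at the $0$ immediately preceding the run, and since there are no $1$'s in the tail, that $0$ lies in $E$. For sufficiency, if $L_a^+(x)<1$ then $\Sigma_m\le L_a^+(x)+\eta<1$ for all large $m\in E$, so the critical quotient tends to $(-1)^{l}\cdot\f$; the box reduction upgrades this to $(M_{k,a})_+'(x)=(-1)^{l}\cdot\f$, which also establishes the asserted sign. For necessity, if $L_a^+(x)>1$ I choose $m_i\in E$ with $\Sigma_{m_i}>1$: the critical quotient along $z_{m_i}$ tends to $(-1)^{l+1}\cdot\f$, whereas at the peaks $p_m:=(0.x_1\cdots x_{m-1}1\,0^\f)_3>x$ the box property gives $M_{k,a}(p_m)-M_{k,a}(x)$ of sign $(-1)^{l}$ (or $0$); hence the difference quotient takes both signs arbitrarily close to $x$, and no infinite right derivative can exist.

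The main obstacle is the bookkeeping in the middle paragraph: one must verify that the $i=0$ term genuinely dominates uniformly in $m$ and that the threshold separating the two behaviours is exactly $\Sigma_m=1$, which is what pins the critical value at $L_a^+=1$; the uniform boundedness of the derivatives $\Sigma_m^{(i)}(a)$ is what makes this work. The remaining ingredients — the box-containment consequence of Lemma \ref{lem:alternating-slopes} and the identity $L_a^+=\limsup_{m\in E}\Sigma_m$ — are routine, and the potential degenerate case of an expansion ending in $2^\f$ is excluded by our choice of ternary expansion.
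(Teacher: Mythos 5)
Your proposal is correct and follows essentially the same route as the paper's proof: reduction to the critical sequence $z_m$, $m\in E$, justified by the zig-zag structure of Lemma \ref{lem:alternating-slopes}; the exact identity $F_a(z_m)-F_a(x)=a^{m-1-l}(1-2a)^l(1-a)\bigl(1-\Sigma_m(a)\bigr)$; and $k$-fold Leibniz differentiation isolating the top-order coefficient $(1-a)\bigl(1-\Sigma_m(a)\bigr)m^k$, with the threshold pinned at $\Sigma_m=1$. The only blemish is that your multiplicative error $\bigl(1+O(1/m)\bigr)$ should really be an additive $O(m^{k-1})$ (the paper keeps it additive, as bounded coefficients $c_{k-1}^{(m)},\dots,c_0^{(m)}$), and the two formulations agree only when $1-\Sigma_m$ stays bounded away from $0$ — but that is exactly the regime in which you invoke it (for necessity one needs $\Sigma_{m_i}\geq 1+\eps$, which $L_a^+(x)>1$ supplies), so the argument stands; your use of the peaks $p_m$ in place of the paper's secant slopes $3^n\Delta_{k,a}(I_n(x))$ for the necessity step is an equivalent variant.
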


\begin{proof}
Let $N_0$ be the position of the last 1 in $(x_i)$, and put $E:=\{n>N_0: x_n=0\}$. For each $n\in E$, put $z_n:=0.x_1\dots x_{n-1}20^\f$. Then $z_n>x$ and $3^{-n}<z_n-x<2\cdot 3^{-n}$. As in the proof of Theorem \ref{thm:infinite-derivatives-half}, in order that $M_{k,a}$ has an infinite right derivative at $x$, it is necessary and sufficient that
\begin{equation*} %\label{eq:the-critical-sequence-b}
\frac{M_{k,a}(z_n)-M_{k,a}(x)}{z_n-x}\to\pm\f.
\end{equation*}
This is a consequence of Lemma \ref{lem:alternating-slopes}; we refer to the argument in the above-mentioned proof.

%We use the following explicit expression for $F_a(x)$ (see \cite{Kobayashi}): 
%\begin{equation}
%F_a(x)=\sum_{j=1}^\infty a^{j-1-l_{j-1}(x)}(1-2a)^{l_{j-1}(x)}q(x_j),
%\label{eq:explicit-expression}
%\end{equation}
%where $q(0)=0$, $q(1)=a$ and $q(2)=1-a$. 
Take $n\in E$. Then Lemma \ref{lem:Okamoto-series} implies
\[
F_a(z_n)-F_a(x)=a^{n-l-1}(1-a)(1-2a)^l-\sum_{j=n+1}^\f a^{j-l-1}(1-a)(1-2a)^l\delta_2(x_j).
\]
Differentiating $k$ times with respect to $a$ gives
\begin{align*}
M_{k,a}&(z_n)-M_{k,a}(x)=\sum_{i=0}^k\binom{k}{i}\frac{\partial^i}{\partial a^i}(a^{n-l-1}-a^{n-l})\cdot\frac{\partial^{k-i}}{\partial a^{k-i}}(1-2a)^l\\
&\qquad\qquad\qquad\qquad -\sum_{j=n+1}^\f \sum_{i=0}^k\binom{k}{i}\frac{\partial^i}{\partial a^i}(a^{j-l-1}-a^{j-l})\cdot\frac{\partial^{k-i}}{\partial a^{k-i}}(1-2a)^l\delta_2(x_j)\\
&=\sum_{i=0}^k\binom{k}{i}\left(\pi_{n-l-1,i}a^{n-l-i-1}-\pi_{n-l,i}a^{n-l-i}\right)\pi_{l,k-i}(-2)^{k-i}(1-2a)^{l-k+i}\\
& \quad -\sum_{j=n+1}^\f \sum_{i=0}^k\binom{k}{i}\left(\pi_{j-l-1,i}a^{j-l-i-1}-\pi_{j-l,i}a^{j-l-i}\right)\pi_{l,k-i}(-2)^{k-i}(1-2a)^{l-k+i}\delta_2(x_j),
\end{align*}
where, for nonnegative integers $n$ and $m$ with $n\geq m$, we denote
\[
\pi_{n,m}:=\frac{n!}{m!}=n(n-1)\cdots(n-m+1).
\]
After reindexing, the summation over $j$ can be written as
\begin{align*}
\sum_{m=1}^\f \sum_{i=0}^k\binom{k}{i}&\left(\pi_{n+m-l-1,i}a^{n+m-l-i-1}-\pi_{n+m-l,i}a^{n+m-l-i}\right)  \\
& \qquad \times \pi_{l,k-i}(-2)^{k-i}(1-2a)^{l-k+i}\delta_2(x_{n+m}).
\end{align*}
Thus, $M_{k,a}(z_n)-M_{k,a}(x)$ takes the form
\[
M_{k,a}(z_n)-M_{k,a}(x)=a^{n-l-k-1}(1-2a)^{l}\left(c_k^{(n)} n^k+c_{k-1}^{(n)}n^{k-1}+\dots+c_1^{(n)} n+c_0^{(n)}\right),
\]
for certain coefficients $c_k^{(n)},\dots,c_0^{(n)}$ that depend on $n$ only through $\{\delta_2(x_{n+m}): m\in\NN\}$ and are therefore bounded. In particular,
\[
c_k^{(n)}=(1-a)\left(1-\sum_{m=1}^\f a^m\delta_2(x_{n+m})\right),
\]
as only the terms with $i=k$ include a term $n^k$. If $L_a^+(x)=\limsup_{n\to\f}a^m\delta_2(x_{n+m})<1$, then $c_k^{(n)}$ is bounded away from $0$, and so the sign of $M_{k,a}(z_n)-M_{k,a}(x)$ is eventually determined by the dominant term $c_k^{(n)} n^k$, so there is a constant $\eps>0$ such that 
\[
|M_{k,a}(z_n)-M_{k,a}(x)|\geq a^{n-l-k-1}(1-a)|1-2a|^l\eps n^k 
\]
for all sufficiently large $n\in E$. Since furthermore $z_n-x<2\cdot 3^{-n}$ and $(3a)^n\to\f$, it follows that $(M_{k,a})_+'(x)=(-1)^l\cdot\f$.

On the other hand, suppose $L_a^+(x)>1$. Then there is a subsequence $(n_i)$ and a constant $\eps>0$ such that $c_k^{(n_i)}<-(1-a)\eps$ for all $i$, which implies that $M_{k,a}(z_{n_i})-M_{k,a}(x)$ has the opposite sign to $(1-2a)^l$ for all sufficiently large $i$. If $M_{k,a}$ were to have an infinite derivative at $x$, say $+\f$, we would need to have the secant slope $3^n\Delta_{k,a}(I_n(x))=3^n a^{n-l-k}(1-2a)^{l-k}P_k(n,l)$ tending to $+\f$ as well. But the sign of $M_{k,a}(z_{n_i})-M_{k,a}(x)$ is the opposite of the sign of $\Delta_{k,a}(I_{n_i}(x))$. Hence, $M_{k,a}$ does not have an infinite derivative at $x$.
\end{proof}

%The following proposition gives the second case of Theorem \ref{thm:infinite-derivatives}.

%\begin{proposition} \label{prop:D-inf-dimension}
%Let $a>1/2$. Then
%\[
%\dim_H D_{k,a}^\infty=\frac{\log\beta}{\log 3}\dim_H \mathcal{U}_\beta, 
%\]
%where $\beta:=1/a$ and $\mathcal{U}_\beta$ is the set of numbers in $[0,1/(\beta-1)]$ having a unique $\beta$-expansion.
%\end{proposition}

\begin{proof}[Proof of Theorem \ref{thm:infinite-derivatives}]
We prove the statement for $\dim_H D_{k,a}^{+\f}$; the proof for $\dim_H D_{k,a}^{-\f}$ is essentially the same.
Recall that $M_{k,a}$ can only have an infinite derivative at $x$ if $l:=\sup_n l_n(x)<\f$. When this is the case, Proposition \ref{prop:limsup-conditions} gives both a necessary and a sufficient condition for $M_{k,a}$ to have an infinite right derivative at $x$. Analogously, define
\[
L_a^-(x):=\limsup_{n\to\f} \sum_{j=1}^\f a^{j}\delta_0(x_{n+j}).
\]
Then, in order that $M_{k,a}$ have an infinite {\em left} derivative at $x$, sufficient is that $L_a^-(x)<1$, and necessary is that $L_a^-(x)\leq 1$. This follows from Proposition \ref{prop:limsup-conditions} and the symmetry $M_{k,a}(1-x)=-M_{k,a}(x)$, or can be proved from scratch in the same way as Proposition \ref{prop:limsup-conditions}. Define now the sets
\[
E_a:=\left\{x\in(0,1): \sup_n l_n(x)\in 2\NN, \ L_a^+(x)<1\ \mbox{and}\ L_a^-(x)<1\right\}, \qquad \frac12<a<1.
\]
Since $L_a^+(x)$ and $L_a^-(x)$ are clearly decreasing in $a$, we have
\[
E_a\subset D_{k,a}^{+\infty} \subset \bigcap_{1/2<a'<a} E_{a'}.
\]
Hence $\dim_H E_a\leq \dim_H D_{k,a}^{+\infty}\leq \dim_H E_{a'}$ for all $1/2<a'<a$. Since the function $\beta\mapsto \dim_H \mathcal{U}_\beta$ is continuous (see \cite{KKL,Allaart-Kong}), it therefore suffices to show that 
\begin{equation} \label{eq:bi-Holder-equivalence}
\dim_H E_a=\frac{-\log a}{\log 3}\dim_H \mathcal{U}_{1/a} \qquad\mbox{for all $\frac12<a<1$}.
\end{equation}
The proof of this fact is given in \cite[Section 5]{Allaart}. (The idea is to apply a transformation $\phi: C\to [0,a/(1-a)]$, where $C$ is the ternary Cantor set, by
\[
\phi(x)=\sum_{n=1}^\f a^n \cdot \frac{x_n}{2},
\]
where $x_1 x_2\dots$ is the ternary expansion of $x$ which does not contain the digit 1. Then the condition that $L_a^+(x)<1$ and $L_a^-(x)<1$ is very close to the condition that $\phi(x)\in \mathcal{U}_{1/a}$, so $\phi(C\cap E_a)$ is ``close" to $\mathcal{U}_{1/a}$ -- close enough to have the same Hausdorff dimension. Furthermore, it can be shown that the map $\phi$, restricted to $C\cap E_a$, is bi-H\"older continuous with exponent $-\log a/\log 3$. Making these statements precise yields \eqref{eq:bi-Holder-equivalence}, since $E_a$ is the countable union of affine images of $C\cap E_a$.)
\end{proof}


\begin{thebibliography}{12}

\bibitem{Allaart-2009}
{\sc P. C. Allaart}, On a flexible class of continuous functions with uniform local structure. {\em J. Math. Soc. Japan} {\bf 61} (2009), no. 1, 237--262.

\bibitem{Allaart}
{\sc P. C. Allaart}, The infinite derivatives of Okamoto's self-affine functions: an application of $\beta$-expansions. {\em J. Fractal Geom.} {\bf 3} (2016), no. 1, 1--31.

\bibitem{Allaart-2017a}
{\sc P.~C. Allaart}, Differentiability of a two-parameter family of self-affine functions. {\em J. Math. Anal. Appl.} {\bf 450} (2017), no. 2, 954--968.

\bibitem{Allaart-2017b}
{\sc P.~C. Allaart}, On univoque and strongly univoque sets. {\em Adv. Math.} {\bf 308} (2017), 575--598.

\bibitem{Allaart-Kawamura}
{\sc P.~C. Allaart} and {\sc K. Kawamura}, The improper infinite derivatives of Takagi's nowhere-differentiable function, {\em J. Math. Anal. Appl.} {\bf 372} (2010), no.~2, 656--665. %MR2678891.

\bibitem{Allaart-Kong}
{\sc P.~C. Allaart} and {\sc D.~Kong}, On the continuity of the Hausdorff dimension of the univoque set. {\em Adv. Math.} {\bf 354} (2019), 106729, 24 pp.

\bibitem{Barany-Prokaj}
{\sc B. B\'ar\'any} and {\sc R. D. Prokaj}, On the dimension theory of Okamoto's function. Preprint, https://arxiv.org/abs/2501.10584 (2025)

\bibitem{Billingsley}
{\sc P.~Billingsley}, {\em Probability and measure}, 3rd Edition, Wiley, New York (1995).

\bibitem{Bourbaki} 
{\sc N. Bourbaki}, {\em Functions of a real variable}, Translated from the 1976 French original by Philip Spain, Springer, Berlin, 2004.

\bibitem{Dalaklis}
{\sc N. Dalaklis, K. Kawamura, T. Mathis} and {\sc M. Paizanis}, The partial derivative of Okamoto's functions with respect to the parameter. {\em Real Anal. Exchange} {\bf 48} (2023), no. 1, 165--178.

\bibitem{Darst}
{\sc R.~B. Darst}, The Hausdorff dimension of the nondifferentiability set of the Cantor function is $[{\rm ln}(2)/{\rm ln}(3)]^2$, {\em Proc. Amer. Math. Soc.} {\bf 119} (1993), no.~1, 105--108. %MR1143222.

\bibitem{Eggleston}
{\sc H. Eggleston}, The fractional dimension of a set defined by decimal properties. {\em Quart. J. Math. Oxford Ser.} {\bf 20} (1949), 31--36.

\bibitem{Eidswick} 
{\sc J. A. Eidswick}, A characterization of the nondifferentiability set of the Cantor function. {\em Proc. Amer. Math. Soc.} {\bf 42} (1974), no. 1, 214--217.

\bibitem{Falconer-2004}
{\sc K.~J. Falconer}, One-sided multifractal analysis and points of non-differentiability of devil's staircases, {\em Math. Proc. Cambridge Philos. Soc.} {\bf 136} (2004), no.~1, 167--174. %MR2034020.

\bibitem{Falconer}
{\sc K.~J.~Falconer}, {\em Fractal geometry: Mathematical foundations and applications,} 3rd Edition, John Wiley \& Sons Ltd., Chichester (2014).

\bibitem{GlenSid} 
{\sc P. Glendinning} and {\sc N. Sidorov}, Unique representations of real numbers in non-integer bases. {\em Math. Res. Lett.} {\bf 8} (2001), no. 4, 535--543.

\bibitem{JKPS}
{\sc T.~M. Jordan, M. Kesseb\"ohmer, M. Pollicott} and {\sc B. Stratmann}, Sets of nondifferentiability for conjugacies between expanding interval maps, {\em Fund. Math.} {\bf 206} (2009), 161--183. %MR2576266

\bibitem{Katsuura} 
{\sc H. Katsuura}, Continuous nowhere-differentiable functions - an application of contraction mappings. {\em Amer. Math. Monthly} {\bf 98} (1991), no. 5, 411--416.

\bibitem{Kobayashi} 
{\sc K. Kobayashi}, On the critical case of Okamoto's continuous non-differentiable functions. {\em Proc. Japan Acad. Ser. A Math. Sci.} {\bf 85} (2009), no. 8, 101--104.

\bibitem{KKL}
{\sc V.~Komornik, D.~Kong} and {\sc W.~Li}, Hausdorff dimension of univoque sets and devil's staircase. {\em Adv. Math.} {\bf 305} (2017), 165--196.

\bibitem{KomLor} 
{\sc V. Komornik} and {\sc P. Loreti}, Unique developments in non-integer bases. {\em Amer. Math. Monthly} {\bf 105} (1998), 636--639.

\bibitem{McCollum} 
{\sc J. McCollum}, Further notes on a family of continuous, nondifferentiable functions. {\em New York J. Math.} {\bf 17} (2011), 569--577.

\bibitem{Okamoto} 
{\sc H. Okamoto}, A remark on continuous, nowhere differentiable functions. {\em Proc. Japan Acad. Ser. A Math. Sci.} {\bf 81} (2005), no. 3, 47--50.

\bibitem{Perkins} 
{\sc F. W. Perkins}, An elementary example of a continuous non-differentiable function. {\em Amer. Math. Monthly} {\bf 34} (1927), 476--478.

\bibitem{Seuret} 
{\sc S. Seuret}, On multifractality and time subordination for continuous functions. {\em Adv. Math.} {\bf 220} (2009), 936--963.

\bibitem{Troscheit}
{\sc S. Troscheit}, H\"older differentiability of self-conformal devil's staircases, {\em Math. Proc. Cambridge Philos. Soc.} {\bf 156} (2014), no.~2, 295--311. %MR3177871.

\end{thebibliography}
\end{document}